\newtheorem{theorem}{Theorem}[section]
\newtheorem{lem}[theorem]{Lemma}
\theoremstyle{Corollary}
\newtheorem{cor}[theorem]{Corollary}
\newtheorem{prop}[theorem]{Proposition}
\numberwithin{equation}{section}
\begin{document}

\title[Webster scalar curvature flow I]{The Webster scalar curvature flow on CR sphere. Part I}

\author{Pak Tung Ho}
\address{Department of Mathematics, Sogang University, Seoul
121-742, Korea}

\email{ptho@sogang.ac.kr, paktungho@yahoo.com.hk}

\subjclass[2000]{Primary 32V20, 53C44; Secondary 53C21, 35R01}

\date{November 30, 2012.}

\keywords{Webster scalar curvature, CR sphere, CR Yamabe problem}

\begin{abstract}
This is the first of two papers, in which we prove some properties of the
Webster scalar curvature flow. More precisely, we establish the long-time existence,
$L^p$ convergence and
the blow-up analysis for the solution
of the flow. As a by-product, we prove the convergence of
the CR Yamabe flow on the CR sphere. The results in this paper will be
used to prove a result of prescribing Webster scalar curvature on the CR sphere, which is the main result of
the second paper.
\end{abstract}

\maketitle

\section{Introduction}

Suppose $(M,g_0)$ is a compact $n$-dimensional Riemannian manifold without boundary, where $n\geq 3$.
As a generalization of Uniformization Theorem for surfaces, one wants to find
a metric $g$ conformal to $g_0$ such that its scalar curvature $R_g$ is constant. This is the so-called Yamabe problem, which was introduced by Yamabe in \cite{Yamabe}. Trudinger \cite{Trudinger} solved it when $R_g\leq 0$. For the case $R_g>0$, Aubin \cite{Aubin0} solved it when $n\ge 6$ and $M$ is not locally conformally flat. Schoen \cite{Schoen} solved the remaining cases, namely, when $3\leq n\leq 5$ or $M$ is locally conformally flat.
See the survey article \cite{Lee&Parker} by Lee and Parker
for more details.

A different approach has been introduced to solve the Yamabe problem. Hamitlon \cite{Hamilton} introduced the Yamabe flow, which is defined by
$$\frac{\partial}{\partial t}g(t)=-(R_{g(t)}-\overline{R}_{g(t)})g(t),\hspace{2mm}g(t)|_{t=0}=g_0,$$
where $\overline{R}_{g(t)}$ is the average of the scalar curvature $R_{g(t)}$ of $g(t)$.
The Yamabe flow was considered by Chow \cite{Chow}, Ye \cite{Ye},
Schwetlick and Struwe \cite{Schwetlick&Struwe}. Finally Brendle
\cite{Brendle4,Brendle5} showed that the Yamabe flow exists for all
time and converges to a metric of constant scalar curvature.

As a generalization of the Yamabe problem, one wants to know what function can be realized as the scalar curvature of some conformal metric. More precisely,
given a function $f$ on a compact $n$-dimensional Riemannian manifold $(M,g_0)$, can we find a metric $g$ conformal to $g_0$ such that $R_g=f$? This prescribing  scalar curvature problem has been studied extensively, even for the case when $M$ is a surface, see \cite{Ho3,Kazdan&Warner,Kazdan&Warner1}. Especially, the problem has attracted a lot of attention when $(M,g_0)$ is the $n$-dimensional standard sphere $S^n$, which is the so-called Nirenberg's problem. See
\cite{Chang&Gursky&Yang,Chang&Yang5,Chang&Yang3,Chang&Yang4,Struwe}. Especially,
Chen and Xu \cite{Chen&Xu} defined the scalar curvature flow, which is a natural
generation of Yamabe flow above, as follows:
\begin{equation*}
\frac{\partial}{\partial t}g(t)=(\alpha(t) f-R_{g(t)})g(t),
\end{equation*}
 where
$\alpha(t)$ is given by
$$\alpha(t)\int_{S^n}fdV_{g(t)}=\int_{S^n}R_{g(t)}dV_{g(t)}.$$
Using the scalar curvature flow, Chen and Xu \cite{Chen&Xu} proved the following:
\begin{theorem}[Chen-Xu]
Suppose that $f$ is a smooth positive function on the $n$-dimensional sphere $S^{n}$ with only non-degenerate critical points
with Morse indices $ind(f,x)$ and such
that $\Delta_{g_0}f (x)\neq 0$ at any such point $x$. Let
$$m_i =\#\{x\in S^{n}: \nabla_{g_0}f(x) = 0, \Delta_{g_0}f(x)< 0, ind(f,x) = n-i\}.$$
Further, suppose $\delta_n = 2^{\frac{2}{n}}$
if $3\leq n\leq 4$ or
$=2^{\frac{2}{n-2}}$
 for $n\geq 5$.
If there is no solution with coefficient $k_i\geq 0$ to the system of equations
\begin{equation*}
m_0 = 1 + k_0, m_i = k_{i-1} + k_i\mbox{ for }1\leq i\leq n, k_{n} = 0,
\end{equation*}
and
$f$ satisfies
\begin{equation*}
\max_{S^{n}}f/\min_{S^{n}}f<\delta_n,
\end{equation*}
then $f$ can be realized as the scalar curvature of some metric conformal to the standard metric
$g_0$.
\end{theorem}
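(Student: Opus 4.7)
The plan is to use the scalar curvature flow itself as the main machinery. Writing $g(t)=u(t)^{4/(n-2)}g_0$, the flow translates into a semilinear parabolic PDE for $u$. First I would prove long-time existence. The normalization on $\alpha(t)$ preserves the volume $\int_{S^n} dV_{g(t)}$, and a suitable functional, for instance $\int_{S^n} R_{g(t)}\, dV_{g(t)}$ (or the Yamabe quotient of $g(t)$), is monotone along the flow, giving uniform Sobolev-type bounds on $u(t)$. Combined with standard parabolic regularity, this yields smoothness of $g(t)$ for all $t\geq 0$.

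The second step is an asymptotic dichotomy. Along a subsequence $t_k\to\infty$, either $g(t_k)$ converges smoothly to a metric $g_\infty$ with $R_{g_\infty}=\alpha_\infty f$ --- which, rescaled by a constant, produces the desired conformal metric with $R=f$ --- or concentration occurs. The pinching $\max f/\min f<\delta_n$ enters here as an energy threshold: the value $\delta_n$ is calibrated so that two simultaneously forming bubbles would push the Yamabe-type quotient below the critical Sobolev constant on $S^n$, which is impossible. Consequently only one bubble can form. A Kazdan--Warner type identity, obtained by pairing the flow equation with the conformal Killing fields on $S^n$ and passing to the limit on the concentrating profile, then forces the concentration point $x_0$ to be a critical point of $f$ with $\Delta_{g_0}f(x_0)<0$.

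The third step turns this picture into a Morse-theoretic count. I would parametrize concentrating configurations by $(x,\lambda)\in S^n\times(0,\infty)$ --- center and scale of the putative bubble --- and perform a Lyapunov--Schmidt reduction. The reduced gradient flow is, to leading order, that of $f$ on $S^n$ coupled with a $\lambda$-equation whose direction of motion at a critical point is determined by the sign of $\Delta_{g_0}f$. The admissible asymptotic concentrations are then counted by the numbers $m_i$, and a degree-theoretic or Conley-index argument applied to the flow yields chain relations of the form $m_0=1+k_0$, $m_i=k_{i-1}+k_i$, $k_n=0$, where $k_i$ records the connecting trajectories between successive Morse-index strata in the reduced picture. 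Non-existence of non-negative integer solutions to this system is therefore incompatible with the concentration scenario, so the flow must converge, proving the theorem.

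The main obstacle is the blow-up analysis combined with the Morse-theoretic bookkeeping. Proving single-bubble concentration with sharp control on location, scale, and profile requires pointwise estimates matching the concentrating solution to a standard bubble up to M\"obius parameters and tracking the error through the parabolic evolution; this is where the precise value of $\delta_n$ is pinned down. Turning the Morse indices into the combinatorial data $(k_i)$ via the reduced flow is equally delicate, since one must rule out exotic connecting orbits and analyze the second-order $\lambda$-dynamics governed by the sign and size of $\Delta_{g_0}f(x_0)$. These estimates are the technical heart of the argument, and are precisely the components that one expects to generalize, with substantial modification, to the Webster setting treated in the remainder of the paper.
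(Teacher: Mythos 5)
Your outline follows essentially the same strategy as the cited proof of Chen and Xu and as the program this paper develops for the CR analogue: long-time existence via the monotone normalized energy $E_f$, the dichotomy between smooth convergence and single-bubble concentration (with the pinching constant $\delta_n$ calibrated exactly to exclude two or more bubbles), a Kazdan--Warner identity locating the blow-up point among critical points with $\Delta_{g_0}f<0$, and a Morse-theoretic count yielding the system $m_0=1+k_0$, $m_i=k_{i-1}+k_i$, $k_n=0$. Note that the present paper only quotes this theorem and does not prove it; your sketch is a correct roadmap, but all of the substance (the a priori bounds, the blow-up profile analysis, and the degree-theoretic bookkeeping) is deferred rather than carried out.
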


The Yamabe problem can also be formulated in the context of CR manifold.
Suppose that $(M,\theta_0)$ is a compact strongly pseudoconvex CR manifold of real dimensional $2n+1$ with a given contact form $\theta_0$. The CR Yamabe problem is to find a contact form $\theta$ conformal to $\theta_0$ such that
its Webster scalar curvature $R_\theta$ is constant. This was introduced by Jerison and Lee in \cite{Jerison&Lee3}, and was solved by Jerison and Lee
for the case when $n\geq 2$ and $M$ is not locally CR equivalent to the sphere $S^{2n+1}$ in \cite{Jerison&Lee1,Jerison&Lee2,Jerison&Lee3}. The remaining case, namely, when $n=1$ or $M$ is locally CR equivalent to the sphere $S^{2n+1}$,  was solved by Gamara and Yacoub in \cite{Gamara2,Gamara1}.

As an analogue to Yamabe flow, one can consider the CR Yamabe flow defined by
$$\frac{\partial}{\partial t}\theta(t)=-(R_{\theta(t)}-\overline{R}_{\theta(t)})\theta(t),\hspace{2mm}\theta(t)|_{t=0}=\theta_0,$$
where $\overline{R}_{\theta(t)}$ is the average of the Webster scalar curvature $R_{\theta(t)}$ of $\theta(t)$.
Cheng and Chang \cite{Chang&Cheng} proved the short time existence of the CR Yamabe flow. Zhang \cite{Zhang} proved the long time existence and convergence for the case $R_{\theta_0}<0$. For the case $R_{\theta_0}>0$, Chang, Chiu, and Wu \cite{Chang&Chiu&Wu} proved the convergence when $M$ is spherical and $n=1$ and $\theta_0$ is torsion-free.
For general $n$, the author proved in \cite{Ho1} the long time existence for the case when $R_{\theta_0}>0$, and the convergence when $M$ is
the CR sphere.

As a generalization of CR Yamabe problem, one can consider prescribing CR Webster scalar curvature problem: given a function $f$
on  a CR manifold $(M,\theta_0)$, we want to find a contact form $\theta$ conformal to $\theta_0$ such that its Webster scalar curvature
$R_{\theta}=f$. This has been studied in \cite{Chtioui&Elmehdi&Gamara,Felli&Uguzzoni,Gamara3,Ho1,Malchiodi&Uguzzoni,Riahia&Gamara,Salem&Gamara}.
As an analogy of Nirenberg's problem, we study the problem of prescribing  Webster scalar curvature on the CR sphere $(S^{2n+1},\theta_0)$.
Motivated by the work of Chen and Xu in \cite{Chen&Xu}, we define the Webster scalar curvature flow:
\begin{equation*}
\frac{\partial }{\partial t}\theta(t)=(\alpha(t) f- R_{\theta(t)})\theta(t),
\end{equation*}
where $R_{\theta(t)}$ is the Webster scalar curvature of the contact form
$\theta(t)$, and $\alpha(t)$ is given by
\begin{equation*}
\alpha(t)\int_{S^{2n+1}}f dV_{\theta(t)}=\int_{S^{2n+1}}R_{\theta(t)} dV_{\theta(t)}.
\end{equation*}
We assume that $f$ is a smooth positive Morse function on the CR sphere $S^{2n+1}$ with only non-degenerate critical points in the sense that
if $f'(x)=0$, then the sub-Laplacian $\Delta_{\theta_0}f(x)\neq 0$. Here $f'(x)$ denotes the gradient of $f$ with respect to the standard Riemannian metric on
$S^{2n+1}$. By using the Webster scalar curvature flow, we will prove
the following theorem:

\begin{theorem}\label{thm1.1}
Suppose that $n\geq 2$ and $f$ is a smooth positive function on $S^{2n+1}$ with only non-degenerate critical points
with Morse indices $ind(f,x)$ and such
that $\Delta_{\theta_0}f (x)\neq 0$ at any such point $x$. Let
$$m_i =\#\{x\in S^{2n+1}: f'(x) = 0, \Delta_{\theta_0}f(x)< 0, ind(f,x) = 2n+1-i\}.$$
If there is no solution with coefficient $k_i\geq 0$ to the system of equations
\begin{equation*}
m_0 = 1 + k_0, m_i = k_{i-1} + k_i\mbox{ for }1\leq i\leq 2n+1, k_{2n+1} = 0,
\end{equation*}
and
$f$ satisfies the simple bubble condition, namely
\begin{equation}\tag{sbc}\label{sbc}
\max_{S^{2n+1}}f/\min_{S^{2n+1}}f<2^{\frac{1}{n}},
\end{equation}
then $f$ can be realized as the Webster scalar curvature of some contact form conformal to
$\theta_0$.
\end{theorem}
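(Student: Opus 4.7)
The plan is to prove Theorem \ref{thm1.1} by combining the three flow ingredients developed in this paper (long-time existence, $L^p$ convergence, and blow-up analysis) with a Morse-theoretic obstruction argument modeled on Chen--Xu but adapted to the CR framework. I would begin by writing $\theta(t) = u(t)^{2/n}\theta_0$, so that the Webster scalar curvature flow becomes a scalar parabolic equation for $u(t)$ with the critical CR exponent $2+2/n$. The multiplier $\alpha(t)$ is chosen precisely so that $\int_{S^{2n+1}} f\,dV_{\theta(t)}$ is preserved; the natural energy is the $f$-weighted CR Yamabe quotient, and a direct computation (using that $\alpha(t)$ equals the $f$-weighted average of $R_{\theta(t)}$) shows this quotient is monotone non-increasing along the flow. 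The long-time existence established earlier in the paper then provides a global solution.

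Next, I would invoke the blow-up analysis. After normalization, either (i) the flow subconverges smoothly (indeed in every $C^k$) to a contact form $\theta_\infty$ with $R_{\theta_\infty} = \alpha_\infty f$, in which case a constant rescaling of $\theta_\infty$ solves the prescribed curvature equation $R_\theta = f$ and we are done; or (ii) concentration occurs. The simple bubble condition $\max f/\min f < 2^{1/n}$ is the sharp threshold (coming from the Jerison--Lee extremal inequality on the Heisenberg group, whose homogeneous dimension is $2n+2$) that forbids a two-bubble limit: if two standard bubbles separated, the weighted energy would exceed twice the Jerison--Lee ground state, contradicting the energy bound inherited from the initial data. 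Thus in alternative (ii) the concentration profile is a \emph{single} Jerison--Lee bubble, blowing up at a single point $x_0 \in S^{2n+1}$.

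I would then extract the location $x_0$ and the sign of $\Delta_{\theta_0} f$ at $x_0$ via a CR Pohozaev / Kazdan--Warner-type identity applied to the rescaled bubble: matching the leading and subleading orders in the CR normal coordinate expansion forces $f'(x_0)=0$ and $\Delta_{\theta_0}f(x_0) < 0$. Hence each concentration point is a critical point of $f$ of the type counted by the $m_i$'s. With this classification in hand, a Leray--Schauder degree computation (or equivalently a Morse theoretic deformation of the sub-level sets of the energy functional between the unobstructed level and the compactified critical-at-infinity set) yields the relations $m_0 = 1 + k_0$, $m_i = k_{i-1}+k_i$, $k_{2n+1}=0$, where $k_i$ is the rank of the $i$th contribution to the limiting Morse polynomial. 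The absence of a nonnegative solution to this system means the degree on either side of the critical level cannot be reconciled by concentration alone, forcing alternative (i) and producing the desired conformal contact form.

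The main obstacle is the blow-up analysis in step two: promoting $L^p$ convergence to a clean single-bubble decomposition requires sharp energy quantization on the Heisenberg group, CR normal coordinates, and delicate control of the neck regions, all of which are substantially harder than the Riemannian analogues because the sub-Laplacian scales anisotropically and lacks a full gradient. Once quantization, the single-bubble reduction under \eqref{sbc}, and the sign condition $\Delta_{\theta_0} f(x_0) < 0$ are established, the combinatorial step and the passage from non-solvability of the $k_i$-system to convergence of the flow are essentially formal; the restriction $n\geq 2$ enters only through the availability of CR normal coordinates of sufficient order for the Kazdan--Warner computation.
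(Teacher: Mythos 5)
Your outline reproduces the intended architecture, and its first two stages correspond to what this paper actually establishes: monotonicity of $E_f$ and long-time existence (Section \ref{section2}, Proposition \ref{prop2.2}), $L^p$ convergence of $R_\theta-\alpha f$ (Section \ref{section3}), and the dichotomy of Theorem \ref{thm4.7} together with the single-bubble reduction under \eqref{sbc} (Lemma \ref{lem4.4}) and the identification of the limit in the compact case (Lemma \ref{lem4.8}). But be aware that this paper does not prove Theorem \ref{thm1.1}; the introduction explicitly postpones the proof to Part II. What you compress into your third step and then call ``essentially formal'' is precisely the deferred content, and it is not formal. Concretely: (a) the claim that the concentration point $x_0$ satisfies $f'(x_0)=0$ and $\Delta_{\theta_0}f(x_0)<0$ does not follow from a static Pohozaev/Kazdan--Warner identity applied to the limiting bubble alone — the Kazdan--Warner identity can at best give criticality in the limit, while the sign of $\Delta_{\theta_0}f(x_0)$ requires a second-order asymptotic expansion of $E_f$ in the bubble parameters $(\widehat{P(t)},r(t))$ coupled with the monotonicity of $E_f$ and an analysis of the induced ODE (``shadow flow'') for those parameters; none of this is set up in your sketch, and it is where the hypothesis that the critical points are non-degenerate with $\Delta_{\theta_0}f\neq 0$ is actually consumed. (b) The passage from the classification of blow-ups to the relations $m_0=1+k_0$, $m_i=k_{i-1}+k_i$, $k_{2n+1}=0$ is a genuine Morse-theoretic/degree computation on the space of flow lines between critical points at infinity, not a bookkeeping step.

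Two smaller inaccuracies. First, the single-bubble lemma in this paper (Lemma \ref{lem4.4}) is proved only for initial data with $E_f(u_0)\leq\beta=(1+\epsilon_0)Y(S^{2n+1},\theta_0)(\min f)^{-n/(n+1)}$, i.e.\ within the class $C^\infty_f$; your sketch treats the energy bound as automatic, whereas in the eventual degree argument one must work inside this sublevel set, and the mechanism is a count of $L^{n+1}$-curvature concentration against the quotient bound $L^{1/(n+1)}\leq(1+\epsilon_0)\delta_n^{n/(n+1)}<2^{1/(n+1)}$ rather than a comparison of energies of two separated bubbles. Second, your claim that the restriction $n\geq 2$ enters ``only through CR normal coordinates'' is speculative and not supported by anything in this paper; the dimensional restrictions in the Riemannian antecedent of Chen--Xu come from integrability/decay of the bubble interaction terms in the energy expansion, and the analogous issue here is again part of the deferred analysis. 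In short, the route is the right one, but the proposal asserts rather than supplies the two steps on which the theorem actually rests.
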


In this paper we first
establish some results which will be used to prove Theorem \ref{thm1.1}, and
the proof of Theorem \ref{thm1.1} will be postponed to the forthcoming paper due to its length.
The paper is organized as follows. In section \ref{section2}, we prove some properties of the Webster scalar
curvature flow, including the uniform lower bound of
the Webster scalar curvature for all time $t\geq 0$ and the long time existence.
In section \ref{section3}, we establish the convergence in the sense of $S_1^2$ and $L^p$, where $p\geq 1$,
with respect to the contact form at time $t$ as $t\rightarrow\infty$.
Section \ref{section4} is devoted to the analysis of the blow-ups of solutions. First we prove the CR analogue
of the compactness theorem obtained by Schwetlick and Struwe in \cite{Schwetlick&Struwe}, and we apply it to
show that one of the two cases must occur: either the flow itself converges in $S_2^p$ for some $p>2n+2$,
or the corresponding normalized flow $v(t)$, which will be introduced there, converges.
In section \ref{section6}, we prove the convergence of the CR Yamabe flow on the CR sphere by using the
techniques we developed in this paper. See Theorem \ref{thm6.3}.
In particular, this recovers the result of the author in \cite{Ho2}.
In the Appendix, we prove the CR analogue of the Aubin's improvement of the
Sobolev inequality. More precisely, we proved that the Folland-Stein inequality
can be improved by assuming that the conformal factor satisfies the balancing condition, i.e. the
center of mass is equal to zero. This is of independent interest and
is used in the compactness argument in section \ref{section4}.

This project began when I was invited by Prof. Paul Yang to visit Princeton University in the summer of 2011.
I would like to thank Prof. Paul Yang, who encouraged me to work on this project and
explained to me his work on prescribing scalar curvature.
I would like to thank Prof. Xingwang Xu, who answered many of my questions about his paper.
I am grateful to Prof. Sai-Kee Yeung for his encouragement and valuable advices.
I would like to thank Prof.  Ngaiming Mok for the invitation to the University of Hong Kong during the summer of 2012, where part of this
work was done. I would also like to thank Prof. Howard Jacobowitz and Prof. L\'{a}szl\'{o} Lempert for
helpful comments and discussions. This work was supported by the National Research Foundation of Korea (NRF) grant funded
by the Korea government (MEST) (No.2012R1A1A1004274).

\section{The flow}\label{section2}

\subsection{The flow and its basic properties}

Let  $\theta_0$ be the standard contact form on the sphere
$S^{2n+1}=\{x=(x_1,...,x_{n+1}):|x|^2=1\}\subset\mathbb{C}^{n+1}$, i.e.
$$\theta_{0}=\sqrt{-1}(\overline{\partial}-\partial)|x|^2
=\sqrt{-1}\sum_{j=1}^{n+1}(x_{j}d\overline{x}_{j}-\overline{x}_{j}dx_{j}).$$
Then $(S^{2n+1},\theta_0)$ is a compact strictly pseudoconvex CR
manifold of real dimension $2n+1$. We define a new contact form
$\theta$ which is conformal to $\theta_0$ as follows:
\begin{equation*}
\theta=u^{\frac{2}{n}}\theta_0, u>0.
\end{equation*}
The volume form with respect to $\theta$ is given by
$dV_\theta:=\theta\wedge d\theta^n$, and the volume of $S^{2n+1}$
with respect to $\theta$ is given by
Vol$(S^{2n+1},\theta)=\displaystyle\int_{S^{2n+1}}dV_\theta
=\int_{S^{2n+1}}u^{2+\frac{2}{n}}dV_{\theta_0}$.
The sub-Laplacian $\Delta_\theta$ with respect to $\theta$ is
defined as
$$\int_{S^{2n+1}}(\Delta_\theta u)\phi\,dV_\theta
=-\int_{S^{2n+1}}\langle \nabla_\theta u,\nabla_\theta \phi\rangle_\theta\,dV_\theta,$$
for any smooth function $\phi$. Here $\langle\mbox{ },\mbox{
}\rangle_\theta$ is the Levi form of $\theta$ and $\nabla_\theta$ is
the sub-gradient of $\theta$. The Folland-Stein space
$S_1^2(S^{2n+1},\theta_0)$ is the completion of $C^1(S^{2n+1})$ with
respect to the norm
\begin{eqnarray*}
\|u\|_{S_1^2(S^{2n+1},\theta_0)}=\left(\int_{S^{2n+1}}(|\nabla_{\theta_0}
u|^2_{\theta_0}+u^2)dV_{\theta_0} \right)^{\frac{1}{2}}.
\end{eqnarray*}
(For more properties about the Folland-Stein space, see
\cite{Folland&Stein}.) If $\Delta_{\theta}$ is the sub-Laplacian
with respect to the contact form $\theta$, then it can be shown that
\begin{equation}\label{2.0}
\Delta_{\theta}\phi=u^{-(1+\frac{2}{n})} \big(u\Delta_{\theta_0}
\phi+2\langle\nabla_{\theta_0} u,\nabla_{\theta_0}
\phi\rangle_{\theta_0}\big)
\end{equation}
for any smooth function $\phi$.
We refer the reader to \cite{Dragomir} for other definitions which
are not included here.

Now suppose $f$ is a smooth positive function on $S^{2n+1}$. We set
$$0<m=\displaystyle\min_{S^{2n+1}}f\leq f\leq \max_{S^{2n+1}}f=M.$$
Let $u_0\in C^\infty(S^{2n+1})$ such that
\begin{equation}\label{2.01}
\int_{S^{2n+1}}u_0^{2+\frac{2}{n}}dV_{\theta_0}=\int_{S^{2n+1}}dV_{\theta_0},
\end{equation}
we define the Webster scalar curvature flow as the evolution of the contact form
$\theta=\theta(t)$, $t\geq 0$ as follows:
\begin{equation}\label{2.1}
\frac{\partial }{\partial t}\theta=(\alpha f- R_{\theta})\theta,
\hspace{4mm} \theta\big|_{t=0}=u_0^{\frac{2}{n}}\theta_0,
\end{equation}
where $R_\theta$ is the Webster scalar curvature (sometimes it is
called pseudo-Hermitian scalar curvature) of the contact form
$\theta$ and $\alpha=\alpha(t)$ is given by
\begin{equation}\label{2.2}
\alpha\int_{S^{2n+1}}f dV_{\theta}=\int_{S^{2n+1}}R_\theta dV_{\theta}.
\end{equation}
If we write $\theta=u^{\frac{2}{n}}\theta_0$,
then (\ref{2.1}) is equivalent to
the following evolution equation of the conformal factor $u$:
\begin{equation}\label{2.3}
\frac{\partial u}{\partial t}=\frac{n}{2}(\alpha f- R_{\theta})u,
\hspace{4mm} u|_{t=0}=u_0.
\end{equation}
Since $\theta=u^{\frac{2}{n}}\theta_0$, the Webster scalar curvature
$R_\theta$ of $\theta$ satisfies the following CR Yamabe equation
\begin{equation}\label{2.4}
R_\theta=u^{-(1+\frac{2}{n})}\left(-(2+\frac{2}{n})\Delta_{\theta_0}u+R_{\theta_0}u\right),
\end{equation}
where $R_{\theta_0}=n(n+1)/2$ is the Webster scalar curvature of
$\theta_0$.

\begin{prop}\label{prop2.1}
The volume of $S^{2n+1}$does not change under the flow. That is,
\emph{Vol}$(S^{2n+1},\theta)=$\emph{Vol}$(S^{2n+1},\theta_0)$ for $t\geq 0$.
\end{prop}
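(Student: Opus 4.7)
The plan is to differentiate the volume integral in time and show that the derivative vanishes identically, which together with the initial normalization \eqref{2.01} gives the claim. Writing $\theta(t) = u(t)^{2/n}\theta_0$, the excerpt already identifies
\[
\mathrm{Vol}(S^{2n+1},\theta) = \int_{S^{2n+1}} u^{2+\frac{2}{n}}\, dV_{\theta_0},
\]
so the problem reduces to computing the $t$-derivative of this integral using \eqref{2.3}.

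First I would differentiate under the integral sign. Since $\theta_0$ is fixed in time, only $u$ depends on $t$, giving
\[
\frac{d}{dt}\mathrm{Vol}(S^{2n+1},\theta) = \Bigl(2+\tfrac{2}{n}\Bigr) \int_{S^{2n+1}} u^{1+\frac{2}{n}}\, \frac{\partial u}{\partial t}\, dV_{\theta_0}.
\]
Next I would substitute the evolution equation \eqref{2.3}, namely $\partial_t u = \tfrac{n}{2}(\alpha f - R_\theta)u$, so that the integrand becomes $\tfrac{n}{2}(\alpha f - R_\theta) u^{2+\frac{2}{n}}$. Recognizing $u^{2+2/n}\,dV_{\theta_0} = dV_\theta$ and simplifying the constants $(2+2/n)(n/2) = n+1$, one obtains
\[
\frac{d}{dt}\mathrm{Vol}(S^{2n+1},\theta) = (n+1)\left(\alpha \int_{S^{2n+1}} f\, dV_\theta - \int_{S^{2n+1}} R_\theta\, dV_\theta\right).
\]

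The final step is to invoke the defining relation \eqref{2.2} for $\alpha(t)$, which is exactly designed to make the parenthesized expression vanish. Thus the volume is constant along the flow, and by \eqref{2.01} its value at $t=0$ equals $\mathrm{Vol}(S^{2n+1},\theta_0)$. There is no real obstacle here; the only subtle point is verifying the formula $dV_\theta = u^{2+2/n}\,dV_{\theta_0}$, which comes from $dV_\theta = \theta\wedge (d\theta)^n$ together with $\theta = u^{2/n}\theta_0$ and is standard in CR geometry (and in any case already asserted in the excerpt). The choice of normalizing constant $\alpha$ in \eqref{2.2} is precisely what makes the flow volume-preserving, mirroring the role of $\overline{R}_{g(t)}$ in the classical Yamabe flow.
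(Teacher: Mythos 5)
Your proposal is correct and follows exactly the paper's own argument: differentiate $\int u^{2+2/n}\,dV_{\theta_0}$ in $t$, substitute the evolution equation (\ref{2.3}), identify $u^{2+2/n}\,dV_{\theta_0}=dV_\theta$, and conclude the derivative vanishes by the definition (\ref{2.2}) of $\alpha$, with (\ref{2.01}) fixing the initial value. No differences to report.
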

\begin{proof}
By (\ref{2.2}) and (\ref{2.3}), we have
\begin{equation*}
\begin{split}
\frac{d}{dt}\mbox{Vol}(S^{2n+1},\theta)&=\frac{d}{dt}\left(\int_{S^{2n+1}}dV_\theta\right)
=\frac{d}{dt}\left(\int_{S^{2n+1}}u^{2+\frac{2}{n}}dV_{\theta_0}\right)\\
&=(2+\frac{2}{n})\int_{S^{2n+1}}u^{1+\frac{2}{n}}\frac{\partial u}{\partial t}dV_{\theta_0}
=(n+1)\int_{S^{2n+1}}(\alpha f- R_{\theta})dV_{\theta}=0.
\end{split}
\end{equation*}
This together with (\ref{2.01}) implies
Vol$(S^{2n+1},\theta)=$Vol$(S^{2n+1},\theta)\big|_{t=0}
=$Vol$(S^{2n+1},\theta_0)$ for $t\geq 0$.
\end{proof}

We define
\begin{equation}\label{2.5}
E(u)=\int_{S^{2n+1}}\left((2+\frac{2}{n})|\nabla_{\theta_0}u|^2_{\theta_0}
+R_{\theta_0}u^2\right)dV_{\theta_0}=\int_{S^{2n+1}}R_\theta dV_\theta
\end{equation}
where the last equality follows from (\ref{2.4}). By (\ref{2.2}) and (\ref{2.5}), we have
\begin{equation}\label{2.6}
\alpha=\frac{E(u)}{\int_{S^{2n+1}}f u^{2+\frac{2}{n}}dV_{\theta_0}}.
\end{equation}
We also define
\begin{equation}\label{2.7}
E_f(u)=\frac{E(u)}{(\int_{S^{2n+1}}fu^{2+\frac{2}{n}}dV_{\theta_0})^{\frac{n}{n+1}}}.
\end{equation}

\begin{prop}\label{prop2.2}
The functional $E_f$ is non-increasing along the flow. Indeed,
$$\frac{d}{dt}E_f(u)=-n\int_{S^{2n+1}}(\alpha f-R_\theta)^2u^{2+\frac{2}{n}}dV_{\theta_0}\Big/
\left(\int_{S^{2n+1}}fu^{2+\frac{2}{n}}dV_{\theta_0}\right)^{\frac{n}{n+1}}\leq 0.$$
\end{prop}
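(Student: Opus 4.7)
The plan is a direct calculation: differentiate the quotient defining $E_f$, use integration by parts on the numerator to express everything in terms of the evolving conformal factor $u$ and the Webster scalar curvature $R_\theta$, and then combine the two pieces using the defining identity (\ref{2.6}) for $\alpha$.

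First I would compute $\frac{d}{dt} E(u)$. Since $E(u) = \int (2+\tfrac{2}{n})|\nabla_{\theta_0} u|^2 + R_{\theta_0} u^2 \, dV_{\theta_0}$ and $\theta_0$ is fixed, differentiating under the integral sign and integrating by parts gives
\begin{equation*}
\frac{d}{dt} E(u) = 2\int_{S^{2n+1}} \left(-(2+\tfrac{2}{n})\Delta_{\theta_0} u + R_{\theta_0} u\right) u_t \, dV_{\theta_0}.
\end{equation*}
By the CR Yamabe equation (\ref{2.4}), the quantity in parentheses equals $R_\theta \, u^{1+\frac{2}{n}}$. Substituting the evolution equation (\ref{2.3}) for $u_t$ then yields
\begin{equation*}
\frac{d}{dt} E(u) = n\int_{S^{2n+1}} (\alpha f - R_\theta) R_\theta \, u^{2+\frac{2}{n}} dV_{\theta_0}.
\end{equation*}

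Next I would differentiate the denominator $I(u) := \int f u^{2+\frac{2}{n}} dV_{\theta_0}$. Using (\ref{2.3}) again,
\begin{equation*}
\frac{d}{dt} I(u) = (2+\tfrac{2}{n})\int f u^{1+\frac{2}{n}} u_t \, dV_{\theta_0} = (n+1)\int f(\alpha f - R_\theta) u^{2+\frac{2}{n}} dV_{\theta_0}.
\end{equation*}

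Finally, applying the quotient rule to $E_f(u) = E(u)/I(u)^{n/(n+1)}$ gives
\begin{equation*}
\frac{d}{dt} E_f(u) = \frac{1}{I(u)^{n/(n+1)}}\left(\frac{dE}{dt} - \frac{n}{n+1}\cdot\frac{E(u)}{I(u)}\cdot\frac{dI}{dt}\right).
\end{equation*}
By (\ref{2.6}) the ratio $E(u)/I(u)$ is exactly $\alpha$, so the bracketed expression becomes
\begin{equation*}
n\int (\alpha f - R_\theta) R_\theta u^{2+\frac{2}{n}} dV_{\theta_0} - n\alpha \int f(\alpha f - R_\theta) u^{2+\frac{2}{n}} dV_{\theta_0} = -n \int (\alpha f - R_\theta)^2 u^{2+\frac{2}{n}} dV_{\theta_0},
\end{equation*}
which is the claimed formula, and is manifestly non-positive.

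The only subtlety is the integration-by-parts step for $dE/dt$: one must recognize that the differential operator appearing after differentiating the Dirichlet energy is precisely the conformal sub-Laplacian whose action on $u$ is $R_\theta u^{1+\frac{2}{n}}$ by (\ref{2.4}). Once that observation is made, the rest is bookkeeping, and the cancellation into a perfect square is forced by the definition of $\alpha$.
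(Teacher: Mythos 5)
Your proof is correct and follows the same route as the paper: differentiate $E(u)$ and the weighted volume $\int f u^{2+\frac{2}{n}}dV_{\theta_0}$ using the evolution equation (\ref{2.3}) and the CR Yamabe equation (\ref{2.4}), apply the quotient rule, and use the identity $\alpha = E(u)/\int f u^{2+\frac{2}{n}}dV_{\theta_0}$ from (\ref{2.6}) to collapse the result into the perfect square. All coefficients check out.
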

\begin{proof}
It follows from (\ref{2.3}), (\ref{2.4}) and (\ref{2.5}) that
\begin{equation}\label{2.8}
\begin{split}
\frac{d}{dt}E(u)=2\int_{S^{2n+1}}\Big(-(2+\frac{2}{n})\Delta_{\theta_0}u
+R_{\theta_0}u\Big)\frac{\partial u}{\partial t}dV_{\theta_0}
=n\int_{S^{2n+1}}(\alpha f-R_\theta)R_\theta dV_{\theta}.
\end{split}
\end{equation}
Therefore, by (\ref{2.3}) and (\ref{2.8}) we obtain
\begin{equation*}
\begin{split}
\frac{d}{dt}E_f(u)&
=\frac{d}{dt}\left(\frac{E(u)}{(\int_{S^{2n+1}}fu^{2+\frac{2}{n}}dV_{\theta_0})^{\frac{n}{n+1}}}\right)\\
&=\frac{d}{dt}E(u)\Big/\left(\int_{S^{2n+1}}fu^{2+\frac{2}{n}}dV_{\theta_0}\right)^{\frac{n}{n+1}}\\
&\hspace{2mm}-
\frac{n}{n+1}E(u)
\int_{S^{2n+1}}(2+\frac{2}{n})fu^{1+\frac{2}{n}}\frac{\partial u}{\partial t}dV_{\theta_0}\Big/
\left(\int_{S^{2n+1}}fu^{2+\frac{2}{n}}dV_{\theta_0}\right)^{\frac{n}{n+1}+1}\\
&=n\int_{S^{2n+1}}(\alpha f-R_\theta)R_\theta dV_{\theta}\Big/
\left(\int_{S^{2n+1}}fu^{2+\frac{2}{n}}dV_{\theta_0}\right)^{\frac{n}{n+1}}\\
&\hspace{2mm}-nE(u)\int_{S^{2n+1}}f(\alpha f-R_\theta)dV_{\theta}\Big/
\left(\int_{S^{2n+1}}fu^{2+\frac{2}{n}}dV_{\theta_0}\right)^{\frac{n}{n+1}+1}\\
&=-n\int_{S^{2n+1}}(\alpha f-R_\theta)^2u^{2+\frac{2}{n}}dV_{\theta_0}\Big/
\left(\int_{S^{2n+1}}fu^{2+\frac{2}{n}}dV_{\theta_0}\right)^{\frac{n}{n+1}},
\end{split}
\end{equation*}
where the last equality follows from (\ref{2.6}).
\end{proof}

By Proposition \ref{prop2.2}, we have for all $t\geq 0$, we have
\begin{equation*}
\int_0^t\frac{\int_{S^{2n+1}}(\alpha
f-R_\theta)^2u^{2+\frac{2}{n}}dV_{\theta_0}}{
(\int_{S^{2n+1}}fu^{2+\frac{2}{n}}dV_{\theta_0})^{\frac{n}{n+1}}}dt=\frac{1}{n}\Big(E_f(u)(0)-E_f(u)(t)\Big)\leq
\frac{1}{n}E_f(u)(0)<\infty
\end{equation*}
since $E_f(u)(t)\geq 0$ by (\ref{2.5}) and (\ref{2.7}). By
Proposition \ref{prop2.1},
\begin{equation}\label{2.9}
m\mbox{Vol}(S^{2n+1},\theta_0)\leq\displaystyle\int_{S^{2n+1}}fu^{2+\frac{2}{n}}dV_{\theta_0}\leq
M\mbox{Vol}(S^{2n+1},\theta_0)\mbox{ for all }t\geq 0,
\end{equation}
hence we have
\begin{equation}\label{2.10}
\int_0^\infty\int_{S^{2n+1}}(\alpha
f-R_\theta)^2u^{2+\frac{2}{n}}dV_{\theta_0}dt\leq C_0
\end{equation}
where $C_0$ is a constant depend only on $f$ and the initial data
$u_0$. Hence, there exists a sequence $\{t_j\}_{j=1}^\infty$ with
$t_j\rightarrow\infty$ such that
\begin{equation}\label{2.11}
\int_{S^{2n+1}}\big(\alpha(t_j)
f-R_{\theta(t_j)}\big)^2u^{2+\frac{2}{n}}(t_j)dV_{\theta_0}\rightarrow
0\mbox{ as }j\rightarrow\infty.
\end{equation}

\subsection{Uniform lower bound of the Webster scalar curvature.}

We have the following:
\begin{lem}\label{lem2.3}
For any $u\in S^2_1(S^{2n+1},\theta_0)$, we have
$$R_{\theta_0}\mbox{\emph{Vol}}(S^{2n+1},\theta_0)^{\frac{1}{n+1}}=
Y(S^{2n+1},\theta_0)\leq\frac{E(u)}{(\int_{S^{2n+1}}u^{2+\frac{2}{n}}dV_{\theta_0})^{\frac{n}{n+1}}}.$$
Here the CR Yamabe invariant of
$(S^{2n+1},\theta)$  defined as
$$
Y(S^{2n+1},\theta)=\inf_{0\not\equiv u\in
S^2_1(S^{2n+1})}\frac{\int_{S^{2n+1}}\left((2+\frac{2}{n})|\nabla_{\theta}
u|^2_{\theta}+R_{\theta} u^2\right)dV_{\theta}}
{\left(\int_{S^{2n+1}}u^{2+\frac{2}{n}}dV_{\theta}\right)^{\frac{n}{n+1}}}
$$
is a conformal invariant.
\end{lem}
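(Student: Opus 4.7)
My plan is to verify, in order, two statements: first the elementary inequality on the right-hand side, which is nothing more than an unpacking of definitions, and then the equality on the left-hand side, which is the substantive content and relies on the deep Jerison--Lee/Gamara--Yacoub resolution of the CR Yamabe problem on the sphere.

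First, I would show that for any positive $u\in S^2_1(S^{2n+1},\theta_0)$ the quotient
\[
\frac{E(u)}{\bigl(\int_{S^{2n+1}}u^{2+\frac{2}{n}}dV_{\theta_0}\bigr)^{\frac{n}{n+1}}}
\]
coincides with the Yamabe quotient of the conformal contact form $\theta=u^{\frac{2}{n}}\theta_0$. This is immediate from (\ref{2.4}) and (\ref{2.5}): integrating (\ref{2.4}) against $u^{2+\frac{2}{n}}dV_{\theta_0}=dV_{\theta}$ turns the numerator into $\int_{S^{2n+1}} R_\theta dV_\theta$, which by a straightforward conformal computation (or integration by parts in the CR setting) equals
\[
\int_{S^{2n+1}}\Bigl((2+\tfrac{2}{n})|\nabla_\theta u_\theta|^2_\theta +R_\theta u_\theta^2\Bigr)dV_\theta
\]
for the constant function $u_\theta\equiv 1$ written with respect to $\theta$, while the denominator becomes $\mathrm{Vol}(S^{2n+1},\theta)^{\frac{n}{n+1}}$. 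Taking the infimum over all positive $u$ (and extending to arbitrary $u$ by replacing $u$ with $|u|$, which does not increase the Folland--Stein seminorm) yields the right-hand inequality $Y(S^{2n+1},\theta_0)\le E(u)/(\int u^{2+\frac{2}{n}}dV_{\theta_0})^{\frac{n}{n+1}}$ directly from the definition of $Y$.

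Next I would turn to the equality on the left. Applying the inequality just established to the test function $u\equiv 1$ gives the upper bound
\[
Y(S^{2n+1},\theta_0)\le \frac{E(1)}{\mathrm{Vol}(S^{2n+1},\theta_0)^{\frac{n}{n+1}}}
=R_{\theta_0}\,\mathrm{Vol}(S^{2n+1},\theta_0)^{\frac{1}{n+1}},
\]
since $R_{\theta_0}=n(n+1)/2$ is constant. The matching lower bound is the content of the Jerison--Lee theorem (together with the extension by Gamara--Yacoub for $n=1$), which asserts that the standard contact form $\theta_0$ achieves the CR Yamabe invariant of the sphere. Combined with the computation of $E(1)$ above, this pins the value of $Y(S^{2n+1},\theta_0)$ exactly at $R_{\theta_0}\mathrm{Vol}(S^{2n+1},\theta_0)^{\frac{1}{n+1}}$.

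The main obstacle is of course the equality, not the inequality: without the Jerison--Lee sharp Folland--Stein inequality on the Heisenberg group (and the identification of the sphere's extremals via the Cayley transform) one cannot conclude that $\theta_0$ minimizes the Yamabe quotient. The conformal invariance of $Y(S^{2n+1},\theta)$ itself follows from step one applied in general, since that computation shows the Yamabe quotient of $\theta'=u^{\frac{2}{n}}\theta$ depends only on the conformal class.
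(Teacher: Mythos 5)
Your argument is correct and rests on the same essential input as the paper, which proves this lemma simply by citing Jerison--Lee (see also Frank--Lieb): the inequality $Y(S^{2n+1},\theta_0)\le E(u)\big/\big(\int_{S^{2n+1}}u^{2+\frac{2}{n}}dV_{\theta_0}\big)^{\frac{n}{n+1}}$ is immediate from the definition of the infimum taken with background form $\theta_0$ (your detour through conformal invariance is harmless but unnecessary for this half), and the equality follows, exactly as you say, from testing with $u\equiv 1$ together with the Jerison--Lee theorem that $\theta_0$ realizes the infimum. The only quibble is the attribution: the sharp constant and its extremals on $S^{2n+1}$ are due to Jerison--Lee for all $n\ge 1$ (and, by a different method, Frank--Lieb); the Gamara--Yacoub work concerns existence for the CR Yamabe problem on general CR manifolds and is not the reference needed here.
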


This is a result of Jerison and Lee in \cite{Jerison&Lee1} and we
refer the reader to \cite{Jerison&Lee1} for its proof. See also
\cite{Frank&Lieb}.

Therefore, we have the following bounds for $E(u)$:
\begin{equation}\label{2.12a}
R_{\theta_0}\mbox{Vol}(S^{2n+1},\theta_0)\leq E(u)\leq E_f(u(0))\left((\max_{S^{2n+1}}f)\mbox{Vol}(S^{2n+1},\theta_0)\right)^{\frac{n}{n+1}}.
\end{equation}
To see this, we note that by (\ref{2.7}), Proposition \ref{prop2.1} and Proposition \ref{prop2.2} we have
\begin{equation*}
\begin{split}
E(u)=E_f(u)\left(\int_{S^{2n+1}}fu^{2+\frac{2}{n}}dV_{\theta_0}\right)^{\frac{n}{n+1}}
&\leq E_f(u(0))\left((\max_{S^{2n+1}}f)\int_{S^{2n+1}}u^{2+\frac{2}{n}}dV_{\theta_0}\right)^{\frac{n}{n+1}}\\
&=E_f(u(0))\left((\max_{S^{2n+1}}f)\mbox{Vol}(S^{2n+1},\theta_0)\right)^{\frac{n}{n+1}}.
\end{split}
\end{equation*}
On the other hand, it follows Proposition \ref{prop2.1} and Lemma \ref{lem2.3} that
\begin{equation*}
\begin{split}
E(u)\geq Y(S^{2n+1},\theta)\left(\int_{S^{2n+1}}u^{2+\frac{2}{n}}dV_{\theta_0}\right)^{\frac{n}{n+1}}=R_{\theta_0}\mbox{Vol}(S^{2n+1},\theta_0).
\end{split}
\end{equation*}

\begin{lem}\label{lem2.4}
There exists positive constants $\alpha_1$ and $\alpha_2$, depending
only on $f$ and the initial data, such that
$0<\alpha_1\leq\alpha\leq\alpha_2$ for all $t\geq 0$.
\end{lem}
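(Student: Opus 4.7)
The plan is to derive the bounds on $\alpha$ by combining the three ingredients that have already been set up in the preceding subsection: the formula (\ref{2.6}) expressing $\alpha$ as a quotient, the two-sided bound (\ref{2.9}) on the denominator, and the two-sided bound (\ref{2.12a}) on the numerator $E(u)$. Once these are in place, the result reduces to a purely algebraic manipulation.

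First I would recall (\ref{2.6}), which says
\[
\alpha(t) = \frac{E(u(t))}{\int_{S^{2n+1}} f\, u(t)^{2+\frac{2}{n}}\, dV_{\theta_0}}.
\]
For the lower bound on $\alpha$, I combine the lower bound $E(u) \geq R_{\theta_0}\,\mathrm{Vol}(S^{2n+1},\theta_0)$ from (\ref{2.12a}) (which came from Lemma \ref{lem2.3} and volume preservation) with the upper bound $\int f u^{2+2/n} dV_{\theta_0} \leq M\,\mathrm{Vol}(S^{2n+1},\theta_0)$ from (\ref{2.9}). This yields $\alpha \geq R_{\theta_0}/M =: \alpha_1 > 0$.

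For the upper bound, I use instead the upper bound on $E(u)$ from (\ref{2.12a}), namely $E(u) \leq E_f(u_0)\bigl(M\,\mathrm{Vol}(S^{2n+1},\theta_0)\bigr)^{n/(n+1)}$, which crucially relies on the monotonicity of $E_f$ along the flow (Proposition \ref{prop2.2}) so that the time-dependent quantity $E_f(u(t))$ is controlled by its value at $t=0$. I pair this with the lower bound $\int f u^{2+2/n} dV_{\theta_0} \geq m\,\mathrm{Vol}(S^{2n+1},\theta_0)$ from (\ref{2.9}) to obtain
\[
\alpha \leq \frac{E_f(u_0)\bigl(M\,\mathrm{Vol}(S^{2n+1},\theta_0)\bigr)^{n/(n+1)}}{m\,\mathrm{Vol}(S^{2n+1},\theta_0)} =: \alpha_2.
\]
Both $\alpha_1$ and $\alpha_2$ depend only on $f$ (through $m$ and $M$) and on the initial data (through $E_f(u_0)$ and $\mathrm{Vol}(S^{2n+1},\theta_0)$), as required.

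There is no real obstacle here; the lemma is essentially a bookkeeping consequence of the two estimates already established. The only subtle point to emphasize in the write-up is that the upper bound on $E(u)$ used in (\ref{2.12a}) is time-uniform precisely because $E_f$ is non-increasing under the flow, so one can replace $E_f(u(t))$ by $E_f(u_0)$; without the monotonicity provided by Proposition \ref{prop2.2} one would only get a pointwise-in-time estimate.
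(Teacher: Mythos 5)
Your proof is correct and follows essentially the same route as the paper: both bound $\alpha$ via the quotient formula (\ref{2.6}), using the two-sided bound (\ref{2.12a}) on $E(u)$ (whose upper half rests on the monotonicity of $E_f$ from Proposition \ref{prop2.2}) together with the volume-preservation bounds (\ref{2.9}) on the denominator. The resulting constants $\alpha_1$ and $\alpha_2$ coincide with those in the paper.
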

\begin{proof}
By (\ref{2.6}),  (\ref{2.12a}) and Proposition \ref{prop2.1}, we have
\begin{equation*}
\alpha=\frac{E(u)}{\int_{S^{2n+1}}f
u^{2+\frac{2}{n}}dV_{\theta_0}}\leq
\frac{E_f(u(0))\left((\max_{S^{2n+1}}f)\mbox{Vol}(S^{2n+1},\theta_0)\right)^{\frac{n}{n+1}}}{(\min_{S^{2n+1}}f)\mbox{Vol}(S^{2n+1},\theta_0)}:=\alpha_2
\end{equation*}
for all $t\geq 0$. On the other hand, by (\ref{2.12a}) and Proposition \ref{prop2.1}, we have
\begin{equation*}
\begin{split}
\alpha=\frac{E(u)}{\int_{S^{2n+1}}f
u^{2+\frac{2}{n}}dV_{\theta_0}}&\geq\frac{R_{\theta_0}\mbox{Vol}(S^{2n+1},\theta_0)}
{(\max_{S^{2n+1}}f)\mbox{Vol}(S^{2n+1},\theta_0)}:=\alpha_1>0
\end{split}
\end{equation*}
for all $t\geq 0$.
\end{proof}

The following lemma is also proved in \cite{Ho1}:

\begin{lem}\label{lem2.5}
Under the flow $(\ref{2.3})$, the Webster scalar curvature
$R_\theta$ with respect to $\theta$ satisfies the following
evolution equation:
\begin{equation*}
\frac{\partial R_\theta}{\partial t}=-(n+1)\Delta_\theta (\alpha
f-R_\theta)+(R_\theta-\alpha f)R_\theta,
\end{equation*}
where $\Delta_\theta$ is the sub-Laplacian with respect to the
contact form $\theta$.
\end{lem}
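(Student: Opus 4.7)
The plan is to differentiate the CR Yamabe equation (\ref{2.4}) in time and then convert all derivatives back into $\Delta_\theta$ via the conformal transformation law (\ref{2.0}). Writing $p=2+\frac{2}{n}$, equation (\ref{2.4}) reads $u^{p-1}R_\theta = -p\,\Delta_{\theta_0}u + R_{\theta_0}u$. Differentiating in $t$ and substituting the evolution equation (\ref{2.3}), i.e. $u_t = \frac{n}{2}\varphi\, u$ with $\varphi := \alpha f - R_\theta$, yields
\begin{equation*}
u^{p-1}(R_\theta)_t = -p\,\Delta_{\theta_0}\!\Bigl(\tfrac{n}{2}\varphi u\Bigr) + \tfrac{n}{2}\varphi R_{\theta_0} u - (p-1)\tfrac{n}{2}\varphi u^{p-1}R_\theta.
\end{equation*}

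Next I would use (\ref{2.4}) to eliminate $R_{\theta_0}u$ in favor of $u^{p-1}R_\theta + p\,\Delta_{\theta_0}u$. Using $p-2 = \frac{2}{n}$, the algebraic terms involving $u^{p-1}R_\theta$ collapse to $-\varphi u^{p-1}R_\theta$, leaving
\begin{equation*}
u^{p-1}(R_\theta)_t = -\varphi u^{p-1}R_\theta + \tfrac{np}{2}\bigl(\varphi\,\Delta_{\theta_0}u - \Delta_{\theta_0}(\varphi u)\bigr).
\end{equation*}
Expanding the product rule for $\Delta_{\theta_0}(\varphi u)$ gives $\varphi\,\Delta_{\theta_0}u - \Delta_{\theta_0}(\varphi u) = -u\,\Delta_{\theta_0}\varphi - 2\langle\nabla_{\theta_0}u,\nabla_{\theta_0}\varphi\rangle_{\theta_0}$, and by (\ref{2.0}) this last quantity equals $-u^{p-1}\Delta_\theta\varphi$ exactly. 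Dividing by $u^{p-1}$ and noting $\frac{np}{2} = n+1$, we arrive at
\begin{equation*}
(R_\theta)_t = -\varphi R_\theta - (n+1)\Delta_\theta\varphi = (R_\theta - \alpha f)R_\theta - (n+1)\Delta_\theta(\alpha f - R_\theta),
\end{equation*}
which is the desired identity.

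There is no real obstacle here; the only delicate point is the bookkeeping that makes the coefficient of $u^{p-1}R_\theta$ combine to a clean $-\varphi u^{p-1}R_\theta$, which relies on the numerology $p-2 = \frac{2}{n}$ and $\frac{np}{2}=n+1$. Because $\alpha=\alpha(t)$ depends only on $t$ and not on space, $\Delta_\theta(\alpha f) = \alpha\,\Delta_\theta f$, so no extra term arises from differentiating $\alpha$; this is why one may freely group $\alpha f - R_\theta$ under a single sub-Laplacian. The argument thus reduces to one differentiation, one substitution, and one application of (\ref{2.0}).
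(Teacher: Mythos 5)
Your computation is correct and follows essentially the same route as the paper: differentiate the CR Yamabe equation (\ref{2.4}) in time, substitute the flow equation (\ref{2.3}), and use the conformal transformation law (\ref{2.0}) to recombine the terms $u\,\Delta_{\theta_0}\varphi+2\langle\nabla_{\theta_0}u,\nabla_{\theta_0}\varphi\rangle_{\theta_0}$ into $u^{1+\frac{2}{n}}\Delta_\theta\varphi$. The only cosmetic difference is that you clear the factor $u^{-(1+\frac{2}{n})}$ before differentiating, while the paper differentiates the quotient form directly; the numerology $(p-1)\tfrac{n}{2}-\tfrac{n}{2}=1$ and $\tfrac{np}{2}=n+1$ is handled correctly in both.
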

\begin{proof}
If follows from (\ref{2.3}) and (\ref{2.4}) that
\begin{equation*}
\begin{split}
\frac{\partial R_\theta}{\partial t}&=\frac{\partial}{\partial
t}\left[u^{-(1+\frac{2}{n})}\Big(-(2+\frac{2}{n})\Delta_{\theta_0}u+R_{\theta_0}u\Big)\right]\\
&=-(1+\frac{2}{n})u^{-(2+\frac{2}{n})}\frac{\partial u}{\partial
t}\left(-(2+\frac{2}{n})\Delta_{\theta_0}u+R_{\theta_0}u\right)\\
&\hspace{4mm}+u^{-(1+\frac{2}{n})}\left(-(2+\frac{2}{n})\Delta_{\theta_0}(\frac{\partial
u}{\partial t})+R_{\theta_0}\frac{\partial u}{\partial t}\right)\\
&=(1+\frac{n}{2})(R_\theta-\alpha f)R_\theta\\
&\hspace{4mm}+u^{-(1+\frac{2}{n})}\left[-(n+1)\Delta_{\theta_0}((\alpha
f-R_\theta)u) +\frac{n}{2}R_{\theta_0}(\alpha f-R_\theta)u\right].
\end{split}
\end{equation*}
The second term of the last expression can be written as
\begin{equation*}
\begin{split}
&u^{-(1+\frac{2}{n})}\Big[-(n+1)\Big((\alpha
f-R_\theta)\Delta_{\theta_0}u+u\Delta_{\theta_0}(\alpha f-R_\theta)
\\
&\hspace{3cm}+2\langle \nabla_{\theta_0}(\alpha
f-R_\theta),\nabla_{\theta_0}u\rangle_{\theta_0}\Big)
+\frac{n}{2}R_{\theta_0}(\alpha f-R_\theta)u\Big]\\
=&\,\frac{n}{2}(\alpha f-R_\theta)u^{-(1+\frac{2}{n})}
\left(-(2+\frac{2}{n})\Delta_{\theta_0}u+R_{\theta_0}u\right)\\
&-(n+1)u^{-(1+\frac{2}{n})}\Big(u\Delta_{\theta_0}(\alpha
f-R_\theta)+2\langle \nabla_{\theta_0}(\alpha
f-R_\theta),\nabla_{\theta_0}u\rangle_{\theta_0}
\Big)\\
=&\,\frac{n}{2}(\alpha f-R_\theta)R_\theta-(n+1)\Delta_\theta
(\alpha f-R_\theta),
\end{split}
\end{equation*}
where the last equality follows from (\ref{2.0}) and (\ref{2.4}).
Combining all these, the assertion follows.
\end{proof}

The following formula follows from $(\alpha f)_t=\alpha' f$ and
Lemma \ref{lem2.5}:
\begin{equation}\label{2.12}
(\alpha f-R_\theta)_t=(n+1)\Delta_\theta (\alpha f-R_\theta)+(\alpha
f-R_\theta)R_\theta+\alpha' f.
\end{equation}

\begin{lem}\label{lem2.6}
There exists a constant $\alpha_0$ such that $\alpha'\leq\alpha_0$
for all $t>0$.
\end{lem}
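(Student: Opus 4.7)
The plan is to compute $\alpha'(t)$ directly from the formula (\ref{2.6}) via the quotient rule, and then estimate the result using the a priori bounds established earlier. The derivative of the numerator $E(u)$ is already given by (\ref{2.8}), namely $\frac{d}{dt}E(u) = n\int_{S^{2n+1}}(\alpha f - R_\theta)R_\theta\,dV_\theta$. The derivative of the denominator is obtained directly from the flow equation (\ref{2.3}):
\begin{equation*}
\frac{d}{dt}\int_{S^{2n+1}} f u^{2+\frac{2}{n}}\,dV_{\theta_0} = \left(2+\frac{2}{n}\right)\int_{S^{2n+1}} f u^{1+\frac{2}{n}} u_t\,dV_{\theta_0} = (n+1)\int_{S^{2n+1}} f(\alpha f - R_\theta)\,dV_\theta.
\end{equation*}

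Feeding both into the quotient rule and writing $R_\theta = \alpha f - (\alpha f - R_\theta)$ in the first integral produces a cancellation between the $n\alpha\int f(\alpha f - R_\theta)\,dV_\theta$ coming from $E'(u)$ and the $(n+1)\alpha\int f(\alpha f - R_\theta)\,dV_\theta$ coming from the denominator, leaving the clean identity
\begin{equation*}
\alpha'(t)\int_{S^{2n+1}} f u^{2+\frac{2}{n}}\,dV_{\theta_0} = -\alpha \int_{S^{2n+1}} f(\alpha f - R_\theta)\,dV_\theta - n\int_{S^{2n+1}} (\alpha f - R_\theta)^2\,dV_\theta.
\end{equation*}
The crucial feature of this identity is that the second term on the right is manifestly non-positive; this gives a negative reserve against which I can absorb the first, sign-ambiguous term.

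To close the argument, I would estimate the first term by Cauchy--Schwarz combined with Proposition~\ref{prop2.1}:
\begin{equation*}
\left|\int_{S^{2n+1}} f(\alpha f - R_\theta)\,dV_\theta\right| \leq M\,\mathrm{Vol}(S^{2n+1},\theta_0)^{1/2}\left(\int_{S^{2n+1}}(\alpha f - R_\theta)^2\,dV_\theta\right)^{1/2},
\end{equation*}
and then apply Young's inequality $ab\leq nb^2 + a^2/(4n)$ with a coefficient tuned so that the resulting square term is exactly cancelled by the reserve $-n\int(\alpha f - R_\theta)^2\,dV_\theta$. What remains is of the form $C\alpha^2$ with $C$ depending only on $M = \max f$ and the volume. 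Dividing by the lower bound $\int f u^{2+\frac{2}{n}}\,dV_{\theta_0} \geq m\,\mathrm{Vol}(S^{2n+1},\theta_0)$ from (\ref{2.9}) and invoking the uniform upper bound $\alpha\leq \alpha_2$ from Lemma~\ref{lem2.4} yields the desired constant $\alpha_0$. The main obstacle, or rather the main insight, is spotting the algebraic cancellation that produces the non-positive quadratic term on the right; without it one could not control the sign of $\alpha'$, since $\int f(\alpha f - R_\theta)\,dV_\theta$ has no a priori sign and the orthogonality relation $\int(\alpha f - R_\theta)\,dV_\theta = 0$ from (\ref{2.2}) is not enough on its own.
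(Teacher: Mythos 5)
Your proposal is correct and follows essentially the same route as the paper: your ``clean identity'' is exactly the paper's (\ref{2.22}) (the paper divides by $E(u)=\alpha\int f u^{2+\frac{2}{n}}dV_{\theta_0}$ rather than by the denominator itself, which is only a cosmetic difference), and the absorption of the cross term into the non-positive quadratic reserve via Young's inequality with the same coefficient is precisely the paper's argument. The only superficial deviation is that you apply Cauchy--Schwarz and then Young's inequality on scalars, while the paper applies Young's inequality pointwise before integrating; the outcome is identical.
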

\begin{proof}
It follows from (\ref{2.5}) and (\ref{2.6}) that
\begin{equation*}
\begin{split}
&\alpha'\int_{S^{2n+1}}f u^{2+\frac{2}{n}}dV_{\theta_0}
+\alpha(2+\frac{2}{n})\int_{S^{2n+1}}f
u^{1+\frac{2}{n}}\frac{\partial u}{\partial
t}dV_{\theta_0}\\
&=\frac{d}{dt}E(u)
=2\int_{S^{2n+1}}\left(-(2+\frac{2}{n})\Delta_{\theta_0}u+R_{\theta_0}u\right)\frac{\partial
u}{\partial t}dV_{\theta_0}.
\end{split}
\end{equation*}
By using (\ref{2.3}), (\ref{2.4}) and (\ref{2.6}), it can be
rewritten as
\begin{equation}\label{2.21}
\frac{\alpha'}{\alpha}E(u)+(n+1)\int_{S^{2n+1}}\alpha f (\alpha
f-R_{\theta})dV_{\theta}=n\int_{S^{2n+1}}(\alpha f-R_\theta)R_\theta
dV_{\theta},
\end{equation}
which implies that
\begin{equation}\label{2.22}
\alpha'=\frac{\alpha}{E(u)}\left[-n\int_{S^{2n+1}}(\alpha
f-R_\theta)^2dV_{\theta}-\int_{S^{2n+1}}\alpha f (\alpha
f-R_{\theta})dV_{\theta}\right].
\end{equation}
Applying Young's inequality, which says that
$ab\leq\displaystyle\frac{\epsilon}{2}b^2+\frac{1}{2\epsilon}a^2$
for any $\epsilon>0$, with $a=\alpha f$, $b=-(\alpha f-R_\theta)$
and $\epsilon=2n$ and integrating it over $S^{2n+1}$ with respect to
$\theta$, we obtain
$$-\int_{S^{2n+1}}\alpha f (\alpha
f-R_{\theta})dV_{\theta}\leq n\int_{S^{2n+1}}(\alpha
f-R_\theta)^2dV_{\theta}+\frac{\alpha^2}{4n}\int_{S^{2n+1}}f^2dV_{\theta}.$$
Hence, we have the following estimate:
\begin{equation*}
\begin{split}
\alpha'\leq\frac{\alpha^3}{4n E(u)}\int_{S^{2n+1}}f^2dV_{\theta}
&\leq\frac{\alpha_2^3M^2\mbox{Vol}(S^{2n+1},\theta_0)}{4n
E_f(u(0))\left(M\,\mbox{Vol}(S^{2n+1},\theta_0)\right)^{\frac{n}{n+1}}}:=\alpha_0,
\end{split}
\end{equation*}
where we have used (\ref{2.12a}), Proposition \ref{prop2.1}, and Lemma
\ref{lem2.4}.
\end{proof}

\begin{lem}\label{lem2.7}
The Webster scalar curvature of $\theta$ satisfies the following:
$$R_\theta-\alpha f\geq\min\Big\{R_{\theta_0}
-\alpha_2M,-\frac{1}{\alpha_1m}(\alpha_0 M+\alpha_2^2M^2)\Big\}
:=\gamma$$ for all $t\geq 0$.
\end{lem}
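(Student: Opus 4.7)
The plan is to apply the parabolic maximum principle to $v := R_\theta - \alpha f$. Using $R_\theta = v + \alpha f$, the evolution equation (\ref{2.12}) (with a sign flip) becomes
$$v_t = (n+1)\Delta_\theta v + v(v + \alpha f) - \alpha' f,$$
and the task is to produce a pointwise lower bound $v(x,t) \geq \gamma$ for all $(x,t)$.

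Fix $T > 0$ and let $(x_0,t_0)$ realise the minimum of $v$ on $S^{2n+1}\times[0,T]$. If $t_0 > 0$, then $\Delta_\theta v(x_0,t_0) \geq 0$, $\partial_t v(x_0,t_0) \leq 0$, and the evolution equation yields
$$v_{\min}^2 + \alpha f\, v_{\min} - \alpha' f \leq 0 \quad \text{at } (x_0,t_0).$$
Only the case $v_{\min} < 0$ is of interest; writing $s := -v_{\min} > 0$ this reads $s^2 - \alpha f\, s \leq \alpha' f$. If $s \leq \alpha f$ then $s \leq \alpha_2 M$ by Lemma~\ref{lem2.4}. Otherwise $s > \alpha f \geq \alpha_1 m > 0$, which forces $\alpha' > 0$, and Lemma~\ref{lem2.6} gives $\alpha' f \leq \alpha_0 M$; combined with $s(s - \alpha f) \leq \alpha' f$ and the lower bound $s > \alpha_1 m$, we obtain $s - \alpha f \leq \alpha_0 M/(\alpha_1 m)$, hence
$$s \leq \alpha_2 M + \frac{\alpha_0 M}{\alpha_1 m} \leq \frac{\alpha_2^2 M^2 + \alpha_0 M}{\alpha_1 m},$$
using $\alpha_1 m \leq \alpha_2 M$. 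Thus in the interior-minimum case $v_{\min} \geq -(\alpha_2^2 M^2 + \alpha_0 M)/(\alpha_1 m)$, which matches the second entry of $\gamma$.

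In the remaining case $t_0 = 0$, the bound must come directly from the initial data: for the canonical choice $u_0 \equiv 1$ one has $R_{\theta(0)} = R_{\theta_0}$, so $v(x,0) = R_{\theta_0} - \alpha(0) f(x) \geq R_{\theta_0} - \alpha_2 M$, which is the first entry of $\gamma$. Taking the smaller of the two possible bounds yields $R_\theta - \alpha f \geq \gamma$ for every $t \geq 0$.

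The main technical point is justifying the pointwise maximum-principle inequalities on the compact CR manifold $(S^{2n+1},\theta(t))$ with time-dependent sub-Laplacian $\Delta_\theta$; this is standard (Hamilton's parabolic maximum principle) once one knows $v$ is smooth enough in space-time. The algebraic form of the bound, in particular the $1/(\alpha_1 m)$ factor, is forced by the sub-case $s > \alpha f$: dividing the quadratic inequality by $s$ and invoking the uniform positive lower bound $\alpha f \geq \alpha_1 m$ from Lemma~\ref{lem2.4} is precisely what produces this denominator.
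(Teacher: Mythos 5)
Your argument is correct in its core mechanism and is essentially a repackaging of the paper's proof: the paper defines the operator $L=\partial_t-(n+1)\Delta_\theta+\alpha f$, checks that the explicit barrier $w=\alpha f+\gamma$ satisfies $Lw\leq LR_\theta$ (using exactly Lemmas \ref{lem2.4} and \ref{lem2.6}, the sign $\gamma<0$, and $R_\theta^2\geq 0$) together with $w(0)\leq R_{\theta_0}$, and concludes by the comparison principle; you instead evaluate the evolution equation at a space-time minimum of $v=R_\theta-\alpha f$ and extract the bound from the resulting quadratic inequality $s^2-\alpha f\,s\leq\alpha' f$. The algebra is the same in both versions — your division of the quadratic by $s>\alpha f\geq\alpha_1 m$ is precisely the paper's inequality $\alpha_0M+\alpha_2^2M^2+\alpha_1 m\gamma\leq 0$ read backwards — so the two routes are interchangeable, with yours avoiding the need to guess the barrier in advance and the paper's avoiding the case analysis at the minimum.

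The one place your write-up falls short of the stated generality is the case $t_0=0$: you invoke ``the canonical choice $u_0\equiv 1$'' to get $R_{\theta(0)}=R_{\theta_0}$, but the flow (\ref{2.3}) is posed for an arbitrary positive $u_0$ satisfying (\ref{2.01}), for which $R_{\theta(0)}$ need not dominate $R_{\theta_0}$; the correct requirement is $v(\cdot,0)=R_{\theta(0)}-\alpha(0)f\geq\gamma$, which for general initial data forces one either to replace $R_{\theta_0}$ in the first entry of $\gamma$ by $\min_{S^{2n+1}}R_{\theta(0)}$ or to impose $R_{\theta(0)}\geq R_{\theta_0}$. You should flag this explicitly rather than silently specializing the initial data — though, to be fair, the paper's own verification ``$w(0)\leq\alpha_2M+\gamma\leq R_{\theta_0}$'' has exactly the same defect, since the comparison principle needs $w(0)\leq R_{\theta(0)}$, not $w(0)\leq R_{\theta_0}$. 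Aside from this shared caveat, your case analysis (including the estimate $\alpha_2M+\alpha_0M/(\alpha_1 m)\leq(\alpha_2^2M^2+\alpha_0M)/(\alpha_1 m)$ via $\alpha_1 m\leq\alpha_2M$, and the justification that $\Delta_\theta v\geq 0$ at a spatial minimum of a smooth function for the sub-Laplacian) is sound.
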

\begin{proof}
Define $L=\displaystyle\frac{\partial}{\partial
t}-(n+1)\Delta_\theta+\alpha f$. It follows from Lemma \ref{lem2.5} that
\begin{equation*}
LR_\theta=\frac{\partial}{\partial t}R_\theta-(n+1)\Delta_\theta
R_\theta+\alpha f R_\theta=-(n+1)\Delta_\theta(\alpha f)+R_\theta^2.
\end{equation*}
Hence, if one set $w(t)=\alpha f+\gamma$ for $t\geq 0$, then we have
\begin{equation*}
\begin{split}
Lw&=\alpha' f-(n+1)\Delta_\theta (\alpha f)+\alpha f(\alpha
f+\gamma)\\
&\leq \alpha_0
M+\alpha_2^2M^2+\alpha_1m\gamma-(n+1)\Delta_\theta(\alpha f)\\
&\leq -(n+1)\Delta_\theta(\alpha f)\leq LR_\theta,
\end{split}
\end{equation*}
where we have used the fact that $\gamma<0$,  Lemma \ref{lem2.4} and
\ref{lem2.6}. Note that $w(0)\leq\alpha_2 M+\gamma\leq R_{\theta_0}$
by Lemma \ref{lem2.4}. By maximum principle, $R_\theta-\alpha
f\geq\gamma$.
\end{proof}

\subsection{Long time existence}\label{section2.3}

\begin{lem}\label{lem2.8}
Given any $T>0$, there exist constants $c=c(T)$, $C=C(T)$ such that
$$c\leq u(x,t)\leq C$$
for any $(x,t)\in S^{2n+1}\times[0,T]$.
\end{lem}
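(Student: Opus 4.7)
The plan is to decouple the two bounds: the upper bound will follow almost immediately from Lemma~\ref{lem2.7}, while the lower bound will require an a priori $L^\infty$ control of $R_\theta$ on the finite interval $[0,T]$.

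For the upper bound, I rewrite (\ref{2.3}) pointwise as $\partial_t \log u = \frac{n}{2}(\alpha f - R_\theta)$. Lemma~\ref{lem2.7} gives $\alpha f - R_\theta \leq -\gamma$ everywhere, so $\partial_t \log u \leq -n\gamma/2$ and integrating over $[0,t]$ yields
\[
u(x,t) \leq \Big(\max_{S^{2n+1}} u_0\Big) \exp\Big(\frac{n|\gamma|T}{2}\Big) =: C(T)
\]
for all $t \in [0,T]$, as required.

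For the lower bound, the key intermediate step is to produce a time-dependent constant $K(T)$ with $R_\theta \leq K(T)$ on $[0,T] \times S^{2n+1}$. Once this is available, the ODE $\partial_t \log u = \frac{n}{2}(\alpha f - R_\theta)$ integrates in the other direction to give
\[
u(x,t) \geq \Big(\min_{S^{2n+1}} u_0\Big) \exp\Big(-\frac{n K(T) T}{2}\Big) =: c(T) > 0.
\]
To produce $K(T)$, my plan is to apply the parabolic maximum principle to the evolution equation (\ref{2.12}) for $v = \alpha f - R_\theta$, feeding in the upper bound $u \leq C(T)$ from Step~1 together with the bounds on $\alpha$ and $\alpha'$ furnished by Lemmas~\ref{lem2.4} and~\ref{lem2.6}, and using (\ref{2.0}) to re-express $\Delta_\theta$ in terms of $\Delta_{\theta_0}$.

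The main obstacle is that a crude maximum principle on (\ref{2.12}) yields, at a spatial minimum of $v$, the ODI $m_v'(t) \geq -m_v^2 + \alpha f m_v + \alpha' f$, in which the $-m_v^2$ term admits comparison only with an ODE that itself can blow down to $-\infty$ in finite time, so naively this is not enough. My plan to circumvent this is to combine the maximum principle with the global integral estimates already at hand---namely the spacetime $L^2$ bound (\ref{2.10}) on $\alpha f - R_\theta$ and the energy bound (\ref{2.12a}) on $E(u)$---and run a Moser-type $L^p$ iteration on (\ref{2.12}) rewritten via (\ref{2.0}) on the fixed background $(S^{2n+1},\theta_0)$, where the upper bound on $u$ makes the quasilinear coefficients controllable. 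This should upgrade the integral control of $R_\theta$ to the pointwise bound $K(T)$, which then yields the lower bound on $u$ as above.
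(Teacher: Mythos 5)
Your upper bound is exactly the paper's argument: Lemma~\ref{lem2.7} gives $\partial_t u\le -\tfrac{n}{2}\gamma u$, hence $u\le C(T)$ on $[0,T]$. The lower bound, however, is where the proposal diverges and where it has a genuine gap. The paper does \emph{not} prove a pointwise bound $R_\theta\le K(T)$ at this stage (nor anywhere before the two-sided $C^0$ bound on $u$ is in hand); instead it works with the \emph{static} CR Yamabe equation (\ref{2.4}) at each fixed time: by Lemma~\ref{lem2.7}, $0\le (R_\theta-\alpha f-\gamma)u^{1+\frac2n}=-(2+\tfrac2n)\Delta_{\theta_0}u+R_{\theta_0}u-(\alpha f+\gamma)u^{1+\frac2n}\le -(2+\tfrac2n)\Delta_{\theta_0}u+Pu$, where $P$ is bounded on $[0,T]$ precisely because of the upper bound $u\le C(T)$ and Lemma~\ref{lem2.4}. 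An elliptic weak Harnack inequality for the sub-Laplacian (Proposition A.1 of \cite{Ho2}) then gives $\inf u\ge c(T)$, the required integral lower bound on $u$ coming from the volume normalization of Proposition~\ref{prop2.1} together with $u\le C(T)$. This is a short, fixed-time argument that completely avoids parabolic estimates for $R_\theta$.

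Your proposed route has at least three concrete obstructions. First, rewriting $\Delta_\theta$ on the fixed background via (\ref{2.0}) produces coefficients $u^{-2/n}$ and $u^{-(1+2/n)}\nabla_{\theta_0}u$; these are controlled by a \emph{lower} bound on $u$, not an upper bound, so they blow up exactly in the scenario ($u\to 0$) you are trying to exclude — the argument is circular as stated. Second, the integral inputs you want to feed into the iteration are not yet available in the form you need: the finite-time $L^p$ bounds on $\alpha f-R_\theta$ (Lemma~\ref{lem2.11}) are themselves proved \emph{using} Lemma~\ref{lem2.8}, and (\ref{2.10}) only gives $F_2\in L^1_t$, not a pointwise-in-time bound. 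Third, the source term $\alpha' f$ in (\ref{2.12}) is only bounded from \emph{above} at this stage (Lemma~\ref{lem2.6}); a lower bound on $\alpha'$ requires control of $F_2(t)$ pointwise in $t$ (cf.\ (\ref{3.3})), so the maximum-principle/Moser step cannot control the direction that pushes $\alpha f-R_\theta$ downward, which is exactly the direction needed for an upper bound on $R_\theta$. I would replace the entire second step by the elliptic Harnack argument sketched above.
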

\begin{proof}
By (\ref{2.3}) and Lemma \ref{lem2.7}, we deduce
$$\frac{\partial u}{\partial t}=\frac{n}{2}(\alpha f-
R_{\theta})u\leq-\frac{n}{2}\gamma u$$ for all $t$. This implies
that $u(t)\leq e^{-\frac{n}{2}\gamma t}\leq e^{-\frac{n}{2}\gamma
T}:=C(T)$ for all $0\leq t\leq T$, since $u(0)=u_0>0$ by (\ref{2.3}).

On the other hand, denote
by $P(x)=R_{\theta_0}+\displaystyle\sup_{0\leq t\leq
T}\sup_{S^{2n+1}}\Big[-(\alpha f+\gamma)u^{\frac{2}{n}}\Big]$. Then
by (\ref{2.4}) and Lemma \ref{lem2.7}, we get
\begin{equation*}
\begin{split}
0\leq(R_\theta-\alpha
f-\gamma)u^{1+\frac{2}{n}}&=-(2+\frac{2}{n})\Delta_{\theta_0}u+R_{\theta_0}u-(\alpha
f+\gamma)u^{1+\frac{2}{n}}
\\&\leq-(2+\frac{2}{n})\Delta_{\theta_0}u+Pu.
\end{split}
\end{equation*}
Applying  Proposition A.1 in \cite{Ho2}, we can conclude that there
exists a constant $c(T)$ such that
$\displaystyle\inf_{S^{2n+1}}u(t)\geq c(T)$ for all $0\leq t\leq T$.
\end{proof}

For $p\geq 1$, let
$$F_p(t)=\int_{S^{2n+1}}|\alpha f-R_\theta|^{p}dV_{\theta}.$$
Then for any $p\geq 2$, we have
\begin{equation}\label{2.13}
\begin{split}
\frac{d}{dt}&F_p(t)=\frac{d}{dt}\left(\int_{S^{2n+1}}|\alpha f-R_\theta|^{p}dV_{\theta}\right)\\
=&\,p\int_{S^{2n+1}}|\alpha f-R_\theta|^{p-2}(\alpha
f-R_\theta)\frac{\partial}{\partial t}(\alpha
f-R_\theta)dV_{\theta}+\int_{S^{2n+1}}|\alpha
f-R_\theta|^{p}\frac{\partial}{\partial t}(dV_{\theta})\\
=&\,p\int_{S^{2n+1}}|\alpha f-R_\theta|^{p-2}(\alpha f-R_\theta)
\big[(n+1)\Delta_\theta (\alpha f-R_\theta)+(\alpha
f-R_\theta)R_\theta\big]dV_{\theta}
\\
&+p\,\alpha'\int_{S^{2n+1}}f|\alpha f-R_\theta|^{p-2}(\alpha
f-R_\theta)dV_{\theta}
+(n+1)\int_{S^{2n+1}}|\alpha f-R_\theta|^{p}(\alpha f-R_\theta)dV_{\theta}\\
=&\,-(n+1)p(p-1)\int_{S^{2n+1}}|\alpha
f-R_\theta|^{p-2}|\nabla_\theta(\alpha
f-R_\theta)|^2_\theta dV_{\theta}\\
&+p\int_{S^{2n+1}} R_\theta|\alpha f-R_\theta|^{p}dV_{\theta}
+p\,\alpha'\int_{S^{2n+1}}f|\alpha f-R_\theta|^{p-2}(\alpha f-R_\theta)dV_{\theta}\\
&+(n+1)\int_{S^{2n+1}}|\alpha f-R_\theta|^{p}(\alpha
f-R_\theta)dV_{\theta},
\end{split}
\end{equation}
where we have used (\ref{2.3}) and (\ref{2.12}).

\begin{lem}\label{lem2.9}
For $p>n+1$, there holds
\begin{equation}\label{2.14}
\frac{d}{dt}F_p(t)+\left(\int_{S^{2n+1}}|\alpha
f-R_\theta|^{\frac{p(n+1)}{n}}dV_\theta\right)^{\frac{n}{n+1}}\leq
CF_p(t)+CF_p(t)^{\frac{p-n}{p-n-1}}.
\end{equation}
\end{lem}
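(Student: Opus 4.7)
The plan is to start from the expression \eqref{2.13} for $\tfrac{d}{dt}F_p(t)$, identify its only negative (``good'') gradient term, and couple it with the conformally invariant CR Sobolev inequality (Lemma~\ref{lem2.3}) to produce a negative multiple of
\begin{equation*}
G(t):=\int_{S^{2n+1}}|\alpha f-R_\theta|^{p(n+1)/n}\,dV_\theta.
\end{equation*}
All other terms on the right of \eqref{2.13} will then be shown to be either absorbable into that negative $G$-term or controllable by $F_p$ and $F_p^{(p-n)/(p-n-1)}$. The hypothesis $p>n+1$ enters precisely to make Young's inequality apply with a finite conjugate exponent.

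Set $w:=\alpha f-R_\theta$ and $v:=|w|^{p/2}$, so that the gradient term in \eqref{2.13} equals $-\tfrac{4(n+1)(p-1)}{p}\int|\nabla_\theta v|_\theta^2\,dV_\theta$. Applying Lemma~\ref{lem2.3} in its conformally invariant form on $(S^{2n+1},\theta)$ gives
\begin{equation*}
Y\,G(t)^{n/(n+1)}\le \bigl(2+\tfrac{2}{n}\bigr)\int_{S^{2n+1}}|\nabla_\theta v|_\theta^2\,dV_\theta+\int_{S^{2n+1}} R_\theta v^2\,dV_\theta.
\end{equation*}
Writing $R_\theta=\alpha f+(R_\theta-\alpha f)$ and using $|R_\theta-\alpha f|=|w|$ together with Lemma~\ref{lem2.4} bounds $\int R_\theta v^2\,dV_\theta\le F_{p+1}(t)+CF_p(t)$. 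Hence the gradient term in \eqref{2.13} is bounded above by $-cG^{n/(n+1)}+C'F_{p+1}+CF_p$. The same splitting of $R_\theta$ handles the second term $p\int R_\theta|w|^p\,dV_\theta$, and the last term in \eqref{2.13} is trivially $\le(n+1)F_{p+1}$.

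For the $\alpha'$ term, I use \eqref{2.22}, Lemma~\ref{lem2.4}, and the H\"older bounds $F_1\le CF_p^{1/p}$, $F_2\le CF_p^{2/p}$ to obtain $|\alpha'|\le C(F_p^{1/p}+F_p^{2/p})$, which combined with the H\"older estimate $|\int f|w|^{p-2}w\,dV_\theta|\le CF_p^{(p-1)/p}$ gives a contribution $CF_p+CF_p^{(p+1)/p}$. Since $(p+1)/p<(p-n)/(p-n-1)$ for $p>n+1$, the elementary inequality $a^r\le a+a^s$ for $1\le r\le s$ yields $F_p^{(p+1)/p}\le F_p+F_p^{(p-n)/(p-n-1)}$, putting this contribution in the required form. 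At this stage the estimate reads
\begin{equation*}
\tfrac{d}{dt}F_p+cG^{n/(n+1)}\le C_1F_{p+1}+CF_p+CF_p^{(p-n)/(p-n-1)}.
\end{equation*}

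Finally, the surviving $F_{p+1}$ is tamed by the H\"older interpolation $F_{p+1}\le F_p^{(p-n)/p}\,G^{n/p}$, derived from $\|w\|_{p+1}\le\|w\|_p^{(p-n)/(p+1)}\|w\|_{p(n+1)/n}^{(n+1)/(p+1)}$, followed by Young's inequality with conjugate exponents $p/(n+1)$ and $p/(p-n-1)$, both positive and finite precisely because $p>n+1$. This yields $F_{p+1}\le\varepsilon G^{n/(n+1)}+C_\varepsilon F_p^{(p-n)/(p-n-1)}$; choosing $\varepsilon$ small enough that $C_1\varepsilon<c$ absorbs the last bad $G$-term into the good one and gives the claimed inequality (the remaining coefficient of $G^{n/(n+1)}$ on the left is strictly positive and can be normalized to $1$ by adjusting $C$). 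The main obstacle is the bookkeeping: three independent sources (the Sobolev step, the $R_\theta|w|^p$ term, and the last term of \eqref{2.13}) each contribute $F_{p+1}$, and all must be simultaneously absorbed by the single $G^{n/(n+1)}$ produced by Lemma~\ref{lem2.3}; the exponent $(p-n)/(p-n-1)$ in the final answer is forced by the Young step, and the condition $p>n+1$ is sharp for this step to work.
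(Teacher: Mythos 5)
Your proposal is correct and follows essentially the same route as the paper: both complete the gradient term to the Yamabe functional of $|\alpha f-R_\theta|^{p/2}$ and invoke Lemma \ref{lem2.3} to produce the good term $-cG^{n/(n+1)}$, both estimate the $\alpha'$ contribution via (\ref{2.22}) and a case split (your inequality $a^{r}\le a+a^{s}$ is the paper's dichotomy $F_p\gtrless 1$), and both absorb $F_{p+1}$ through the same H\"older interpolation followed by Young's inequality with exponents $p/(n+1)$ and $p/(p-n-1)$. No gaps.
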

\begin{proof}
It follows from (\ref{2.13}) that
\begin{equation*}
\begin{split}
\frac{d}{dt}F_p(t)
&=-\frac{2n(p-1)}{p}\int_{S^{2n+1}}\left((2+\frac{2}{n})\big|\nabla_\theta|\alpha
f-R_\theta|^{\frac{p}{2}}\big|^2_\theta+R_\theta|\alpha
f-R_\theta|^{p}\right)dV_\theta\\
&\hspace{2mm}+\big(p+\frac{2n(p-1)}{p}\big)\int_{S^{2n+1}}R_\theta|\alpha
f-R_\theta|^{p}dV_\theta\\
&\hspace{2mm}+(n+1)\int_{S^{2n+1}}|\alpha f-R_\theta|^{p}(\alpha
f-R_\theta)dV_{\theta}+p\,\alpha'\int_{S^{2n+1}}f|\alpha
f-R_\theta|^{p-2}(\alpha f-R_\theta)dV_{\theta}\\
&\leq-\frac{2n(p-1)Y(S^{2n+1},\theta_0)}{p}\left(\int_{S^{2n+1}}|\alpha
f-R_\theta|^{\frac{p(n+1)}{n}}dV_\theta\right)^{\frac{n}{n+1}}\\
&\hspace{2mm}+CF_p(t)+CF_{p+1}(t)
+\alpha'\int_{S^{2n+1}}f|\alpha
f-R_\theta|^{p-2}(\alpha f-R_\theta)dV_{\theta},
\end{split}
\end{equation*}
where we have used Lemma \ref{lem2.3}.
By (\ref{2.12a}), (\ref{2.22}), and H\"{o}lder's inequality, we have
\begin{equation*}
\begin{split}
&\alpha'\int_{S^{2n+1}}f|\alpha
f-R_\theta|^{p-2}(\alpha f-R_\theta)dV_{\theta}\\
&=\frac{\alpha}{E(u)}\left[-n\int_{S^{2n+1}}(\alpha
f-R_\theta)^2dV_{\theta}-\int_{S^{2n+1}}\alpha f (\alpha
f-R_{\theta})dV_{\theta}\right]\left(\int_{S^{2n+1}}f|\alpha
f-R_\theta|^{p-2}(\alpha f-R_\theta)dV_{\theta}\right)\\
&\leq C\left(\int_{S^{2n+1}}|\alpha
f-R_\theta|^2dV_{\theta}\right)
\left(\int_{S^{2n+1}}|\alpha
f-R_\theta|^{p-1}dV_{\theta}\right)\\
&\leq C\,\mbox{Vol}(S^{2n+1},\theta)^{\frac{p-2}{p}}\left(\int_{S^{2n+1}}|\alpha
f-R_\theta|^pdV_{\theta}\right)^{\frac{2}{p}}\mbox{Vol}(S^{2n+1},\theta)^{\frac{1}{p}}\left(\int_{S^{2n+1}}|\alpha
f-R_\theta|^pdV_{\theta}\right)^{\frac{p-1}{p}}\\
&=CF_{p}(t)^{\frac{p+1}{p}}\leq \left\{
        \begin{array}{ll}
          CF_p(t)^{\frac{p-n}{p-n-1}}, & \hbox{if $F_p(t)\geq 1$;} \\
          CF_p(t), & \hbox{if $F_p(t)<1$.}
        \end{array}
      \right.
\end{split}
\end{equation*}
On the other hand, for any $0<\epsilon<1$, by
H\"{o}lder's inequality, we have
\begin{equation*}
\begin{split}
&\int_{S^{2n+1}}|\alpha f-R_\theta|^{p+1}dV_\theta\\
&\leq \left(\int_{S^{2n+1}}|\alpha
f-R_\theta|^{\frac{p(n+1)}{n}}dV_\theta\right)^{\frac{n}{p}}\left(\int_{S^{2n+1}}|\alpha
f-R_\theta|^{p}dV_\theta\right)^{\frac{p-n}{p}}\\
&\leq \epsilon\left(\int_{S^{2n+1}}|\alpha
f-R_\theta|^{\frac{p(n+1)}{n}}dV_\theta\right)^{\frac{n}{n+1}}+C(\epsilon)
\left(\int_{S^{2n+1}}|\alpha
f-R_\theta|^{p}dV_\theta\right)^{\frac{p-n}{p-n-1}}
\end{split}
\end{equation*}
where we have used Young's inequality, which says that
$\displaystyle ab\leq\epsilon a^{\frac{p}{n+1}}+C(\epsilon)
b^{\frac{p}{p-n-1}}$ for any $a, b\geq 0$. Thus, if we choose
$\epsilon$ small enough,
one obtains the result.
\end{proof}

For $T>0$, let $$\delta=\sup_{0\leq t\leq T}\|\alpha
f+\gamma\|_{C^0(S^{2n+1})}+1$$ where $\gamma$ is the constant given
in Lemma \ref{lem2.7}. Then $R_\theta+\delta\geq R_\theta-(\alpha
f+\gamma)+1\geq 1$ by Lemma \ref{lem2.7}.

\begin{lem}\label{lem2.10}
For $p>2$, there holds
\begin{equation}\label{2.15}
\begin{split}
\frac{d}{dt}&\left(\int_{S^{2n+1}}(R_\theta+\delta)^{p}dV_{\theta}\right)
=-\frac{4(n+1)(p-1)}{p}\int_{S^{2n+1}}|\nabla_\theta(R_\theta+\delta)^{\frac{p}{2}}|^2_{\theta}dV_{\theta}\\
&+(n+1)p(p-1)\int_{S^{2n+1}}(R_\theta+\delta)^{p-2}\langle\nabla_\theta(R+\delta),\nabla_\theta(\alpha
f+\delta)\rangle_\theta
dV_\theta\\
&-(n+1-p)\int_{S^{2n+1}}(R_\theta+\delta)^{p}(R_\theta-\alpha
f)dV_{\theta}\\
&-p\,\delta\int_{S^{2n+1}}\big[(R_\theta+\delta)^{p-1}-(\alpha
f+\delta)^{p-1}\big](R_\theta-\alpha f)dV_{\theta}\\
&-p\,\delta\int_{S^{2n+1}}(\alpha f+\delta)^{p-1}(R_\theta-\alpha
f)dV_{\theta}.
\end{split}
\end{equation}
\end{lem}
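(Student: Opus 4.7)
The plan is to compute the time derivative directly by applying the product rule to $(R_\theta+\delta)^p\,dV_\theta$, then simplify the resulting terms using Lemma \ref{lem2.5} and equation (\ref{2.3}), and finally rearrange algebraically to match the form in the statement.

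First I would write
\begin{equation*}
\frac{d}{dt}\int_{S^{2n+1}}(R_\theta+\delta)^p\,dV_\theta = p\int_{S^{2n+1}}(R_\theta+\delta)^{p-1}\frac{\partial R_\theta}{\partial t}\,dV_\theta + \int_{S^{2n+1}}(R_\theta+\delta)^p\frac{\partial}{\partial t}(dV_\theta),
\end{equation*}
using that $\delta$ is constant in $t$. Equation (\ref{2.3}) together with $dV_\theta = u^{2+2/n}dV_{\theta_0}$ gives $\partial_t(dV_\theta) = (n+1)(\alpha f - R_\theta)\,dV_\theta$, and Lemma \ref{lem2.5} supplies $\partial_t R_\theta = -(n+1)\Delta_\theta(\alpha f - R_\theta) + (R_\theta-\alpha f)R_\theta$.

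The next step is to handle the Laplacian term by integration by parts. Since $\nabla_\theta(R_\theta+\delta) = \nabla_\theta R_\theta$ and $\nabla_\theta(\alpha f+\delta) = \nabla_\theta(\alpha f)$, I get
\begin{equation*}
-p(n+1)\int_{S^{2n+1}}(R_\theta+\delta)^{p-1}\Delta_\theta(\alpha f - R_\theta)\,dV_\theta = p(n+1)(p-1)\int_{S^{2n+1}}(R_\theta+\delta)^{p-2}\langle \nabla_\theta(R_\theta+\delta),\nabla_\theta(\alpha f+\delta)\rangle_\theta\,dV_\theta - \frac{4(n+1)(p-1)}{p}\int_{S^{2n+1}}|\nabla_\theta(R_\theta+\delta)^{p/2}|^2_\theta\,dV_\theta,
\end{equation*}
where the identity $p(p-1)(R_\theta+\delta)^{p-2}|\nabla_\theta R_\theta|^2 = \tfrac{4(p-1)}{p}|\nabla_\theta(R_\theta+\delta)^{p/2}|^2$ absorbs the $|\nabla_\theta R_\theta|^2$ piece.

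The remaining undifferentiated terms are
\begin{equation*}
p\int_{S^{2n+1}}(R_\theta+\delta)^{p-1}(R_\theta-\alpha f)R_\theta\,dV_\theta - (n+1)\int_{S^{2n+1}}(R_\theta+\delta)^p(R_\theta-\alpha f)\,dV_\theta.
\end{equation*}
Writing $R_\theta = (R_\theta+\delta) - \delta$ in the first integral collapses these two into
\begin{equation*}
-(n+1-p)\int_{S^{2n+1}}(R_\theta+\delta)^p(R_\theta-\alpha f)\,dV_\theta - p\delta\int_{S^{2n+1}}(R_\theta+\delta)^{p-1}(R_\theta-\alpha f)\,dV_\theta.
\end{equation*}
Finally, to produce the two separate $\delta$-terms in the statement I add and subtract $(\alpha f + \delta)^{p-1}(R_\theta - \alpha f)$ inside the last integral, splitting it as $[(R_\theta+\delta)^{p-1} - (\alpha f+\delta)^{p-1}](R_\theta-\alpha f) + (\alpha f+\delta)^{p-1}(R_\theta-\alpha f)$. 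Combining everything yields (\ref{2.15}).

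There is no real analytical obstacle here; the only care needed is bookkeeping. The mildly delicate step is the algebraic splitting at the end, whose purpose is presumably to prepare a sign analysis in a later estimate: the difference $(R_\theta+\delta)^{p-1}-(\alpha f+\delta)^{p-1}$ has the same sign as $R_\theta - \alpha f$, so that combined term is nonpositive, while the $(\alpha f+\delta)^{p-1}$ term is essentially bounded uniformly in $t\in[0,T]$ by the definition of $\delta$ and Lemma \ref{lem2.4}. Recognizing this motivation makes the somewhat asymmetric presentation in (\ref{2.15}) natural.
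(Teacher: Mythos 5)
Your proposal is correct and follows essentially the same route as the paper: product rule, substitution of Lemma \ref{lem2.5} and (\ref{2.3}), integration by parts on the $\Delta_\theta(\alpha f-R_\theta)$ term written as $\Delta_\theta[(\alpha f+\delta)-(R_\theta+\delta)]$, the substitution $R_\theta=(R_\theta+\delta)-\delta$, and the final add-and-subtract of $(\alpha f+\delta)^{p-1}$. All coefficients and signs check out, and your closing remark about the sign of the difference term correctly anticipates how the identity is used in Lemma \ref{lem2.11}.
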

\begin{proof}
Note that
\begin{equation*}
\begin{split}
\frac{d}{dt}\left(\int_{S^{2n+1}}(R_\theta+\delta)^{p}dV_{\theta}\right)
&=p\int_{S^{2n+1}}(R_\theta+\delta)^{p-1}\frac{\partial
R_\theta}{\partial t}dV_{\theta}+
\int_{S^{2n+1}}(R_\theta+\delta)^{p}\frac{\partial}{\partial t}(dV_{\theta})\\
&=p\int_{S^{2n+1}}(R_\theta+\delta)^{p-1}\big[-(n+1)\Delta_\theta
(\alpha f-R_\theta)+(R_\theta-\alpha f)R_\theta\big]dV_{\theta}\\
&\hspace{4mm}+(n+1)\int_{S^{2n+1}}(R_\theta+\delta)^{p}(\alpha
f-R_\theta)dV_{\theta}.
\end{split}
\end{equation*}
Here we have used (\ref{2.3}) and Lemma \ref{lem2.5}. The last
expression can be written as
\begin{equation*}
\begin{split}
&(n+1)p\int_{S^{2n+1}}(R_\theta+\delta)^{p-1}\Delta_\theta
[(R_\theta+\delta)-(\alpha
f+\delta)]dV_{\theta}\\
&+p\int_{S^{2n+1}}(R_\theta+\delta)^{p-1}(R_\theta-\alpha
f)(R_\theta+\delta-\delta)
dV_{\theta}+(n+1)\int_{S^{2n+1}}(R_\theta+\delta)^{p}(\alpha
f-R_\theta)dV_{\theta}\\
=&-\frac{4(n+1)(p-1)}{p}\int_{S^{2n+1}}|\nabla_\theta(R_\theta+\delta)^{\frac{p}{2}}|^2_{\theta}dV_{\theta}\\
&+(n+1)p(p-1)\int_{S^{2n+1}}(R_\theta+\delta)^{p-2}\langle\nabla_\theta(R+\delta),\nabla_\theta(\alpha
f+\delta)\rangle_\theta
dV_\theta\\
&-(n+1-p)\int_{S^{2n+1}}(R_\theta+\delta)^{p}(R_\theta-\alpha
f)dV_{\theta}-p\,\delta\int_{S^{2n+1}}(R_\theta+\delta)^{p-1}(R_\theta-\alpha
f)dV_{\theta}.
\end{split}
\end{equation*}
Note that the last integral can be written as
\begin{equation*}
\begin{split}
\int_{S^{2n+1}}(R_\theta+\delta)^{p-1}(R_\theta-\alpha
f)dV_{\theta}
=&\int_{S^{2n+1}}\big[(R_\theta+\delta)^{p-1}-(\alpha
f+\delta)^{p-1}\big](R_\theta-\alpha
f)dV_{\theta}\\&+\int_{S^{2n+1}}(\alpha
f+\delta)^{p-1}(R_\theta-\alpha f)dV_{\theta}.
\end{split}
\end{equation*}
Combining all these, we  prove the assertion.
\end{proof}

\begin{lem}\label{lem2.11}
For any fixed $T>0$, there exists constant $C=C(T)$ such that
$\displaystyle\int_{S^{2n+1}}|\alpha
f-R_\theta|^{p}dV_\theta\leq C(T)$ for all $0\leq
t\leq T$ and $n+1<p<\displaystyle\frac{(n+1)^2}{n}$.
\end{lem}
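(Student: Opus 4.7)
The plan is a two-step bootstrap argument. \textbf{Step 1:} I bound $G_{n+1}(t):=\int_{S^{2n+1}}(R_\theta+\delta)^{n+1}\,dV_\theta$ pointwise on $[0,T]$ by applying Lemma~\ref{lem2.10} at the distinguished exponent $p=n+1$. At this value two key simplifications occur: the coefficient $-(n+1-p)$ of the dangerous $\int(R_\theta+\delta)^{p+1}\,dV_\theta$ term in (\ref{2.15}) vanishes, and the ``middle'' term $-p\delta\int\bigl[(R_\theta+\delta)^{n}-(\alpha f+\delta)^{n}\bigr](R_\theta-\alpha f)\,dV_\theta$ is manifestly non-positive because $(x^n-y^n)(x-y)\geq 0$ for $x,y>0$. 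The cross gradient term can be absorbed into the good term $-c\int|\nabla_\theta(R_\theta+\delta)^{(n+1)/2}|^2_\theta\,dV_\theta$ via Cauchy--Schwarz and Young, using Lemma~\ref{lem2.8} to control $|\nabla_\theta(\alpha f+\delta)|_\theta$ on $[0,T]$, at the cost of a term $C\int(R_\theta+\delta)^{n-1}\,dV_\theta\leq C+CG_{n+1}(t)$ by Jensen. The remaining last term is bounded by $C\int|R_\theta-\alpha f|\,dV_\theta\leq C\,\mathrm{Vol}^{1/2}F_2(t)^{1/2}$. This leads to the ODI
\[
G_{n+1}'(t)\leq C+CG_{n+1}(t)+CF_2(t)^{1/2},
\]
which, combined with the time-integrability $F_2\in L^1(0,\infty)$ from (\ref{2.10}), yields $G_{n+1}(t)\leq C(T)$ on $[0,T]$ by Gronwall, and hence $F_{n+1}(t)\leq C(T)$ via $|\alpha f-R_\theta|^{n+1}\leq 2^n\bigl((R_\theta+\delta)^{n+1}+(\alpha f+\delta)^{n+1}\bigr)$.

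\textbf{Step 2:} For $p\in(n+1,(n+1)^2/n)$, set $\tau=(p-n)/(p-n-1)>1$ and $q=p(n+1)/n$, and apply Lemma~\ref{lem2.9}. H\"older interpolation of $L^p(dV_\theta)$ between $L^q$ and $L^{n+1}$ gives (with $w=\alpha f-R_\theta$) the inequality $\|w\|_{L^p}\leq\|w\|_{L^q}^{1/\tau}\|w\|_{L^{n+1}}^{1-1/\tau}$, whence
\[
F_p(t)^\tau\leq\|w\|_{L^q}^p\cdot\|w\|_{L^{n+1}}^{p/(p-n-1)}.
\]
By Step 1, $\|w\|_{L^{n+1}}^{p/(p-n-1)}\leq C(T)$, so $F_p(t)^\tau\leq C(T)\|w\|_{L^q}^p$. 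Substituting into (\ref{2.14}), applying an $\epsilon$-Young inequality (implemented via an auxiliary intermediate interpolation between $L^q$ and $L^s$ for some $s\in(n+1,p)$ so that the corresponding interpolation parameter is strictly below $1/\tau$, together with a short finite iteration of the same argument to control $\|w\|_{L^s}$), and using the H\"older lower bound $\|w\|_{L^q}^p\geq cF_p(t)$ (Proposition~\ref{prop2.1}), the differential inequality (\ref{2.14}) reduces to $F_p'(t)\leq CF_p(t)+C(T)$. Gronwall then yields $F_p(t)\leq C(T)$ on $[0,T]$, as claimed.

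\textbf{Main obstacle:} The delicate point is Step 2's $\epsilon$-absorption. The natural H\"older interpolation of $L^p$ between $L^q$ and $L^{n+1}$ places the interpolation parameter precisely at $\alpha=1/\tau$, so that the would-be Young exponent $1/(\alpha\tau)$ equals $1$ and standard $\epsilon$-absorption fails. Moving to an intermediate exponent $s\in(n+1,p)$ restores $\alpha<1/\tau$ and permits Young's inequality, but requires an a priori $L^s$ bound for $w$, which must be produced by iterating the Step 2 argument finitely many times starting from Step 1's bound on $F_{n+1}$. The upper restriction $p<(n+1)^2/n$ is a convenient technical range that suffices for the sequel and under which this iteration terminates in finitely many steps.
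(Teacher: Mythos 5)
Your Step 1 is essentially the paper's own first step: taking $p=n+1$ in (\ref{2.15}) kills the $(n+1-p)$--term, the difference term is non-positive, the cross term is absorbed, and Gronwall gives $\int_{S^{2n+1}}(R_\theta+\delta)^{n+1}dV_{\theta}\leq C(T)$, hence $F_{n+1}(t)\leq C(T)$ on $[0,T]$. The gap is in Step 2, and although you have correctly located the critical point, your proposed repair does not work. To absorb $CF_p(t)^{\tau}$, $\tau=\frac{p-n}{p-n-1}$, into the dissipation $\bigl(\int|\alpha f-R_\theta|^{p(n+1)/n}dV_\theta\bigr)^{n/(n+1)}=\|w\|_{L^q(dV_\theta)}^{p}$ of (\ref{2.14}), you interpolate $L^p$ between $L^q$ and $L^s$; the Young exponent is subcritical precisely when $s>n+1$ strictly, so each application of the argument at level $p$ presupposes an a priori bound $F_s(t)\leq C(T)$ at some strictly intermediate level $s\in(n+1,p)$. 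But the only pointwise-in-time bound Step 1 supplies is at $s=n+1$ exactly, where the interpolation parameter equals $1/\tau$ and absorption degenerates. Your ``short finite iteration'' therefore has no base case: it is an infinite regress toward $n+1$, and no finite number of repetitions produces a bound at any exponent strictly above $n+1$.

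The missing idea is that Step 1 yields strictly more than the pointwise bound: retaining the dissipation term in the $p=n+1$ differential inequality gives the space--time estimate $\int_0^T\int_{S^{2n+1}}|\nabla_\theta(R_\theta+\delta)^{(n+1)/2}|_\theta^2\,dV_\theta\,dt\leq C(T)$, which together with Lemma \ref{lem2.8} and the Folland--Stein/Yamabe inequality of Lemma \ref{lem2.3} applied to $(R_\theta+\delta)^{(n+1)/2}$ upgrades to the time-integrated higher integrability $\int_0^T F_{(n+1)^2/n}(t)^{n/(n+1)}dt\leq C(T)$; this is where the endpoint $(n+1)^2/n=\frac{n+1}{2}\cdot\frac{2n+2}{n}$ actually comes from (it is the Sobolev exponent, not an iteration count). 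With this in hand one does not absorb the critical term at all: dividing (\ref{2.14}) with $p=\frac{(n+1)^2}{n}$ by $F_p(t)$ gives $\frac{d}{dt}\log F_{(n+1)^2/n}(t)\leq C+CF_{(n+1)^2/n}(t)^{n/(n+1)}$, whose right-hand side is in $L^1(0,T)$ by (\ref{2.17}); integrating in time yields $F_{(n+1)^2/n}(t)\leq C(T)$, and the intermediate exponents $n+1<p<\frac{(n+1)^2}{n}$ then follow from H\"older's inequality and Proposition \ref{prop2.1}. You should keep the dissipation in your Step 1 ODI and replace your Step 2 by this logarithmic integration at the top exponent.
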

\begin{proof}
Take $p=n+1$ in (\ref{2.15}), we deduce that
\begin{equation*}
\begin{split}
\frac{d}{dt}\left(\int_{S^{2n+1}}(R_\theta+\delta)^{n+1}dV_{\theta}\right)
\leq&-4n\int_{S^{2n+1}}|\nabla_\theta(R_\theta+\delta)^{\frac{n+1}{2}}|^2_{\theta}dV_{\theta}\\
&+n(n+1)^2\int_{S^{2n+1}}(R_\theta+\delta)^{n-1}\langle\nabla_\theta(R+\delta),\nabla_\theta(\alpha
f+\delta)\rangle_\theta
dV_\theta\\
&-(n+1)\delta\int_{S^{2n+1}}(\alpha f+\delta)^{n}(R_\theta-\alpha
f)dV_{\theta}.
\end{split}
\end{equation*}
Note that
$$-(n+1)\delta\int_{S^{2n+1}}(\alpha f+\delta)^{n}(R_\theta-\alpha
f)dV_{\theta}\leq-(n+1)\delta\gamma\displaystyle\int_{S^{2n+1}}(\alpha
f+\delta)^{n}dV_{\theta}\leq C$$  where we have used Proposition
\ref{prop2.1}, Lemma \ref{lem2.4}, and Lemma \ref{lem2.7}. Note also that
for every $0<\epsilon<1$, by Young's inequality and H\"{o}lder's
inequality and Lemma \ref{lem2.8}, one obtains
\begin{equation*}
\begin{split}
&\left|\int_{S^{2n+1}}(R_\theta+\delta)^{n-1}\langle\nabla_\theta(R+\delta),\nabla_\theta(\alpha
f+\delta)\rangle_\theta dV_\theta\right|\\
&\leq\epsilon\int_{S^{2n+1}}\big|\nabla_\theta(R_\theta+\delta)^{\frac{n+1}{2}}\big|^2_\theta
dV_\theta+C(\epsilon)\int_{S^{2n+1}}\big|\nabla_\theta(\alpha
f+\delta)\big|^2_\theta
(R_\theta+\delta)^{n-1}dV_\theta\\
&\leq\epsilon\int_{S^{2n+1}}\big|\nabla_\theta(R_\theta+\delta)^{\frac{n+1}{2}}\big|^2_\theta
dV_\theta+C\left(\int_{S^{2n+1}}
(R_\theta+\delta)^{n+1}dV_\theta\right)^{\frac{n-1}{n+1}}.
\end{split}
\end{equation*}
If we choose $\epsilon=\displaystyle\frac{2}{(n+1)^2}$ and let
$y(t)=\displaystyle\int_{S^{2n+1}}
(R_\theta+\delta)^{n+1}dV_\theta$, then we have
\begin{equation}\label{2.16}
\frac{d}{dt}y(t)+2n\int_{S^{2n+1}}\big|\nabla_\theta(R_\theta+\delta)^{\frac{n+1}{2}}\big|^2_\theta
dV_\theta\leq Cy(t)^{\frac{n-1}{n+1}}+C.
\end{equation}

We claim that $y(t)\leq C(T)$ for all $0\leq t\leq T$. When $n=1$,
it follows from (\ref{2.16}) that $\displaystyle\frac{dy}{dt}\leq
C$, which implies that $y(t)\leq C(T)y(0)$ for $0\leq t\leq T$. When
$n>1$, by (\ref{2.16}) we have
$$\frac{d}{dt}\Big(y(t)^{\frac{2}{n+1}}\Big)=y(t)^{-\frac{n-1}{n+1}}\frac{d}{dt}y(t)
\leq C+Cy(t)^{-\frac{n-1}{n+1}}\leq C,$$ where the last inequality
follows from $y(t)=\displaystyle\int_{S^{2n+1}}
(R_\theta+\delta)^{n+1}dV_\theta\geq \int_{S^{2n+1}}
dV_\theta=\mbox{Vol}(S^{2n+1},\theta_0)$ since $R_\theta+\delta\geq
1$. Hence, $y(t)^{\frac{2}{n+1}}\leq C(T)$ for $0\leq t\leq T$. This
proves the claim.

Now by (\ref{2.16}) and the claim, we have
$$\max_{0\leq t\leq T}\int_{S^{2n+1}}(R_\theta+\delta)^{n+1}dV_\theta=\max_{0\leq t\leq T}y(t)\leq C(T)$$
and
$$\int_0^T\int_{S^{2n+1}}\big|\nabla_\theta(R_\theta+\delta)^{\frac{n+1}{2}}\big|^2_\theta
dV_\theta dt\leq C(T).$$ By Lemma \ref{lem2.8}, we have
$$\max_{0\leq t\leq T}\int_{S^{2n+1}}(R_\theta+\delta)^{n+1}dV_{\theta_0}\leq C(T)$$
and
$$\int_0^T\int_{S^{2n+1}}\big|\nabla_{\theta_0}(R_\theta+\delta)^{\frac{n+1}{2}}\big|^2_{\theta_0}
dV_{\theta_0} dt\leq C(T).$$
By these estimates and by Lemma \ref{lem2.3} and Lemma \ref{lem2.8}, we have
\begin{equation*}
\begin{split}
\int_0^T\left(\int_{S^{2n+1}}(R_\theta+\delta)^{\frac{n+1}{2}
\cdot(2+\frac{2}{n})}dV_{\theta}\right)^{\frac{n}{n+1}}dt
&\leq C(T)
\int_0^T\left(\int_{S^{2n+1}}(R_\theta+\delta)^{\frac{n+1}{2}
\cdot(2+\frac{2}{n})}dV_{\theta_0}\right)^{\frac{n}{n+1}}dt\\
&\leq C(T)\int_0^TE((R_\theta+\delta)^{\frac{n+1}{2}})dt\leq C(T).
\end{split}
\end{equation*}
This together with Proposition \ref{prop2.1} and Lemma \ref{lem2.4} implies that
\begin{equation}\label{2.17}
\begin{split}
&\int_0^T\left(\int_{S^{2n+1}}|\alpha f-R_\theta|^{\frac{(n+1)^2}{n}}dV_{\theta}\right)^{\frac{n}{n+1}}dt\\
&\leq \int_0^T\left(\int_{S^{2n+1}}(R_\theta+\delta)^{\frac{(n+1)^2}{n}}dV_{\theta}\right)^{\frac{n}{n+1}}dt+
\int_0^T\left(\int_{S^{2n+1}}(\alpha f+\delta)^{\frac{(n+1)^2}{n}}dV_{\theta}\right)^{\frac{n}{n+1}}dt
\leq C(T).
\end{split}
\end{equation}
Applying Lemma \ref{lem2.9} with $p=\displaystyle\frac{(n+1)^2}{n}$, we obtain
$$\frac{d}{dt}\log\left(\int_{S^{2n+1}}|\alpha f-R_\theta|^{\frac{(n+1)^2}{n}}dV_{\theta}\right)\leq C
+C\left(\int_{S^{2n+1}}(\alpha f+\delta)^{\frac{(n+1)^2}{n}}
dV_{\theta}\right)^{\frac{n}{n+1}}.$$
Integrating it from $0$ to $T$ and using the estimate (\ref{2.17}), we can conclude that
$$\int_{S^{2n+1}}|\alpha f-R_\theta|^{\frac{(n+1)^2}{n}}dV_{\theta}\leq C(T).$$
Now the assertion follows from this, H\"{o}lder's inequality and
Proposition \ref{prop2.1}.
\end{proof}

\begin{lem}\label{lem2.12}
For $0<\lambda<\displaystyle\frac{2}{n+1}$ and any fixed $T>0$, there exists a constant $C=C(T)>0$ such that
$$|u(x_1,t_1)-u(x_2,t_2)|\leq C\big((t_1-t_2)^\frac{\lambda}{2}+d_{S^{2n+1}}(x_1,x_2)^\lambda\big)$$
for all  $x_1, x_2\in S^{2n+1}$ and all $t_1, t_2\geq 0$ satisfying
$0<t_1-t_2<1$. Here $d_{S^{2n+1}}$ is the Carnot-Carath\'{e}odory distance
with respect to the contact form $\theta_0$.
\end{lem}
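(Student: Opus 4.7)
The strategy is to upgrade the $L^p$ control on $\alpha f - R_\theta$ from Lemma \ref{lem2.11} into spatial $C^{0,\lambda}$ regularity for $u(\cdot,t)$ via the CR Yamabe equation, and then combine this with the $L^p$ bound on $u_t$ coming from (\ref{2.3}) to obtain the parabolic H\"older estimate.

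\emph{Step 1: Spatial regularity.} Rearranging (\ref{2.4}) gives
\begin{equation*}
(2+\tfrac{2}{n})\Delta_{\theta_0} u = R_{\theta_0} u - R_\theta u^{1+\frac{2}{n}}.
\end{equation*}
Since $u$ is bounded above and below on $[0,T]$ by Lemma \ref{lem2.8}, $\alpha$ is bounded by Lemma \ref{lem2.4}, and $f$ is bounded, writing $R_\theta = \alpha f + (R_\theta-\alpha f)$ gives the pointwise estimate
$$|\Delta_{\theta_0} u(\cdot,t)| \leq C(T) + C(T)\,|\alpha f - R_\theta|(\cdot,t).$$
Since $dV_\theta$ and $dV_{\theta_0}$ are comparable on $[0,T]$ (Lemma \ref{lem2.8}), Lemma \ref{lem2.11} yields $\|\Delta_{\theta_0}u(\cdot,t)\|_{L^p(\theta_0)}\leq C(T)$ uniformly in $t\in[0,T]$, for any fixed $p<(n+1)^2/n$. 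Folland-Stein $L^p$ estimates for the sub-Laplacian on $(S^{2n+1},\theta_0)$ then give $\|u(\cdot,t)\|_{S_2^p}\leq C(T)$. As the homogeneous dimension of $S^{2n+1}$ is $Q=2n+2$, the subelliptic Morrey embedding $S_2^p\hookrightarrow C^{0,2-Q/p}$ (valid when $2p>Q$, i.e.\ $p>n+1$) yields, uniformly in $t\in[0,T]$,
$$[u(\cdot,t)]_{C^{0,\mu}(S^{2n+1},d_{S^{2n+1}})}\leq C(T),\qquad \mu=2-\tfrac{Q}{p}.$$
Letting $p\nearrow (n+1)^2/n$, the exponent $\mu$ approaches $2 - 2n/(n+1)=2/(n+1)$, which is exactly the stated threshold.

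\emph{Step 2: Temporal regularity.} From (\ref{2.3}) and Lemma \ref{lem2.8}, $|u_t|\leq C(T)|\alpha f - R_\theta|$, hence $\sup_{s\in[0,T]}\|u_t(\cdot,s)\|_{L^p}\leq C(T)$. Fix $\lambda<2/(n+1)$, pick $p\in(Q/(2-\lambda),(n+1)^2/n)$ (possible precisely because $\lambda<2/(n+1)$), and for $0<t_1-t_2<1$ set $r=(t_1-t_2)^{1/2}$, $B=B_r(x_1)$. Writing $\phi(y):=u(y,t_1)-u(y,t_2)=\int_{t_2}^{t_1}u_t(y,s)\,ds$ and averaging, we decompose $\phi(x_1) = (\phi(x_1)-\bar\phi_B) + \bar\phi_B$. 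Step 1 shows $[\phi]_{C^{0,\lambda}}\leq C(T)$, whence $|\phi(x_1)-\bar\phi_B|\leq Cr^\lambda = C(t_1-t_2)^{\lambda/2}$. On the other hand, by Fubini and H\"older's inequality, together with $|B|\asymp r^Q$,
$$|\bar\phi_B| \leq \frac{1}{|B|}\int_{t_2}^{t_1}\int_B|u_t|\,dV_{\theta_0}\,ds \leq Cr^{-Q/p}(t_1-t_2) = C(t_1-t_2)^{1-\frac{Q}{2p}},$$
and the choice of $p$ ensures $1-Q/(2p)\geq\lambda/2$. Combining with the spatial estimate of Step 1 via the triangle inequality $|u(x_1,t_1)-u(x_2,t_2)|\leq |u(x_1,t_1)-u(x_1,t_2)|+|u(x_1,t_2)-u(x_2,t_2)|$ gives the claim.

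\emph{Main obstacle.} The quasilinear term $R_\theta u^{1+2/n}$ in the CR Yamabe equation is essentially harmless because Lemma \ref{lem2.8} already pins $u$ between two positive constants on $[0,T]$, reducing the analysis to linear subelliptic regularity with controlled right-hand side. The genuine work is numerological: matching the integrability threshold $p<(n+1)^2/n$ from Lemma \ref{lem2.11} with the Folland-Stein-Morrey exponent $2-Q/p$ to produce exactly the exponent $2/(n+1)$, and then choosing $r=(t_1-t_2)^{1/2}$ so that spatial H\"older regularity and the $L^p$ control on $u_t$ balance to yield H\"older exponent $\lambda/2$ in time.
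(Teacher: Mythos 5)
Your proposal is correct and follows essentially the same route as the paper: an $L^p$ bound on $\Delta_{\theta_0}u$ from (\ref{2.4}), Lemma \ref{lem2.8} and Lemma \ref{lem2.11} giving spatial H\"older continuity via the Folland--Stein/Morrey embedding, combined with the $L^p$ bound on $u_t$ from (\ref{2.3}) and an averaging argument over balls of radius $\sqrt{t_1-t_2}$ for the temporal estimate. Your exponent bookkeeping (choosing $p\in(Q/(2-\lambda),(n+1)^2/n)$, nonempty precisely when $\lambda<2/(n+1)$) is in fact cleaner than the paper's.
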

\begin{proof}
Choose $\lambda=2-\displaystyle\frac{2n+2}{p}$ with $n+1<p<\displaystyle\frac{(n+1)^2}{n}$.
For $0\leq t\leq T$ we have
\begin{equation}\label{2.18}
\begin{split}
\int_{S^{2n+1}}\big|-(2+\frac{2}{n})\Delta_{\theta_0}u+R_{\theta_0}
u\big|^pdV_{\theta_0}&=\int_{S^{2n+1}}|R_{\theta}u^{1+\frac{2}{n}}|^pdV_{\theta_0}\\
&\leq C\int_{S^{2n+1}}|R_{\theta}|^pdV_{\theta_0}\\
&\leq C\Big(\int_{S^{2n+1}}|\alpha f-R_{\theta}|^pdV_{\theta_0}+\int_{S^{2n+1}}|\alpha
f|^pdV_{\theta_0}\Big)\\
&\leq
C\Big(C+\big(\alpha_2\max_{S^{2n+1}}f\big)^p\,\mbox{Vol}({S^{2n+1}},\theta_0)\Big),
\end{split}
\end{equation}
where the first equality follows from (\ref{2.4}), the first
inequality follows from Lemma \ref{lem2.8}, and
the final inequality follows from
Proposition \ref{prop2.1}, Lemma \ref{lem2.4} and Lemma \ref{lem2.11}.
On the other hand, for $0\leq t\leq T$,
\begin{equation}\label{2.19}
\int_{S^{2n+1}}\Big|\frac{\partial u}{\partial t}\Big|^pdV_{\theta_0}
=\big(\frac{n}{2}\big)^p\int_{S^{2n+1}}|(\alpha f-R_\theta)u|^pdV_{\theta_0}
\leq C\int_{S^{2n+1}}|\alpha f-R_\theta|^pdV_{\theta}\leq C,
\end{equation}
where the first equality follows from (\ref{2.3}), the first
inequality follows from Lemma \ref{lem2.8}, and
the last inequality follows from Lemma \ref{lem2.11}.

Then (\ref{2.18}) implies that
$$|u(x_1,t)-u(x_2,t)|\leq Cd_{S^{2n+1}}(x_1,x_2)^{\lambda}$$
for all $x_1, x_2\in {S^{2n+1}}$ and $0\leq t\leq T$. Now using (\ref{2.19}), we
obtain
\begin{equation*}
\begin{split}
&|u(x,t_1)-u(x,t_2)|\leq
C(t_1-t_2)^{-(n+1)}\int_{B_{\sqrt{t_1-t_2}}(x)}|u(x,t_1)-u(x,t_2)|dV_{\theta_0}\\
&\leq
C(t_1-t_2)^{-(n+1)}\int_{B_{\sqrt{t_1-t_2}}(x)}|u(t_1)-u(t_2)|dV_{\theta_0}+C(t_1-t_2)^{\frac{\alpha}{2}}\\
&\leq C(t_1-t_2)^{-n}\sup_{t_2\leq t\leq
t_1}\int_{B_{\sqrt{t_1-t_2}}(x)}\left|\frac{\partial}{\partial
t}u(t)\right|dV_{\theta_0}+C(t_1-t_2)^{\frac{\beta}{2}}\\
&\leq C(t_1-t_2)^{\frac{\beta}{2}}\sup_{t_2\leq t\leq
t_1}\left(\int_{S^{2n+1}}\left|\frac{\partial}{\partial
t}u(t)\right|^pdV_{\theta_0}\right)^{\frac{1}{p}}+C(t_1-t_2)^{\frac{\beta}{2}}\\
&\leq C(t_1-t_2)^{\frac{\beta}{2}}
\end{split}
\end{equation*}
for all $x\in S^{2n+1}$ and all $0\leq t_1,t_2\leq T$ satisfying $0<t_1-t_2<1$.
This proves the assertion.
\end{proof}

In view of Lemma \ref{lem2.12}, it is easy to see that all
derivatives of $u(t)$ are uniformly bounded in every finite interval
$[0,T]$. Indeed, we can apply Theorem 1.1 in \cite{Bramanti}, which
says: let $X_1,X_2,\dots,X_q$ be a system of real smooth vector
fields satisfying H\"{o}rmander's condition in a bounded domain
$\Omega$ of $\Bbb R^n$. Let $A=\{a_{ij}(t,x)\}^q_{i,j=1}$ be a
symmetric, uniformly positive-definite matrix of real functions
defined in a domain $U\subset\Bbb R\times\Omega$. For operator of
the form
$$H=\partial_t-\sum^q_{i,j=1}a_{ij}(t,x)X_iX_j-\sum^q_{i=1}b_i(t,x)X_i-c(t,x)$$
we have a priori estimate of Schauder type in parabolic
H\"{o}rmander H\"{o}lder spaces $C^{k,\beta}_P(U)$. Namely, for
$a_{ij},b_i,c\in C^{k,\beta}_P(U)$ and $U'\Subset U$, we have
\begin{eqnarray}\label{2.20}
\| u\|_{C^{k+2,\beta}_P(U')}\le C\{\| Hu\|_{C^{k,\beta}_P(U)}+\|
u\|_{L^\infty(U)}\}.
\end{eqnarray}
Here, we have  (see P.193-194 in \cite{Bramanti})
\begin{equation}\label{2.24}
\begin{split}
C^{k,\beta}_P(U)&=\{u:U\rightarrow\mathbb{R}:
\|u\|_{C^{k,\beta}_P(U)}<\infty\},\\
\|u\|_{C^{k,\beta}_P(U)}&=\sum_{|I|+2h\leq
k}\left\|\partial_t^hX^Iu\right\|_{C^\beta_P(U)},\\
\|u\|_{C^{\beta}_P(U)}&=|u|_{C^\beta_P(U)}+\|u\|_{L^\infty(U)},\\
|u|_{C^{\beta}_P(U)}&=\sup\left\{\frac{|u(t,x)-u(s,y)|}{d_P((t,x),(s,y))^{\beta}}:(t,x),(s,y)\in
U, (t,x)\neq(s,y)\right\},
\end{split}
\end{equation}
where $d_P$ is the parabolic Carnot-Carath\'{e}odory distance (see
P. 189 in \cite{Bramanti}) which is given by
$$d_P((t_1,x_1),(t_2,x_2))=\sqrt{d(x_1,x_2)^2+|t_1-t_2|}.$$
Here $d$ is the Carnot-Carath\'{e}odory distance in $\Omega$.
Moreover, for any multiindex $I=(i_1,i_2,...,i_s)$, with $1\leq
i_j\leq q$, $X^Iu=X_{i_1}X_{i_2}\cdots X_{i_s}u$.

It follows from Lemma \ref{lem2.12} that $u(t,x)\in
C_P^{0,\lambda}([0,T]\times S^{2n+1})$. Therefore, with the
estimates (\ref{2.20}), Lemma \ref{lem2.8} and \ref{2.12}, we can
now use the similar standard regularity theory for weakly parabolic
equation to show that all higher order derivatives of $u(t)$ are
uniformly bounded on $[0,T]$. This shows the long time existence of
the flow (\ref{2.1}) and (\ref{2.3}).

\section{$S^2_1$ and $L^p$ convergence}\label{section3}

We first consider $F_2(t)$. It follows from (\ref{2.13}) with $p=2$
that
\begin{equation}\label{3.1}
\begin{split}
&\frac{1}{2}\frac{d}{dt}F_2(t)=\frac{d}{dt}\left(\frac{1}{2}\int_{S^{2n+1}}(\alpha
f-R_\theta)^{2}dV_{\theta}\right)\\
&=\alpha'\int_{S^{2n+1}}f(\alpha
f-R_\theta)dV_{\theta}+\int_{S^{2n+1}}(\alpha
f-R_\theta)^2R_\theta dV_{\theta}\\
&\hspace{2mm}-(n+1)\int_{S^{2n+1}}|\nabla_\theta(\alpha
f-R_\theta)|^2_\theta
dV_{\theta}+\frac{n+1}{2}\int_{S^{2n+1}}(\alpha
f-R_\theta)^3dV_{\theta}.
\end{split}
\end{equation}
Now if we substitute (\ref{2.21}) into the first term on the right
hand side of (\ref{3.1}), we obtain
\begin{equation}\label{3.2}
\begin{split}
\frac{1}{2}\frac{d}{dt}F_2(t)
&=-\frac{1}{n+1}\left(\frac{\alpha'}{\alpha}\right)^2E(u)
+\frac{n}{n+1}\frac{\alpha'}{\alpha}\int_{S^{2n+1}}(\alpha
f-R_\theta)R_\theta dV_{\theta}\\
&\hspace{2mm}+\int_{S^{2n+1}}(\alpha f-R_\theta)^2R_\theta
dV_{\theta}-(n+1)\int_{S^{2n+1}}|\nabla_\theta(\alpha
f-R_\theta)|^2_\theta
dV_{\theta}\\
&\hspace{2mm}+\frac{n+1}{2}\int_{S^{2n+1}}(\alpha
f-R_\theta)^3dV_{\theta}.
\end{split}
\end{equation}

First we show that the Webster scalar curvature $R_\theta$ converges
to $\alpha f$ in the $L^2$ sense.

\begin{lem}\label{lem3.1}
For a positive smooth solution $u$ of $(\ref{2.3})$, there holds
$$F_2(t)=\int_{S^{2n+1}}(\alpha
f-R_\theta)^{2}dV_{\theta}\rightarrow 0, \mbox{ as
}t\rightarrow\infty.$$
\end{lem}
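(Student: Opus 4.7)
The plan is to combine the integrability $\int_0^\infty F_2(t)\,dt < \infty$ (which follows from (2.10) together with the volume lower bound (2.9)) with a quantitative bound on how fast $F_2$ can grow. Already (2.11) supplies a sequence $t_j \to \infty$ with $F_2(t_j) \to 0$, so the task reduces to upgrading this subsequential statement to convergence along the full trajectory.

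To obtain the quantitative bound, I would extract from the evolution identity (3.1) a differential inequality of the form
$$\frac{d}{dt} F_2(t) \;\leq\; C_1 F_2(t) + C_2 F_2(t)^{1/2}.$$
Three ingredients enter. First, from formula (2.22), Cauchy--Schwarz, and Lemma \ref{lem2.4} one gets $|\alpha'(t)| \leq C(F_2(t) + F_2(t)^{1/2})$, which controls the $\alpha'$-term in (3.1). Second, the splitting $R_\theta = \alpha f + (R_\theta - \alpha f)$ inside $\int R_\theta(\alpha f - R_\theta)^2\,dV_\theta$ produces a piece bounded by $C F_2$ and a cubic remainder that partially cancels with the explicit cubic term in (3.1), leaving only $\tfrac{n-1}{2}\int(\alpha f - R_\theta)^3\,dV_\theta$. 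Third, the dissipative gradient term $-(n+1)\int |\nabla_\theta(\alpha f - R_\theta)|^2\,dV_\theta$ is retained and, through the Folland--Stein inequality (Lemma \ref{lem2.3}), used to absorb this remaining cubic integral modulo a multiple of $F_2$.

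Given the differential inequality, the conclusion $F_2(t) \to 0$ follows by a Barbalat-type contradiction. If $F_2 \not\to 0$, one finds $\epsilon > 0$ and infinitely many disjoint intervals $[a_k,b_k]$ on which $F_2$ rises from $\epsilon$ to $2\epsilon$. On each such interval the inequality bounds $F_2'$ by some constant $K(\epsilon)$, forcing $b_k - a_k \geq \epsilon/K(\epsilon) > 0$; but then $\int_{\bigcup [a_k,b_k]} F_2\,dt \geq \epsilon \sum (b_k - a_k) = \infty$, contradicting $\int_0^\infty F_2\,dt < \infty$.

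The main obstacle is controlling the sign-indefinite cubic term $\int (\alpha f - R_\theta)^3\,dV_\theta$. When $n = 1$ it vanishes outright; when $n = 2$ the CR Sobolev exponent equals $3$ so absorption into the gradient term is direct. For $n \geq 3$ the Sobolev exponent $2+2/n$ lies below $3$, so one must invoke the higher $L^p$-integrability of $\alpha f - R_\theta$ supplied by Lemma \ref{lem2.11} and organize the absorption more carefully, perhaps by localizing on short time intervals on which those $L^p$ bounds are available before patching the resulting estimates together.
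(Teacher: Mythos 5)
Your overall skeleton --- integrability of $F_2$ from (\ref{2.10}), a differential inequality for $F_2$, and a Barbalat-type argument to upgrade the subsequential decay (\ref{2.11}) to full convergence --- is exactly the paper's strategy (the paper packages the last step as an ODE comparison via $v(t)=\int_0^{F_2(t)}\frac{ds}{1+s^{1/2}}$, which is equivalent to your interval argument), and your bound $|\alpha'|\leq C(F_2+F_2^{1/2})$ is the paper's (\ref{3.3}). However, your treatment of the cubic term is where the proof breaks down, and it is the only genuinely nontrivial step. The absorption you propose, $\int|\alpha f-R_\theta|^3dV_\theta\leq\|\alpha f-R_\theta\|_{L^{n+1}(dV_\theta)}\|\alpha f-R_\theta\|^2_{L^{2+2/n}(dV_\theta)}$ followed by Folland--Stein, requires $\|\alpha f-R_\theta\|_{L^{n+1}(dV_\theta)}$ to be small (or at least uniformly bounded by a constant below the coefficient of the dissipative term) \emph{uniformly in $t$}. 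That is precisely the content of Lemma \ref{lem3.2}, which comes \emph{after} Lemma \ref{lem3.1} and uses $F_2(t)\to 0$ in its proof (see (\ref{3.17})); so this route is circular. Your fallback to Lemma \ref{lem2.11} does not repair it: the constants there are $C(T)$ on finite intervals $[0,T]$ and are not uniform as $T\to\infty$, whereas the Barbalat step needs the growth constant $K(\epsilon)$ to be uniform over intervals $[a_k,b_k]$ marching off to infinity. (Even your $n=2$ case is not ``direct'': the prefactor $\|\alpha f-R_\theta\|_{L^3}$ is not known to be small at this stage.)

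The missing idea is Lemma \ref{lem2.7}: the maximum-principle bound $R_\theta-\alpha f\geq\gamma$ with $\gamma<0$ a constant depending only on $f$ and $u_0$, valid for all $t\geq 0$. This gives the pointwise bound $\alpha f-R_\theta\leq-\gamma$, hence
$$\frac{n-1}{2}\int_{S^{2n+1}}(\alpha f-R_\theta)^3dV_\theta\leq-\frac{(n-1)\gamma}{2}F_2(t),$$
so the sign-indefinite cubic term is controlled by $CF_2(t)$ with a time-uniform constant, with no Sobolev inequality and no higher $L^p$ information needed. With this, one obtains $\frac{d}{dt}F_2\leq CF_2(1+F_2^{1/2})$, and the rest of your argument goes through. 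A minor further point: the inequality you state, $\frac{d}{dt}F_2\leq C_1F_2+C_2F_2^{1/2}$, is not derivable as written --- the $\alpha'$-term contributes $C(F_2+F_2^{1/2})\cdot CF_2^{1/2}=CF_2+CF_2^{3/2}$, so the correct right-hand side is $CF_2+CF_2^{3/2}$ --- but this is harmless for your contradiction argument since on the intervals $[a_k,b_k]$ one has $F_2\leq 2\epsilon$.
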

\begin{proof}
By (\ref{2.22}), (\ref{2.12a}), H\"{o}lder's inequality, Proposition \ref{prop2.1},
Lemma \ref{lem2.4}, we have
\begin{equation}\label{3.3}
\begin{split}
|\alpha'|&\leq\frac{\alpha}{E(u)}\left[n\int_{S^{2n+1}}(\alpha
f-R_\theta)^2dV_{\theta}+\int_{S^{2n+1}}\alpha f |\alpha
f-R_{\theta}|dV_{\theta}\right]\\
&\leq\frac{\alpha_2}{R_{\theta_0}\mbox{Vol}(S^{2n+1},\theta_0)}\Big[nF_2(t)+\alpha_2M\left(\mbox{Vol}(S^{2n+1},\theta_0)F_2(t)\right)^{\frac{1}{2}}\Big].
\end{split}
\end{equation}
Thus it follows from (\ref{3.1}) that
\begin{equation}\label{3.5}
\begin{split}
&\frac{1}{2}\frac{d}{dt}F_2(t)+(n+1)\int_{S^{2n+1}}|\nabla_\theta(\alpha
f-R_\theta)|^2_\theta
dV_{\theta}\\
&=\alpha'\int_{S^{2n+1}}f(\alpha
f-R_\theta)dV_{\theta}+\alpha\int_{S^{2n+1}}f(\alpha
f-R_\theta)^2dV_{\theta}+\frac{n-1}{2}\int_{S^{2n+1}}(\alpha
f-R_\theta)^3dV_{\theta}\\
&\leq C|\alpha'|\left(\int_{S^{2n+1}}(\alpha
f-R_\theta)^2dV_{\theta}\right)^{\frac{1}{2}}+\alpha\int_{S^{2n+1}}f(\alpha
f-R_\theta)^2dV_{\theta}-\frac{(n-1)\gamma}{2}\int_{S^{2n+1}}(\alpha
f-R_\theta)^2dV_{\theta}\\
&\leq C\int_{S^{2n+1}}(\alpha
f-R_\theta)^2dV_{\theta}\left[1+\left(\int_{S^{2n+1}}(\alpha
f-R_\theta)^2dV_{\theta}\right)^{\frac{1}{2}}\right]
\end{split}
\end{equation}
where we have used H\"{o}lder's inequality  and Lemma \ref{lem2.6}
in the first inequality, and the last inequality follows from
(\ref{3.3}) and Lemma \ref{lem2.4}.

Hence, if we set
$v(t)=\displaystyle\int_0^{F_2(t)}\frac{ds}{1+s^{1/2}}$, then
$$\frac{dv(t)}{dt}=\frac{1}{1+\sqrt{F_2(t)}}\frac{d}{dt}F_2(t)\leq
CF_2(t).$$ Thus we obtain, for all $t\geq t_j$
$$v(t)\leq v(t_j)+C\int_{t_j}^\infty F_2(t)dt,$$
where $t_j$ is a sequence defined by (\ref{2.11}). Note that
$v(t_j)=\displaystyle\int_0^{F_2(t_j)}\frac{ds}{1+s^{1/2}}\leq
F(t_j)\rightarrow 0$ as $j\rightarrow\infty$ by (\ref{2.11}). Note
also that $\displaystyle\int_{t_j}^\infty F_2(t)dt\rightarrow 0$ as
$j\rightarrow\infty$ by (\ref{2.10}). Therefore, $v(t)\rightarrow 0$
as $t\rightarrow\infty$. On the other hand, by definition of $v(t)$,
we have
\begin{equation}\label{3.4}
v(t)=\int_0^{F_2(t)}\frac{ds}{1+s^{1/2}}\geq\frac{F_2(t)}{1+F_2(t)^{\frac{1}{2}}}
=F_2(t)^{\frac{1}{2}}-\frac{F_2(t)^{\frac{1}{2}}}{1+F_2(t)^{\frac{1}{2}}},
\end{equation}
or equivalently,
$$F_2(t)^{\frac{1}{2}}\leq
v(t)+\frac{F_2(t)^{\frac{1}{2}}}{1+F_2(t)^{\frac{1}{2}}}\leq
v(t)+1.$$ This implies that $F_2(t)$ is bounded. By (\ref{3.4}), we
have $F_2(t)\leq(1+F_2(t)^{\frac{1}{2}})v(t)\leq Cv(t)\rightarrow 0$
as $t\rightarrow\infty$. This proves the assertion.
\end{proof}

For $p\geq 1$, let
$$G_p(t)=\int_{S^{2n+1}}|\nabla_\theta(R_\theta-\alpha f)|_\theta^pdV_\theta.$$
Integrating (\ref{3.5}) from $0$ to $\infty$ yields
\begin{equation}\label{3.6}
\int_0^\infty
G_2(t)dt=\int_0^\infty\int_{S^{2n+1}}|\nabla_\theta(\alpha
f-R_\theta)|^2_\theta dV_{\theta}dt\leq C+C\int_0^\infty
F_2(t)dt<\infty
\end{equation}
 by (\ref{2.10}) and Lemma \ref{lem3.1}.
By (\ref{3.1}), we have
\begin{equation*}
\begin{split}
&\frac{n-1}{2}\int_{S^{2n+1}}(\alpha f-R_\theta)^3dV_{\theta}
\\
&=\frac{1}{2}\frac{d}{dt}F_2(t)-\alpha\int_{S^{2n+1}}f(\alpha
f-R_\theta)^2 dV_{\theta}+(n+1)G_2(t)-\alpha'\int_{S^{2n+1}}f(\alpha
f-R_\theta)dV_{\theta}\\
&\leq\frac{1}{2}\frac{d}{dt}F_2(t)
+(n+1)G_2(t)+|\alpha'|\Big(\max_{S^{2n+1}}f\Big)F_1(t)\\
&\leq\frac{1}{2}\frac{d}{dt}F_2(t)+(n+1)G_2(t)\\
&\hspace{4mm}+\frac{2\alpha_2}{n(n+1)\mbox{Vol}(S^{2n+1},\theta_0)}
\Big[nF_2(t)+\alpha_2M\left(\mbox{Vol}(S^{2n+1},\theta_0)F_2(t)\right)^{\frac{1}{2}}\Big]
\Big(\max_{S^{2n+1}}f\Big)\left(\mbox{Vol}(S^{2n+1},\theta_0)F_2(t)\right)^{\frac{1}{2}}\\
&\leq \frac{1}{2}\frac{d}{dt}F_2(t)+CF_2(t)+(n+1)G_2(t),
\end{split}
\end{equation*}
where we have used
(\ref{3.3}), Proposition \ref{prop2.1} and H\"{o}lder's inequality
in the second inequality, and Lemma \ref{lem3.1} in the last
inequality. Similarly, we can show that
\begin{equation*}
\begin{split}
&\frac{n-1}{2}\int_{S^{2n+1}}(\alpha f-R_\theta)^3dV_{\theta}
\\
&=\frac{1}{2}\frac{d}{dt}F_2(t)-\alpha\int_{S^{2n+1}}f(\alpha
f-R_\theta)^2 dV_{\theta}+(n+1)G_2(t)-\alpha'\int_{S^{2n+1}}f(\alpha
f-R_\theta)dV_{\theta}\\
&\geq\frac{1}{2}\frac{d}{dt}F_2(t)
-|\alpha|\Big(\max_{S^{2n+1}}f\Big)F_2(t)-|\alpha'|\Big(\max_{S^{2n+1}}f\Big)F_1(t)\geq
\frac{1}{2}\frac{d}{dt}F_2(t)-CF_2(t).
\end{split}
\end{equation*}
Integrating these two inequalities and using (\ref{2.10}),
(\ref{3.6}) and Lemma \ref{lem3.1}, we conclude that
\begin{equation}\label{3.8}
\left|\int_0^\infty\int_{S^{2n+1}}(\alpha
f-R_\theta)^{3}dV_{\theta}dt\right|<\infty.
\end{equation}

\begin{lem}\label{lem3.2}
For any $p<\infty$, there holds $F_p(t)\rightarrow 0$ as
$t\rightarrow\infty$.
\end{lem}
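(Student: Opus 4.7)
The plan is to iterate in the exponent, starting from Lemma \ref{lem3.1} ($F_2 \to 0$) together with the time-integrability bounds $\int_0^\infty F_2\, dt < \infty$ (from (\ref{2.10})) and $\int_0^\infty G_2\, dt < \infty$ (from (\ref{3.6})). The CR (Folland--Stein) Sobolev embedding of Lemma \ref{lem2.3} will propagate these estimates to higher exponents, and the differential inequality of Lemma \ref{lem2.9} will promote subsequential convergence to full convergence.

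Setting $h = \alpha f - R_\theta$, I would first apply Lemma \ref{lem2.3} to $|h|$, obtaining $F_{q_1}(t)^{n/(n+1)} \leq C(G_2(t) + F_2(t))$ with $q_1 = 2(n+1)/n > 2$. Time integration then yields $\int_0^\infty F_{q_1}(t)^{n/(n+1)}\, dt < \infty$, so there exists a sequence $t_j \to \infty$ along which $F_{q_1}(t_j) \to 0$.

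To upgrade this to full convergence $F_{q_1}(t) \to 0$, I would adapt the $v$-function argument from the proof of Lemma \ref{lem3.1}: for $q_1 > n+1$, Lemma \ref{lem2.9} combined with Young's inequality yields $F_{q_1}'(t) \leq C F_{q_1}(t)(1 + F_{q_1}(t)^{\beta})$ for some $\beta > 0$; for $q_1 \leq n+1$, the analogous Bernoulli form can be obtained directly from (\ref{2.13}) using H\"older's inequality. Setting $v(t) = \int_0^{F_{q_1}(t)} (1 + s^{\beta})^{-1}\, ds$ gives $v'(t) \leq C F_{q_1}(t)^{1-\beta}$, whose time integrability follows from the previous step. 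Together with $v(t_j) \to 0$, this forces $v(t) \to 0$ and hence $F_{q_1}(t) \to 0$. Iterating with $q_{k+1} = q_k (n+1)/n \to \infty$, any prescribed $p < \infty$ is reached after finitely many rounds.

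The main difficulty is handling the super-linear term $F_p^{(p-n)/(p-n-1)}$ on the RHS of Lemma \ref{lem2.9}, which obstructs a naive Gronwall argument. The resolution is to first extract a uniform-in-time bound on $F_{q_k}$ at each iteration---exploiting the dissipative $B_p$ term on the LHS of Lemma \ref{lem2.9} together with the one-sided pointwise bound $R_\theta - \alpha f \geq \gamma$ from Lemma \ref{lem2.7}---before deploying the $v$-function argument to convert sequential smallness into full convergence.
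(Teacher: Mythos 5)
Your overall architecture --- bootstrap the exponent through the Folland--Stein embedding, then convert subsequential decay into full decay via an $H$-function argument after first securing a uniform-in-time bound --- is exactly the paper's Step 2, and that part of your plan is sound \emph{once an exponent above $n+1$ has been reached in the time-integrated sense}. The genuine gap is in how you propose to cross the subcritical range $2<p\leq n+1$. First, the pointwise-in-time inequality $F_{q_1}(t)^{n/(n+1)}\leq C(G_2(t)+F_2(t))$ is false as stated: applying Lemma \ref{lem2.3} in the contact form $\theta(t)$ to $w=|\alpha f-R_\theta|$ produces the zeroth-order term $\int_{S^{2n+1}}R_\theta(\alpha f-R_\theta)^2dV_\theta$, which contains $-\int(\alpha f-R_\theta)^3dV_\theta$; since $R_\theta-\alpha f$ is only bounded below (Lemma \ref{lem2.7}), this is not controlled by $F_2(t)$ at a fixed time. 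The paper salvages only the \emph{time-integrated} version by separately proving the finiteness of the signed integral $\left|\int_0^\infty\int(\alpha f-R_\theta)^3dV_\theta dt\right|$ in (\ref{3.8}), an ingredient your proposal omits and which must itself be re-established at every stage of the iteration (this is estimate (\ref{3.13})).

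Second, and more seriously, your claim that for $q\leq n+1$ ``the analogous Bernoulli form can be obtained directly from (\ref{2.13}) using H\"older's inequality'' does not hold. The identity (\ref{2.13}) contains $p\int R_\theta|\alpha f-R_\theta|^pdV_\theta$, which carries the term $\int|\alpha f-R_\theta|^{p+1}dV_\theta$ of one degree higher; the only mechanism for absorbing $F_{p+1}$ is the Folland--Stein/Young absorption into the dissipative term, and the Young exponents $\frac{p}{n+1}$, $\frac{p}{p-n-1}$ are admissible precisely when $p>n+1$ --- this is the hypothesis of Lemma \ref{lem2.9}, not a removable convenience. Since $q_1=2+\frac{2}{n}\leq n+1$ for all $n\geq 2$, your very first upgrade step already sits in the regime where no closed differential inequality for $F_{q}$ alone is available, and your geometric iteration $q_{k+1}=q_k\frac{n+1}{n}$ would have to pass through many such non-integer subcritical exponents. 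The paper avoids this entirely: below $n+1$ it never attempts pointwise-in-time control, but instead runs a purely time-integrated bootstrap over even integer exponents $2k$, propagating the four coupled estimates (\ref{3.10})--(\ref{3.13}) and exploiting the sign of the coefficient $(n-2k)$ in (\ref{3.15}) together with the one-sided bound of Lemma \ref{lem2.7} (with a separate fractional-exponent device when $2k=n$), solely to reach some $p_0>n+1$ with $\int_0^\infty F_{p_0}^{v_0}dt<\infty$; only then does the $H$-function argument you describe take over. To repair your proof you would need to replace your subcritical upgrade step with an argument of this integrated, sign-exploiting type.
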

\begin{proof}
We divide the proof into two steps.

\textit{Step 1.} We are going to show that there exists $p_0>n+1$ and
$v_0\in(0,1]$ such that
\begin{equation}\label{3.9}
\int_0^\infty\left(\int_{S^{2n+1}}|\alpha
f-R_\theta|^{p_0}dV_\theta\right)^{v_0}dt<\infty.
\end{equation}
To do this, we
will show that the following estimates are true for all positive integer
$k<\displaystyle\frac{n+1}{2}$:
\begin{eqnarray}
&&\label{3.10}\int_0^\infty\int_{S^{2n+1}}|\alpha
f-R_\theta|^{2k}dV_{\theta}dt<\infty,\\
&&\label{3.11}\int_{S^{2n+1}}|\alpha
f-R_\theta|^{2k}dV_{\theta}\leq C,\\
&&\label{3.12}\int_0^\infty\int_{S^{2n+1}}(\alpha
f-R_\theta)^{2(k-1)}|\nabla_\theta(\alpha f-R_\theta)|^2_\theta
dV_{\theta}dt<\infty,\\
&&\label{3.13}\left|\int_0^\infty\int_{S^{2n+1}}(\alpha
f-R_\theta)^{2k+1}dV_{\theta}dt\right|<\infty,
\end{eqnarray}
where $C$ is a positive constant independent of $t$.

For $k=1$, (\ref{3.10})-(\ref{3.13}) follow from (\ref{2.10}),
(\ref{3.6}), (\ref{3.8}) and Lemma \ref{lem3.1}.

Now suppose that the estimates (\ref{3.10})-(\ref{3.13}) are true for
$k$ with $k<(n+1)/2$. By (\ref{3.10}) and (\ref{3.12}) and the sharp
Folland-Stein inequality (see \cite{Folland&Stein} or Theorem 3.13 in \cite{Dragomir}), we obtain
\begin{equation}\label{3.14}
\int_0^\infty\left(\int_{S^{2n+1}}|\alpha
f-R_\theta|^{\left(\frac{2n+2}{n}\right)k}dV_{\theta}\right)^{\frac{n}{n+1}}dt\leq
C.
\end{equation}
In the following three cases, we intend to show either this
procedure terminates by suitable choice of $p_0>n+1$ and
$v_0\in(0,1]$, or the estimates (\ref{3.10})-(\ref{3.13}) are true
for $k+1$.

\textit{Case (a).} If $k>n/2$, we can set
$p_0=k(2n+2)/n>n+1$ and $v_0=n/(n+1)$, then the iteration terminates
by (\ref{3.14}).

\textit{Case (b).} If $k<n/2$, by (\ref{2.13}) with $p=2k+1$, we have
\begin{equation}\label{3.15}
\begin{split}
&\frac{d}{dt}\left(\int_{S^{2n+1}}(\alpha f-R_\theta)^{2k+1}dV_{\theta}\right)\\
&
+2k(2k+1)(n+1)\int_{S^{2n+1}}(\alpha
f-R_\theta)^{2k-1}|\nabla_\theta(\alpha
f-R_\theta)|^2_\theta dV_{\theta}\\
=&\,(2k+1)\int_{S^{2n+1}} \alpha f(\alpha f-R_\theta)^{2k+1}dV_{\theta}
+(2k+1)\alpha'\int_{S^{2n+1}}f(\alpha f-R_\theta)^{2k}dV_{\theta}\\
&+(n-2k)\int_{S^{2n+1}}(\alpha f-R_\theta)^{2k+2}dV_{\theta}.
\end{split}
\end{equation}
Note that by  H\"{o}lder's inequality, Young's inequality, and Lemma \ref{lem2.4}, we have
\begin{equation}\label{3.16}
\begin{split}
&(2k+1)\left|\int_{S^{2n+1}} \alpha f(\alpha f-R_\theta)^{2k+1}dV_{\theta}\right|\\
&\leq(2k+1)\left(\int_{S^{2n+1}} (\alpha f-R_\theta)^{2k+2}dV_{\theta}\right)^{\frac{1}{2}}
\left(\int_{S^{2n+1}}\alpha^2 f^2(\alpha f-R_\theta)^{2k}dV_{\theta}\right)^{\frac{1}{2}}\\
&\leq\frac{n-2k}{2}\int_{S^{2n+1}} (\alpha f-R_\theta)^{2k+2}dV_{\theta}
+C\int_{S^{2n+1}} (\alpha f-R_\theta)^{2k}dV_{\theta}.
\end{split}
\end{equation}
By (\ref{3.3}) and Lemma \ref{lem3.1}, we have
\begin{equation}\label{3.17}
|\alpha'|\leq C
\end{equation}
where $C$ is a constant independent of $t$. Combining (\ref{3.15}),
(\ref{3.16}), (\ref{3.17}) and Lemma \ref{lem2.7}, we obtain
\begin{equation*}
\begin{split}
&\frac{n-2k}{2}\int_{S^{2n+1}}(\alpha f-R_\theta)^{2k+2}dV_{\theta}\\
\leq&\,\frac{d}{dt}\left(\int_{S^{2n+1}}(\alpha f-R_\theta)^{2k+1}dV_{\theta}\right)
+C\int_{S^{2n+1}}(\alpha f-R_\theta)^{2k}dV_{\theta}\\
&-2k(2k+1)(n+1)\gamma\int_{S^{2n+1}}(\alpha
f-R_\theta)^{2k-2}|\nabla_\theta(\alpha
f-R_\theta)|^2_\theta dV_{\theta}.
\end{split}
\end{equation*}
Integrating it from $0$ to $t$, we get
\begin{equation*}
\begin{split}
&\frac{n-2k}{2}\int_0^t\int_{S^{2n+1}}(\alpha f-R_\theta)^{2k+2}dV_{\theta}dt\\
\leq&\,\int_{S^{2n+1}}(\alpha f-R_\theta)^{2k+1}(t)dV_{\theta(t)}
-\int_{S^{2n+1}}(\alpha f-R_\theta)^{2k+1}(0)dV_{\theta(0)}\\
&+C\int_0^t\int_{S^{2n+1}}|\alpha f-R_\theta|^{2k}dV_{\theta}dt\\
&-2k(2k+1)(n+1)\gamma\int_0^t\int_{S^{2n+1}}|\alpha
f-R_\theta|^{2(k-1)}|\nabla_\theta(\alpha
f-R_\theta)|^2_\theta dV_{\theta}dt
\\
\leq &\,-\gamma\int_{S^{2n+1}}(\alpha f-R_\theta)^{2k}(t)dV_{\theta(t)}+C\leq C
\end{split}
\end{equation*}
where we have used Lemma \ref{lem2.7} and (\ref{3.10})-(\ref{3.13}) for $k$. By letting $t\rightarrow\infty$,
we have shown that (\ref{3.10})
holds for $k+1$.

\textit{Case (b)(i).} If $2k+2>n+1$, choose $p_0=2k+2$ and $v_0=1$ in (\ref{3.9}),
then the iteration terminates.

\textit{Case (b)(ii).} If $2k+2=n+1$, by (\ref{2.13}) with $p=2k+\displaystyle\frac{4}{3}$, we have
\begin{equation}\label{3.18}
\begin{split}
&\frac{d}{dt}F_{2k+\frac{4}{3}}(t)+C\int_{S^{2n+1}}|\alpha
f-R_\theta|^{2k-\frac{2}{3}}|\nabla_\theta(\alpha
f-R_\theta)|^2_\theta dV_{\theta}\\
&=(2k+\frac{4}{3})\int_{S^{2n+1}} \alpha f|\alpha f-R_\theta|^{2k+\frac{4}{3}}dV_{\theta}\\
&\hspace{4mm}
+(2k+\frac{4}{3})\alpha'\int_{S^{2n+1}}f|\alpha f-R_\theta|^{2k-\frac{2}{3}}(\alpha f-R_\theta)dV_{\theta}\\
&\hspace{4mm}+\frac{2}{3}\int_{S^{2n+1}}|\alpha f-R_\theta|^{2k+\frac{4}{3}}(\alpha f-R_\theta)dV_{\theta}.
\end{split}
\end{equation}
By Lemma \ref{lem2.7} and Young's inequality
\begin{equation}\label{3.19}
F_{2k+\frac{4}{3}}(t)\leq \frac{1}{3}F_{2k}(t)+\frac{2}{3}F_{2k+2}(t),
\end{equation}
we have
\begin{equation*}
\int_{S^{2n+1}}|\alpha f-R_\theta|^{2k+\frac{4}{3}}(\alpha
f-R_\theta)dV_{\theta}\leq-\gamma F_{2k+\frac{4}{3}}(t)
\leq -\gamma\left( \frac{1}{3}F_{2k}(t)+\frac{2}{3}F_{2k+2}(t)\right).
\end{equation*}
By (\ref{3.17}) and Young's inequality
$
F_{2k+\frac{1}{3}}(t)\leq \frac{5}{6}F_{2k}(t)+\frac{1}{6}F_{2k+2}(t),
$
we have
\begin{equation*}
\left|\alpha'\int_{S^{2n+1}}f|\alpha f-R_\theta|^{2k-\frac{2}{3}}(\alpha f-R_\theta)dV_{\theta}\right|
\leq  |\alpha'|\Big(\max_{S^{2n+1}}f\Big)F_{2k+\frac{1}{3}}(t)
\leq C[F_{2k}(t)+F_{2k+2}(t)].
\end{equation*}
Combining all these inequality,
we get by (\ref{3.18})
\begin{equation}\label{3.20}
\begin{split}
&\frac{d}{dt}\left(\int_{S^{2n+1}}(\alpha f-R_\theta)^{2k+\frac{4}{3}}dV_{\theta}\right)
+C\int_{S^{2n+1}}|\alpha
f-R_\theta|^{2k-\frac{2}{3}}|\nabla_\theta(\alpha
f-R_\theta)|^2_\theta dV_{\theta}\\
&\leq C[F_{2k}(t)+F_{2k+2}(t)].
\end{split}
\end{equation}
Due to (\ref{3.10}) for $k$ and $k+1$, integrating (\ref{3.20}) from $0$ to $\infty$ with respect to $t$ yields
\begin{equation}\label{3.21}
\int_0^\infty\int_{S^{2n+1}}|\alpha
f-R_\theta|^{2k-\frac{2}{3}}|\nabla_\theta(\alpha
f-R_\theta)|^2_\theta dV_{\theta}dt\leq C\hspace{2mm}\mbox{ and }\hspace{2mm}F_{2k+\frac{4}{3}}(t)\leq C.
\end{equation}
Also, due to (\ref{3.10}) for $k$ and $k+1$, integrating
(\ref{3.19}) from $0$ to $\infty$ with respect to
$t$ yields
\begin{equation}\label{3.22}
\int_0^\infty\int_{S^{2n+1}}|\alpha
f-R_\theta|^{2k+\frac{4}{3}}dV_{\theta}dt\leq C.
\end{equation}
Combining (\ref{3.21}) and (\ref{3.22}) and using the sharp Folland-Stein inequality again, we obtain
$$\int_0^\infty\left(\int_{S^{2n+1}}|\alpha f-R_\theta|^{(k+\frac{2}{3})(2+\frac{2}{n})}
dV_{\theta}\right)^{\frac{n}{n+1}}dt\leq C.$$
Now we can choose $p_0=\displaystyle(k+\frac{2}{3})(2+\frac{2}{n})>\frac{n+1}{2}$
and $v_0=\displaystyle\frac{n}{n+1}$ and then the iteration terminates.

\textit{Case (b)(iii).} If $2k+2<n+1$, by (\ref{2.13}) again with $p=2k+2$, we have
\begin{equation}\label{3.23}
\begin{split}
&\frac{d}{dt}F_{2k+2}(t)+C\int_{S^{2n+1}}|\alpha
f-R_\theta|^{2k}|\nabla_\theta(\alpha
f-R_\theta)|^2_\theta dV_{\theta}\\
&=(2k+2)\int_{S^{2n+1}} \alpha f|\alpha f-R_\theta|^{2k+2}dV_{\theta}
\\&\hspace{4mm}+(2k+2)\alpha'\int_{S^{2n+1}}f|\alpha f-R_\theta|^{2k}(\alpha f-R_\theta)dV_{\theta}\\
&\hspace{4mm}+(n-2k-1)\int_{S^{2n+1}}|\alpha f-R_\theta|^{2k+2}(\alpha f-R_\theta)dV_{\theta}\\
&\leq CF_{2k+2}(t)
+CF_2(t)-\gamma(n-2k-1)F_{2k+2}(t)
\end{split}
\end{equation}
where we have used (\ref{3.17}), Lemma \ref{lem2.7}, and the fact that
\begin{equation*}
|\alpha f-R_\theta|^{2k+1}\leq \left\{
                                 \begin{array}{ll}
                                   |\alpha f-R_\theta|^{2k+2}, & \hbox{if $|\alpha f-R_\theta|\geq 1$;} \\
                                   |\alpha f-R_\theta|^{2}, & \hbox{if  $|\alpha f-R_\theta|<1$.}
                                 \end{array}
                               \right.
\end{equation*}
By  (\ref{2.10}) and (\ref{3.10})  for $k+1$,
 integrating (\ref{3.23}) from $0$ to $\infty$ yields
\begin{equation}\label{3.24}
\int_0^\infty\int_{S^{2n+1}}|\alpha
f-R_\theta|^{2k}|\nabla_\theta(\alpha
f-R_\theta)|^2_\theta dV_{\theta}dt\leq C\hspace{2mm}\mbox{ and }\hspace{2mm}F_{2k+2}(t)\leq C.
\end{equation}
This implies that (\ref{3.11}) and (\ref{3.12}) are true for $k+1$.
By (\ref{3.23}) again, we have
\begin{equation*}
\begin{split}
&(n-2k-1)\int_{S^{2n+1}}|\alpha f-R_\theta|^{2k+2}(\alpha
f-R_\theta)dV_{\theta}\\
=&\,\frac{d}{dt}F_{2k+2}(t)+C\int_{S^{2n+1}}|\alpha
f-R_\theta|^{2k}|\nabla_\theta(\alpha
f-R_\theta)|^2_\theta dV_{\theta}\\
&-(2k+2)\int_{S^{2n+1}} \alpha f|\alpha
f-R_\theta|^{2k+2}dV_{\theta}-(2k+2)\alpha'\int_{S^{2n+1}}f|\alpha f-R_\theta|^{2k}(\alpha
f-R_\theta)dV_{\theta}\\
\leq&\,\frac{d}{dt}F_{2k+2}(t)+C\int_{S^{2n+1}}|\alpha
f-R_\theta|^{2k}|\nabla_\theta(\alpha f-R_\theta)|^2_\theta
dV_{\theta}+CF_{2k+2}(t)+CF_2(t).
\end{split}
\end{equation*}
Integrating
it from 0 to $\infty$, we obtain by (\ref{2.10}), (\ref{3.24}) and by
(\ref{3.10}) for $k+1$
$$\left|\int_0^\infty\int_{S^{2n+1}}(\alpha
f-R_\theta)^{2k+3}dV_{\theta}dt\right|<\infty,$$ which implies that
(\ref{3.13}) is true for $k+1$. Thus, the iteration procedure can be
continued.

\textit{Case (c).} If $k=\displaystyle\frac{n}{2}$, we consider
(\ref{2.13}) with $p=2k+\displaystyle\frac{1}{3}$. It yields
\begin{equation}\label{3.26}
\begin{split}
&\frac{d}{dt}F_{2k+\frac{1}{3}}(t)+C\int_{S^{2n+1}}|\alpha
f-R_\theta|^{2k-\frac{5}{3}}|\nabla_\theta(\alpha
f-R_\theta)|^2_\theta dV_{\theta}\\
\leq&\,\frac{6k+1}{3}\int_{S^{2n+1}} \alpha f|\alpha
f-R_\theta|^{\frac{6k+1}{3}}dV_{\theta}
+\frac{6k+1}{3}\alpha'\int_{S^{2n+1}}f|\alpha f-R_\theta|^{\frac{6k-5}{3}}(\alpha f-R_\theta)dV_{\theta}\\
&+\frac{2}{3}\int_{S^{2n+1}}|\alpha
f-R_\theta|^{\frac{6k+1}{3}}(\alpha f-R_\theta)dV_{\theta}.
\end{split}
\end{equation}
By Lemma \ref{lem2.7}, we have
\begin{equation}\label{3.26.5}
\int_{S^{2n+1}}|\alpha f-R_\theta|^{\frac{6k+1}{3}}(\alpha
f-R_\theta)dV_{\theta}\leq-\gamma F_{\frac{6k+1}{3}}(t).
\end{equation}
 By Lemma
\ref{lem2.4}, we have
$$\int_{S^{2n+1}} \alpha f|\alpha
f-R_\theta|^{\frac{6k+1}{3}}dV_{\theta}\leq C
F_{\frac{6k+1}{3}}(t).$$
By H\"{o}lder's inequality and Young's
inequality, we get
\begin{equation}\label{3.27}
\begin{split}
F_{\frac{6k+1}{3}}(t)&\leq F_{k(\frac{2n+2}{n})}(t)^{\frac{1}{3}}
F_{2k}(t)^{\frac{2}{3}}\\
&\leq\frac{n+1}{3n}F_{k(\frac{2n+2}{n})}(t)^{\frac{n}{n+1}}
+\frac{2n-1}{3n}F_{2k}(t)^{\frac{2n}{2n-1}}\\
&\leq CF_{k(\frac{2n+2}{n})}(t)^{\frac{n}{n+1}}+CF_{2k}(t)
\end{split}
\end{equation}
where we have used (\ref{3.11}) and the fact that
$k=\displaystyle\frac{n}{2}$. On the other hand, by (\ref{3.3}) and Lemma \ref{lem3.1}, we can estimate
\begin{equation*}
\begin{split}
\left|\alpha'\int_{S^{2n+1}}f|\alpha f-R_\theta|^{\frac{6k-5}{3}}(\alpha
f-R_\theta)dV_{\theta}\right|
&\leq C|\alpha'|F_{\frac{6k-2}{3}}(t)\\
&\leq C(F_2(t)+F_2(t)^{\frac{1}{2}})F_{\frac{6k-2}{3}}(t)\\
&\leq C(F_2(t)+F_2(t)^{\frac{1}{2}})(F_{2k}(t)+F_1(t))\\
&\leq C(F_2(t)+F_2(t)^{\frac{1}{2}})(F_{2k}(t)+F_2(t)^{\frac{1}{2}})\\
&\leq CF_{2k}(t)+CF_2(t),
\end{split}
\end{equation*}
where we have used the inequality
\begin{equation*}
|\alpha f-R_\theta|^{\frac{6k-2}{3}}\leq \left\{
                                 \begin{array}{ll}
                                   |\alpha f-R_\theta|^{2k}, & \hbox{if $|\alpha f-R_\theta|\geq 1$;} \\
                                   |\alpha f-R_\theta|, & \hbox{if  $|\alpha f-R_\theta|<1$.}
                                 \end{array}
                               \right.
\end{equation*}
 Combining  the above estimates, the right hand side of
(\ref{3.26}) can be bounded by
\begin{equation*}
\begin{split}
&\frac{d}{dt}F_{2k+\frac{1}{3}}(t)+C\int_{S^{2n+1}}|\alpha
f-R_\theta|^{2k-\frac{5}{3}}|\nabla_\theta(\alpha
f-R_\theta)|^2_\theta dV_{\theta}\\
&\leq CF_{k(\frac{2n+2}{n})}(t)^{\frac{n}{n+1}}+CF_{2k}(t)+CF_2(t).
\end{split}
\end{equation*}
Integrating it from 0 to $\infty$, we obtain
\begin{equation}\label{3.28}
\int_0^\infty\int_{S^{2n+1}}|\alpha
f-R_\theta|^{2k-\frac{5}{3}}|\nabla_\theta(\alpha
f-R_\theta)|^2_\theta dV_{\theta}dt\leq C\mbox{ and
}F_{2k+\frac{1}{3}}(t)\leq C
\end{equation}
by (\ref{2.10}), (\ref{3.10}) and  (\ref{3.14}). On the other hand, integrating (\ref{3.27}) from 0 to $\infty$, we obtain
\begin{equation}\label{3.29}
\int_0^\infty\int_{S^{2n+1}}|\alpha f-R_\theta|^{2k+\frac{1}{3}}
dV_{\theta}dt\leq C
\end{equation}
by (\ref{3.10}) and (\ref{3.14}).
 Using (\ref{3.28}), (\ref{3.29})
and the sharp Folland-Stein inequality again, we obtain
$$\int_0^\infty\left(\int_{S^{2n+1}}|\alpha f-R_\theta|^{(k+\frac{1}{6})(2+\frac{2}{n})}
dV_{\theta}\right)^{\frac{n}{n+1}}dt<\infty.$$ Hence we can choose
$p_0=\displaystyle(k+\frac{1}{6})(2+\frac{2}{n})=n+1+\frac{n+1}{3n}$
since $k=\displaystyle\frac{n}{2}$ and
$v_0=\displaystyle\frac{n}{n+1}$ in (\ref{3.9}), and the iteration
terminates.

In sum, after finitely many steps, we obtain (\ref{3.9}) with some
suitable $p_0>n+1$ and $v_0\in (0,1]$.

\textit{Step 2.}  We will show that for any $p>n+1$, there holds
$\displaystyle\lim_{t\rightarrow\infty}F_p(t)=0$. Set
$p_k=p_0\displaystyle(\frac{n+1}{n})^k$, $k\in\mathbb{N}$, where
$p_0, v_0$ are given in Step 1. Next we argue by induction on $k$.

Assume for $p=p_k$ and some $v_k\in(0,1]$, there holds
\begin{equation}\label{3.30}
\int_0^\infty F_{p_k}(t)^{v_k}dt<\infty.
\end{equation}
We are going to show that
$\displaystyle\lim_{t\rightarrow\infty}F_{p_k}(t)=0$ and to
establish (\ref{3.30}) with $p_{k+1}$ and
$v_k=\displaystyle\frac{n}{n+1}$. Using (\ref{2.14}) for
$p=p_k>n+1$, we have
\begin{equation}\label{3.31}
\frac{d}{dt}F_{p_k}(t)+F_{p_{k+1}}(t)^{v_{k+1}}\leq
CF_{p_k}(t)^{v_k}\left(F_{p_k}(t)^{1-v_k}+F_{p_k}(t)^{\frac{p_k-n}{p_k-n-1}-v_k}\right).
\end{equation}
From (\ref{3.30}), there exists a sequence $t_j$ with
$t_j\rightarrow\infty$ as $j\rightarrow\infty$ such that
\begin{equation}\label{3.32}
F_{p_k}(t_j)\rightarrow 0\hspace{2mm}\mbox{ as }j\rightarrow\infty.
\end{equation}
For brevity, let $F(t)=F_{p_k}(t)$ and $v=v_k$ with
$\displaystyle\int_0^\infty F(t)^vdt<\infty$ by (\ref{3.30}). Then
(\ref{3.31}) implies that
\begin{equation}\label{3.33}
\frac{d}{dt}F(t)\leq
CF(t)^{v}\left(F(t)^{\beta_1}+F(t)^{\beta_2}\right)
\end{equation}
where $0\leq \beta_1=1-v_k<1$ and
$\beta_2=\displaystyle\frac{p_k-n}{p_k-n-1}-v_k=\beta_1+\frac{1}{p_k-n-1}>\beta_1\geq
0$. Define
$$H(t)=\int_0^{F(t)}\frac{ds}{s^{\beta_1}+s^{\beta_2}}.$$
By (\ref{3.32}), we have
\begin{equation}\label{3.34}
H(t_j)\leq
\int_0^{F(t_j)}\frac{ds}{s^{\beta_1}}=\frac{1}{1-\beta_1}F(t_j)^{1-\beta_1}\rightarrow
0 \hspace{2mm}\mbox{ as }j\rightarrow\infty.
\end{equation}
 By (\ref{3.33}), we
also have
$$\frac{d}{dt}H(t)=\frac{1}{F(t)^{\beta_1}+F(t)^{\beta_2}}\cdot\frac{d}{dt}F(t)\leq CF(t)^v.$$
Integrating it from $t_j$ to $t$ yields
$$H(t)\leq H(t_j)+C\int_{t_j}^\infty F(t)^vdt,$$
where the right hand side tends to 0 as $j\rightarrow\infty$ by
(\ref{3.30}) and (\ref{3.34}). This implies that
\begin{equation}\label{3.35}
\lim_{t\rightarrow\infty}H(t)=0.
\end{equation}
We claim that there exists a positive constant $C_0$ such that
$F(t)\leq C_0$ for all $t$. Otherwise, there would exist a sequence $t_k$
with $t_k\rightarrow\infty$ as $k\rightarrow\infty$, such that
$F(t_k)>1$ for all $k\in\mathbb{N}$, which would imply
$$H(t_k)=\int_0^{F(t_k)}\frac{ds}{s^{\beta_1}+s^{\beta_2}}\geq\int_0^{1}\frac{ds}{s^{\beta_1}+s^{\beta_2}}>0,
\mbox{ for all }k\in\mathbb{N},$$ which contradicts (\ref{3.35}).
Hence, by definition of $H(t)$, we have
$$H(t)\geq\frac{F(t)}{F(t)^{\beta_1}+F(t)^{\beta_2}}\geq \frac{F(t)}{C_0^{\beta_1}+C_0^{\beta_2}}=
CF(t).$$ Combining this with (\ref{3.35}), we have
\begin{equation}\label{3.36}
\lim_{t\rightarrow\infty}F(t)=0.
\end{equation}
By (\ref{3.31}) and (\ref{3.36}), we have $\displaystyle\frac{d}{dt}F(t)+F_{p_{k+1}}(t)^{v_{k+1}}\leq
CF(t)^{v}$. Integrating it from $0$ to $\infty$ and using (\ref{3.30}) and (\ref{3.36}), we obtain
$$\int_0^\infty F_{p_{k+1}}(t)^{v_{k+1}}dt<\infty.$$
Thus, the induction step is complete.
\end{proof}

\begin{lem}\label{lem3.3}
There holds $G_2(t)\rightarrow 0$ as $t\rightarrow\infty$.
\end{lem}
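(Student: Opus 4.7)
The approach is to establish a differential inequality of the form $G_2'(t) \leq c_1\, G_2(t) + c_2$ with bounded constants, and then combine it with the integrability $\int_0^\infty G_2(t)\, dt < \infty$ already recorded in (\ref{3.6}) to conclude pointwise decay.

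Setting $v = \alpha f - R_\theta$ for brevity, I would compute $G_2'(t)$ by writing $G_2(t) = \int_{S^{2n+1}} u^2\,|\nabla_{\theta_0} v|^2_{\theta_0}\, dV_{\theta_0}$, differentiating in $t$, using $u_t = \tfrac{n}{2}vu$ from (\ref{2.3}), and integrating by parts, to obtain
\begin{equation*}
G_2'(t) = n\int_{S^{2n+1}} v\,|\nabla_\theta v|^2_\theta\, dV_\theta - 2\int_{S^{2n+1}} v_t\, \Delta_\theta v\, dV_\theta.
\end{equation*}
Solving (\ref{2.12}) for $\Delta_\theta v = (n+1)^{-1}(v_t - vR_\theta - \alpha' f)$ and applying Young's inequality $2|xy|\le x^2+y^2$ to the cross terms $vv_tR_\theta$ and $\alpha' v_t f$ exactly cancels the two copies of $\int v_t^2\, dV_\theta$ that appear, yielding
\begin{equation*}
G_2'(t) \leq n\int v\,|\nabla_\theta v|^2_\theta\, dV_\theta + \tfrac{1}{n+1}\int v^2 R_\theta^2\, dV_\theta + \tfrac{(\alpha')^2}{n+1}\int f^2\, dV_\theta.
\end{equation*}
By Lemma \ref{lem2.7}, $v\le|\gamma|$, so the first integral is $\le n|\gamma|\,G_2(t)$. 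Expanding $R_\theta = \alpha f - v$ and invoking Lemma \ref{lem3.2} (boundedness of $F_p$ for all finite $p$), Lemma \ref{lem2.4}, and Proposition \ref{prop2.1} bounds the second integral by a constant, while (\ref{3.3}) together with the boundedness of $F_2$ yields $|\alpha'|\le C$, controlling the third. Altogether,
\begin{equation*}
G_2'(t) \leq n|\gamma|\, G_2(t) + C \qquad \text{for all large } t.
\end{equation*}

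The main obstacle is converting this differential inequality, together with $G_2\in L^1(0,\infty)$, into pointwise convergence. I would first show $G_2$ is uniformly bounded: the inequality says that $(G_2+C/(n|\gamma|))\,e^{-n|\gamma|t}$ is non-increasing, so backward integration over $[t_0-1,t_0]$ gives $G_2(s)\ge G_2(t_0)e^{-n|\gamma|}-C/(n|\gamma|)$ on this interval, whence $\int_{t_0-1}^{t_0}G_2\,ds\gtrsim G_2(t_0)$ once $G_2(t_0)$ is large; since the tail $\int_{t_0-1}^\infty G_2$ tends to zero as $t_0\to\infty$, $G_2$ must be uniformly bounded. Consequently $G_2'(t)\le C'$ for some constant $C'$, and the classical argument finishes: if $G_2(t_n)\ge 2\epsilon$ along some $t_n\to\infty$, then $G_2(s)\ge \epsilon$ for $s\in[t_n, t_n+\epsilon/C']$, so passing to a subsequence with spacing exceeding $\epsilon/C'$ produces disjoint intervals each contributing at least $\epsilon^2/C'$ to $\int_0^\infty G_2$, contradicting $G_2\in L^1$. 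Hence $G_2(t)\to 0$ as $t\to\infty$.
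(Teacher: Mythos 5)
Your argument is correct, and while it shares the paper's overall strategy (derive a differential inequality for $G_2$ and combine it with $\int_0^\infty G_2\,dt<\infty$ from (\ref{3.6})), the execution differs at both key steps. In the derivative computation, the paper substitutes Lemma \ref{lem2.5} and keeps the term $-2(n+1)\int(\Delta_\theta v)^2\,dV_\theta$ as a good negative term, absorbing the cross term $\int vR_\theta\Delta_\theta v\,dV_\theta$ into it by Young's inequality; this produces a right-hand side of the form $C\bigl[G_2(t)+F_2(t)+F_{\frac{2n+2}{n}}(t)^{\frac{n}{n+1}}\bigr]$, where the inhomogeneous part is \emph{integrable} in $t$ (by (\ref{2.10}) and (\ref{3.14})), so the conclusion follows simply by integrating from a good sequence $t_j$ with $G_2(t_j)\to0$. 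You instead solve (\ref{2.12}) for $\Delta_\theta v$ and cancel the $\int v_t^2$ terms algebraically, which is cleaner but leaves a genuine constant $C$ on the right; this forces the extra work in your endgame (uniform boundedness of $G_2$, hence $G_2'\le C'$, hence the Barbalat-type disjoint-interval argument), which the paper avoids entirely. Both routes are valid; yours trades a sharper inhomogeneous term for a slightly longer concluding argument. One small slip: since you only have an \emph{upper} bound on $G_2'$, the interval on which $G_2\ge\epsilon$ is $[t_n-\epsilon/C',\,t_n]$ (to the left of $t_n$), not $[t_n,\,t_n+\epsilon/C']$; the contradiction with $G_2\in L^1$ goes through unchanged.
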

\begin{proof}
By (\ref{3.6}), there exists a sequence $t_j$ with
$t_j\rightarrow\infty$ as $j\rightarrow\infty$ such that
\begin{equation}\label{3.37}
G_2(t_j)\rightarrow 0\mbox{ as
}j\rightarrow\infty.
\end{equation}
Since $\theta=u^{\frac{2}{n}}\theta_0$, we have (see (2.4) in \cite{Ho2} for example)
$$\int_{S^{2n+1}}\langle\nabla_\theta\,\psi_1,\nabla_\theta\,\psi_2\rangle_\theta dV_{\theta}
=\int_{S^{2n+1}}u^2\langle\nabla_{\theta_0}\psi_1,\nabla_{\theta_0}\psi_2\rangle_{\theta_0} dV_{\theta_0},$$
for any function $\psi_1 $ and $\psi_2$,
which together with (\ref{2.3}) and Lemma \ref{lem2.5} implies that
\begin{equation*}
\begin{split}
\frac{d}{dt}G_2(t)
&=2\int_{S^{2n+1}}|\nabla_{\theta_0}(\alpha
f-R_\theta)|^2_{\theta_0} u\frac{\partial u}{\partial t}dV_{\theta_0}+
2\int_{S^{2n+1}}\langle\nabla_\theta(\alpha
f-R_\theta),\nabla_\theta(\alpha'
f-\frac{\partial R_\theta}{\partial t})\rangle_\theta dV_{\theta}\\
&=n\int_{S^{2n+1}}u^2|\nabla_{\theta_0}(\alpha
f-R_\theta)|^2_{\theta_0} (\alpha
f-R_\theta)dV_{\theta_0}+2
\alpha'\int_{S^{2n+1}}\langle\nabla_\theta(\alpha
f-R_\theta),\nabla_\theta f\rangle_\theta dV_{\theta}\\
&\hspace{4mm}-2\int_{S^{2n+1}}\Delta_\theta(\alpha
f-R_\theta)\Big[(n+1)\Delta_\theta (\alpha
f-R_\theta)+(\alpha f-R_\theta)R_\theta\Big]dV_{\theta}\\
&:=I+II+III.
\end{split}
\end{equation*}
Note that by Lemma \ref{lem2.7} we have
$$I\leq-n\gamma\int_{S^{2n+1}}u^2|\nabla_{\theta_0}(\alpha
f-R_\theta)|^2_{\theta_0} dV_{\theta_0}=-n\gamma\,G_2(t).$$
Note also that $|\alpha'|\leq CF_2(t)+CF_2(t)^{\frac{1}{2}}\leq CF_2(t)^{\frac{1}{2}}$
by (\ref{3.3}) and the fact that $F_2(t)\leq C$. This together
with H\"{o}lder's inequality and Young's inequality
implies that
\begin{equation*}
\begin{split}
|II|
&\leq 2\frac{|\alpha'|}{2}\left(\int_{S^{2n+1}}|\nabla_\theta(\alpha
f-R_\theta|_\theta^2 dV_{\theta}\right)^{\frac{1}{2}}\left(\int_{S^{2n+1}}|\nabla_\theta f|^2_\theta
dV_{\theta}\right)^{\frac{1}{2}}\\
&\leq CF_2(t)^{\frac{1}{2}}G_2(t)^{\frac{1}{2}}\leq CF_2(t)+CG_2(t),
\end{split}
\end{equation*}
since
$$\int_{S^{2n+1}}|\nabla_\theta f|^2_\theta dV_{\theta}
=\int_{S^{2n+1}}|\nabla_{\theta_0} f|^2_{\theta_0}u^2 dV_{\theta_0}
\leq C\left(\int_{S^{2n+1}}u^{\frac{2n+2}{n}}dV_{\theta_0}\right)^{\frac{n}{n+1}}\leq C$$
by Proposition \ref{prop2.1}.
On the other hand, for any $\epsilon>0$, by Lemma \ref{lem2.4} and Young's inequality, we have
\begin{equation*}
\begin{split}
&\frac{III}{2}+(n+1)\int_{S^{2n+1}}|\Delta_\theta(\alpha
f-R_\theta)|^2dV_{\theta}
\\
&=-\int_{S^{2n+1}}(\alpha f-R_\theta)R_\theta\Delta_\theta(\alpha
f-R_\theta)dV_{\theta}\\
&=\int_{S^{2n+1}}(\alpha f-R_\theta)^2\Delta_\theta(\alpha
f-R_\theta)dV_{\theta}-\alpha\int_{S^{2n+1}}f(\alpha f-R_\theta)\Delta_\theta(\alpha
f-R_\theta)dV_{\theta}\\
&\leq \epsilon\int_{S^{2n+1}}|\Delta_\theta(\alpha
f-R_\theta)|^2dV_{\theta}+C\int_{S^{2n+1}}(\alpha f-R_\theta)^4dV_{\theta}
+C\int_{S^{2n+1}}(\alpha f-R_\theta)^2dV_{\theta}.
\end{split}
\end{equation*}
Hence, if we choose $\epsilon=n+1$ and by H\"{o}lder's inequality, we get
\begin{equation*}
\begin{split}
III\leq C\big[F_4(t)+F_2(t)\big]
&\leq C\big[F_{2n+2}(t)^{\frac{1}{n+1}}F_{\frac{2n+2}{n}}(t)^{\frac{n}{n+1}}+F_2(t)\big]\\
&\leq
C\big[F_{\frac{2n+2}{n}}(t)^{\frac{n}{n+1}}+F_2(t)\big]
\end{split}
\end{equation*}
since $F_{2n+2}(t)\leq C$ by Lemma \ref{lem3.2}. Combining
all the estimates above, we obtain
$$\frac{d}{dt}G_2(t)\leq C\big[G_2(t)+F_2(t)+F_{\frac{2n+2}{n}}(t)^{\frac{n}{n+1}}\big].$$
Integrating it from $t_j$ to $t$, we get
$$G_2(t)\leq G_2(t_j)+C\int_{t_j}^t\big[G_2(t)+F_2(t)
+F_{\frac{2n+2}{n}}(t)^{\frac{n}{n+1}}\big]dt.$$
Note that the right hand side tends to 0 when $j\rightarrow\infty$,
thanks to (\ref{2.10}), (\ref{3.6}),
(\ref{3.14}) and (\ref{3.37}). This proves the assertion.
\end{proof}

\section{Blow-up analysis}\label{section4}

The Riemannian version of the following theorem was proved by
Schwetlick and Struwe (see Theorem A.1 in \cite{Schwetlick&Struwe}).

\begin{theorem}\label{thm4.1}
Let $u\in C^\infty(M)$ be a solution of
\begin{equation}\label{4.1}
-(2+\frac{2}{n})\Delta_{\theta_0}u+R_{\theta_0}u=Pu\hspace{2mm}\mbox{
on }S^{2n+1}.
\end{equation}
\emph{(i)} For any $\sigma< Y(S^{2n+1},\theta_0)$, there exists
constants $q_0>2+\displaystyle\frac{2}{n}$ and $r_0>0$ such that
whenever for some $r<r_0$ and some $x_0\in M$ there holds
$\|P\|_{L^{n+1}(B_{2r}(x_0))}\leq\sigma$, then
$$\|u\|_{L^{q_0}(B_r(x_0))}<C\|u\|_{L^{2+\frac{2}{n}}(B_{2r}(x_0))}$$
for some constant $C$ independent of $x_0$.\\
\emph{(ii)} For any $q>2+\displaystyle\frac{2}{n}$, and any $r>0$,
there exists a constant $C=C(q,r)$ such that
$$\|u\|_{L^{q}(B_{3r}(x_0))}<C\|u\|_{L^{2+\frac{2}{n}}(B_{4r}(x_0))}$$
whenever there holds
$\|P\|_{L^{n+1}(B_{4r}(x_0))}<\displaystyle\frac{2n+2}{nq}
Y(S^{2n+1},\theta_0)$.
\end{theorem}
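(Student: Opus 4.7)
The plan is to run a Moser-type iteration driven by the sharp Folland-Stein inequality of Jerison-Lee (Lemma \ref{lem2.3}), which is the CR analogue of the Sobolev inequality with the Yamabe constant that Schwetlick-Struwe used in \cite{Schwetlick&Struwe}. The strategy is to test equation (\ref{4.1}) against $u^{2\beta-1}\eta^2$ for $\beta\geq 1$ and a smooth cutoff $\eta$ supported in a Carnot-Carath\'eodory ball, integrate by parts, absorb cross terms to obtain an energy estimate for $u^\beta\eta$, and then apply Lemma \ref{lem2.3} to turn that energy into an $L^{2+2/n}$ bound.

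Concretely, after using the identity $u^{2\beta-2}|\nabla_{\theta_0}u|^2_{\theta_0}=\beta^{-2}|\nabla_{\theta_0}u^\beta|^2_{\theta_0}$ and the expansion
\[
|\nabla_{\theta_0}(u^\beta\eta)|^2_{\theta_0}=\eta^2|\nabla_{\theta_0}u^\beta|^2_{\theta_0}+2u^\beta\eta\,\langle\nabla_{\theta_0}u^\beta,\nabla_{\theta_0}\eta\rangle_{\theta_0}+u^{2\beta}|\nabla_{\theta_0}\eta|^2_{\theta_0},
\]
the test function identity yields, after absorbing the cross terms (whose prefactor is $O(\beta-1)$) by a $\delta$-Cauchy-Schwarz,
\[
E(u^\beta\eta)\leq \frac{\beta^2}{2\beta-1}\int_{S^{2n+1}} P(u^\beta\eta)^2\,dV_{\theta_0}+C_\beta\int_{S^{2n+1}} u^{2\beta}\bigl(|\nabla_{\theta_0}\eta|^2_{\theta_0}+\eta^2\bigr)dV_{\theta_0},
\]
with $E$ as in (\ref{2.5}). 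Feeding in $Y(S^{2n+1},\theta_0)\|u^\beta\eta\|^2_{L^{2+2/n}}\leq E(u^\beta\eta)$ from Lemma \ref{lem2.3} and the H\"older bound $\int P(u^\beta\eta)^2\leq\|P\|_{L^{n+1}(\mathrm{supp}\,\eta)}\|u^\beta\eta\|^2_{L^{2+2/n}}$ produces the master inequality
\[
\left(Y(S^{2n+1},\theta_0)-\tfrac{\beta^2}{2\beta-1}\|P\|_{L^{n+1}(\mathrm{supp}\,\eta)}\right)\|u^\beta\eta\|^2_{L^{2+2/n}}\leq C_\beta\int u^{2\beta}\bigl(|\nabla_{\theta_0}\eta|^2_{\theta_0}+\eta^2\bigr)dV_{\theta_0}.
\]

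For part (i) I would take $\beta$ slightly larger than $1$ so that $\beta^2/(2\beta-1)$ is close to $1$; since $\sigma<Y(S^{2n+1},\theta_0)$ and $\|P\|_{L^{n+1}(B_{2r})}\leq\sigma$, the left-hand coefficient is uniformly bounded below. Choosing a standard cutoff with $\eta\equiv 1$ on $B_r(x_0)$, supported in $B_{2r}(x_0)$, with $|\nabla_{\theta_0}\eta|_{\theta_0}\leq C/r$, and noting $\beta\leq 1+1/n$ so H\"older gives $\|u\|_{L^{2\beta}(B_{2r})}\leq C\|u\|_{L^{2+2/n}(B_{2r})}$, one concludes $\|u\|_{L^{q_0}(B_r)}\leq C\|u\|_{L^{2+2/n}(B_{2r})}$ with $q_0:=\beta(2+2/n)>2+2/n$; the threshold $r_0$ only serves to ensure the ball admits a standard cutoff in a Folland-Stein chart with horizontal gradient of size $C/r$. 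For part (ii) I would iterate the master inequality along a nested family of cutoffs shrinking from $B_{4r}$ to $B_{3r}$, setting $p_0:=2+2/n$, $p_{k+1}:=\chi p_k$ with $\chi:=1+1/n$, and using $\beta=p_k/2$ at step $k$; at the terminal step one chooses $\beta=nq/(2n+2)$ so the output exponent is exactly $q$. The hypothesis $\|P\|_{L^{n+1}(B_{4r})}<\frac{2n+2}{nq}Y$ is calibrated precisely so that $\frac{\beta^2}{2\beta-1}\|P\|_{L^{n+1}}<Y$ at this last step, and since every intermediate $\beta$ is strictly smaller than $nq/(2n+2)$ (as follows from minimality of the iteration count), the absorption succeeds at every step. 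After finitely many iterations one arrives at $\|u\|_{L^q(B_{3r})}\leq C(q,r)\|u\|_{L^{2+2/n}(B_{4r})}$.

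The main obstacle is the absorption step: one must verify that the factor $\frac{\beta^2}{2\beta-1}\|P\|_{L^{n+1}}$ sits strictly below $Y(S^{2n+1},\theta_0)$ at every iteration, and in particular that the final step of part (ii) actually closes, since the threshold $\frac{2n+2}{nq}Y$ is sharp. A secondary technical issue is the handling of the cross terms from $\nabla_{\theta_0}\eta$: for part (i), their coefficient $O(\beta-1)$ is innocuous, but for part (ii) with larger $\beta$ a $\beta$-dependent Cauchy-Schwarz is needed, which inflates $C_\beta$; since only finitely many iterations occur, this still produces a finite constant $C(q,r)$.
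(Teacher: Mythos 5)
Your proposal is correct and follows essentially the same route as the paper: testing (\ref{4.1}) against $u^{2p-1}\eta^2$, deriving an energy estimate for $w=u^p\eta$, invoking Lemma \ref{lem2.3} together with H\"older on the $\int Pw^2$ term to absorb, and iterating with nested cutoffs and the exponent ladder $p_{k+1}=\frac{n+1}{n}p_k$. The only difference is the absorption constant ($\tfrac{\beta^2}{2\beta-1}$ versus the paper's $\tfrac{1+p}{2}$ arising from its estimate (\ref{4.4a})); both sit strictly below the required threshold under the stated hypotheses, so the argument closes either way.
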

\begin{proof}
(i) For suitable $p\geq 1$ and $\eta\in C^1_0(B_{2r}(x_0))$ with
\begin{equation}\label{4.2}
\eta=1\mbox{ in }B_{r}(x_0),\hspace{3mm}\eta=0\mbox{ in
}S^{2n+1}\setminus B_{2r}(x_0),\hspace{3mm}
|\nabla_{\theta_0}\eta|_{\theta_0}<C/r,
\end{equation} we let
$v=u^{2p-1}\eta^2$. Multiplying (\ref{4.1}) by $v$ and integrating
it over $S^{2n+1}$, we have
\begin{equation}\label{4.3}
\int_{S^{2n+1}}\left((2+\frac{2}{n})\langle\nabla_{\theta_0}u,\nabla_{\theta_0}v\rangle_{\theta_0}
+R_{\theta_0}uv\right)dV_{\theta_0}=
\int_{S^{2n+1}}Puv\:dV_{\theta_0}.
\end{equation}
Also, let $w=u^p\eta$. Then $w^2=u^{2p}\eta^2=uv$ and
\begin{equation}\label{4.4a}
\begin{split}
|\nabla_{\theta_0}w|_{\theta_0}^2&=p^2u^{2p-2}|\nabla_{\theta_0}u|_{\theta_0}^2\eta^2
+2p\,u^{2p-1}\eta\langle\nabla_{\theta_0}u,\nabla_{\theta_0}\eta\rangle_{\theta_0}
+u^{2p}|\nabla_{\theta_0}\eta|_{\theta_0}^2\\
&\leq
(p^2+\frac{p-1}{2})u^{2p-2}|\nabla_{\theta_0}u|_{\theta_0}^2\eta^2
+(p+1)u^{2p-1}\eta\langle\nabla_{\theta_0}u,\nabla_{\theta_0}\eta\rangle_{\theta_0}
+(1+\frac{p-1}{2})u^{2p}|\nabla_{\theta_0}\eta|_{\theta_0}^2\\
&=\frac{1+p}{2}\Big(\langle\nabla_{\theta_0}u,\nabla_{\theta_0}v\rangle_{\theta_0}
+u^{2p}|\nabla_{\theta_0}\eta|_{\theta_0}^2\Big).
\end{split}
\end{equation}
Thus,
\begin{equation}\label{4.4}
\begin{split}
&\frac{2}{1+p}\int_{S^{2n+1}}\left((2+\frac{2}{n})|\nabla_{\theta_0}w|_{\theta_0}^2
+R_{\theta_0}w^2\right)dV_{\theta_0}-(2+\frac{2}{n})
\int_{S^{2n+1}}u^{2p}|\nabla_{\theta_0}\eta|_{\theta_0}^2dV_{\theta_0}\\
&\leq\int_{S^{2n+1}}(2+\frac{2}{n})\langle\nabla_{\theta_0}u,\nabla_{\theta_0}v\rangle_{\theta_0}dV_{\theta_0}
+\frac{2}{1+p}\int_{S^{2n+1}}R_{\theta_0}w^2\:dV_{\theta_0}\\
&\leq\int_{S^{2n+1}}\left((2+\frac{2}{n})\langle\nabla_{\theta_0}u,\nabla_{\theta_0}v\rangle_{\theta_0}dV_{\theta_0}
+R_{\theta_0}w^2\right)dV_{\theta_0}\\
&=\int_{B_{2r}(x_0)}Pw^2\:dV_{\theta_0}\leq
\|P\|_{L^{n+1}(B_{2r}(x_0))}\|w\|_{L^{2+\frac{2}{n}}(B_{2r}(x_0))}^2\leq
\sigma\|w\|_{L^{2+\frac{2}{n}}(B_{2r}(x_0))}^2,
\end{split}
\end{equation}
where the first inequality follows from (\ref{4.4a}), and the first equality follows from  (\ref{4.3}) and the fact that
support of $w$ lies in $B_{2r}(x_0)$, and the second last
inequality follows from H\"{o}lder's inequality, and the last
inequality follows from the assumption that
$\|P\|_{L^{n+1}(B_{2r}(x_0))}\leq\sigma$.

On the other hand, by Lemma \ref{lem2.3} and the fact that support
of $w$ lies in $B_{2r}(x_0)$, we have
$$\int_{S^{2n+1}}\left((2+\frac{2}{n})|\nabla_{\theta_0}w|_{\theta_0}^2
+R_{\theta_0}w^2\right)dV_{\theta_0}\geq
Y(S^{2n+1},\theta_0)\|w\|_{L^{2+\frac{2}{n}}(B_{2r}(x_0))}^2.$$
Combining this with (\ref{4.4}), we have
\begin{equation}\label{4.5}
\left(\frac{2}{1+p}Y(S^{2n+1},\theta_0)-\sigma\right)\|w\|_{L^{2+\frac{2}{n}}(B_{2r}(x_0))}^2\leq
(2+\frac{2}{n})
\int_{S^{2n+1}}u^{2p}|\nabla_{\theta_0}\eta|_{\theta_0}^2dV_{\theta_0}.
\end{equation}
Let $\sigma=a Y(S^{2n+1},\theta_0)$, then $a<1$ by assumption. For
$1<p<\min\displaystyle\Big\{\frac{3-a}{1+a},2+\frac{2}{n}\Big\}$ and
$r<r_0=r_0(a)$, we have
\begin{equation*}
\begin{split}
\frac{(1-a)Y(S^{2n+1},\theta_0)}{4(2+\frac{2}{n})}\|u\|^{2p}_{L^{(2+\frac{2}{n})p}(B_{r}(x_0))}
&=\frac{(1-a)Y(S^{2n+1},\theta_0)}{4(2+\frac{2}{n})}
\left(\int_{B_{r}(x_0)}w^{\frac{2n+2}{n}}dV_{\theta_0}\right)^{\frac{n}{n+1}}\\
&\leq
\frac{(1-a)Y(S^{2n+1},\theta_0)}{4(2+\frac{2}{n})}\|w\|_{L^{\frac{2n+2}{n}}(B_{2r}(x_0))}^2\\
&\leq
\int_{S^{2n+1}}u^{2p}|\nabla_{\theta_0}\eta|_{\theta_0}^2dV_{\theta_0}\leq
Cr^{-2}\|u\|_{L^{\frac{2n+2}{n}}(B_{2r}(x_0))}^{2p}
\end{split}
\end{equation*}
where the second last inequality follows from (\ref{4.5}), and the
last inequality follows from (\ref{4.2}) and H\"{o}lder's
inequality. This proves the assertion.\\
(ii) For $1<p\leq\displaystyle\frac{q}{2+\frac{2}{n}}$ and $\eta\in C^1(B_{4r}(x_0))$, we
define the test function
 $v=u^{2p-1}\eta^2$. Then by the same argument as (i) we can derive
\begin{equation}\label{4.6}
\frac{Y(S^{2n+1},\theta_0)}{2+\frac{2}{n}}\left(\frac{2}{1+p}
-\frac{2+\frac{2}{n}}{q}\right)\|w\|_{L^{2+\frac{2}{n}}(S^{2n+1})}^2\leq
\int_{S^{2n+1}}u^{2p}|\nabla_{\theta_0}\eta|_{\theta_0}^2dV_{\theta_0}
\end{equation}
where $w=u^p\eta$, since $\|P\|_{L^{n+1}(B_{4r}(x_0))}<\displaystyle\frac{2n+2}{nq}
Y(S^{2n+1},\theta_0)$ by assumption. Choose $p_1=\displaystyle\Big(1,\frac{n+1}{n}\Big]$ maximal such that
$q=2p_1\displaystyle\Big(\frac{n+1}{n}\Big)^m$ for some $m\in\mathbb{N}$.
For $i=1,...,m+1$, let $p_i=p_1\displaystyle\Big(\frac{n+1}{n}\Big)^{i-1}$, $r_i=r(3+2^{1-i})$, and
$$\eta_i=1\mbox{ in }B_{r_{i+1}}(x_0),\hspace{3mm}\eta_i=0\mbox{ in
}S^{2n+1}\setminus B_{r_i}(x_0),\hspace{3mm}
|\nabla_{\theta_0}\eta_i|_{\theta_0}<C2^i/r,$$
we can apply (\ref{4.6}) to get
\begin{equation*}
\|u\|_{L^{2p_{i+1}}(B_{r_{i+1}}(x_0))}
\leq \left(\frac{C2^{2i}(q-\frac{2n+2}{n})}{r^2}\right)^{\frac{1}{2p_i}}\|u\|_{L^{2p_i}(B_{r_i}(x_0))},
\end{equation*}
which can be iterated to obtain the result.
\end{proof}

Now we can apply the previous theorem to prove
the following concentration-compactness result,
which is the CR version of the results of Schwetlick and Struwe
(see Theorem 3.1 in \cite{Schwetlick&Struwe}).

\begin{theorem}\label{thm4.2}
Let $\theta_k=u_k^{\frac{2}{n}}\theta_0$,  where
$0<u_k\in C^\infty(S^{2n+1})$ and $k\in\mathbb{Z}^+$, be a family of contact form with
\emph{Vol}$(S^{2n+1},\theta_k)=$\emph{Vol}$(S^{2n+1},\theta_0)$. If
there exists  $p_1>n+1$ and a constant $C_0$ such that
\begin{equation}\label{4.7}
\overline{R}_{\theta_k}=\frac{\int_{S^{2n+1}}R_{\theta_k}dV_{\theta_k}}{\int_{S^{2n+1}}dV_{\theta_k}}
\leq C_0\mbox{ and }
\int_{S^{2n+1}}|R_{\theta_k}-\overline{R}_{\theta_k}|^{p_1}dV_{\theta_k}\leq C_0
\end{equation}
for all $k$, then either\\
\emph{(i)} the sequence $\{u_k\}$ is uniformly bounded in $S^p_2(S^{2n+1},\theta_0)$ for all $p<p_1$, or \\
\emph{(ii)} there exists a subsequence $\{u_k\}$ (relabeled) and
finitely many points $x_1,..., x_L\in S^{2n+1}$ such that for any
$r>0$ and any $i\in\{1,..., L\}$ there holds
\begin{equation}\label{4.8}
\liminf_{k\rightarrow\infty}\left(\int_{B_r(x_i)}|R_{\theta_k}|^{n+1}dV_{\theta_k}\right)^{\frac{1}{n+1}}\geq
Y(S^{2n+1},\theta_0).
\end{equation}
Moreover, the sequence $\{u_k\}$ is bounded in $S_2^p(S^{2n+1},\theta_0)$
on any compact subset of $(S^{2n+1}\setminus\{x_1,..., x_L\},\theta_0)$.
\end{theorem}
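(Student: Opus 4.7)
The starting point is to rewrite the CR Yamabe equation for $u_k$ in the form required by Theorem \ref{thm4.1}, namely
\begin{equation*}
-(2+\tfrac{2}{n})\Delta_{\theta_0}u_k + R_{\theta_0}u_k = P_k u_k,\qquad P_k := R_{\theta_k}\,u_k^{2/n}.
\end{equation*}
The point of this rewriting is the identity
\begin{equation*}
\int_{B_r(x_0)}|P_k|^{n+1}\,dV_{\theta_0}=\int_{B_r(x_0)}|R_{\theta_k}|^{n+1}\,dV_{\theta_k},
\end{equation*}
which converts the size hypothesis of Theorem \ref{thm4.1} into a statement about the $L^{n+1}$ mass of $R_{\theta_k}$ in the intrinsic volume. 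The assumption (\ref{4.7}) together with H\"older's inequality (using $p_1>n+1$ and the fixed volume from Proposition \ref{prop2.1}) yields the global bound $\int_{S^{2n+1}}|R_{\theta_k}|^{n+1}\,dV_{\theta_k}\leq C$ uniformly in $k$.

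Next, I would pass to a subsequence along which the finite nonnegative Radon measures $\mu_k:=|R_{\theta_k}|^{n+1}\,dV_{\theta_k}$ converge weak-$*$ to a Radon measure $\mu$ on $S^{2n+1}$. Define the concentration set
\begin{equation*}
\Sigma:=\{x\in S^{2n+1}:\mu(\{x\})\geq Y(S^{2n+1},\theta_0)^{n+1}\}.
\end{equation*}
Since $\mu$ is finite, $\Sigma=\{x_1,\dots,x_L\}$ is finite. Because $B_r(x_i)$ is open, weak-$*$ convergence gives $\liminf_k\mu_k(B_r(x_i))\geq\mu(B_r(x_i))\geq\mu(\{x_i\})\geq Y(S^{2n+1},\theta_0)^{n+1}$, which is precisely (\ref{4.8}). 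The dichotomy between (i) and (ii) is then $\Sigma=\emptyset$ versus $\Sigma\neq\emptyset$.

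Away from $\Sigma$ the same argument works in both cases. For $x_0\notin\Sigma$, the upper semicontinuity of weak-$*$ convergence on closed sets, applied as the radius shrinks to $0$, produces an $r_0>0$ with $\limsup_k\mu_k(\overline{B_{2r_0}(x_0)})<Y(S^{2n+1},\theta_0)^{n+1}$. Hence for all large $k$ the hypothesis of Theorem \ref{thm4.1}(i) holds on $B_{2r_0}(x_0)$, yielding a uniform bound on $\|u_k\|_{L^{q_0}(B_{r_0}(x_0))}$ for some $q_0>2+\tfrac{2}{n}$. Iterating Theorem \ref{thm4.1}(ii) on slightly shrinking balls (choosing at each step the largest $q$ allowed by the smallness of $\|P_k\|_{L^{n+1}}$) produces uniform bounds $\|u_k\|_{L^{q}(B_{r_0/2}(x_0))}\leq C(q)$ for every $q<\infty$. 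To upgrade $L^q$ control on $u_k$ to an $S_2^p$ bound for every $p<p_1$, I would apply H\"older's inequality with exponents $p_1/p$ and $p_1/(p_1-p)$ to the right-hand side of the CR Yamabe equation:
\begin{equation*}
\int|R_{\theta_k}u_k^{1+2/n}|^p\,dV_{\theta_0}\leq\left(\int|R_{\theta_k}|^{p_1}u_k^{2+2/n}\,dV_{\theta_0}\right)^{p/p_1}\left(\int u_k^{\alpha}\,dV_{\theta_0}\right)^{(p_1-p)/p_1},
\end{equation*}
with $\alpha$ chosen to match the power of $u_k$. The first factor equals $\bigl(\int|R_{\theta_k}|^{p_1}\,dV_{\theta_k}\bigr)^{p/p_1}$ and is bounded by (\ref{4.7}); the second factor is bounded by the $L^{\alpha}$ estimate just established. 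Sub-elliptic $L^p$ regularity for $-(2+\tfrac{2}{n})\Delta_{\theta_0}+R_{\theta_0}$ then gives $\|u_k\|_{S_2^p(B_{r_0/4}(x_0))}\leq C$. A finite covering argument (of $S^{2n+1}$ when $\Sigma=\emptyset$, of any fixed compact $K\subset S^{2n+1}\setminus\Sigma$ otherwise) completes the proof.

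\textbf{Main obstacle.} The principal technical point is the final H\"older split used to pass from $L^q$ control on $u_k$ to $S_2^p$ control. The exponent $p_1>n+1$ is exactly what makes the matching exponent $\alpha=[p(n+2)/n-p(2n+2)/(np_1)]\cdot p_1/(p_1-p)$ finite, so the assumption on the $L^{p_1}(\theta_k)$ norm of $R_{\theta_k}-\overline{R}_{\theta_k}$ is used in a sharp way. A secondary subtlety is bookkeeping in the iterated application of Theorem \ref{thm4.1}(ii): the admissible exponent $q$ must be chosen at each stage in terms of the current smallness of $\|P_k\|_{L^{n+1}}$ on a slightly shrunk ball, so one must verify that finitely many iterations suffice to reach any prescribed $q<\infty$ while keeping the radius bounded below. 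Both of these are essentially parallel to the Riemannian argument of Schwetlick--Struwe, but they require the CR Yamabe inequality (Lemma \ref{lem2.3}) and the sharp Folland--Stein inequality in place of their Riemannian counterparts.
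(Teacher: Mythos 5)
Your architecture is essentially the paper's: the rewriting $P_k=R_{\theta_k}u_k^{2/n}$, the two-stage use of Theorem \ref{thm4.1}, the final H\"older split against (\ref{4.7}) followed by sub-elliptic regularity, and a covering argument. Your weak-$*$ formulation of the concentration set is a legitimate repackaging of the paper's iterative selection of bad points (the paper instead bounds $L$ by summing (\ref{4.8}) over disjoint balls against the uniform global bound on $\int_{S^{2n+1}}|R_{\theta_k}|^{n+1}dV_{\theta_k}$); the two are interchangeable, modulo the small point that your alternative (i) is a priori only a statement about the chosen subsequence, so you should argue by contradiction (an unbounded subsequence must produce a nonempty $\Sigma$) to recover the statement for the full sequence.

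There is, however, one genuine gap: the passage from the initial $L^{q_0}$ bound to $L^{q}$ bounds for \emph{every} $q<\infty$. Theorem \ref{thm4.1}(ii) requires $\|P_k\|_{L^{n+1}(B_{4r}(x_0))}<\frac{2n+2}{nq}Y(S^{2n+1},\theta_0)$, a threshold that tends to $0$ as $q\rightarrow\infty$, and ``iterating Theorem \ref{thm4.1}(ii) on slightly shrinking balls'' does not by itself make $\|P_k\|_{L^{n+1}}$ any smaller. Indeed, for $x_0\notin\Sigma$ nothing in your setup forbids $\mu(\{x_0\})=\frac{1}{2}Y(S^{2n+1},\theta_0)^{n+1}$, in which case $\liminf_k\|P_k\|_{L^{n+1}(B_{4r}(x_0))}\geq 2^{-\frac{1}{n+1}}Y(S^{2n+1},\theta_0)$ for every $r>0$ and your iteration stalls at a finite $q$. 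The missing step -- and the second place where $p_1>n+1$ is used in an essential way -- is the paper's (\ref{4.10}): the $L^{q_0}$ bound from Theorem \ref{thm4.1}(i) forces $\mbox{Vol}(B_r(x_0),\theta_k)\rightarrow 0$ as $r\rightarrow 0$ uniformly in $k$, and then writing $R_{\theta_k}=(R_{\theta_k}-\overline{R}_{\theta_k})+\overline{R}_{\theta_k}$ and applying H\"older with (\ref{4.7}) gives
\begin{equation*}
\|P_k\|_{L^{n+1}(B_{4r}(x_0))}\leq C_0^{\frac{1}{p_1}}\mbox{Vol}(B_{4r}(x_0),\theta_k)^{\frac{1}{n+1}-\frac{1}{p_1}}
+C_0\,\mbox{Vol}(B_{4r}(x_0),\theta_k)^{\frac{1}{n+1}}\longrightarrow 0
\end{equation*}
as $r\rightarrow 0$, uniformly in $k$. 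Only after this can Theorem \ref{thm4.1}(ii) be applied with an arbitrary $q$ on a sufficiently small ball -- a single application, no iteration needed. With this step inserted, the remainder of your argument (the H\"older split with exponents $p_1/p$ and $p_1/(p_1-p)$, the sub-elliptic estimate, and the covering) goes through exactly as in the paper.
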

\begin{proof}
Fix a point $x_0\in S^{2n+1}$ and assume that for some $r>0$ there holds
\begin{equation}\label{4.9}
\sup_k\left(\int_{B_r(x_0)}|R_{\theta_k}|^{n+1}dV_{\theta_k}\right)^{\frac{1}{n+1}}
\leq\sigma< Y(S^{2n+1},\theta_0).
\end{equation}
For $\theta_k=u_k^{\frac{2}{n}}\theta_0$, we have
$$-(2+\frac{2}{n})\Delta_{\theta_0}u_k+R_{\theta_0}u_k=R_{\theta_k}u_k^{1+\frac{2}{n}}=P_ku_k,$$
where $P_k=R_{\theta_k}u_k^{\frac{2}{n}}\in L^{n+1}(B_{2r}(x_0))$
because
$$\|P_k\|_{L^{n+1}(B_{2r}(x_0)}
=\left(\int_{B_{2r}(x_0)}|R_{\theta_k}|^{n+1}u_k^{2+\frac{2}{n}}dV_{\theta_0}\right)^{\frac{1}{n+1}}
\leq\sigma<Y(S^{2n+1},\theta_0)$$
by (\ref{4.9}). Clearly we can assume that $r\leq r_0(\sigma)$ as determined in Theorem \ref{thm4.1}(i).
Therefore, Theorem \ref{thm4.1}(i) and the
assumption that $\|u_k\|^{2+\frac{2}{n}}_{L^{2+\frac{2}{n}}(S^{2n+1})}=$Vol$(S^{2n+1},\theta_k)=$
Vol$(S^{2n+1},\theta_0)$
imply that $u_k$ is bounded in $L^{q_0}(B_r(x_0))$ for
some $q_0>2+\displaystyle\frac{2}{n}$. In particular,
\begin{equation}\label{4.10}
\mbox{Vol}(B_r(x_0),\theta_k)
\leq \|u_k\|_{L^{q_0}(B_r(x_0))}^{2+\frac{2}{n}}\mbox{Vol}(B_r(x_0),\theta_0)^{1-\frac{1}{q_0}}\rightarrow 0\hspace{2mm}\mbox{ as }r\rightarrow 0
\end{equation}
uniformly in $k\in\mathbb{Z}^+$. Then
\begin{equation*}
\begin{split}
&\|P_k\|_{L^{n+1}(B_{4r}(x_0))}\\
&=\left(\int_{B_{4r}(x_0)}|R_{\theta_k}|^{n+1}u_k^{2+\frac{2}{n}}dV_{\theta_0}\right)^{\frac{1}{n+1}}=
\left(\int_{B_{4r}(x_0)}|R_{\theta_k}|^{n+1}dV_{\theta_k}\right)^{\frac{1}{n+1}}\\
&\leq \left(\int_{B_{4r}(x_0)}|R_{\theta_k}-\overline{R}_{\theta_k}|^{n+1}dV_{\theta_k}\right)^{\frac{1}{n+1}}+
\overline{R}_{\theta_k}\left(\int_{B_{4r}(x_0)}dV_{\theta_k}\right)^{\frac{1}{n+1}}\\
&\leq\left(\int_{B_{4r}(x_0)}|R_{\theta_k}-\overline{R}_{\theta_k}|^{p_1}dV_{\theta_k}\right)^{\frac{1}{p_1}}
\left(\int_{B_{4r}(x_0)}dV_{\theta_k}\right)^{\frac{1}{n+1}-\frac{1}{p_1}}+
\overline{R}_{\theta_k}\left(\int_{B_{4r}(x_0)}dV_{\theta_k}\right)^{\frac{1}{n+1}}\\
&\leq C_0^{\frac{1}{p_1}} \mbox{Vol}(B_r(x_0),\theta_k)^{\frac{1}{n+1}-\frac{1}{p_1}}
+C_0\mbox{Vol}(B_r(x_0),\theta_k)^{\frac{1}{n+1}}\rightarrow 0\hspace{2mm}\mbox{ as }r\rightarrow 0,
\end{split}
\end{equation*}
where we have used H\"{o}lder's inequality, (\ref{4.7}) and (\ref{4.10}).
By replacing $r$ by a smaller radius,
we can achieve $\|P_k\|_{L^{n+1}(B_{4r}(x_0))}<\displaystyle\frac{2n+2}{nq}
Y(S^{2n+1},\theta_0)$. Therefore, we can apply Theorem \ref{thm4.1}(ii) and the assumption that
$\|u_k\|^{2+\frac{2}{n}}_{L^{2+\frac{2}{n}}(S^{2n+1})}=$Vol$(S^{2n+1},\theta_k)=$
Vol$(S^{2n+1},\theta_0)$
to conclude that $u_k$ is bounded in $L^{q}(B_{3r}(x_0))$ for any $q>2+\displaystyle\frac{2}{n}$,
 which implies that
\begin{equation}\label{4.11}
\begin{split}
-(2+\frac{2}{n})\Delta_{\theta_0}u_k&=R_{\theta_k}u_k^{1+\frac{2}{n}}-R_{\theta_0}u_k\\
&=(R_{\theta_k}-\overline{R}_{\theta_k})u_k^{1+\frac{2}{n}}
+\overline{R}_{\theta_k}u_k^{1+\frac{2}{n}}-R_{\theta_0}u_k\in L^{p}(B_{3r}(x_0))
\end{split}
\end{equation}
for all  $\displaystyle 2+\frac{2}{n}< p<p_1$. To see this, by
H\"{o}lder's inequality we have
\begin{equation*}
\begin{split}
\big\|(R_{\theta_k}-\overline{R}_{\theta_k})u_k^{1+\frac{2}{n}}\big\|^{p}_{L^{p}(B_{3r}(x_0))}
&=\int_{B_{3r}(x_0)}|R_{\theta_k}-\overline{R}_{\theta_k}|^{p}u_k^{(1+\frac{2}{n})p}dV_{\theta_0}\\
&\leq\left(\int_{B_{3r}(x_0)}|R_{\theta_k}-\overline{R}_{\theta_k}|^{p_1}u_k^{2+\frac{2}{n}}
dV_{\theta_0}\right)^{\frac{p}{p_1}}
\left(\int_{B_{3r}(x_0)}u_k^{q_0}dV_{\theta_0}\right)^{1-\frac{p}{p_1}}\\
&\leq\left(\int_{S^{2n+1}}|R_{\theta_k}-\overline{R}_{\theta_k}|^{p_1}
dV_{\theta_k}\right)^{\frac{p}{p_1}}\|u_k\|_{L^{q_0}(B_{3r}(x_0))}^{q_0(\frac{p_1-p}{p_1})}.
\end{split}
\end{equation*}
which is bounded because of (\ref{4.7}) and the fact $u_k$ is bounded in
$L^{q_0}(B_{3r}(x_0))$ where
\begin{equation*}
\begin{split}
q_0&=\big[(1+\frac{2}{n})p-(2+\frac{2}{n})\frac{p}{p_1}\big]\cdot\frac{p_1}{p_1-p}
\\
&\geq(1+\frac{2}{n})p-(2+\frac{2}{n})\frac{p}{p_1}
>(2+\frac{2}{n})\big[(1+\frac{2}{n})-(2+\frac{2}{n})\frac{1}{p_1}\big]\geq2+\frac{2}{n},
\end{split}
\end{equation*}
where we have used the assumption that $p_1>n+1$. On the other hand,
\begin{equation*}
\begin{split}
\big\|\overline{R}_{\theta_k}u_k^{1+\frac{2}{n}}\big\|_{L^{p}(B_{3r}(x_0))}
&\leq
\overline{R}_{\theta_k}\|u_k\|^{1+\frac{2}{n}}_{L^{p(1+\frac{2}{n})}(B_{3r}(x_0))}\mbox{ and }\\
\big\|R_{\theta_0}u_k\big\|_{L^{p}(B_{3r}(x_0))}&\leq
|R_{\theta_0}|\|u_k\|_{L^{p}(B_{3r}(x_0))}
\end{split}
\end{equation*}
are bounded by  (\ref{4.7}) and the fact $u_k$ is bounded in
$L^{q}(B_{3r}(x_0))$ for all $q>2+\displaystyle\frac{2}{n}$. Combining
all these, we prove (\ref{4.11}). By (\ref{4.11}) and the fact that
$u_k$ is bounded in $L^{p_1}(B_{3r}(x_0))$, we have
\begin{equation}\label{4.12}
\|u_k\|_{S^{p}_2(B_{3r}(x_0))}\leq
C\|\Delta_{\theta_0}u_k\|_{L^{p}(B_{3r}(x_0))}+C\|u_k\|_{L^{p}(B_{3r}(x_0))}\leq
C(r),
\end{equation}
where we have used Theorem 3.16 and 3.17 in \cite{Dragomir}. See
also \cite{Folland&Stein}.

Now assume that (\ref{4.9}) is satisfied for every $x\in S^{2n+1}$
and some radius $r=r(x)>0$. Since $S^{2n+1}$ is compact, the cover
$\big(B_{r(x)}(x)\big)_{x\in S^{2n+1}}$ of $S^{2n+1}$ admits a
finite subcover $B_{r_i}(x_i)$, where $r_i=r(x_i)$, $1\leq i\leq I$.
From (\ref{4.12}), we then obtain the desired uniform bound
$$\|u_k\|_{S^{p}_2(S^{2n+1})}\leq I\max_{1\leq i\leq I}C(r_i).$$

If (\ref{4.9}) does not hold for every $x$ with some $r=r(x)>0$, we
iteratively determine points $x_l, l\in\mathbb{N}$, and a
subsequence $\{u_k\}$ (relabeled) such that  condition (\ref{4.8}) is valid
for any $r>0$. This iteration terminates after
finitely many steps. Indeed, given $x_1,..., x_L$, choose
$0<r<\displaystyle\min_{i\neq j} d(x_i,x_j)/2$. Then (\ref{4.8})
yields the bound
\begin{equation*}
\begin{split}
L\cdot Y(S^{2n+1},\theta_0)^{n+1}&\leq
\sum_{i=1}^L\liminf_{k\rightarrow\infty}\int_{B_r(x_i)}|R_{\theta_k}|^{n+1}dV_{\theta_k}\\
&=
\liminf_{k\rightarrow\infty}\int_{\cup_{i=1}^LB_r(x_i)}|R_{\theta_k}|^{n+1}dV_{\theta_k}\\
&\leq \sup_{k}\int_{S^{2n+1}}|R_{\theta_k}|^{n+1}dV_{\theta_k}\\
&\leq
C\sup_{k}\left(\int_{S^{2n+1}}|R_{\theta_k}-\overline{R}_{\theta_k}|^{n+1}dV_{\theta_k}
+\overline{R}^{n+1}_{\theta_k}\right)\\
&\leq
C\sup_{k}\left(\int_{S^{2n+1}}|R_{\theta_k}-\overline{R}_{\theta_k}|^{p_1}dV_{\theta_k}
+\overline{R}^{n+1}_{\theta_k}\right)<\infty
\end{split}
\end{equation*}
where we have used (\ref{4.7}), H\"{o}lder's inequality, and
Vol$(S^{2n+1},\theta_k)=$Vol$(S^{2n+1},\theta_0)$. By a covering
argument as above we then obtain that $\{u_k\}$ is bounded in
$S_2^p(S^{2n+1})$ on any compact subset of
$(S^{2n+1}\setminus\{x_1,..., x_L\},\theta_0)$.
\end{proof}

Now we can apply Theorem \ref{4.2} to the solution of the flow
equation (\ref{2.3}).

\begin{lem}\label{lem4.3}
For any time sequence $t_k$, we denote $u_k=u(t_k)$ where $u(t)$
is the solution to the flow equation $(\ref{2.3})$. Then either\\
\emph{(i)} the sequence $\{u_k\}$ is uniformly bounded in $S^p_2(S^{2n+1},\theta_0)$
for some $p>n+1$, or \\
\emph{(ii)} there exists a subsequence $\{u_k\}$ (relabeled) and
finitely many points $x_1,..., x_L\in S^{2n+1}$ such that for any
$r>0$ and any $i\in\{1,..., L\}$ there holds
\begin{equation}\label{4.13}
\liminf_{k\rightarrow\infty}\left(\int_{B_r(x_i)}|R_{\theta(t_k)}|^{n+1}
dV_{\theta(t_k)}\right)^{\frac{1}{n+1}}\geq
Y(S^{2n+1},\theta_0).
\end{equation}
Moreover, the sequence $\{u_k\}$ is bounded in $S_2^p(S^{2n+1},\theta_0)$ on
any compact subset of $(S^{2n+1}\setminus\{x_1,...,
x_L\},\theta_0)$.
\end{lem}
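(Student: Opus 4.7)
The plan is simply to verify that the hypotheses of Theorem \ref{thm4.2} hold for the family $\theta_k := \theta(t_k) = u_k^{2/n}\theta_0$ arising from the flow, and then invoke that theorem. There are three conditions to check: the volume normalization, a uniform upper bound on the averages $\overline{R}_{\theta_k}$, and a uniform $L^{p_1}$ bound on $R_{\theta_k}-\overline{R}_{\theta_k}$ for some $p_1>n+1$.

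The volume normalization $\mathrm{Vol}(S^{2n+1},\theta_k)=\mathrm{Vol}(S^{2n+1},\theta_0)$ is immediate from Proposition \ref{prop2.1}. For the second condition, observe that
\[
\overline{R}_{\theta_k}=\frac{\int_{S^{2n+1}} R_{\theta_k}\,dV_{\theta_k}}{\mathrm{Vol}(S^{2n+1},\theta_0)}=\frac{E(u_k)}{\mathrm{Vol}(S^{2n+1},\theta_0)},
\]
by (\ref{2.5}) and Proposition \ref{prop2.1}, and the upper bound in (\ref{2.12a}) then gives $\overline{R}_{\theta_k}\leq C_0$ uniformly in $k$.

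For the third condition, I would use the triangle inequality
\[
|R_{\theta_k}-\overline{R}_{\theta_k}|\leq |R_{\theta_k}-\alpha(t_k) f|+|\alpha(t_k) f-\overline{R}_{\theta_k}|
\]
and bound each piece separately in $L^{p_1}(dV_{\theta_k})$. The first piece is exactly $F_{p_1}(t_k)^{1/p_1}$, which by Lemma \ref{lem3.2} tends to $0$ (and is therefore bounded) for every $p_1<\infty$. For the second piece, Lemma \ref{lem2.4} gives $\alpha(t_k)\leq\alpha_2$, while the computation above yields a uniform bound on $\overline{R}_{\theta_k}$; since $f$ is smooth on the compact manifold $S^{2n+1}$, the quantity $|\alpha(t_k)f-\overline{R}_{\theta_k}|$ is uniformly bounded in $C^0$, and Proposition \ref{prop2.1} then bounds its $L^{p_1}(dV_{\theta_k})$ norm. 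Choosing any $p_1>n+1$, both conditions in (\ref{4.7}) are verified.

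Having established the hypotheses, applying Theorem \ref{thm4.2} yields the dichotomy in the lemma verbatim: either $\{u_k\}$ is bounded in $S^p_2(S^{2n+1},\theta_0)$ for all $p<p_1$, in particular for some $p>n+1$ (recall $p_1>n+1$ was at our disposal), or the concentration alternative (\ref{4.13}) occurs at finitely many points with the residual $S^p_2$ bound on compact subsets of the complement. There is no real obstacle here; the only thing to be careful about is keeping track that Lemma \ref{lem3.2} delivers bounded $F_{p_1}$ with $p_1$ strictly larger than $n+1$, which is what allows us to land in case (i) of Theorem \ref{thm4.2} with an exponent above the critical $n+1$.
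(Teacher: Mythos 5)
Your proposal is correct and follows essentially the same route as the paper: verify the volume normalization via Proposition \ref{prop2.1}, bound $\overline{R}_{\theta_k}$ uniformly, control $\|R_{\theta_k}-\overline{R}_{\theta_k}\|_{L^{p_1}(dV_{\theta_k})}$ by splitting off $\alpha(t_k)f-R_{\theta_k}$ and using Lemma \ref{lem3.2} together with Lemma \ref{lem2.4}, and then invoke Theorem \ref{thm4.2}. The only cosmetic differences are that you derive the bound on $\overline{R}_{\theta_k}$ from $E(u_k)$ and (\ref{2.12a}) rather than from (\ref{2.2}) and Lemma \ref{lem2.4}, and you use a two-term triangle inequality where the paper uses a three-term Minkowski split; both are equivalent.
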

\begin{proof}
We are going to apply Theorem \ref{thm4.2}. Since the flow (\ref{2.3})
keeps the volume fixed by Proposition \ref{prop2.1}, we only need to
check (\ref{4.7}) for some $p_1>n+1$. First, note that
\begin{equation}\label{4.14}
\overline{R}_{\theta(t_k)}=\frac{\int_{S^{2n+1}}R_{\theta(t_k)}
dV_{\theta(t_k)}}{\int_{S^{2n+1}}dV_{\theta(t_k)}}=
\frac{\alpha(t_k)\int_{S^{2n+1}}fdV_{\theta(t_k)}}{\int_{S^{2n+1}}
dV_{\theta(t_k)}}\leq\alpha_2\max_{S^{2n+1}}f
\end{equation}
by (\ref{2.2}) and Lemma \ref{lem2.4}. On the other hand, by
Minkowski inequality, Proposition \ref{prop2.1} and Lemma
\ref{lem2.4}, we have
\begin{equation*}
\begin{split}
&\left(\int_{S^{2n+1}}|R_{\theta(t_k)}-\overline{R}_{\theta(t_k)}|^{p_1}
dV_{\theta(t_k)}\right)^{\frac{1}{p_1}}\\
&\leq F_{p_1}(t_k)^{\frac{1}{p_1}}
+\alpha(t_k)\left(\int_{S^{2n+1}}f^{p_1}dV_{\theta(t_k)}\right)^{\frac{1}{p_1}}
+\overline{R}_{\theta(t_k)}\mbox{Vol}(S^{2n+1},\theta(t_k))^{\frac{1}{p_1}}\\
&=F_{p_1}(t_k)^{\frac{1}{p_1}} +\big(\alpha_2\max_{S^{2n+1}}f
+\overline{R}_{\theta(t_k)}\big)\mbox{Vol}(S^{2n+1},\theta_0)^{\frac{1}{p_1}}
\end{split}
\end{equation*}
which is bounded by (\ref{4.14}) and Lemma \ref{lem3.1}. Therefore,
Lemma \ref{lem4.3} follows from Theorem \ref{thm4.2}.
\end{proof}

\textit{Remark.} One important thing we would like to mention is
that concentration in the sense
of (\ref{4.13}) implies concentration of volume. To see this, by H\"{o}lder's inequality and Lemma \ref{lem2.4}, for any $r>0$ and $p>n+1$ we can estimate
\begin{equation}\label{4.14a}
\begin{split}
&\left(\int_{B_r(x_i)}|R_{\theta(t_k)}|^{n+1}dV_{\theta(t_k)}\right)^{\frac{1}{n+1}}\\
&\leq \alpha_2\Big(\max_{S^{2n+1}}f\Big)\left(\int_{B_r(x_i)}dV_{\theta(t_k)}\right)^{\frac{1}{n+1}}
+\left(\int_{B_r(x_i)}|\alpha(t_k)f-R_{\theta(t_k)}|^{n+1}dV_{\theta(t_k)}\right)^{\frac{1}{n+1}}\\
&\leq \alpha_2\Big(\max_{S^{2n+1}}f\Big)\left(\int_{B_r(x_i)}dV_{\theta(t_k)}\right)^{\frac{1}{n+1}}
+\left(\int_{B_r(x_i)}|\alpha(t_k)f-R_{\theta(t_k)}|^{p}dV_{\theta(t_k)}\right)^{\frac{1}{p}}
\left(\int_{B_r(x_i)}dV_{\theta_k}\right)^{\frac{1}{n+1}-\frac{1}{p}}.
\end{split}
\end{equation}
It follows from (\ref{4.14a}) and Lemma \ref{lem3.2} that if
 concentration of volume does not occur, then  concentration in the sense
of (\ref{4.13}) does not occur.

Let $\delta_n=\max_{S^{2n+1}}f/\min_{S^{2n+1}}f.$ By assumption (\ref{sbc}) in Theorem \ref{thm1.1}, we
have $\delta_n<2^{\frac{1}{n}}.$ Then there exists $\epsilon_0>0$
such that
$$\displaystyle\frac{\delta_n^{\frac{n}{n+1}}}{2^{\frac{1}{n+1}}}=\frac{1-\epsilon_0}{1+\epsilon_0}.$$
In particular,
$(1+\epsilon_0)\delta_n^{\frac{n}{n+1}}<2^{\frac{1}{n+1}}$.
 Set
\begin{equation}\label{4.15}
\beta=(1+\epsilon_0)Y(S^{2n+1},\theta_0)\left(\min_{S^{2n+1}}f\right)^{-\frac{n}{n+1}}.
\end{equation}

The next lemma is to estimate the number of blow-up points.

\begin{lem}\label{lem4.4}
For any $0<u_0\in C^\infty(S^{2n+1})$ with
$\displaystyle\int_{S^{2n+1}}u_0^{2+\frac{2}{n}}dV_{\theta_0}=\mbox{\emph{Vol}}(S^{2n+1},\theta_0)$
and $E_f(u_0)\leq \beta$, let $u(t)$ be the solution of the flow
$(\ref{2.3})$ with the initial data $u_0$. If $\{u(t_k)\}$ is a
sequence with $t_k\rightarrow\infty$ as $k\rightarrow\infty$ and
\begin{equation}\label{4.16}
\max_{S^{2n+1}}f<2^{\frac{1}{n}}\min_{S^{2n+1}}f,
\end{equation}
 then $L=1$.
\end{lem}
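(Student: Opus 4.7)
The plan is to rule out $L\geq 2$; since case (ii) of Lemma \ref{lem4.3} requires $L\geq 1$, this will force $L=1$. The idea is to convert the $L^{n+1}$ concentration of $R_{\theta(t_k)}$ encoded in (\ref{4.13}) into concentration of the $f$-weighted volume at each blow-up point, which can then be contradicted by the energy bound $E_f(u(t))\leq E_f(u_0)\leq \beta$ inherited along the flow from Proposition \ref{prop2.2}. Passing to subsequences (Lemma \ref{lem2.4} and the boundedness of $\int_{S^{2n+1}} f\,dV_\theta$), we may assume $\alpha(t_k)\to\alpha_\infty\in[\alpha_1,\alpha_2]$ and $\int_{S^{2n+1}}f\,dV_{\theta(t_k)}\to A$.

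Since $F_{n+1}(t_k)\to 0$ by Lemma \ref{lem3.2}, Minkowski's inequality allows one to replace $R_{\theta_k}$ by $\alpha(t_k)f$ on the left of (\ref{4.13}), giving $Y(S^{2n+1},\theta_0)\leq \alpha_\infty\liminf_k\|f\|_{L^{n+1}(B_r(x_i),\theta_k)}$ for every $r>0$. The continuity of $f$ lets one bound $\sup_{B_r(x_i)}f\to f(x_i)$ as $r\to 0$, so that
\[
V_i:=\lim_{r\to 0}\liminf_{k\to\infty}\mathrm{Vol}(B_r(x_i),\theta(t_k))\geq \frac{Y(S^{2n+1},\theta_0)^{n+1}}{(\alpha_\infty f(x_i))^{n+1}}.
\]
A parallel argument using $\inf_{B_r(x_i)}f\to f(x_i)$ and summation over the $B_r(x_i)$ (disjoint for $r$ small) then yields
\[
A\geq \sum_{i=1}^L f(x_i)V_i\geq \Big(\frac{Y(S^{2n+1},\theta_0)}{\alpha_\infty}\Big)^{n+1}\sum_{i=1}^L\frac{1}{f(x_i)^n}\geq \frac{L\,Y(S^{2n+1},\theta_0)^{n+1}}{\alpha_\infty^{n+1}(\max_{S^{2n+1}}f)^n}.
\]

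On the other hand, (\ref{2.2}) and (\ref{2.7}) combine to yield the identity $E_f(u)^{n+1}=\alpha^{n+1}\int_{S^{2n+1}} f\,dV_\theta$, so Proposition \ref{prop2.2} gives $\alpha_\infty^{n+1}A=\lim_{k}E_f(u(t_k))^{n+1}\leq \beta^{n+1}$. Multiplying the previous display by $\alpha_\infty^{n+1}$ and substituting $\beta=(1+\epsilon_0)Y(S^{2n+1},\theta_0)(\min_{S^{2n+1}}f)^{-n/(n+1)}$ produces
\[
L\leq (1+\epsilon_0)^{n+1}\delta_n^n=2(1-\epsilon_0)^{n+1}<2,
\]
where the middle equality uses the defining relation $(1+\epsilon_0)\delta_n^{n/(n+1)}=(1-\epsilon_0)2^{1/(n+1)}$ for $\epsilon_0$. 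Hence $L\leq 1$, and with $L\geq 1$ from case (ii) we conclude $L=1$.

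The delicate point---and the reason the inverted weights $1/f(x_i)^n$ appear---is that one must match $f$ with the pointwise value $f(x_i)$ (from above in the lower bound for $V_i$, and from below in the lower bound for $A$) on shrinking balls \emph{before} summing the $L$ contributions. A cruder argument that replaces $f$ uniformly by $\max f$ throughout would only yield $L\leq (1+\epsilon_0)^{n+1}\delta_n^{n+1}$, which fails to be strictly less than $2$ under the simple bubble condition $\delta_n<2^{1/n}$; so the pointwise-value refinement is essential to extract exactly the hypothesis \eqref{sbc}.
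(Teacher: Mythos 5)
Your proof is correct, and it reaches the paper's exact numerical threshold by a genuinely different decomposition. The paper's own argument stays entirely at the level of $L^{n+1}$-norms of $R_{\theta(t_k)}$: it sums the left side of (\ref{4.13}) over the $L$ disjoint balls, applies H\"{o}lder's inequality $\sum_i a_i\leq L^{n/(n+1)}\bigl(\sum_i a_i^{n+1}\bigr)^{1/(n+1)}$ to pick up the crucial factor $L^{n/(n+1)}$, splits off $\|R_{\theta(t_k)}-\alpha(t_k) f\|_{L^{n+1}}\to 0$ via Lemma \ref{lem3.2}, and bounds the remaining global quantity by
$\alpha(t_k)\bigl(\int f^{n+1}dV_{\theta(t_k)}\bigr)^{1/(n+1)}=E_f(u(t_k))\bigl(\int f\,dV_{\theta(t_k)}\bigr)^{-1/(n+1)}\bigl(\int f^{n+1}dV_{\theta(t_k)}\bigr)^{1/(n+1)}\leq\beta(\max f)^{n/(n+1)}$,
arriving at $L^{1/(n+1)}\leq(1+\epsilon_0)\delta_n^{n/(n+1)}<2^{1/(n+1)}$ --- the $(n+1)$-th root of your bound $L\leq(1+\epsilon_0)^{n+1}\delta_n^{n}<2$. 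You instead localize first, converting (\ref{4.13}) into a quantitative lower bound $V_i\geq Y(S^{2n+1},\theta_0)^{n+1}/(\alpha_\infty f(x_i))^{n+1}$ on the volume carried by each blow-up point, and then compare the sum of the $f$-weighted local volumes with the identity $E_f(u)^{n+1}=\alpha^{n+1}\int_{S^{2n+1}}f\,dV_\theta$ (which indeed follows from (\ref{2.6}) and (\ref{2.7})) together with Proposition \ref{prop2.2}. Both routes use the same ingredients (Lemma \ref{lem3.2}, monotonicity of $E_f$, and the definitions of $\beta$ and $\epsilon_0$); the paper's version is shorter because the global H\"{o}lder step absorbs the pointwise matching of $f$ that you must perform by hand on shrinking balls, while your version makes the geometric content --- each concentration point costs a definite amount of $f$-weighted volume, and the energy budget only pays for one --- explicit, and your closing remark correctly identifies why the cruder uniform replacement of $f$ by $\max f$ would lose exactly the factor $\delta_n$ needed to close the argument under (\ref{sbc}). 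Minor housekeeping your write-up glosses over but which causes no trouble: passing to a subsequence to define $\alpha_\infty$ and $A$ is harmless since (\ref{4.13}) is stated as a $\liminf$ and persists along subsequences, and $V_i$ is well defined as $r\to 0$ because $\liminf_k\mbox{Vol}(B_r(x_i),\theta(t_k))$ is nondecreasing in $r$.
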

\begin{proof}
Suppose that $x_1,..., x_L\in S^{2n+1}$ are the blow-up
points. Let $r=\displaystyle\min_{i\neq j} d(x_i,x_j)/2$. For any
given $\epsilon>0$, if $k$ is sufficiently large, by (\ref{4.13}) we
have
\begin{equation}\label{4.17}
\begin{split}
L\left(Y(S^{2n+1},\theta_0)-\epsilon\right)
&\leq\sum_{i=1}^L\left(\int_{B_r(x_i)}|R_{\theta(t_k)}|^{n+1}dV_{\theta(t_k)}\right)^{\frac{1}{n+1}}\\
&\leq
L^{1-\frac{1}{n+1}}\left(\int_{S^{2n+1}}|R_{\theta(t_k)}|^{n+1}dV_{\theta(t_k)}\right)^{\frac{1}{n+1}}\\
&\leq
L^{1-\frac{1}{n+1}}\left(\int_{S^{2n+1}}|R_{\theta(t_k)}-\alpha(t_k)f|^{n+1}
dV_{\theta(t_k)}\right)^{\frac{1}{n+1}}
\\
&\hspace{4mm}+L^{1-\frac{1}{n+1}}\,\alpha(t_k)\left(\int_{S^{2n+1}}f^{n+1}
dV_{\theta(t_k)}\right)^{\frac{1}{n+1}}.
\end{split}
\end{equation}
Note that as $k\rightarrow\infty$ the first term on the right hand
side tends to 0 by Lemma \ref{lem3.2}. On the other hand, the second
term can be estimated as follows:
\begin{equation}\label{4.18}
\begin{split}
\alpha(t_k)&\left(\int_{S^{2n+1}}f^{n+1}dV_{\theta(t_k)}\right)^{\frac{1}{n+1}}\\
&=E_f(u(t_k))\left(\int_{S^{2n+1}}fdV_{\theta(t_k)}\right)^{-\frac{1}{n+1}}
\left(\int_{S^{2n+1}}f^{n+1}dV_{\theta(t_k)}\right)^{\frac{1}{n+1}}\\
&\leq E_f(u_0)\left(\max_{S^{2n+1}}f\right)^{\frac{n}{n+1}}\leq \beta\left(\max_{S^{2n+1}}f\right)^{\frac{n}{n+1}}\\
&=(1+\epsilon_0)Y(S^{2n+1},\theta_0)\left(\frac{\max_{S^{2n+1}}f}{\min_{S^{2n+1}}f}\right)^{\frac{n}{n+1}}
\end{split}
\end{equation}
by (\ref{2.6}), (\ref{2.7}), (\ref{4.15}), and Proposition
\ref{prop2.2}. Combining (\ref{4.17}) and (\ref{4.18}), and using
(\ref{4.16}), we obtain
$$L\left(Y(S^{2n+1},\theta_0)-\epsilon\right)\leq
L^{1-\frac{1}{n+1}}(1+\epsilon_0)Y(S^{2n+1},\theta_0)\delta_n^{\frac{n}{n+1}}$$
for any $\epsilon>0$. This implies that
$$L^{\frac{1}{n+1}}\leq
(1+\epsilon_0)\delta_n^{\frac{n}{n+1}}<2^{\frac{1}{n+1}},$$
which implies $L<2$. Since $L$ is a
natural number, one can easily conclude that $L=1$.
\end{proof}

\begin{lem}\label{lem4.5}
The blow-up point in Lemma \ref{lem4.4} does not depend on the
special choice of the sequence $t_k$.
\end{lem}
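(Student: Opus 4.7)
My plan is to argue by contradiction. Suppose there exist sequences $t_k \to \infty$ and $s_k \to \infty$ whose subsequences (in the sense of Lemma \ref{lem4.4}) yield distinct blow-up points $x_\infty \neq y_\infty$. After passing to subsequences, arrange $t_k < s_k < t_{k+1}$.

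The first step is to construct intermediate times at which the volume is ``split'' between the two would-be blow-up regions. Fix $r \in (0, d_{S^{2n+1}}(x_\infty, y_\infty)/3)$, and set $V := \mbox{Vol}(S^{2n+1},\theta_0)$. By the Remark following Lemma \ref{lem4.3}, concentration of $|R_\theta|^{n+1}$ at a point implies concentration of volume there, so
$$\mbox{Vol}(B_r(x_\infty), \theta(t_k)) \longrightarrow V \quad\mbox{and}\quad \mbox{Vol}(B_r(x_\infty), \theta(s_k)) \longrightarrow 0.$$
Since $u(t)$ depends smoothly on $t$, the map $t \mapsto \mbox{Vol}(B_r(x_\infty), \theta(t))$ is continuous, and the intermediate value theorem produces times $\tau_k \in (t_k, s_k)$ with $\mbox{Vol}(B_r(x_\infty), \theta(\tau_k)) = V/2$. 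In particular $\tau_k \to \infty$.

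Next I would apply the dichotomy of Lemma \ref{lem4.3} to $\{u(\tau_k)\}$. In case (ii), Lemma \ref{lem4.4} gives a single blow-up point $z_\infty$ at which volume concentrates; but this forces $\mbox{Vol}(B_r(x_\infty), \theta(\tau_k)) \to V$ if $z_\infty \in B_r(x_\infty)^\circ$, and $\to 0$ if $z_\infty \in S^{2n+1} \setminus \overline{B_r(x_\infty)}$, both contradicting the value $V/2$. The borderline case $z_\infty \in \partial B_r(x_\infty)$ is avoided by perturbing $r$ in the open interval $(0, d_{S^{2n+1}}(x_\infty, y_\infty)/3)$ away from the at-most-countable set of ``bad'' radii.

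In case (i), $\{u(\tau_k)\}$ is uniformly bounded in $S_2^p$ for some $p > n+1$, hence uniformly bounded in $L^\infty$ by the Sobolev embedding for the parabolic dimension $2n+2$. Passing to a further subsequence, $u(\tau_k) \to u_\infty$ in $C^0$, where (using Lemma \ref{lem3.2} to pass to the limit in the equation) $u_\infty$ is a bounded, positive, classical solution of $-(2+\tfrac{2}{n})\Delta_{\theta_0} u_\infty + R_{\theta_0} u_\infty = \alpha_\infty f\, u_\infty^{1+2/n}$ with $E_f(u_\infty) \leq \beta$. Taking $k \to \infty$ in the volume identity gives
$$\frac{V}{2} \leq \|u_\infty\|_{L^\infty}^{2+\frac{2}{n}} \, \mbox{Vol}(B_r(x_\infty), \theta_0).$$
The main obstacle is obtaining an $L^\infty$ bound on $u_\infty$ that is independent of $r$. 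I would derive this from the simple bubble condition (\ref{sbc}) combined with $E_f(u_\infty) \leq \beta$: this energy threshold precludes $u_\infty$ from exhibiting a single-bubble concentration, and standard blow-up analysis for the CR Yamabe equation on the sphere then gives a uniform $L^\infty$ bound $M$ depending only on $f$ and $\beta$. Once this is established, letting $r \to 0$ makes $\mbox{Vol}(B_r(x_\infty), \theta_0) \to 0$ while the left-hand side stays at $V/2$, a contradiction. Thus $x_\infty = y_\infty$.
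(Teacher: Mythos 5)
Your route is genuinely different from the paper's, and it contains real gaps. The paper's proof is a two-line interleaving trick: if $\{u(t_k)\}$ and $\{u(s_k)\}$ blew up at $x_1\neq x_2$ respectively, merge them into one sequence whose even terms come from the first and whose odd terms come from the second; this is again a time sequence tending to infinity, and both $x_1$ and $x_2$ are concentration points for it, so $L\geq 2$, contradicting Lemma \ref{lem4.4}. No intermediate-value construction or analysis of the limit equation is needed.

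The concrete gaps in your argument are these. First, the Remark after Lemma \ref{lem4.3} does not give $\mbox{Vol}(B_r(x_\infty),\theta(t_k))\to V$. Estimate (\ref{4.14a}) combined with (\ref{4.13}) and Lemma \ref{lem3.2} yields only a uniform \emph{positive lower bound}, roughly $\bigl(Y(S^{2n+1},\theta_0)/(\alpha_2\max_{S^{2n+1}}f)\bigr)^{n+1}$, on the volume captured in $B_r(x_\infty)$; this quantity can be far below $V/2$. The full concentration statement (\ref{4.23}) is part of Theorem \ref{thm4.7}(ii), proved only later and under extra hypotheses, so it cannot be used here. Likewise $\mbox{Vol}(B_r(x_\infty),\theta(s_k))\to 0$ is not automatic: the $S_2^p$ bound on compact sets away from $y_\infty$ gives $\mbox{Vol}(B_r(x_\infty),\theta(s_k))\leq C(r)\,\mbox{Vol}(B_r(x_\infty),\theta_0)$ with a constant depending on $r$ and the subsequence, which for your fixed $r$ need not drop below $V/2$. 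Hence the two endpoint values needed for the intermediate value theorem are not secured. Second, in case (i) for the intermediate sequence, the $L^\infty$ bound on $u_\infty$ you need must be independent of $r$; but the times $\tau_k$, hence the $S_2^p$ bound and the limit $u_\infty$, all depend on $r$, and the ``standard blow-up analysis'' you invoke to get an $r$-independent bound from (\ref{sbc}) and $E_f\leq\beta$ is precisely the machinery this section is in the process of building — it is not available at this point. Without that uniformity, letting $r\to 0$ proves nothing, so the contradiction in case (i) is not established.
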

\begin{proof}
Suppose it were not true. Then there would exist $x_1\neq x_2$ in
$S^{2n+1}$ and two sequences $\{t_j\}$ and $\{t_k\}$ such that the
sequences $\{u(t_j)\}$ and $\{u(t_k)\}$ are blow-up at $x_1$ and
$x_2$ respectively. Then we define the new sequence $\{u(t_l)\}$
such that $u(t_{2j})=u(t_j)$ and $u(t_{2k+1})=u(t_k)$. That is, all
the even terms of $\{u(t_l)\}$ consist of the sequence $\{u(t_j)\}$
while all the odd terms consist of the sequence $\{u(t_k)\}$. Then
$\{u(t_l)\}$ would be blow-up at $x_1$ and $x_2$, which contradicts
Lemma \ref{lem4.4}.
\end{proof}

\begin{lem}\label{lem4.6}
Any sequence $u_k=u(t_k)$ with $t_k\rightarrow\infty$ is a
Palais-Smale sequence of the energy functional $E_f(u)$ if $u$ is a
solution of the flow
$(\ref{2.3})$ with a fixed initial data $u_0$.
\end{lem}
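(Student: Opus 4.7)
The plan is to verify the two defining properties of a Palais--Smale sequence for the functional $E_f$ on $S^2_1(S^{2n+1},\theta_0)$: uniform boundedness of $E_f(u_k)$, and convergence of the derivative $dE_f(u_k)$ to zero in the dual space $(S^2_1)^*$. The first is already essentially built into the construction: by Proposition \ref{prop2.2} the energy $E_f(u(t))$ is non-increasing along the flow, and $E_f(u)\geq 0$ by (\ref{2.5}) and (\ref{2.7}); hence $0\leq E_f(u_k)\leq E_f(u_0)$ uniformly in $k$.

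The second step is to compute $dE_f(u)[\phi]$ for an arbitrary test function $\phi\in S^2_1$. Differentiating the quotient (\ref{2.7}), using the CR Yamabe equation (\ref{2.4}) to write $dE(u)[\phi]=2\int R_\theta u^{1+\frac{2}{n}}\phi\,dV_{\theta_0}$, and using (\ref{2.6}) to identify $\alpha$, a routine calculation gives
\begin{equation*}
dE_f(u)[\phi]=\frac{2}{\bigl(\int_{S^{2n+1}}fu^{2+\frac{2}{n}}dV_{\theta_0}\bigr)^{n/(n+1)}}\int_{S^{2n+1}}(R_\theta-\alpha f)u^{1+\frac{2}{n}}\phi\,dV_{\theta_0}.
\end{equation*}
By Proposition \ref{prop2.1} and $f\geq m>0$, the prefactor is bounded uniformly in $t$ by some constant $K$.

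To estimate the integral, I would apply Cauchy--Schwarz to pair $(R_\theta-\alpha f)u^{\frac{1}{2}(2+\frac{2}{n})}$ with $u^{\frac{1}{n}}\phi$:
\begin{equation*}
\left|\int(R_\theta-\alpha f)u^{1+\frac{2}{n}}\phi\,dV_{\theta_0}\right|\leq F_2(t)^{\frac{1}{2}}\left(\int_{S^{2n+1}}u^{\frac{2}{n}}\phi^2\,dV_{\theta_0}\right)^{\frac{1}{2}}.
\end{equation*}
The first factor is $F_2(t_k)^{1/2}$, which tends to $0$ by Lemma \ref{lem3.1}. For the second factor, apply H\"older's inequality with conjugate exponents $n+1$ and $(n+1)/n$, then the Folland--Stein embedding $S^2_1\hookrightarrow L^{2+\frac{2}{n}}$, together with $\int u^{2+\frac{2}{n}}dV_{\theta_0}=\mbox{Vol}(S^{2n+1},\theta_0)$ from Proposition \ref{prop2.1}, to obtain
\begin{equation*}
\int_{S^{2n+1}}u^{\frac{2}{n}}\phi^2\,dV_{\theta_0}\leq\mbox{Vol}(S^{2n+1},\theta_0)^{\frac{1}{n+1}}\|\phi\|_{L^{2+\frac{2}{n}}}^2\leq C\|\phi\|_{S^2_1}^2.
\end{equation*}

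Combining these, $|dE_f(u_k)[\phi]|\leq CF_2(t_k)^{1/2}\|\phi\|_{S^2_1}$, whence $\|dE_f(u_k)\|_{(S^2_1)^*}\to 0$. There is no real obstacle here: the flow is essentially a gradient-like flow for $E_f$, so the decay of $F_2$ proved in Lemma \ref{lem3.1} is exactly what is needed. The only delicate points are choosing exponents so that the $u$-integral reduces to the fixed volume (Proposition \ref{prop2.1}) and so that $\phi$ is tested in the Folland--Stein critical exponent rather than a non-embeddable one.
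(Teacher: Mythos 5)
Your proposal is correct and follows essentially the same route as the paper: boundedness of $E_f(u_k)$ (hence of $E(u_k)$ by (\ref{2.12a}) and of $u_k$ in $S_1^2$), the same formula for $dE_f(u)[\phi]$ with prefactor controlled by (\ref{2.9}), and an estimate of $\int(R_\theta-\alpha f)u^{1+\frac{2}{n}}\phi\,dV_{\theta_0}$ by H\"older against $\|\phi\|_{L^{2+\frac{2}{n}}}$. The only (immaterial) difference is the split of exponents: you use Cauchy--Schwarz to produce $F_2(t_k)^{1/2}$ and invoke Lemma \ref{lem3.1}, whereas the paper pairs with exponents $\frac{2n+2}{n+2}$ and $\frac{2n+2}{n}$ and invokes Lemma \ref{lem3.2}.
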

\begin{proof}
By Palais-Smale sequence we mean that $\{u_k\}$ is bounded in
$S_1^2(S^{2n+1},\theta_0)$ and $dE_f(u_k)\rightarrow 0$ as
$k\rightarrow\infty$. As $k\rightarrow\infty$,
$$e_k=E_f(u_k)\rightarrow e_\infty$$
since $E_f(u(t))$ is monotonic decreasing in time $t$ by Proposition
\ref{prop2.2}.  Since $E(u_k)$ is bounded by (\ref{2.12a}),  $\{u_k\}$ is bounded in
$S_1^2(S^{2n+1},\theta_0)$. On the other hand, for any $\varphi\in
S_1^2(S^{2n+1},\theta_0)\hookrightarrow
L^{2+\frac{2}{n}}(S^{2n+1},\theta_0)$, there
holds
\begin{equation*}
\begin{split}
&\frac{1}{2}\left(\int_{S^{2n+1}}fu_k^{2+\frac{2}{n}}dV_{\theta_0}\right)^{\frac{n}{n+1}}|\langle
DE_f(u_k),\varphi\rangle|\\
&=\left|\int_{S^{2n+1}}\left((2+\frac{2}{n})\langle\nabla_{\theta_0}u_k,\nabla_{\theta_0}\varphi\rangle_{\theta_0}
+R_{\theta_0}u_k\varphi
\right)dV_{\theta_0}
-\alpha(t_k)\int_{S^{2n+1}}fu_k^{1+\frac{2}{n}}\varphi\,dV_{\theta_0}\right|\\
&=\left|\int_{S^{2n+1}}(R_{\theta_k}-\alpha(t_k)f)u_k^{1+\frac{2}{n}}\varphi\,dV_{\theta_0}\right|\\
&\leq\left(\int_{S^{2n+1}}|R_{\theta_k}-\alpha(t_k)f|^{\frac{2n+2}{n+2}}u_k^{2+\frac{2}{n}}
dV_{\theta_0}\right)^{\frac{n+2}{2n+2}}
\left(\int_{S^{2n+1}}|\varphi|^{2+\frac{2}{n}}dV_{\theta_0}\right)^{\frac{n}{2n+2}}\rightarrow
0
\end{split}
\end{equation*}
as $k\rightarrow\infty$ by Lemma \ref{lem3.2}.
\end{proof}

For every smooth positive function $u(t)$, set
$P(t)=\displaystyle\int_{S^{2n+1}}x
u(t)^{2+\frac{2}{n}}dV_{\theta_0}$ where $x=(x_1,...,x_{n+1})\in
S^{2n+1}\subset\mathbb{C}^{n+1}$, and we define
\begin{equation}\label{cm}
\widehat{P(t)}=\displaystyle\frac{P(t)}{\|P(t)\|}\mbox{ if }\|P(t)\|\neq 0,
\mbox{ otherwise }\widehat{P(t)}=P(t).
\end{equation}
Clearly $\widehat{P(t)}\in S^{2n+1}$ smoothly depends
on the time $t$ if $u$ does. There exists a family of conformal CR
diffeomorphisms $\phi(t): S^{2n+1}\rightarrow S^{2n+1}$ such that
(see \cite{Frank&Lieb})
\begin{equation}\label{4.19}
\int_{S^{2n+1}}x\,dV_h=(0,...,0)\in\mathbb{C}^{n+1}\hspace{2mm}\mbox{ for all }t>0,
\end{equation}
where the new contact form
\begin{equation}\label{4.19a}
h=h(t)=\phi(t)^*\big(\theta(t)\big)=v(t)^{2+\frac{2}{n}}\theta_0
\end{equation}
is called the normalized contact form with
$v=v(t)=(u(t)\circ\phi(t))\big|\det(d\phi(t))\big|^{\frac{n}{2n+2}}$
and the volume form $dV_h=v(t)^{2+\frac{2}{n}}dV_{\theta_0}$. In
fact, the conformal CR diffeomorphism may be represented as
$\phi(t)=\phi_{p(t),r(t)}=\Psi\circ T_{p(t)}\circ D_{r(t)}\circ\pi$
for some $p(t) \in\mathbb{H}^n$ and $r(t)>0$. Here the CR
diffeomorphism $\pi:S^{2n+1}\setminus\{(0,...,0,-1)\}\rightarrow\mathbb{H}^n$
is given by
\begin{equation}\label{4.20}
\pi(x)=\left(\frac{x'}{1+x_{n+1}},Re\big(\sqrt{-1}\frac{1-x_{n+1}}{1+x_{n+1}}\big)\right)
, x=(x',x_{n+1})\in S^{2n+1},
\end{equation}
 where $\mathbb{H}^n$ denotes the
Heisenberg group, and
 $D_{\lambda},
T_{(z',\tau')}:\mathbb{H}^n\rightarrow\mathbb{H}^n$ are respectively
the dilation and translation on $\mathbb{H}^n$ given by
\begin{equation}\label{4.21}
D_{\lambda}(z,\tau)=(\lambda z, \lambda^2\tau)\mbox{ and }
T_{(z',\tau')}(z,\tau)=(z+z',\tau+\tau'+2Im(z'\cdot
\overline{z}))\mbox{ for }(z,\tau)\in\mathbb{H}^n.
\end{equation}
And $\Psi=\pi^{-1}$ is the inverse of $\pi$.

Meanwhile, the normalized function $v$ satisfies
\begin{equation}\label{4.22}
-(2+\frac{2}{n})\Delta_{\theta_0}v+R_{\theta_0}v=R_hv^{1+\frac{2}{n}},
\end{equation}
where $R_h=R_\theta\circ\phi(t)$ is the Webster scalar curvature of
the normalized contact form $h=h(t)$ in view of (\ref{4.19a}). Hereafter, we set
$f_\phi=f\circ\phi$.

Now we state our main result of this section.

\begin{theorem}\label{thm4.7}
For any given $u_0$ satisfying $(\ref{2.01})$ with
$E_f(u_0)\leq\beta$, consider the flow $\theta(t)$ with initial data
$u_0$. Let $\{t_k\}$ be a time sequence of the flow with
$t_k\rightarrow\infty$ as $k\rightarrow\infty$. Let $\{\theta_k\}$
be the corresponding contact forms such that
$\theta_k=u(t_k)^{\frac{2}{n}}\theta_0$. Assume that
$\|R_{\theta_k}-R_\infty\|_{L^{p_1}(S^{2n+1},\theta_k)}\rightarrow
0$ as $k\rightarrow\infty$ for some $p_1>n+1$ and a smooth function
$R_\infty>0$ defined on $S^{2n+1}$ which satisfies the simple bubble
condition (sbc):
$$\frac{\max_{S^{2n+1}}R_\infty}{\min_{S^{2n+1}}R_\infty}<2^{\frac{1}{n}}.$$
Then, up to a subsequence, either\\
\emph{(i)} $\{u_k\}$ is uniformly bounded in $S^p_2(S^{2n+1},\theta_0)$ for some $p\in(n+1,p_1)$.
 Furthermore, $u_k\rightarrow u_\infty$ in
$S^p_2(S^{2n+1},\theta_0)$ as $k\rightarrow\infty$, where $\theta_\infty=u_\infty^{\frac{2}{n}}\theta_0$
has Webster scalar curvature $R_\infty$, or\\
\emph{(ii)} let
$h_k=\phi(t_k)^*(\theta_k)=v_k^{\frac{2}{n}}\theta_0$ be the
associated sequence of the normalized contact forms satisfying
$\displaystyle\int_{S^{2n+1}}x\,dV_{h_k}=(0,...,0)\in\mathbb{C}^{n+1}$. Then, there exists
$Q\in S^{2n+1}$ such that
\begin{equation}\label{4.23}
dV_{\theta_k}\rightharpoonup\mbox{\emph{Vol}}(S^{2n+1},\theta_0)\delta_Q,\hspace{2mm}\mbox{
as }k\rightarrow\infty
\end{equation}
in the weak sense of measures. In addition, for any $\lambda\in (0,1)$, we have
\begin{equation}\label{4.24}
v_k\rightarrow 1\mbox{ in }
C^{1,\lambda}_P(S^{2n+1})\hspace{2mm}\mbox{ as }k\rightarrow\infty.
\end{equation}
Here $C^{1,\lambda}_P(S^{2n+1})$ is the parabolic H\"{o}rmander
H\"{o}lder spaces defined as in $(\ref{2.24})$.
\end{theorem}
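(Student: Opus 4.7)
The plan is to apply the concentration--compactness dichotomy of Lemma \ref{lem4.3} to $\{u_k\}$: case (i) of that lemma will be handled by elliptic regularity, while case (ii) will be reduced, after normalization, to the Aubin-type improved Folland--Stein inequality of the Appendix. If $\{u_k\}$ is uniformly bounded in $S^p_2(S^{2n+1},\theta_0)$ for some $p>n+1$, then Folland--Stein embedding yields a uniform $L^\infty$ bound and Rellich compactness extracts a subsequence converging strongly in every $L^q$. Combined with $\|R_{\theta_k}-R_\infty\|_{L^{p_1}(\theta_k)}\to 0$, H\"{o}lder's inequality, and the comparison of $dV_{\theta_k}$ with $dV_{\theta_0}$, the right-hand side of
$$-(2+\tfrac{2}{n})\Delta_{\theta_0}u_k+R_{\theta_0}u_k=R_{\theta_k}u_k^{1+2/n}$$
converges in $L^q$ for some $q\in(n+1,p_1)$; subelliptic Schauder (Theorems 3.16--3.17 in \cite{Dragomir}) upgrades this to $u_k\to u_\infty$ in $S^p_2$, and the limit equation identifies $R_{\theta_\infty}=R_\infty$, proving conclusion (i).

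If Lemma \ref{lem4.3}(ii) occurs, so $\{u_k\}$ concentrates at finitely many points $x_1,\ldots,x_L$, then I will run the counting estimate of Lemma \ref{lem4.4} with $\alpha(t_k)f$ replaced by $R_\infty$, absorbing the error using the $L^{p_1}(\theta_k)$-convergence and H\"{o}lder; the simple bubble condition imposed on $R_\infty$ gives
$$L^{1/(n+1)}\leq(1+\epsilon_0)\left(\frac{\max_{S^{2n+1}}R_\infty}{\min_{S^{2n+1}}R_\infty}\right)^{n/(n+1)}<2^{1/(n+1)},$$
so $L=1$; let $Q$ denote this unique blow-up point. Since $\{u_k\}$ is bounded in $S^p_2$ away from $Q$, volume preservation (Proposition \ref{prop2.1}) together with the Remark following Lemma \ref{lem4.3} forces $dV_{\theta_k}\rightharpoonup\mathrm{Vol}(S^{2n+1},\theta_0)\,\delta_Q$.

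It remains to establish $v_k\to 1$. By construction $v_k$ satisfies the balancing condition (\ref{4.19}) and preserves volume, and by conformal invariance of the energy, $E(v_k)=E(u_k)$; the definition of $\beta$ in (\ref{4.15}), Proposition \ref{prop2.2}, and estimate (\ref{2.12a}) combine to give the strict bound
$$E(v_k)\leq(1-\epsilon_0)\cdot 2^{1/(n+1)}Y(S^{2n+1},\theta_0)\,\mathrm{Vol}(S^{2n+1},\theta_0)^{n/(n+1)},$$
lying below the Aubin threshold. The Aubin-type improved Folland--Stein inequality of the Appendix, valid under balancing, then rules out the concentration alternative of Lemma \ref{lem4.3} when applied to $\{v_k\}$, giving a uniform $S^p_2$-bound and a strongly convergent subsequence $v_k\to v_\infty$. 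Since $\phi_k$ degenerates at $Q$ and $R_\infty$ is smooth, $R_\infty\circ\phi_k\to R_\infty(Q)$ in every $L^q(\theta_0)$; combined with $\|R_{\theta_k}-R_\infty\|_{L^{p_1}(\theta_k)}\to 0$, this yields $R_{h_k}\to R_\infty(Q)$ in a topology strong enough to pass to the limit in
$$-(2+\tfrac{2}{n})\Delta_{\theta_0}v_k+R_{\theta_0}v_k=R_{h_k}v_k^{1+2/n}.$$
The limit $v_\infty$ is a positive, balanced solution of the CR Yamabe equation with constant right-hand side and prescribed volume, and the Jerison--Lee/Frank--Lieb classification of extremals of $Y(S^{2n+1},\theta_0)$ forces $v_\infty\equiv 1$ and $R_\infty(Q)=R_{\theta_0}$. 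A final bootstrapping argument using the subelliptic Schauder estimate (\ref{2.20}) upgrades the convergence to $v_k\to 1$ in $C^{1,\lambda}_P(S^{2n+1})$ for every $\lambda\in(0,1)$.

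The main obstacle I anticipate is the quantitative use of the Aubin improvement to preclude every concentration scenario for $\{v_k\}$ under the strict energy gap above, together with controlling the pulled-back curvature $R_{h_k}\to R_\infty(Q)$ in a topology strong enough to pass to the limit while the conformal diffeomorphisms $\phi_k$ degenerate.
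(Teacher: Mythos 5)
Your treatment of case (i) and your counting argument for $L=1$ follow the paper's route (Lemma \ref{lem4.8}, Lemmas \ref{lem4.4}--\ref{lem4.5}) and are fine. The gap is in the compactness step for the normalized sequence $\{v_k\}$. You assert that the Aubin-type improvement of Lemma \ref{lemA}, combined with the energy bound $E(v_k)\leq(1-\epsilon_0)2^{\frac{1}{n+1}}Y(S^{2n+1},\theta_0)\mathrm{Vol}(S^{2n+1},\theta_0)^{\frac{n}{n+1}}$, ``rules out the concentration alternative of Lemma \ref{lem4.3} when applied to $\{v_k\}$, giving a uniform $S^p_2$-bound.'' It does not. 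What the improved inequality yields under the balancing condition is only the estimate
$$\big(C_\epsilon-(2^{-\frac{1}{n+1}}+\epsilon)R_{\theta_0}\big)\int_{S^{2n+1}}v_k^2\,dV_{\theta_0}\geq c>0,$$
i.e.\ a uniform lower bound on the subcritical norm $\|v_k\|_{L^2}$ (this is exactly Lemma \ref{lem4.9}). That prevents total collapse of $v_k$ to zero, but it is compatible with a single bubble carrying a fixed fraction of the volume superposed on a nondegenerate remainder; the counting argument applied to $v_k$ still permits $L=1$, so concentration is not excluded by energy considerations alone, and no $S^p_2$ bound for $p>n+1$ (which controls $\|v_k\|_{L^\infty}$) can follow from an $L^2$ lower bound.

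The mechanism the paper actually uses to convert the Aubin lower bound into compactness is substantially longer and you have omitted all of it: (a) a Moser--Trudinger-type estimate on $w=\log v$ giving condition $(\ast)$ (Lemma \ref{lem4.9.5}, Corollary \ref{cor4.9.5}); (b) a Green's-function argument upgrading this to a uniform pointwise lower bound $v(t)\geq C_3>0$ (Lemma \ref{lem4.10}); (c) a uniform lower bound on the first nonzero eigenvalue $\lambda_1(\theta(t_k))$, proved by a delicate capacity/cut-off argument on the Heisenberg group (Lemma \ref{lem4.11}); and (d) a Rayleigh-quotient iteration using (b) and (c) to obtain the supercritical integral bound $\int_{S^{2n+1}}v_k^{2+\frac{2}{n}+\epsilon_0}dV_{\theta_0}\leq C_0$ (Lemma \ref{lem4.12}), which is what breaks criticality and allows the elliptic bootstrap to $C^{1,\lambda}_P$ in Lemma \ref{lem4.13}. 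Two further points you gloss over: you must \emph{prove} that the dilation parameters $r(t_k)$ of $\phi_k$ tend to infinity (otherwise $u_k$ itself is bounded above and below and one is back in case (i)); and the identification $R_{h_k}\to R_\infty(Q)$ requires the uniform convergence $\phi_k\to Q$ away from $Q$ established in (\ref{4.69}), not merely smoothness of $R_\infty$. As written, the central analytic content of case (ii) is missing.
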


Due to the length of the proof of Theorem \ref{thm4.7}, we will
divide the proof into several lemmas.

\begin{lem}\label{lem4.8}
Suppose case \emph{(i)} occurs. Then $f$ can be realized as the
Webster scalar curvature of some contact form conformal to
$\theta_0$.
\end{lem}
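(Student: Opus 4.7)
The plan is to show first that the given limit scalar curvature $R_\infty$ must be a positive constant multiple of $f$, and then to absorb that constant via a trivial rescaling of $\theta_\infty$.

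By Lemma \ref{lem2.4}, the sequence $\{\alpha(t_k)\}$ lies in the fixed compact interval $[\alpha_1,\alpha_2]\subset(0,\infty)$, so after passing to a subsequence I may assume $\alpha(t_k)\to\alpha_\infty\in[\alpha_1,\alpha_2]$. Applying Lemma \ref{lem3.2} with $p=p_1$ together with the uniform volume bound from Proposition \ref{prop2.1} gives
\begin{equation*}
\int_{S^{2n+1}}|\alpha_\infty f-R_{\theta_k}|^{p_1}\,dV_{\theta_k}\longrightarrow 0.
\end{equation*}
Combining this with the assumption $\|R_{\theta_k}-R_\infty\|_{L^{p_1}(dV_{\theta_k})}\to 0$ and applying the triangle inequality yields
\begin{equation*}
\int_{S^{2n+1}}|\alpha_\infty f-R_\infty|^{p_1}u_k^{2+\frac{2}{n}}\,dV_{\theta_0}\longrightarrow 0.
\end{equation*}

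Next I would pass to the limit in the weight. In case (i), $u_k\to u_\infty$ strongly in $S_2^p(S^{2n+1},\theta_0)$ for some $p>n+1$; since the homogeneous dimension of $S^{2n+1}$ is $2n+2$, the Folland--Stein embedding $S_2^p\hookrightarrow C^0(S^{2n+1})$ (see \cite{Dragomir,Folland&Stein}) upgrades this to uniform convergence, so $u_k^{2+\frac{2}{n}}\to u_\infty^{2+\frac{2}{n}}$ uniformly. The identification $R_{\theta_\infty}=R_\infty$ in case (i) implicitly asserts that $u_\infty>0$ pointwise (as $\theta_\infty$ is a contact form), which is consistent since $\int u_\infty^{2+\frac{2}{n}}dV_{\theta_0}=\mathrm{Vol}(S^{2n+1},\theta_0)>0$ and the strong maximum principle applied to the limiting CR Yamabe equation rules out interior zeros. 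Passing to the limit in the above integral therefore gives $|\alpha_\infty f-R_\infty|^{p_1}u_\infty^{2+\frac{2}{n}}\equiv 0$, and hence $R_\infty=\alpha_\infty f$ pointwise on $S^{2n+1}$.

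To conclude, define $\tilde\theta=\alpha_\infty\theta_\infty=(\alpha_\infty^{n/2}u_\infty)^{2/n}\theta_0$. Applying the CR Yamabe equation (\ref{2.4}) to the conformal factor $\alpha_\infty^{n/2}u_\infty$ yields the elementary scaling identity $R_{c\theta}=c^{-1}R_\theta$ for any positive constant $c$, so
\begin{equation*}
R_{\tilde\theta}=\alpha_\infty^{-1}R_{\theta_\infty}=\alpha_\infty^{-1}R_\infty=f,
\end{equation*}
exhibiting $f$ as the Webster scalar curvature of the contact form $\tilde\theta$ conformal to $\theta_0$. The only real point of substance is justifying the pointwise identity $R_\infty=\alpha_\infty f$; this relies crucially on the strong $S_2^p$-convergence with $p>n+1$ from case (i), without which one could only conclude an integral version of the identity.
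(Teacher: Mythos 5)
Your argument is correct and follows essentially the same route as the paper's proof: identify $u_\infty>0$ via the maximum principle, pass to the limit to get $R_\infty=\alpha_\infty f$, and absorb the constant by the scaling $R_{c\theta}=c^{-1}R_\theta$. The only cosmetic differences are that you extract a convergent subsequence of $\{\alpha(t_k)\}$ from the compact interval $[\alpha_1,\alpha_2]$ where the paper shows the sequence is Cauchy, and you keep the weight $u_k^{2+\frac{2}{n}}$ in the limiting integral where the paper first converts to unweighted $L^p(dV_{\theta_0})$ convergence using the lower bound on $u_\infty$; both are immaterial.
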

\begin{proof}
If $\{u_k\}$ is uniformly bounded in $S^p_2(S^{2n+1},\theta_0)$ for
some $p>n+1$, then up to a subsequence, there exists $u_\infty\in
S^p_2(S^{2n+1},\theta_0)$ such that $u_k\rightarrow u_\infty$ weakly
in $S^p_2(S^{2n+1},\theta_0)$ and strongly in $C^\sigma(S^{2n+1})$
for any $0<\sigma<(p-n-1)/p$  as $k\rightarrow\infty$, where
$C^\sigma(S^{2n+1})$ is the standard H\"{o}lder space (see Theorem
3.16 and 3.17 in \cite{Dragomir}, and also \cite{Folland&Stein}).
Since
$\|R_{\theta_k}-R_\infty\|_{L^{p_1}(S^{2n+1},\theta_k)}\rightarrow
0$ as $k\rightarrow\infty$ and $u_k$ satisfies
$$-(2+\frac{2}{n})\Delta_{\theta_0}u_k+R_{\theta_0}u_k
=R_{\theta_k}u_k^{1+\frac{2}{n}}\mbox{ on }S^{2n+1},$$
so $u_\infty$ weakly solves
\begin{equation}\label{4.25}
-(2+\frac{2}{n})\Delta_{\theta_0}u_\infty+R_{\theta_0}u_\infty
=R_{\infty}u_\infty^{1+\frac{2}{n}}\mbox{ on }S^{2n+1}.
\end{equation}
By Theorem 3.22 in \cite{Dragomir}, $u_\infty\in C^\infty(S^{2n+1})$
if $R_\infty\in C^\infty(S^{2n+1})$. Since
$\mbox{Vol}(S^{2n+1},\theta_k)
=\displaystyle\int_{S^{2n+1}}u_k^{2+\frac{2}{n}}dV_{\theta_0}
=\int_{S^{2n+1}}u_0^{2+\frac{2}{n}}dV_{\theta_0}>0$ by Proposition
\ref{prop2.1},
$\displaystyle\int_{S^{2n+1}}u_\infty^{2+\frac{2}{n}}dV_{\theta_0}>0$.
Since $u_k>0$, we have $u_\infty\geq 0$. That is, $u_\infty$ is
nonnegative and not identically zero on $S^{2n+1}$. Applying
Proposition A.1 in \cite{Ho2}, we get $u_\infty>0$ on $S^{2n+1}$.
Hence there exists constant $C>0$ such that
\begin{equation}\label{4.26}
C^{-1}\leq u_\infty\leq C\hspace{2mm}\mbox{ on }S^{2n+1}.
\end{equation}
Moreover, we have
\begin{equation}\label{4.27}
R_{\theta_k}\rightarrow R_\infty\hspace{2mm}\mbox{ in }L^{p_1}(S^{2n+1},\theta_0),
\mbox{ as }k\rightarrow\infty,
\end{equation}
and
\begin{equation}\label{4.28}
u_k\rightarrow u_\infty\hspace{2mm}\mbox{ in }S^{p}_2(S^{2n+1},\theta_0),
\mbox{ as }k\rightarrow\infty.
\end{equation}
Hence, by (\ref{4.26}), (\ref{4.28}), and Lemma \ref{lem3.2}, we
have
\begin{equation}\label{4.29}
R_{\theta_k}-\alpha(t_k)f\rightarrow 0\hspace{2mm}\mbox{ in }L^p(S^{2n+1},\theta_0).
\end{equation}
 On the other hand, by (\ref{4.26}) and H\"{o}lder's inequality, we have
\begin{equation*}
\begin{split}
|\alpha(t_{k})-\alpha(t_{l})|\int_{S^{2n+1}}f^pdV_{\theta_0}
\leq&\int_{S^{2n+1}}|R_{\theta_k}-\alpha(t_{k})f|^pdV_{\theta_0}
+\int_{S^{2n+1}}|R_{\theta_{l}}-\alpha(t_{l})f|^pdV_{\theta_0}\\
&+\left(\int_{S^{2n+1}}|R_{\theta_{k}}-R_\infty|^{p_1}
dV_{\theta_0}\right)^{\frac{p_1}{p}}
\left(\int_{S^{2n+1}}dV_{\theta_0}\right)^{\frac{p-p_1}{p}}\\
&
+\left(\int_{S^{2n+1}}|R_{\theta_l}-R_\infty|^{p_1}dV_{\theta_0}\right)^{\frac{p_1}{p}}
\left(\int_{S^{2n+1}}dV_{\theta_0}\right)^{\frac{p-p_1}{p}}
\end{split}
\end{equation*}
which tends to 0 as $k,l\rightarrow\infty$ by (\ref{4.27}) and
(\ref{4.29}). That is, $\{\alpha(t_k)\}$ is a Cauchy sequence, which
implies that $\alpha(t_k)\rightarrow\alpha_\infty$ as
$k\rightarrow\infty$. Combining all these, we have
$R_\infty=\alpha_\infty f$ for some $\alpha_\infty>0$. Therefore, up
to a constant multiple, $u_\infty$ is a solution we want in view of
(\ref{4.25}).
\end{proof}

We are now ready to study case (ii), i.e. study the normalized flow $v(t)$ defined in (\ref{4.19a}).
For convenience,
for each conformal CR diffeormorphism $\phi$ from $S^{2n+1}$ to itself,
we denote
$(u\circ\phi)|\det(d\phi)|^{\frac{n}{2n+2}}$ by $v$.
Note that $v$ enjoys the following properties:
\begin{equation}\label{4.30}
E(v)=E(u)\hspace{2mm}\mbox{ and }\hspace{2mm}
\int_{S^{2n+1}}v^{2+\frac{2}{n}}dV_{\theta_0}
=\int_{S^{2n+1}}u^{2+\frac{2}{n}}dV_{\theta_0}.
\end{equation}

\begin{lem}\label{lem4.9}
There exists a constant $C_0$ depending only on $n$, such that,
for the normalized conformal factor $v(t)$, we have
$$\int_{S^{2n+1}}v(t)^2dV_{\theta_0}\geq C_0>0$$
for all $t\geq 0$ with initial data in $C^\infty_f$ which is defined in the proof.
\end{lem}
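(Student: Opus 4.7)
The natural choice of $C^\infty_f$ is the set of smooth positive functions $u_0$ on $S^{2n+1}$ satisfying the normalization (\ref{2.01}) together with the sub-threshold energy bound $E_f(u_0) \leq \beta$, with $\beta$ defined in (\ref{4.15}). The plan is to play an Aubin-type improved Folland-Stein inequality for $v(t)$ against a matching upper bound for $E(v(t))$ coming from $E_f(u_0) \leq \beta$; the strict sub-threshold gap built into the simple bubble condition (\ref{sbc}) will then force a positive lower bound on $\int v(t)^2\,dV_{\theta_0}$.

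The key input is the CR analogue of Aubin's improvement of the Folland-Stein inequality that is established in the Appendix: for every nonzero $w \in S^2_1(S^{2n+1}, \theta_0)$ satisfying the balancing condition $\int_{S^{2n+1}} x\,w^{2+\frac{2}{n}}\,dV_{\theta_0} = 0$, there exists a constant $A = A(n) > 0$ with
$$2^{\frac{1}{n+1}}\,Y(S^{2n+1}, \theta_0)\left(\int_{S^{2n+1}} w^{2+\frac{2}{n}}\,dV_{\theta_0}\right)^{\frac{n}{n+1}} \leq E(w) + A \int_{S^{2n+1}} w^2\,dV_{\theta_0}.$$
By construction $v(t)$ satisfies this balancing condition via (\ref{4.19}) and preserves the volume by (\ref{4.30}) and Proposition \ref{prop2.1}. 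Applying the inequality to $w = v(t)$ and using Lemma \ref{lem2.3} then gives
$$2^{\frac{1}{n+1}}\,R_{\theta_0}\,\mbox{Vol}(S^{2n+1}, \theta_0) \leq E(v(t)) + A \int_{S^{2n+1}} v(t)^2\,dV_{\theta_0}.$$

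For the matching upper bound on $E(v(t))$, I will use $E(v(t)) = E(u(t))$ from (\ref{4.30}) together with the monotonicity $E_f(u(t)) \leq E_f(u_0) \leq \beta$ from Proposition \ref{prop2.2}. Substituting into (\ref{2.7}) and invoking Proposition \ref{prop2.1}, Lemma \ref{lem2.3}, and the definition (\ref{4.15}) of $\beta$, I expect
$$E(v(t)) \leq \beta \bigl((\max f)\,\mbox{Vol}(S^{2n+1},\theta_0)\bigr)^{\frac{n}{n+1}} = (1+\epsilon_0)\,\delta_n^{\frac{n}{n+1}}\,R_{\theta_0}\,\mbox{Vol}(S^{2n+1}, \theta_0);$$
the defining relation for $\epsilon_0$ in the paragraph preceding (\ref{4.15}) rewrites this as $E(v(t)) \leq (1-\epsilon_0)\,2^{\frac{1}{n+1}}\,R_{\theta_0}\,\mbox{Vol}(S^{2n+1}, \theta_0)$. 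Subtracting from the Aubin inequality produces the strict gap
$$\epsilon_0\,2^{\frac{1}{n+1}}\,R_{\theta_0}\,\mbox{Vol}(S^{2n+1}, \theta_0) \leq A \int_{S^{2n+1}} v(t)^2\,dV_{\theta_0},$$
and the lemma follows with $C_0 = \epsilon_0\,2^{\frac{1}{n+1}}\,R_{\theta_0}\,\mbox{Vol}(S^{2n+1}, \theta_0)/A > 0$. The only nontrivial ingredient is the Aubin-type inequality, whose proof is deferred to the Appendix; everything else is bookkeeping around conservation of volume, monotonicity of $E_f$, and the strict inequality in (\ref{sbc}), so the main obstacle is packaged entirely into the Appendix.
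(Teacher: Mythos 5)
Your proposal is correct and follows essentially the same route as the paper: identify $C^\infty_f$ by the normalization together with $E_f(u_0)\leq\beta$, bound $E(v(t))=E(u(t))$ from above by $(1+\epsilon_0)\delta_n^{\frac{n}{n+1}}R_{\theta_0}\mbox{Vol}(S^{2n+1},\theta_0)=(1-\epsilon_0)2^{\frac{1}{n+1}}R_{\theta_0}\mbox{Vol}(S^{2n+1},\theta_0)$ via Proposition \ref{prop2.2} and (\ref{4.15}), and play this against the balanced Folland--Stein inequality of the Appendix. One caveat: the Aubin-type inequality you invoke, with leading term exactly $E(w)$ (i.e.\ sharp constant $2^{\frac{1}{n+1}}Y$ on the left with no loss), is stronger than what Lemma \ref{lemA} actually provides, which carries an $\epsilon$ in the gradient coefficient; you must run the argument with that $\epsilon$-version and choose $\epsilon$ small relative to the gap $\epsilon_0$ (the paper takes $\epsilon=\frac{\epsilon_0}{2}(1+\epsilon_0)^{-1}\delta_n^{-\frac{n}{n+1}}$), which still yields $\int_{S^{2n+1}}v(t)^2dV_{\theta_0}\geq \frac{\epsilon_0}{2}R_{\theta_0}\mbox{Vol}(S^{2n+1},\theta_0)\big/\big(C_\epsilon-(2^{-\frac{1}{n+1}}+\epsilon)R_{\theta_0}\big)$.
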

\begin{proof}
It follows from  (\ref{4.30}) and Proposition \ref{prop2.2} that
\begin{equation*}
\begin{split}
E(v(t))&=E(u(t))=E_f(u(t))\left(\int_{S^{2n+1}}fu(t)^{2+\frac{2}{n}}
dV_{\theta_0}\right)^{\frac{n}{n+1}}\\
&\leq E_f(u_0)\left(\int_{S^{2n+1}}fu(t)^{2+\frac{2}{n}}dV_{\theta_0}\right)^{\frac{n}{n+1}}\\
&\leq \beta\left(\big(\max_{S^{2n+1}}f\big)\int_{S^{2n+1}}u_0^{2+\frac{2}{n}}
dV_{\theta_0}\right)^{\frac{n}{n+1}}\\
&=(1+\epsilon_0)Y(S^{2n+1},\theta_0)\mbox{Vol}(S^{2n+1},\theta_0)^{\frac{n}{n+1}}\left(\frac{\max_{S^{2n+1}}f}{\min_{S^{2n+1}}f}\right)^{\frac{n}{n+1}}\\
&=(1+\epsilon_0)R_{\theta_0}\mbox{Vol}(S^{2n+1},\theta_0)\left(\frac{\max_{S^{2n+1}}f}{\min_{S^{2n+1}}f}\right)
^{\frac{n}{n+1}}\\
&\leq(1+\epsilon_0)R_{\theta_0}\mbox{Vol}(S^{2n+1},\theta_0)\delta_n^{\frac{n}{n+1}},
\end{split}
\end{equation*}
where the initial data $u_0$ satisfies
$$u_0\in C^\infty_f:=\{u\in C_*^\infty: u>0\mbox{ and }E_f(u)\leq \beta\}$$
with $\beta$ defined as (\ref{4.15}) and
$$C^\infty_*:=\Big\{0<u\in C^\infty(S^{2n+1}): \theta=u^{\frac{2}{n}}\theta_0
\mbox{ satisfies }\int_{S^{2n+1}}u^{2+\frac{2}{n}}dV_{\theta_0}
=\int_{S^{2n+1}}dV_{\theta_0}\Big\}.$$

Choose
$\epsilon=\displaystyle\frac{\epsilon_0}{2}(1+\epsilon_0)^{-1}\delta_n^{-\frac{n}{n+1}}>0$,
then
\begin{equation*}
\begin{split}
(1+\epsilon_0)(2^{-\frac{1}{n+1}}+\epsilon)\delta_n^{\frac{n}{n+1}}
&=(1+\epsilon_0)2^{-\frac{1}{n+1}}\delta_n^{\frac{n}{n+1}}+\epsilon(1+\epsilon_0)\delta_n^{\frac{n}{n+1}}\\
&=(1-\epsilon_0)+\frac{\epsilon_0}{2}=1-\frac{\epsilon_0}{2}<1,
\end{split}
\end{equation*}
thanks to
$\displaystyle\frac{\delta_n^{\frac{n}{n+1}}}{2^{\frac{1}{n+1}}}=\frac{1-\epsilon_0}{1+\epsilon_0}$.
By Lemma \ref{lemA}, there exists a constant $C_\epsilon$ such that
$$Y(S^{2n+1},\theta_0)\left(\int_{S^{2n+1}}v^{2+\frac{2}{n}}dV_{\theta_0}\right)^{\frac{n}{n+1}}\leq (2^{-\frac{1}{n+1}}+\epsilon)(2+\frac{2}{n})
\int_{S^{2n+1}}|\nabla_{\theta_0} v|^2_{\theta_0}
dV_{\theta_0}+C_\epsilon\int_{S^{2n+1}}v^2dV_{\theta_0}.$$
By (\ref{4.30}) and Proposition \ref{prop2.1}, we have
$$
R_{\theta_0}\mbox{Vol}(S^{2n+1},\theta_0)\leq (2^{-\frac{1}{n+1}}+\epsilon)(2+\frac{2}{n})
\int_{S^{2n+1}}|\nabla_{\theta_0} v|^2_{\theta_0}
dV_{\theta_0}+C_\epsilon\int_{S^{2n+1}}v^2dV_{\theta_0}.$$
Combining the above inequalities, we obtain
\begin{equation*}
\big(C_\epsilon-(2^{-\frac{1}{n+1}}+\epsilon)R_{\theta_0}\big)\int_{S^{2n+1}}v^2dV_{\theta_0}\geq R_{\theta_0}\mbox{Vol}(S^{2n+1},\theta_0)\big[1-(1+\epsilon_0)(2^{-\frac{1}{n+1}}+\epsilon)\delta_n^{\frac{n}{n+1}}
\big]
\end{equation*}
which implies the desired estimate because of our choice of $\epsilon$ above.
\end{proof}

The following definition appeared in \cite{Chang&Chiu&Wu}, \cite{Chang&Gursky&Yang} and \cite{Chiu}: A sequence
of positive functions $v_k$ defined on $S^{2n+1}$ satisfies condition $(\ast)$ if there
is a set $\Omega_k\subset S^{2n+1}$ with $|\Omega_k|\geq C_1>0$, and a constant $C_2, \epsilon>0$ such that
$$\int_{\Omega_k}v_k^{-\epsilon}dV_{\theta_0}\leq C_2,$$
where $C_1, C_2$ are two constants independent of $k$. We have the following (see also Lemma 4.4 in \cite{Chang&Gursky&Yang}):

\begin{lem}\label{lem4.9.5}
If condition $(\ast)$ does not hold with  $\epsilon>0$ being small enough, then  $\displaystyle\int_{S^{2n+1}}v_k^\epsilon dV_{\theta_0}\rightarrow 0$
as $k\rightarrow\infty$.
\end{lem}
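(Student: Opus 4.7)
The plan is to argue by contradiction. Suppose $\int_{S^{2n+1}} v_k^{\epsilon}\,dV_{\theta_0} \not\to 0$; then after passing to a subsequence there exists $\delta>0$ such that
\[
\int_{S^{2n+1}} v_k^{\epsilon}\,dV_{\theta_0} \geq \delta \quad \text{for all } k.
\]
The goal is to produce sets $\Omega_k$ and constants $C_1, C_2 > 0$ witnessing condition $(\ast)$, contradicting the hypothesis.

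First I would threshold: for a small parameter $\eta>0$ to be chosen later, set $\Omega_k^{\eta} := \{x \in S^{2n+1} : v_k(x) \geq \eta\}$ and split
\[
\int_{S^{2n+1}} v_k^{\epsilon}\,dV_{\theta_0} = \int_{\Omega_k^{\eta}} v_k^{\epsilon}\,dV_{\theta_0} + \int_{S^{2n+1}\setminus\Omega_k^{\eta}} v_k^{\epsilon}\,dV_{\theta_0}.
\]
The second piece is trivially bounded by $\eta^{\epsilon}\,\mathrm{Vol}(S^{2n+1},\theta_0)$. For the first piece, the normalization $\int_{S^{2n+1}} v_k^{2+2/n}\,dV_{\theta_0} = \mathrm{Vol}(S^{2n+1},\theta_0)$, which follows from (\ref{4.30}) and Proposition \ref{prop2.1}, combined with H\"older's inequality gives
\[
\int_{\Omega_k^{\eta}} v_k^{\epsilon}\,dV_{\theta_0} \leq |\Omega_k^{\eta}|^{1-\tfrac{\epsilon n}{2n+2}} \mathrm{Vol}(S^{2n+1},\theta_0)^{\tfrac{\epsilon n}{2n+2}},
\]
provided $\epsilon < 2 + \tfrac{2}{n}$. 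Choosing $\eta$ so small that $\eta^{\epsilon}\,\mathrm{Vol}(S^{2n+1},\theta_0) < \delta/2$ then forces a uniform lower bound $|\Omega_k^{\eta}| \geq C_1 > 0$ for all $k$.

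The final step is to observe that on the very same set $\Omega_k^{\eta}$ the pointwise inequality $v_k^{-\epsilon} \leq \eta^{-\epsilon}$ holds by construction, so
\[
\int_{\Omega_k^{\eta}} v_k^{-\epsilon}\,dV_{\theta_0} \leq \eta^{-\epsilon}\,\mathrm{Vol}(S^{2n+1},\theta_0) =: C_2.
\]
This is exactly condition $(\ast)$ with constants $C_1, C_2, \epsilon$, contradicting the hypothesis. The argument presents no real obstacle; the only requirement is that $\epsilon$ be small enough to keep the H\"older exponent $1 - \tfrac{\epsilon n}{2n+2}$ strictly positive, which is the meaning of ``$\epsilon$ small enough'' in the statement.
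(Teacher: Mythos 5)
Your proof is correct, but it takes a genuinely different and far more elementary route than the paper's. The paper follows Schwetlick--Struwe: it sets $w=\log v-\beta$, feeds the curvature equation (\ref{4.22}) into a Moser-type iteration for $\|w^p\|_{L^2}$ using the Jerison--Lee and Poincar\'e inequalities, and arrives at the exponential integrability bound $\int_{S^{2n+1}}e^{p_0|w|}dV_{\theta_0}\leq C$, hence the two-sided product estimate (\ref{4.90}); the lemma then follows by taking $\Omega_k=S^{2n+1}$, and only for the specific exponent $\epsilon=p_0=1/(2eAB)$ produced by the iteration. You use no PDE input at all: only the normalization $\int_{S^{2n+1}}v_k^{2+\frac{2}{n}}dV_{\theta_0}=\mathrm{Vol}(S^{2n+1},\theta_0)$ from (\ref{4.30}) and Proposition \ref{prop2.1}, a threshold decomposition, and H\"older's inequality, taking $\Omega_k$ to be the superlevel set $\{v_k\geq\eta\}$; each step checks out. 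Your argument works for every $\epsilon<2+\frac{2}{n}$ rather than for one particular small $p_0$, and it delivers exactly what the lemma is used for downstream (Corollary \ref{cor4.9.5} and Lemma \ref{lem4.10}); what the paper's heavier machinery buys is the stronger Harnack-type bound (\ref{4.90}) on $\int_{S^{2n+1}}v^{-p_0}dV_{\theta_0}\cdot\int_{S^{2n+1}}v^{p_0}dV_{\theta_0}$, which is of independent interest but not needed for the statement as written. One minor point, shared with the paper's own deduction from (\ref{4.90}): arguing by contradiction you only construct the sets $\Omega_k$ along the subsequence where $\int_{S^{2n+1}}v_k^{\epsilon}dV_{\theta_0}\geq\delta$, so strictly speaking you verify condition $(\ast)$ for that subsequence only; this is harmless here because in the application the lower bound on $\int_{S^{2n+1}}v_k^{\epsilon}dV_{\theta_0}$ is uniform in $k$, so your construction produces admissible sets for every $k$.
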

\begin{proof}
We follow the proof of Theorem A.2 in \cite{Schwetlick&Struwe}.
By (\ref{4.22}), we have
\begin{equation}\label{4.80}
-(2+\frac{2}{n})\Delta_{\theta_0}v+R_{\theta_0}v=(R_{h}-\alpha f_\phi)v^{1+\frac{2}{n}}+\alpha f_\phi v^{1+\frac{2}{n}}
\geq (R_{h}-\alpha f_\phi)v^{1+\frac{2}{n}}.
\end{equation}
Let $w=\log v-\beta$, where
$\beta$ is a constant chosen such that $\int_{S^{2n+1}}wdV_{\theta_0}=0$. Note that
$$|\nabla_{\theta_0} w|_{\theta_0}^2+\Delta_{\theta_0}w=\frac{\Delta_{\theta_0}v}{v},$$
which together with (\ref{4.80}) implies that
\begin{equation}\label{4.81}
|\nabla_{\theta_0} w|_{\theta_0}^2+\Delta_{\theta_0}w\leq\frac{R_{\theta_0}}{(2+\frac{2}{n})}+\frac{(\alpha f_\phi-R_{h})v^{\frac{2}{n}}}{(2+\frac{2}{n})}
=\frac{n^2}{4}+\frac{(\alpha f_\phi-R_{h})v^{\frac{2}{n}}}{(2+\frac{2}{n})}.
\end{equation}
Integrating (\ref{4.81}) over $S^{2n+1}$, we get
\begin{equation}\label{4.82}
\begin{split}
&\int_{S^{2n+1}}|\nabla_{\theta_0} w|_{\theta_0}^2dV_{\theta_0}\\
&\leq
\frac{n^2}{4}\int_{S^{2n+1}}dV_{\theta_0}+\frac{n}{2n+2}\int_{S^{2n+1}}(\alpha f_\phi-R_{h})v^{\frac{2}{n}}dV_{\theta_0}\\
&\leq\frac{n^2}{4}\int_{S^{2n+1}}dV_{\theta_0}+\frac{n}{2n+2}
\left(\int_{S^{2n+1}}|\alpha f_\phi-R_{h}|^{n+1}v^{2+\frac{2}{n}}dV_{\theta_0}\right)^{\frac{1}{n+1}}
\left(\int_{S^{2n+1}}dV_{\theta_0}\right)^{\frac{n}{n+1}}\leq C_0
\end{split}
\end{equation}
by H\"{o}lder's inequality and Lemma \ref{lem3.2}. Now, for any $p\in\mathbb{Z}^+$
multiplying (\ref{4.81}) by $|w|^{2p}\geq 0$ and integrating over $S^{2n+1}$, we get
\begin{equation}\label{4.83}
\begin{split}
&\int_{S^{2n+1}}(|w|^{2p}|\nabla_{\theta_0} w|_{\theta_0}^2+|w|^{2p}\Delta_{\theta_0}w)dV_{\theta_0}\\
&\leq \frac{n^2}{4}\int_{S^{2n+1}}|w|^{2p}dV_{\theta_0}+\frac{n}{2n+2}\int_{S^{2n+1}}(\alpha f_\phi-R_{h})v^{\frac{2}{n}}|w|^{2p}dV_{\theta_0}\\
&\leq \frac{n^2}{4}\int_{S^{2n+1}}|w|^{2p}dV_{\theta_0}+\frac{n}{2n+2}
\left(\int_{S^{2n+1}}|\alpha f_\phi-R_{h}|^{n+1}v^{2+\frac{2}{n}}dV_{\theta_0}\right)^{\frac{1}{n+1}}
\left(\int_{S^{2n+1}}|w|^{2p(\frac{n+1}{n})}dV_{\theta_0}\right)^{\frac{n}{n+1}}\\
&\leq\frac{n^2}{4}\int_{S^{2n+1}}|w|^{2p}dV_{\theta_0}\\
&\hspace{4mm}+\frac{n}{2n+2}F_{n+1}(t)^{\frac{1}{n+1}}Y(S^{2n+1},\theta_0)^{-1}
\left((2+\frac{2}{n})
\int_{S^{2n+1}}|\nabla_{\theta_0} w^p|^2_{\theta_0}dV_{\theta_0}+R_{\theta_0}\int_{S^{2n+1}}|w|^{2p}dV_{\theta_0}\right)
\end{split}
\end{equation}
where we have used H\"{o}lder's inequality and Lemma \ref{lem2.3}. Since
$$\Delta_{\theta_0}|w|^{2p+1}=(2p+1)|w|^{2p}\Delta_{\theta_0}w+2p(2p+1)|w|^{2p-1}|\nabla_{\theta_0} w|^2_{\theta_0},$$
we can rewrite the left hand side of (\ref{4.83}) as
\begin{equation}\label{4.84}
\begin{split}
&\int_{S^{2n+1}}(|w|^{2p}|\nabla_{\theta_0} w|_{\theta_0}^2+|w|^{2p}\Delta_{\theta_0}w)dV_{\theta_0}\\
&=\int_{S^{2n+1}}|w|^{2p}|\nabla_{\theta_0} w|_{\theta_0}^2dV_{\theta_0}
-2p\int_{S^{2n+1}}|w|^{2p-1}|\nabla_{\theta_0} w|^2_{\theta_0}dV_{\theta_0}\\
&\geq\int_{S^{2n+1}}|w|^{2p}|\nabla_{\theta_0} w|_{\theta_0}^2dV_{\theta_0}
-\frac{1}{2}\int_{S^{2n+1}}|w|^{2p}|\nabla_{\theta_0} w|^2_{\theta_0}dV_{\theta_0}
-2p^2\int_{S^{2n+1}}|w|^{2p-2}|\nabla_{\theta_0} w|^2_{\theta_0}dV_{\theta_0}\\
&=\frac{1}{2(p+1)^2}\int_{S^{2n+1}}|\nabla_{\theta_0} w^{p+1}|^2_{\theta_0}dV_{\theta_0}
-2\int_{S^{2n+1}}|\nabla_{\theta_0} w^p|^2_{\theta_0}dV_{\theta_0}
\end{split}
\end{equation}
where we have used Young's inequality. Combining (\ref{4.83}) and  (\ref{4.84}), we obtain
\begin{equation}\label{4.85}
\begin{split}
\frac{1}{2(p+1)^2}\int_{S^{2n+1}}|\nabla_{\theta_0} w^{p+1}|^2_{\theta_0}dV_{\theta_0}\leq&
\Big(F_{n+1}(t)^{\frac{1}{n+1}}Y(S^{2n+1},\theta_0)^{-1}+2\Big)\int_{S^{2n+1}}|\nabla_{\theta_0} w^p|^2_{\theta_0}dV_{\theta_0}\\
&
+\frac{n^2}{4}\Big(F_{n+1}(t)^{\frac{1}{n+1}}Y(S^{2n+1},\theta_0)^{-1}+1\Big)\int_{S^{2n+1}}|w|^{2p}dV_{\theta_0}.
\end{split}
\end{equation}
By Lemma \ref{lem3.2}, there exists a constant $C_1$ such that $F_{n+1}(t)^{\frac{1}{n+1}}\leq C_1$ for all $t\geq 0$. Hence,
it follows from (\ref{4.85}) that
\begin{equation}\label{4.86}
\|\nabla_{\theta_0} w^{p+1}\|^2_{L^2(S^{2n+1},\theta_0)}\leq (p+1)^2(C_2\|\nabla_{\theta_0} w^{p}\|^2_{L^2(S^{2n+1},\theta_0)}
+C_3\|w^{p}\|^2_{L^2(S^{2n+1},\theta_0)})
\end{equation}
where $C_2=2C_1Y(S^{2n+1},\theta_0)^{-1}+4$ and $C_3=\displaystyle\frac{n^2}{2}(C_1Y(S^{2n+1},\theta_0)^{-1}+1)$.
Recall the following Poincar\'{e}-type inequality (see Theorem 3.20 \cite{Dragomir}):
there exists a constant $C_4$ such that
\begin{equation}\label{4.87}
\|\varphi\|_{L^2(S^{2n+1},\theta_0)}\leq C_4\|\nabla_{\theta_0} \varphi\|_{L^2(S^{2n+1},\theta_0)}
\end{equation}
for all $\varphi\in S_1^2(S^{2n+1},\theta_0)$ satisfying $\int_{S^{2n+1}}\varphi\,dV_{\theta_0}=0$. It follows that with
$A:=\max\{\sqrt{C_0},\sqrt{C_3}\}$ and $B:=\displaystyle\sqrt{\frac{C_2}{C_3}}+2C_4$ for all $p\in\mathbb{Z}^+$ there holds
\begin{equation}\label{4.88}
\|\nabla_{\theta_0} w^{p+1}\|_{L^2(S^{2n+1},\theta_0)}\leq A(p+1)\left(B\|\nabla_{\theta_0} w^{p}\|_{L^2(S^{2n+1},\theta_0)}
+\|w^{p}\|_{L^2(S^{2n+1},\theta_0)}\right).
\end{equation}
To see this, if we denote $\overline{w^p}=\displaystyle\frac{1}{\mbox{Vol}(S^{2n+1},\theta_0)}\int_{S^{2n+1}}w^pdV_{\theta_0}$,
 by (\ref{4.86}) and (\ref{4.87}) we have
\begin{equation*}
\begin{split}
&\|\nabla_{\theta_0} w^{p+1}\|_{L^2(S^{2n+1},\theta_0)}\\&\leq
(p+1)\left(\sqrt{C_2}\|\nabla_{\theta_0} w^{p}\|_{L^2(S^{2n+1},\theta_0)}
+\sqrt{C_3}\|w^{p}\|_{L^2(S^{2n+1},\theta_0)}\right)\\
&\leq
(p+1)\left(\sqrt{C_2}\|\nabla_{\theta_0} w^{p}\|_{L^2(S^{2n+1},\theta_0)}
+\sqrt{C_3}\|w^{p}-\overline{w^p}\|_{L^2(S^{2n+1},\theta_0)}
+\sqrt{C_3}\|\overline{w^p}\|_{L^2(S^{2n+1},\theta_0)}\right)\\
&\leq
(p+1)\left((\sqrt{C_2}+\sqrt{C_3}C_4)\|\nabla_{\theta_0} w^{p}\|_{L^2(S^{2n+1},\theta_0)}
+\sqrt{\frac{C_3}{\mbox{Vol}(S^{2n+1},\theta_0)}}\left|\int_{S^{2n+1}}w^pdV_{\theta_0}\right|\right)\\
&\leq
(p+1)\left((\sqrt{C_2}+\sqrt{C_3}C_4)\|\nabla_{\theta_0} w^{p}\|_{L^2(S^{2n+1},\theta_0)}
+\sqrt{C_3}\|w^{p}\|_{L^2(S^{2n+1},\theta_0)}\right)\\
&=
\sqrt{C_3}(p+1)\left((\sqrt{\frac{C_2}{C_3}}+C_4)\|\nabla_{\theta_0} w^{p}\|_{L^2(S^{2n+1},\theta_0)}
+\|w^{p}\|_{L^2(S^{2n+1},\theta_0)}\right).
\end{split}
\end{equation*}

By induction on $p$ we can now obtain the estimates
\begin{equation}\label{4.89}
\|\nabla_{\theta_0} w^{p}\|_{L^2(S^{2n+1},\theta_0)}\leq A^{p}B^{p-1}p^p\hspace{2mm}\mbox{ and }\hspace{2mm}
\|w^{p}\|_{L^2(S^{2n+1},\theta_0)}\leq A^{p}B^{p}p^p
\end{equation}
for all $p\in\mathbb{Z}^+$. Indeed, for $p=1$, (\ref{4.89}) follows from (\ref{4.82}),  (\ref{4.87}) and the fact that
$A\geq \sqrt{C_0}$ and $B\geq C_4$. Moreover, assuming that (\ref{4.89}) is true for some $p$. Then from (\ref{4.88}) and H\"{o}lder's inequality we deduce
\begin{equation*}
\begin{split}
\|\nabla_{\theta_0} w^{p+1}\|_{L^2(S^{2n+1},\theta_0)}&\leq A(p+1)\left(B\|\nabla_{\theta_0} w^{p}\|_{L^2(S^{2n+1},\theta_0)}
+\|w^{p}\|_{L^2(S^{2n+1},\theta_0)}\right)\\
&\leq A(p+1)\left(B\cdot A^{p}B^{p-1}p^p+A^{p}B^{p}p^p\right)
=A^{p+1}B^p2(p+1)p^p.
\end{split}
\end{equation*}
Now the first inequality in (\ref{4.89}) for $p+1$ follows from this inequality, because
Bernoulli's inequality says that $(1+t)^{\frac{1}{p}}\leq 1+\frac{t}{p}$ for all $t>0$, which implies
that $2\leq \big((p+1)/p\big)^p$. Similarly, by (\ref{4.87}), we have
\begin{equation*}
\|w^{p+1}-\overline{w^{p+1}}\|_{L^2(S^{2n+1},\theta_0)}\leq C_4\|\nabla_{\theta_0}w^{p+1}\|_{L^2(S^{2n+1},\theta_0)},
\end{equation*}
which implies that
\begin{equation*}
\begin{split}
\|w^{p+1}\|_{L^2(S^{2n+1},\theta_0)}&\leq C_4\|\nabla_{\theta_0}w^{p+1}\|_{L^2(S^{2n+1},\theta_0)}+
\|\overline{w^{p+1}}\|_{L^2(S^{2n+1},\theta_0)}\\
&= C_4\|\nabla_{\theta_0}w^{p+1}\|_{L^2(S^{2n+1},\theta_0)}+
\frac{1}{\sqrt{\mbox{Vol}(S^{2n+1},\theta_0)}}\left|\int_{S^{2n+1}}w^{p+1}dV_{\theta_0}\right|\\
&\leq C_4\|\nabla_{\theta_0}w^{p+1}\|_{L^2(S^{2n+1},\theta_0)}+
\|w^{p+1}\|_{L^1(S^{2n+1},\theta_0)}\\
&\leq C_4\|\nabla_{\theta_0}w^{p+1}\|_{L^2(S^{2n+1},\theta_0)}+
\|w^{p}\|_{L^2(S^{2n+1},\theta_0)}\|w\|_{L^2(S^{2n+1},\theta_0)}\\
&\leq C_4A^{p+1}B^{p}(p+1)^{p+1}+A^{p}B^{p}p^p\cdot AB\\
&\leq \frac{1}{2}A^{p+1}B^{p+1}(p+1)^{p+1}+A^{p+1}B^{p+1}p^p\\
&\leq A^{p+1}B^{p+1}(p+1)^{p+1}
\end{split}
\end{equation*}
since $2C_4\leq B$. This completes the induction step.

Now using Stirling's formula to estimate $p!\geq\displaystyle\left(\frac{p}{e}\right)^p$ for any $p\in\mathbb{Z}^+$, for any $p_0>0$ we can estimate
\begin{equation*}
\int_{S^{2n+1}}(e^{p_0|w|}-1)dV_{\theta_0}\leq\sum_{p=1}^\infty\int_{S^{2n+1}}\left(\frac{p_0e|w|}{p}\right)^pdV_{\theta_0}
\leq\sum_{p=1}^\infty(p_0eAB)^p,
\end{equation*}
which is finite whenever $p_0eAB<1$. Choosing $p_0=\displaystyle\frac{1}{2eAB}>0$, we then conclude that
\begin{equation}\label{4.90}
\int_{S^{2n+1}}v^{-p_0}dV_{\theta_0}\cdot \int_{S^{2n+1}}v^{p_0}dV_{\theta_0}\leq\left(\int_{S^{2n+1}}\exp(p_0|w|)dV_{\theta_0}\right)^2\leq C.
\end{equation}
The inequality (\ref{4.90}) says that $v_k$ does not satisfy condition $(\ast)$ with $\epsilon=p_0$ if and only if
$\int_{S^{2n+1}}v^{p_0}_kdV_{\theta_0}\rightarrow 0$ as $k\rightarrow\infty$.
\end{proof}

\begin{cor}\label{cor4.9.5}
The normalized conformal factor $v(t)$ satisfies condition $(\ast)$.
\end{cor}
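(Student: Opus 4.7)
The plan is to prove the corollary by contradiction, playing the vanishing furnished by Lemma \ref{lem4.9.5} against the uniform lower bound for the $L^2$ mass provided by Lemma \ref{lem4.9}. Suppose that $v(t)$ does not satisfy condition $(\ast)$. Then for every sufficiently small $\epsilon>0$, Lemma \ref{lem4.9.5} yields a sequence $t_k\to\infty$ such that
\[
\int_{S^{2n+1}}v_k^{\epsilon}\,dV_{\theta_0}\longrightarrow 0\quad\text{as }k\to\infty,
\]
where $v_k=v(t_k)$. On the other hand, by \eqref{4.30} together with Proposition \ref{prop2.1},
\[
\int_{S^{2n+1}}v_k^{2+\frac{2}{n}}\,dV_{\theta_0}=\mathrm{Vol}(S^{2n+1},\theta_0)
\]
is constant in $k$.

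Next I would interpolate $\|v_k\|_{L^2}$ between $\|v_k\|_{L^{\epsilon}}$ and $\|v_k\|_{L^{2+2/n}}$. Choosing $\theta\in(0,1)$ so that
\[
\frac{1}{2}=\frac{\theta}{\epsilon}+\frac{1-\theta}{2+\tfrac{2}{n}},
\]
which is solvable whenever $\epsilon<2$, the log-convexity of $L^p$-norms produces
\[
\int_{S^{2n+1}}v_k^{2}\,dV_{\theta_0}\le\left(\int_{S^{2n+1}}v_k^{\epsilon}\,dV_{\theta_0}\right)^{\!2\theta/\epsilon}\left(\int_{S^{2n+1}}v_k^{2+\frac{2}{n}}\,dV_{\theta_0}\right)^{\!2(1-\theta)/(2+2/n)},
\]
where an easy computation gives $2\theta/\epsilon=\frac{2/n}{2(2+2/n)-2\epsilon}$, a fixed positive constant for $\epsilon$ small.

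Letting $k\to\infty$ along the sequence produced by Lemma \ref{lem4.9.5}, the right-hand side tends to $0$, while Lemma \ref{lem4.9} bounds the left-hand side from below by $C_0>0$. This contradiction forces $v(t)$ to satisfy condition $(\ast)$, proving the corollary. The only mildly delicate point is making sure that the interpolation exponent $2\theta/\epsilon$ is uniformly positive (i.e., does not collapse to zero) when we pick $\epsilon$ small enough to apply Lemma \ref{lem4.9.5}, but this is transparent from the explicit formula above, so I do not foresee any real obstacle.
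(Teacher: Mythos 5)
Your argument is correct and is essentially the paper's proof: both rest on the same H\"older interpolation of the $L^2$ norm of $v$ between $L^{\epsilon}$ and $L^{2+\frac{2}{n}}$ (whose norm is fixed by \eqref{4.30} and Proposition \ref{prop2.1}), combined with the lower bound of Lemma \ref{lem4.9} and the dichotomy of Lemma \ref{lem4.9.5}; the paper merely states the conclusion directly (a positive lower bound on $\int_{S^{2n+1}}v^{\epsilon}dV_{\theta_0}$) rather than arguing by contradiction. The only blemish is arithmetic: the interpolation relation gives $2\theta/\epsilon=\frac{2/n}{(2+\frac{2}{n})-\epsilon}$ rather than your $\frac{2/n}{2(2+\frac{2}{n})-2\epsilon}$, but since either expression is a fixed positive constant this does not affect the argument.
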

\begin{proof}
For $0<\delta<2$, let $\epsilon=\displaystyle\frac{(2+2n)\delta}{2+n\delta}>0$. Then by (\ref{4.30}), H\"{o}lder's inequality, Proposition \ref{prop2.1}, and Lemma \ref{lem4.9}, we have
\begin{equation*}
\begin{split}
0<C_0\leq\int_{S^{2n+1}}v(t)^2dV_{\theta_0}&\leq
\left(\int_{S^{2n+1}}v(t)^{\frac{2+2n}{n}}dV_{\theta_0}\right)^{\frac{n(2-\delta)}{2+2n}}
\left(\int_{S^{2n+1}}v(t)^{\delta\cdot\frac{2+2n}{2+n\delta}}dV_{\theta_0}\right)^{\frac{2+n\delta}{2+2n}}\\
&=\mbox{Vol}(S^{2n+1},\theta_0)^{\frac{n(2-\delta)}{2+2n}}
\left(\int_{S^{2n+1}}v(t)^{\epsilon}dV_{\theta_0}\right)^{\frac{2+n\delta}{2+2n}}.
\end{split}
\end{equation*}
Hence, it follows from Lemma \ref{lem4.9.5} that $v(t)$ satisfies condition $(\ast)$ with the $\epsilon>0$ defined above.
\end{proof}

\begin{lem}\label{lem4.10}
There exists a uniform constant $C_3>0$ such that $v(t)\geq C_3>0$ on $S^{2n+1}$ for all $t\geq 0$ and
 any $u_0\in C^\infty_f$.
\end{lem}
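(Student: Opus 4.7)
My plan is to combine the integral bound $\int_{S^{2n+1}} v(t)^{-p_0}\,dV_{\theta_0}\leq C$ (implicit in (\ref{4.90}), with the positive lower bound for $\int v^{p_0}\,dV_{\theta_0}$ coming from Lemma~\ref{lem4.9}) with a subelliptic Moser iteration applied to equation (\ref{4.22}), and to close the argument via a Harnack chain on the compact sphere.

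First I would establish a uniform $L^\infty$ upper bound on $v(t)$. Testing (\ref{4.22}) against $v^{2q-1}$ for $q\geq 1$ gives
\[
\tfrac{(2+\frac{2}{n})(2q-1)}{q^2}\int|\nabla_{\theta_0}v^q|^2_{\theta_0}\,dV_{\theta_0}+R_{\theta_0}\int v^{2q}\,dV_{\theta_0}=\int R_h\,v^{2q+\frac{2}{n}}\,dV_{\theta_0}.
\]
Rewriting the right-hand side as $\int R_h\,v^{2q-2}\,dV_h$ and estimating it by H\"older against $\|R_h\|_{L^{p_1}(dV_h)}=\|R_\theta\|_{L^{p_1}(dV_\theta)}$, which is uniformly bounded in $t$ by Lemma~\ref{lem3.2} and Lemma~\ref{lem2.4}, yields a strictly subcritical integrand in $v$ because $p_1>n+1$ forces $p_1'<(n+1)/n$. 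The sharp Folland--Stein inequality applied to $v^q$ then closes one Moser step, and the base step is handled by the fixed $L^{2+\frac{2}{n}}$-norm of $v$ (Proposition~\ref{prop2.1}) together with the energy bound $E(v)\leq(1+\epsilon_0)R_{\theta_0}\mathrm{Vol}(S^{2n+1},\theta_0)\delta_n^{n/(n+1)}$ derived in the proof of Lemma~\ref{lem4.9}. Iteration then delivers $\|v(t)\|_{L^\infty(S^{2n+1})}\leq C$.

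With $v$ bounded above, rewrite the equation as $-\Delta_{\theta_0}v=K\,v$ with $K:=(R_hv^{\frac{2}{n}}-R_{\theta_0})/(2+\frac{2}{n})$. Since $v$ is bounded and $R_h\in L^{p_1}(dV_h)$ with $p_1>n+1$, a direct computation $\int|R_hv^{2/n}|^p\,dV_{\theta_0}=\int R_h^p v^{(2p/n)-2-\frac{2}{n}}\,dV_h$ shows that $K\in L^p(S^{2n+1},\theta_0)$ uniformly for every $p\in(n+1,p_1]$. Applying the subelliptic Harnack inequality (with constants depending only on $\|K\|_{L^p}$) on each ball of a fixed finite cover of $S^{2n+1}$ and chaining the comparisons through connectedness yields $\sup v(t)\leq C_4\inf v(t)$ with $C_4$ independent of $t$. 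Since $\int v^{2+\frac{2}{n}}\,dV_{\theta_0}=\mathrm{Vol}(S^{2n+1},\theta_0)$ by Proposition~\ref{prop2.1}, H\"older's inequality gives $\sup v\geq 1$, and hence $\inf v\geq 1/C_4=:C_3>0$, as required.

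The hard part will be the base step of the Moser iteration, since $\int R_h v^{2+\frac{2}{n}}\,dV_{\theta_0}$ is exactly scale-invariant. The strict inequality $p_1>n+1$ (inherited from Lemma~\ref{lem3.2}) together with the Aubin-type improved Folland--Stein inequality of the Appendix---whose hypothesis is the center-of-mass normalization $\int x\,dV_h=(0,\ldots,0)$ imposed via (\ref{4.19})---is precisely what produces the strict gain in Sobolev constant needed to rule out concentration of the critical quantity at the initial scale and to launch the iteration. Once the upper bound is secured, the Harnack step and the final volume comparison are then routine subelliptic theory.
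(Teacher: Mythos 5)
Your plan has a genuine circularity. You propose to first establish a uniform $L^\infty$ upper bound on $v(t)$ by Moser iteration and then deduce the lower bound via a Harnack chain. But the upper bound is not available at this stage of the argument, and cannot be produced by the iteration you describe: the equation $-(2+\frac{2}{n})\Delta_{\theta_0}v+R_{\theta_0}v=R_hv^{1+\frac{2}{n}}$ is critical, so testing against $v^{2q-1}$ and applying H\"older with $\|R_h\|_{L^{p_1}(dV_h)}$ requires an a priori bound on $\int v^{(2q-2)p_1'+2+\frac{2}{n}}dV_{\theta_0}$ with exponent strictly larger than $2+\frac{2}{n}$, i.e.\ supercritical integrability of $v$. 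The Aubin-type improvement of the Appendix does not supply this: it improves the \emph{constant} in the Folland--Stein inequality for balanced functions (and is used in Lemma \ref{lem4.9} to bound $\int v^2$ from \emph{below}), but it gives no control on any norm above the critical one. In the paper, the supercritical bound $\int v^{2+\frac{2}{n}+\epsilon_0}dV_{\theta_0}\leq C_0$ is the content of Lemma \ref{lem4.12}, whose hypothesis (iv) is verified precisely by invoking Lemma \ref{lem4.10} with $\sigma_0=C_3$; the eventual $C^\sigma_P$ bound on $v_k$ appears only in Lemma \ref{lem4.13}, downstream of Lemmas \ref{lem4.10}--\ref{lem4.12}. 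So your first step presupposes results that logically depend on the lemma you are proving.

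The paper's proof avoids any upper bound on $v$ by working exclusively with negative powers. It multiplies (\ref{4.22}) by $v^{-1-2\delta}$, combines the resulting identity with Raleigh's inequality for $\varphi=v^{-\delta}$ and with condition $(\ast)$ (Corollary \ref{cor4.9.5}, itself a consequence of the Moser--Trudinger-type estimate of Lemma \ref{lem4.9.5} and the lower bound $\int v^2\geq C_0$ of Lemma \ref{lem4.9}), to obtain $\int_{S^{2n+1}}v^{-2\delta}dV_{\theta_0}\leq C$; it then upgrades this to a pointwise bound $v^{-\gamma}(Q)\leq C$ via the Green's function representation for $-\Delta_{\theta_0}$, using that $G(Q,\cdot)\in L^p$ for $p<2$ and that $\|R_h-\alpha f_\phi\|$ is small by Lemma \ref{lem3.2}. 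If you want to salvage your approach, you would need to either import the negative-power argument for the lower bound directly, or find an independent source of supercritical integrability of $v$; as written, the base step of your iteration does not launch.
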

\begin{proof}
Note that $v(t)$ satisfies (\ref{4.22}). For $\delta>0$, multiply
(\ref{4.22}) by $v^{-1-2\delta}$, then integrate both sides, we get
\begin{equation}\label{4.31}
\begin{split}
&(2+\frac{2}{n})\int_{S^{2n+1}}|\nabla_{\theta_0}v^{-\delta}|_{\theta_0}^2dV_{\theta_0}\\
&=-\frac{\delta^2}{1+2\delta}\int_{S^{2n+1}}R_hv^{1+\frac{2}{n}-1-2\delta}dV_{\theta_0}
+\frac{\delta^2R_{\theta_0}}{1+2\delta}
\int_{S^{2n+1}}v^{-2\delta}dV_{\theta_0}\\
&=-\frac{\delta^2}{1+2\delta}\int_{S^{2n+1}}(R_h-\alpha f_\phi)v^{\frac{2-2n\delta}{n}}dV_{\theta_0}
-\frac{\delta^2}{1+2\delta}\int_{S^{2n+1}}\alpha f_\phi v^{\frac{2-2n\delta}{n}}dV_{\theta_0}\\
&\hspace{4mm}+\frac{\delta^2R_{\theta_0}}{1+2\delta}
\int_{S^{2n+1}}v^{-2\delta}dV_{\theta_0}\\
&\leq C(\delta,n)\|R_h-f_\phi\|_{L^{\frac{2n+2}{2-2n\delta}}(S^{2n+1},h)}
+\frac{\delta^2R_{\theta_0}}{1+2\delta}
\int_{S^{2n+1}}v^{-2\delta}dV_{\theta_0}
\end{split}
\end{equation}
where we have used the short form $f_\phi$ for $f\circ \phi(t)$.

Let $\lambda_1=n/2$ be the first nonzero eigenvalue of
$-\Delta_{\theta_0}$, then for any $\varphi\in C^\infty(S^{2n+1})$,
Raleigh's inequality gives
$$\int_{S^{2n+1}}\varphi^2\,dV_{\theta_0}\leq\frac{1}{\mbox{Vol}(S^{2n+1},\theta_0)}
\left(\int_{S^{2n+1}}\varphi\,dV_{\theta_0}\right)^2
+\lambda_1^{-1}\int_{S^{2n+1}}|\nabla_{\theta_0}\varphi|_{\theta_0}^2dV_{\theta_0}.$$
Choose $\varphi=v^{-\delta}$, then
\begin{equation}\label{4.32a}
\int_{S^{2n+1}}v^{-2\delta}dV_{\theta_0}\leq\frac{1}{\mbox{Vol}(S^{2n+1},\theta_0)}
\left(\int_{S^{2n+1}}v^{-\delta}dV_{\theta_0}\right)^2
+\lambda_1^{-1}\int_{S^{2n+1}}|\nabla_{\theta_0}v^{-\delta}|_{\theta_0}^2dV_{\theta_0}.
\end{equation}
It follows from Corollary \ref{cor4.9.5} that $v(t)$ satisfies condition $(\ast)$ with some $\epsilon>0$.
By condition $(\ast)$, if $\delta<\epsilon$, we have
\begin{equation*}
\int_{S^{2n+1}}v^{-\delta}dV_{\theta_0}=\int_{\Omega}v^{-\delta}dV_{\theta_0}
+\int_{\Omega^c}v^{-\delta}dV_{\theta_0}\leq C_2+\left(\int_{\Omega^c}v^{-2\delta}dV_{\theta_0}\right)
^{\frac{1}{2}}\left(\int_{\Omega^c}dV_{\theta_0}\right)^{\frac{1}{2}}.
\end{equation*}
This and Young's inequality imply that, for any $\eta>0$, there holds
\begin{equation}\label{4.33}
\begin{split}
\frac{1}{\mbox{Vol}(S^{2n+1},\theta_0)}\left(\int_{S^{2n+1}}v^{-\delta}dV_{\theta_0}\right)^2
&
\leq \frac{1}{\mbox{Vol}(S^{2n+1},\theta_0)}\left[C_2+\left(\int_{\Omega^c}v^{-2\delta}
dV_{\theta_0}\right)^{\frac{1}{2}}\left(\int_{\Omega^c}
dV_{\theta_0}\right)^{\frac{1}{2}}\right]^2\\
&\leq\frac{(1+\eta^{-1})C^2_2}{\mbox{Vol}(S^{2n+1},\theta_0)}
+\frac{(1+\eta)|\Omega^c|}{\mbox{Vol}(S^{2n+1},\theta_0)}
\left(\int_{\Omega^c}v^{-2\delta}dV_{\theta_0}\right).
\end{split}
\end{equation}
Since $\mbox{Vol}(\Omega,\theta_0)>0$, then
$$\frac{\mbox{Vol}(\Omega^c,\theta_0)}{\mbox{Vol}(S^{2n+1},\theta_0)}=1-2\theta
\mbox{ where
}\theta=\frac{\mbox{Vol}(\Omega,\theta_0)}{2\mbox{Vol}(S^{2n+1},\theta_0)}>0.$$
Thus choosing $\eta$ sufficiently small such that
$(1+\eta)(1-2\theta)<1-\theta$, together with (\ref{4.31}),
(\ref{4.32a}) and (\ref{4.33}), we conclude that
\begin{equation*}
\begin{split}
\int_{S^{2n+1}}v^{-2\delta}dV_{\theta_0}&\leq C+(1-\theta)\int_{S^{2n+1}}v^{-2\delta}dV_{\theta_0}
+\frac{n^2\delta^2}{4\lambda_1(1+2\delta)}\int_{S^{2n+1}}v^{-2\delta}dV_{\theta_0}\\
&\hspace{4mm}+C\|R_h-f_\phi\|_{L^{\frac{2n+2}{2-2n\delta}}(S^{2n+1},h)}.
\end{split}
\end{equation*}
By Lemma \ref{lem3.2} and taking $\delta>0$ sufficiently small, we have
\begin{equation}\label{4.32}
\int_{S^{2n+1}}v^{-2\delta}dV_{\theta_0}\leq C.
\end{equation}
Now let $G(Q,\cdot\,)$ be the Green's function for $-\Delta_{\theta_0}$
with singularity at $Q\in S^{2n+1}$. Then from \cite{Sanchez-Calle}, we have
\begin{equation}\label{4.35}
G(Q,\cdot\,)>0\mbox{ and }\|G(Q,\cdot\,)\|_{L^{p}(S^{2n+1},\theta_0)}\leq C(p)\mbox{ for all }p<2.
\end{equation}
Hence, for $\gamma>0$ sufficiently small so that
$2+\gamma<2+\displaystyle\frac{2}{n}$, we obtain by (\ref{4.22}) and
(\ref{4.35}) that
\begin{equation*}
\begin{split}
v^{-\gamma}(Q)&=\frac{1}{\mbox{Vol}(S^{2n+1},\theta_0)}\int_{S^{2n+1}}v^{-\gamma}
dV_{\theta_0}-\int_{S^{2n+1}}G(Q,\cdot\,)\Delta_{\theta_0}(v^{-\gamma})
dV_{\theta_0}\\
&=\frac{1}{\mbox{Vol}(S^{2n+1},\theta_0)}\int_{S^{2n+1}}v^{-\gamma}dV_{\theta_0}
-\frac{n}{2n+2}\int_{S^{2n+1}}G(Q,\cdot\,)\\
&\hspace{4mm}\Big(\gamma R_hv^{\frac{2}{n}-\gamma}-\gamma R_{\theta_0}v^{-\gamma}+
(2+\frac{2}{n})\gamma(1+\gamma)v^{-\gamma-2}|\nabla_{\theta_0}v|^2_{\theta_0}\Big)
dV_{\theta_0}\\
&\leq\frac{1}{\mbox{Vol}(S^{2n+1},\theta_0)}\int_{S^{2n+1}}v^{-\gamma}dV_{\theta_0}
+\frac{n}{2n+2}\gamma R_{\theta_0}\int_{S^{2n+1}}G(Q,\cdot\,)v^{-\gamma}dV_{\theta_0}\\
&\hspace{4mm}-\frac{n}{2n+2}\gamma\int_{S^{2n+1}}G(Q,\cdot\,)(R_h-\alpha f_{\phi})v^{\frac{2}{n}-\gamma}
dV_{\theta_0}\\
&\leq\frac{1}{\mbox{Vol}(S^{2n+1},\theta_0)}\int_{S^{2n+1}}v^{-\gamma}dV_{\theta_0}\\
&\hspace{4mm}+\frac{n}{2n+2}\gamma R_{\theta_0}C
\Big(\frac{n+1}{n}\Big)\|v^{-\gamma}\|_{L^{n+1}(S^{2n+1},\theta_0)}\\
&\hspace{4mm}+\frac{n}{2n+2}\gamma C\Big(\frac{2n+2}{2n-n\gamma}\Big)
\|R_h-\alpha f_{\phi}\|_{L^{\frac{2n+2}{2-n\gamma}}(S^{2n+1},h)}.
\end{split}
\end{equation*}
Finally, choosing $\gamma=\displaystyle\frac{2\delta}{n+1}$ with
$\delta<1$ and using (\ref{4.32}), (\ref{4.35}) and Lemma \ref{lem3.2}, we get the
desired lower bound $v\geq C_3>0$.
\end{proof}

\begin{lem}\label{lem4.11}
Suppose that case \emph{(i)} does not occur in Theorem
\ref{thm4.7}. For any sequence $\{t_k\}$, let $\lambda_1(\theta(t_k))$ be the
first non-zero eigenvalue of the sub-Laplacian with respect to the
contact form $\theta(t_k)=u^{\frac{2}{n}}(t_k)\theta_0$. Then there
exists a subsequence $\{t_j\}$ such that $\lambda_1(\theta(t_j))\geq
\beta_0>0$ for all $j$, where $\beta_0$ is a  positive constant independent of $j$.
\end{lem}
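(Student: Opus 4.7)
The plan is to argue by contradiction, using that the sub-Laplacian spectrum is invariant under pullback by CR diffeomorphisms and that the balancing condition on $v(t)$ prevents concentration of the volume. Since $h(t_k) = \phi(t_k)^*\theta(t_k)$, we have $\lambda_1(\theta(t_k)) = \lambda_1(h(t_k))$, so it suffices to bound $\lambda_1(h_k)$ from below. Writing $h_k = v_k^{\frac{2}{n}}\theta_0$ and using the conformal identities already noted in the proof of Lemma~\ref{lem3.3}, the Rayleigh characterization reads
\begin{equation*}
\lambda_1(h_k) = \inf\left\{\frac{\int_{S^{2n+1}} v_k^2 |\nabla_{\theta_0}\psi|^2_{\theta_0} dV_{\theta_0}}{\int_{S^{2n+1}} \psi^2 v_k^{2+\frac{2}{n}} dV_{\theta_0}} : \psi \not\equiv 0,\ \int_{S^{2n+1}} \psi\, v_k^{2+\frac{2}{n}} dV_{\theta_0} = 0\right\}.
\end{equation*}

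Suppose toward contradiction that, along some subsequence (relabeled), $\lambda_1(h_k)\to 0$, and let $\psi_k$ be the associated eigenfunctions normalized by $\|\psi_k\|_{L^2(h_k)} = 1$ and $\int \psi_k dV_{h_k} = 0$. The uniform lower bound $v_k \geq C_3$ of Lemma~\ref{lem4.10} gives
\begin{equation*}
\int_{S^{2n+1}} |\nabla_{\theta_0}\psi_k|^2_{\theta_0} dV_{\theta_0} \leq \frac{\lambda_1(h_k)}{C_3^2}\longrightarrow 0
\end{equation*}
and $\|\psi_k\|_{L^2(\theta_0)} \leq C_3^{-(n+1)/n}$, so $\{\psi_k\}$ is bounded in $S_1^2(S^{2n+1},\theta_0)$ with vanishing gradient energy; after passing to a subsequence, $\psi_k\to c$ in $L^2(\theta_0)$ and a.e., where $c$ is a constant. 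In parallel, (\ref{2.12a}) together with the identity $E(v_k) = E(u_k)$ bounds $\{v_k\}$ in $S_1^2(\theta_0)$, and a further subsequence gives $v_k\to v_\infty$ weakly in $S_1^2$, strongly in $L^p(\theta_0)$ for every $p<2+\frac{2}{n}$, and a.e., with $v_\infty \geq C_3 > 0$.

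The critical upgrade I need is strong $L^{2+\frac{2}{n}}(\theta_0)$ convergence of $v_k$, equivalently tightness of the measures $v_k^{2+\frac{2}{n}}\,dV_{\theta_0}$. I would establish this using the balancing condition $\int x\, dV_{h_k} = 0$: a concentration-compactness dichotomy shows that the only possible loss of $L^{2+\frac{2}{n}}$ mass is a sum of point bubbles at finitely many $p_i\in S^{2n+1}$, and any such bubble would contribute a nontrivial term to the center-of-mass vector that cannot be cancelled by the bulk (uniformly bounded below by $v_\infty\geq C_3$); the improved Folland-Stein inequality of Lemma~A in the Appendix quantitatively forbids such bubbles for balanced sequences satisfying the energy bound $E_f(u_0)\leq\beta$. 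This no-concentration step is where I expect the main technical difficulty.

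With strong $L^{2+\frac{2}{n}}$ convergence of $v_k$ in hand, interpolation between $\psi_k\to c$ in $L^2(\theta_0)$ and the Folland-Stein bound $\|\psi_k\|_{L^{2+\frac{2}{n}}(\theta_0)}\leq C$ yields $\psi_k\to c$ in $L^p(\theta_0)$ for every $p<2+\frac{2}{n}$. A H\"older/Vitali argument then passes to the limit in the two normalization identities:
\begin{equation*}
0 = \int_{S^{2n+1}}\psi_k v_k^{2+\frac{2}{n}} dV_{\theta_0} \longrightarrow c\cdot\mbox{Vol}(S^{2n+1},\theta_0),
\end{equation*}
forcing $c = 0$, whereas
\begin{equation*}
1 = \int_{S^{2n+1}}\psi_k^2 v_k^{2+\frac{2}{n}} dV_{\theta_0} \longrightarrow c^2\cdot\mbox{Vol}(S^{2n+1},\theta_0) = 0,
\end{equation*}
a contradiction. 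Hence $\lambda_1(h_k)\not\to 0$, and extracting a subsequence yields the required uniform lower bound $\beta_0 > 0$.
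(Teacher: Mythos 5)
Your reduction to the normalized flow, the contradiction setup via the Rayleigh quotient, and the use of Lemma \ref{lem4.10} to force $\|\nabla_{\theta_0}\psi_k\|_{L^2(\theta_0)}\to 0$ are all sound and broadly parallel to the paper. The gap is the step you yourself flag as the "critical upgrade": strong $L^{2+\frac{2}{n}}(\theta_0)$ convergence of $v_k$, i.e.\ no concentration of $dV_{h_k}$. First, the proposed mechanism does not deliver it: the balancing condition only constrains the first moment of $dV_{h_k}$, and a bubble carrying a \emph{fraction} of the volume at a point is perfectly compatible with $\int x\,dV_{h_k}=0$ once the bulk (bounded below by $v_k\geq C_3$) is taken into account; Lemma \ref{lemA} is used in Lemma \ref{lem4.9} only to bound $\int v_k^2$ from below and gives no upper $L^{2+\frac{2}{n}+\epsilon}$ control. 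Second, there is a circularity risk: in the paper's architecture the non-concentration of the normalized volume is a \emph{consequence} of the eigenvalue bound you are trying to prove (Lemma \ref{lem4.11} feeds condition (iii) of Lemma \ref{lem4.12}, which feeds Lemma \ref{lem4.13}), so it cannot be assumed here. Third, even granting an $L^{2+\frac{2}{n}+\epsilon_0}$ bound on $v_k$, your passage to the limit in $\int\psi_k^2\,dV_{h_k}=1$ is not justified: $\psi_k$ is controlled only in $L^{2+\frac{2}{n}}(\theta_0)$, and H\"older then requires $v_k\in L^{(2+\frac{2}{n})(n+1)}$ to rule out that the measure $\psi_k^2\,dV_{h_k}$ itself concentrates; $L^2$ or a.e.\ convergence of $\psi_k$ to a constant does not exclude this.

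The paper's proof handles exactly this difficulty by exploiting the precise blow-up structure instead: since case (i) fails, Lemma \ref{lem4.3} gives a single concentration point $Q$, the rescaled conformal factors converge to the standard Jerison--Lee bubble, and after composing with a further CR diffeomorphism the contact forms $\widetilde{h}(t_k)$ converge to $\theta_0$ locally away from one point. The possible loss of mass of $\psi_k^2\,dV_{\widetilde{h}(t_k)}$ at that point is then controlled by a capacity argument with the log-log cutoffs $\zeta_\rho$, whose horizontal gradients have vanishing $L^{2n+2}$ norm (see (\ref{4.39}), (\ref{4.43}), (\ref{4.44})); this is what lets one pass to the limit in the normalization identities and reach the contradiction with Rayleigh's inequality. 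To repair your argument you would need either to supply this capacity/cutoff step or to prove non-concentration of $dV_{h_k}$ by an independent route, neither of which is done in the proposal.
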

\begin{proof}
Since $\theta(t_k)=u^{\frac{2}{n}}(t_k)\theta_0$ is diffeomorphic to
the normalized contact form $h(t_k)=v_k^{\frac{2}{n}}\theta_0$, we only
need to get the first non-zero eigenvalue estimate for
$h(t_k)=v_k^{\frac{2}{n}}\theta_0$. By Lemma \ref{lem4.3}, for any
sequence $\{t_k\}$, we have
\begin{equation}\label{4.36}
\liminf_{k\rightarrow\infty}\left(\frac{1}{\mbox{Vol}(S^{2n+1},\theta_0)}\int_{B_r(Q)}
|R_{h(t_k)}|^{n+1}dV_{h(t_k)}\right)^{\frac{1}{n+1}}\geq\frac{n(n+1)}{2}
\end{equation}
for any $r>0$. Therefore for each $k$, there exists $r_k>0$ such
that
\begin{equation}\label{4.37}
\left(\frac{1}{\mbox{Vol}(S^{2n+1},\theta_0)}\int_{B_{r_k}(Q)}
|R_{h(t_k)}|^{n+1}dV_{h(t_k)}\right)^{\frac{1}{n+1}}=\frac{n(n+1)}{2},
\end{equation}
since $|R_{h(t_k)}|^{n+1}$ is integrable with respect to
$dV_{h(t_k)}$. It follows from (\ref{4.36}) and (\ref{4.37}) that
$r_k\rightarrow 0$ as $k\rightarrow\infty$.

Now, up to a rotation, we may assume $Q=S=(0,...,0,-1)$, the south pole of $S^{2n+1}$.  Let
$\Psi(z,\tau)=\pi^{-1}(z,\tau)$ for
$(z,\tau)\in\mathbb{H}^n\subset\mathbb{C}^n\times\mathbb{R}$ where
$\pi$ is defined as in (\ref{4.20}). Set
$$w_k(z,\tau)=|\det(d\Psi)|^{\frac{n}{2n+2}}(v_k\circ\Psi)\geq 0$$
to obtain a sequence $w_k:\mathbb{H}^n\rightarrow\mathbb{R}$ which
is in  $S^p_2(\mathbb{H}^n)$ locally and converges to a function
such that
$$(2+\frac{2}{n})\Delta_{\mathbb{H}^n}w_\infty+\alpha(\infty)f(Q)w_\infty^{\frac{n}{2n+2}}=0.$$
Here $\Delta_{\mathbb{H}^n}$ is the sub-Laplacian with respect to
the standard contact form
$\theta_{\mathbb{H}^n}=d\tau+i\sum_{j=1}^n(z_jd\overline{z}_j-\overline{z}_jdz_j)$
on $\mathbb{H}^n$ (see \cite{Dragomir}). By the classification
theorem of Jerison and Lee in \cite{Jerison&Lee1}, up to the scaling
with $\alpha(\infty)f(Q)=n(n+1)/2$,
$$w_\infty(z,\tau)=\lambda^{n}\omega
\circ D_{\lambda}\circ T_{(z',\tau')}(z,\tau)\mbox{ and
}\omega(z,\tau)=\left(\frac{4}{\tau^2+(1+|z|^2)^2}\right)^{\frac{n}{2}},$$
where $D_{\lambda},
T_{(z',\tau')}:\mathbb{H}^n\rightarrow\mathbb{H}^n$ are respectively
the dilation and translation on $\mathbb{H}^n$ defined as in
(\ref{4.21}). Then let us define
$$
\psi=\lambda^{n}\Psi\circ D_{\lambda}\circ T_{(z',\tau')}\circ\pi:
S^{2n+1}\setminus\{S\}\rightarrow S^{2n+1}\setminus\{S\}.
$$
It follows directly from the definition that
$$\widetilde{h}(t_k):=\psi^*\big(h(t_k)\big)\rightarrow\theta_0\hspace{2mm}\mbox{ as }k\rightarrow\infty$$
locally in $S^p_2(S^{2n+1}\setminus\{S\},\theta_0)$  for any
$p<\infty$. Denote by $\widetilde{w}_k^{\frac{2}{n}}\theta_0$ the
contact form $\widetilde{h}(t_k)=\psi^*\big(h(t_k)\big)$. Hence we
have
\begin{equation}\label{4.38}
\widetilde{w}_k\rightarrow 1\hspace{2mm}\mbox{ as
}k\rightarrow\infty\mbox{ locally in
}S^p_2(S^{2n+1}\setminus\{S\},\theta_0).
\end{equation}

Now for any given $0<\rho<1$, let $\zeta_\rho$ be a cut-off function
on $\mathbb{H}^n$ defined by
$$\zeta_\rho(z,\tau)=\log\log\frac{1}{\rho}-\log\log|(z,\tau)|_{\mathbb{H}^n}$$
for
$\rho^{-1/e}<|(z,\tau)|_{\mathbb{H}^n}=\sqrt[4]{|z|^4+\tau^2}<\rho^{-1}$;
and with $\zeta_\rho(z)=0$ in $\mathbb{H}^n\setminus
B_{\rho^{-1}}(0)$, $\zeta_\rho(z)=1$ in $ B_{\rho^{-1/e}}(0)$. Then
we have
\begin{equation}\label{4.39}
\begin{split}
&\int_{S^{2n+1}}|\nabla_{\theta_0}(\zeta_\rho\circ\pi)|_{\theta_0}^{2n+2}dV_{\theta_0}=
\int_{\mathbb{H}^n}|\nabla_{\theta_{\mathbb{H}^n}}\zeta_\rho|_{\theta_{\mathbb{H}^n}}^{2n+2}
dV_{\theta_{\mathbb{H}^n}}\\
&=C\iint_{\{\rho^{-1/e}<\sqrt[4]{|z|^4+\tau^2}<\rho^{-1}\}}
\left(\frac{|z|^3}{(|z|^4+\tau^2)\log(|z|^4+\tau^2)}\right)^{2n+2}dzd\tau\\
&\longrightarrow 0\hspace{2mm}\mbox{ as }\rho\rightarrow 0.
\end{split}
\end{equation}

For any $k$, let $\psi_k$ be the first eigenfunction of the
sub-Laplacian of $\widetilde{h}(t_k)$ with nonzero first eigenvalue
$\lambda_1(\widetilde{h}(t_k))$. We normalize the eigenfunction such
that
\begin{equation}\label{4.40}
\int_{S^{2n+1}}\psi_k dV_{\widetilde{h}(t_k)}=0\hspace{2mm}\mbox{
and }\hspace{2mm}\int_{S^{2n+1}}\psi_k^2 dV_{\widetilde{h}(t_k)}=1.
\end{equation}
We claim that there exists a subsequence $k$ such that
$\lambda_1(\widetilde{h}(t_k))\geq\displaystyle\frac{n}{4}$. Assume
the claim does not hold, we would have
$\lambda_1(\widetilde{h}(t_k))\leq\displaystyle\frac{n}{4}$ for all
sufficiently large $k$. This would imply
\begin{equation}\label{4.41}
\int_{S^{2n+1}}|\nabla_{\widetilde{h}(t_k)}\psi_k|^2_{\widetilde{h}(t_k)}dV_{\widetilde{h}(t_k)}
=\lambda_1(\widetilde{h}(t_k))\leq\frac{n}{4}
\end{equation}
by (\ref{4.38}).  Note that, by Lemma \ref{lem4.10},
$\widetilde{w}_k\geq c_0>0$ for some constant $c_0>0$ depending only
on $C_3$ and $\lambda$. Therefore for each fixed $\rho>0$
sufficiently small, by (\ref{4.40}) we have
\begin{equation*}
\begin{split}\int_{S^{2n+1}}(\psi_k\zeta_\rho\circ\pi)^2dV_{\theta_0}&\leq c_0^{-(2+\frac{2}{n})}
\int_{S^{2n+1}}(\psi_k\zeta_\rho\circ\pi)^2\widetilde{w}_k^{2+\frac{2}{n}}dV_{\theta_0}\\
&\leq c_0^{-(2+\frac{2}{n})}
\int_{S^{2n+1}}\psi_k^2dV_{\widetilde{h}(t_k)}=c_0^{-(2+\frac{2}{n})}
\end{split}
\end{equation*}
since $(\zeta_\rho\circ\pi)^2\leq 1$ for all $z$. We also have
\begin{equation*}
\begin{split}
&\int_{S^{2n+1}}|\nabla_{\theta_0}(\psi_k\zeta_\rho\circ\pi)|^2dV_{\theta_0}
\leq
c_0^{-2}\int_{S^{2n+1}}|\nabla_{\theta_0}(\psi_k\zeta_\rho\circ\pi)|_{\theta_0}^2\widetilde{w}_k^2dV_{\theta_0}\\
&=2c_0^{-2}\left(\int_{S^{2n+1}}|\nabla_{\theta_0}\psi_k|_{\theta_0}^2\widetilde{w}_k^2
(\zeta_\rho\circ\pi)^2dV_{\theta_0}
+\int_{S^{2n+1}}|\nabla_{\theta_0}(\zeta_\rho\circ\pi)|_{\theta_0}^2\widetilde{w}_k^2\psi_k^2dV_{\theta_0}\right).
\end{split}
\end{equation*}
Observe that the first term in  bracket of the right hand side is
bounded since $(\zeta_\rho\circ\pi)^2\leq 1$ and
\begin{equation}\label{4.41a}
\int_{S^{2n+1}}|\nabla_{\theta_0}\psi_k|_{\theta_0}^2\widetilde{w}_k^2dV_{\theta_0}=
\int_{S^{2n+1}}|\nabla_{\widetilde{h}(t_k)}\psi_k|^2_{\widetilde{h}(t_k)}dV_{\widetilde{h}(t_k)}
\leq\frac{n}{4}.
\end{equation} For the second term, we can apply H\"{o}lder's
inequality to get
\begin{equation*}
\begin{split}
&\int_{S^{2n+1}}|\nabla_{\theta_0}(\zeta_\rho\circ\pi)|_{\theta_0}^2\widetilde{w}_k^2\psi_k^2dV_{\theta_0}\\
&\leq
\left(\int_{S^{2n+1}}|\nabla_{\theta_0}(\zeta_\rho\circ\pi)|_{\theta_0}^{2n+2}
dV_{\theta_0}\right)^{\frac{1}{n+1}}
\left(\int_{S^{2n+1}}|\widetilde{w}_k\psi_k|^{2+\frac{2}{n}}
dV_{\theta_0}\right)^{\frac{n}{n+1}}.
\end{split}
\end{equation*}
The first factor is bounded thanks to (\ref{4.39}), while the second
factor can be estimated as follows:
\begin{equation}\label{4.42}
\begin{split}
&Y(S^{2n+1},\theta_0)\left(\int_{S^{2n+1}}|\widetilde{w}_k\psi_k|^{2+\frac{2}{n}}
dV_{\theta_0}\right)^{\frac{n}{n+1}}\\
&=Y(S^{2n+1},\theta_0)\|\widetilde{w}_k\psi_k\|^2_{L^{2+\frac{2}{n}}(S^{2n+1},\theta_0)}=
Y(S^{2n+1},\theta_0)\left(\int_{S^{2n+1}}|\psi_k|^{2+\frac{2}{n}}
dV_{\widetilde{h}(t_k)}\right)^{\frac{n}{n+1}}\\
&\leq(2+\frac{2}{n})\int_{S^{2n+1}}
|\nabla_{\widetilde{h}(t_k)}\psi_k|^2_{\widetilde{h}(t_k)}dV_{\widetilde{h}(t_k)}
+\int_{S^{2n+1}}
R_{\widetilde{h}(t_k)}\psi_k^2dV_{\widetilde{h}(t_k)}\\
&=(2+\frac{2}{n})\int_{S^{2n+1}}
|\nabla_{\widetilde{h}(t_k)}\psi_k|^2_{\widetilde{h}(t_k)}dV_{\widetilde{h}(t_k)}
+\alpha(t_k)\int_{S^{2n+1}}f\psi_k^2dV_{\widetilde{h}(t_k)}\\
&\hspace{4mm}+\int_{S^{2n+1}}
(R_{\widetilde{h}(t_k)}-\alpha(t_k)f)\psi_k^2dV_{\widetilde{h}(t_k)}
\\
&\leq(2+\frac{2}{n})\int_{S^{2n+1}}
|\nabla_{\widetilde{h}(t_k)}\psi_k|^2_{\widetilde{h}(t_k)}dV_{\widetilde{h}(t_k)}
+\alpha(t_k)\int_{S^{2n+1}}f\psi_k^2dV_{\widetilde{h}(t_k)}\\
&\hspace{4mm} +\left(\int_{S^{2n+1}}
|R_{\widetilde{h}(t_k)}-\alpha(t_k)f|^{n+1}dV_{\widetilde{h}(t_k)}\right)^{\frac{1}{n+1}}
\left(\int_{S^{2n+1}}|\psi_k|^{2+\frac{2}{n}}
dV_{\widetilde{h}(t_k)}\right)^{\frac{n}{n+1}}
\end{split}
\end{equation}
where the first inequality follows from Lemma \ref{lem2.3} and the
last inequality follows from H\"{o}lder's inequality. When $k$ is
sufficiently large, $$\left(\int_{S^{2n+1}}
|R_{\widetilde{h}(t_k)}-\alpha(t_k)f|^{n+1}dV_{\widetilde{h}(t_k)}\right)^{\frac{1}{n+1}}
\leq\frac{Y(S^{2n+1},\theta_0)}{2}$$ by Lemma \ref{lem3.2}. Combining this with
(\ref{4.42}), we deduce that
\begin{equation*}
\begin{split}
&\frac{Y(S^{2n+1},\theta_0)}{2}\left(\int_{S^{2n+1}}|\psi_k|^{2+\frac{2}{n}}
dV_{\widetilde{h}(t_k)}\right)^{\frac{n}{n+1}}\\
&\leq
(2+\frac{2}{n})\int_{S^{2n+1}}|\nabla_{\widetilde{h}(t_k)}\psi_k|^2_{\widetilde{h}(t_k)}dV_{\widetilde{h}(t_k)}+
\alpha(t_k)\int_{S^{2n+1}}f\psi_k^2dV_{\widetilde{h}(t_k)}\\
&\leq
(2+\frac{2}{n})\int_{S^{2n+1}}|\nabla_{\widetilde{h}(t_k)}\psi_k|^2_{\widetilde{h}(t_k)}dV_{\widetilde{h}(t_k)}
+\alpha_2\big(\max_{S^{2n+1}}f\big)\int_{S^{2n+1}}\psi_k^2dV_{\widetilde{h}(t_k)}\leq
C
\end{split}
\end{equation*}
by (\ref{4.40}), (\ref{4.41a}), and Lemma \ref{lem2.4}.

Therefore we have shown that $\{\psi_k\zeta_\rho\circ\pi\}$ is a
bound sequence in $S_1^2(S^{2n+1},\theta_0)$. Thus there exists a
subsequence, still denote by $\{\psi_k\zeta_\rho\circ\pi\}$, and a
function $\psi_\rho$ such that
$\psi_k\zeta_\rho\circ\pi\rightarrow\psi_\rho$ as
$k\rightarrow\infty$ weakly in $S^2_1(S^{2n+1},\theta_0)$ which is
also strongly convergent in $L^q(S^{2n+1},\theta_0)$ with
$q<\displaystyle\frac{n}{2n+2}$. Then  $\{\psi_\rho\}$ is bounded in
$S^2_1(S^{2n+1},\theta_0)$ since
$\displaystyle\|\psi_\rho\|_{S^2_1(S^{2n+1},\theta_0)}
\leq\lim_{k\rightarrow\infty}\|\psi_k\zeta_\rho\circ\pi\|_{S^2_1(S^{2n+1},\theta_0)}$.
Therefore there exists a subsequence $\{\psi_{\rho_j}\}$ and
$\psi_0\in S^2_1(S^{2n+1},\theta_0)$ such that
$\psi_{\rho_j}\rightarrow\psi_0$ as $j\rightarrow\infty$ weakly in
$S^2_1(S^{2n+1},\theta_0)$.

Now we claim that if we let $\pi=\Psi^{-1}$, then there hold
\begin{equation}\label{4.43}
\lim_{\rho\rightarrow
0}\lim_{k\rightarrow\infty}\|\psi_k\zeta_\rho\circ\pi-\psi_k\|_{L^2(S^{2n+1},\widetilde{h}(t_k))}=0,
\end{equation}
and
\begin{equation}\label{4.44}
\lim_{j\rightarrow\infty}\lim_{k\rightarrow\infty}
\big(\|\nabla_{\widetilde{h}(t_k)}(\psi_k\zeta_{\rho_j}\circ\pi)\|_{L^2(S^{2n+1},\widetilde{h}(t_k))}
-\|\nabla_{\widetilde{h}(t_k)}\psi_k\|_{L^2(S^{2n+1},\widetilde{h}(t_k))}\big)\leq
0.
\end{equation}
If we assume these, then we have
\begin{equation*}
\begin{split}
\int_{S^{2n+1}}\psi_0^2\,dV_{\theta_0}=\lim_{j\rightarrow\infty}\int_{S^{2n+1}}\psi_{\rho_j}^2dV_{\theta_0}
&=\lim_{j\rightarrow\infty}\lim_{k\rightarrow\infty}\int_{S^{2n+1}}(\psi_k\zeta_{\rho_j}\circ\pi)^2dV_{\theta_0}\\
&=\lim_{k\rightarrow\infty}\int_{S^{2n+1}}\psi_k^2\widetilde{w}_k^{2+\frac{2}{n}}dV_{\theta_0}=1
\end{split}
\end{equation*}
where we have used (\ref{4.38}), (\ref{4.40}) and (\ref{4.43}).
Similarly, we have
\begin{equation*}
\begin{split}
\int_{S^{2n+1}}\psi_0\,dV_{\theta_0}=\lim_{j\rightarrow\infty}\int_{S^{2n+1}}\psi_{\rho_j}dV_{\theta_0}
&=\lim_{j\rightarrow\infty}\lim_{k\rightarrow\infty}\int_{S^{2n+1}}\psi_k\zeta_{\rho_j}\circ\pi\,dV_{\theta_0}\\
&=\lim_{k\rightarrow\infty}\int_{S^{2n+1}}\psi_k\widetilde{w}_k^{2+\frac{2}{n}}dV_{\theta_0}=0.
\end{split}
\end{equation*}
Thus $\psi_0\not\equiv 0$ and Raleigh's inequality gives
\begin{equation}\label{4.45}
\|\nabla_{\theta_0}\psi_0\|_{L^2(S^{2n+1},\theta_0)}\geq\frac{n}{2}.
\end{equation}
Hence, we get the following contradiction:
\begin{equation*}
\begin{split}
\frac{n}{4}&\geq\lim_{k\rightarrow\infty}\lambda_1(\widetilde{h}(t_k))
=\lim_{k\rightarrow\infty}
\int_{S^{2n+1}}|\nabla_{\widetilde{h}(t_k)}\psi_k|^2_{\widetilde{h}(t_k)}dV_{\widetilde{h}(t_k)}\\
&\geq \lim_{j\rightarrow\infty}\lim_{k\rightarrow\infty}
\|\nabla_{\widetilde{h}(t_k)}(\psi_k\zeta_{\rho_j}\circ\pi)\|_{L^2(S^{2n+1},\widetilde{h}(t_k))}
=\lim_{j\rightarrow\infty}\lim_{k\rightarrow\infty}
\|\nabla_{\theta_0}(\psi_k\zeta_{\rho_j}\circ\pi)\|_{L^2(S^{2n+1},\theta_0)}\\
&\geq\lim_{j\rightarrow\infty}
\|\nabla_{\theta_0}\psi_{\rho_j}\|_{L^2(S^{2n+1},\theta_0)}\geq
\|\nabla_{\theta_0}\psi_0\|_{L^2(S^{2n+1},\theta_0)}\geq\frac{n}{2},
\end{split}
\end{equation*}
where the first inequality follows from (\ref{4.41}), the second
inequality follows from (\ref{4.44}), the second equality follows
from (\ref{4.38}), and the last inequality follows from
(\ref{4.45}).

Therefore it remains to show (\ref{4.43}) and (\ref{4.44}). By
H\"{o}lder's inequality and the fact that $\zeta_\rho\circ\pi\leq
1$, we have
\begin{equation*}
\begin{split}
&\int_{S^{2n+1}}|\psi_k(\zeta_\rho\circ\pi-1)|^2dV_{\widetilde{h}(t_k)}
\leq\int_{\{x\in
S^{2n+1}|\zeta_\rho\circ\pi<1\}}|\psi_k|^2dV_{\widetilde{h}(t_k)}\\
&\leq\mbox{Vol}\big(\{x\in
S^{2n+1}|\zeta_\rho\circ\pi<1\},\widetilde{h}(t_k)\big)^{\frac{1}{n+1}}
\left(\int_{S^{2n+1}}|\psi_k|^{2+\frac{2}{n}}dV_{\widetilde{h}(t_k)}\right)^{\frac{n}{n+1}},
\end{split}
\end{equation*}
where the first factor can be estimated by (\ref{4.38}):{\footnote {The
integral can be estimated as follows:
\begin{equation*}
\begin{split}
&\int_{\mathbb{H}^n\setminus
B_{\rho^{-1}}(0)}\frac{dzd\tau}{(\tau^2+(1+|z|^2)^2)^{n+1}}
=\int_{\{\sqrt[4]{\tau^2+|z|^4}\geq\rho^{-1}\}}\frac{dzd\tau}{(\tau^2+(1+|z|^2)^2)^{n+1}}\\
&\leq
2\int_{\{|z|\geq\rho^{-1}\}}\left(\int_{\sqrt{\rho^{-4}-|z|^4}}^\infty\frac{d\tau}{1+\tau^2}\right)
\frac{dz}{(1+|z|^2)^{2n}}=2\int_{\{|z|\geq\rho^{-1}\}}\Big[\tan^{-1}(\tau)\Big]_{0}^\infty
\frac{dz}{(1+|z|^2)^{2n}}\\
&\leq\pi\int_{\{|z|\geq\rho^{-1}\}}\frac{dz}{(1+|z|^2)^{2n}}
=C\int_{\rho^{-1}}^\infty\frac{r^{2n-1}dr}{(1+r^2)^{2n}}\leq
C\int_{\rho^{-1}}^\infty\frac{dr}{r^{2n+1}}=O(\rho^{2n}).
\end{split}
\end{equation*}
}}
\begin{equation*}
\begin{split}
\mbox{Vol}\big(\{x\in
S^{2n+1}|\zeta_\rho\circ\pi<1\},\widetilde{h}(t_k)\big)
&=\mbox{Vol}\big(\{x\in
S^{2n+1}|\zeta_\rho\circ\pi<1\},\widetilde{w}(t_k)^{\frac{2}{n}}\theta_0\big)\\
&\xrightarrow{k\rightarrow\infty} \mbox{Vol}\big(\{x\in
S^{2n+1}|\zeta_\rho\circ\pi<1\},\theta_0\big)\\
&=\int_{\mathbb{H}^n\setminus
B_{\rho^{-1/e}}(0)}\left(\frac{4}{(1+|z|^2)^2+\tau^2}\right)^{n+1}dV_{\theta_{\mathbb{H}^n}}
\xrightarrow{\rho\rightarrow 0} 0
\end{split}
\end{equation*}
and the second factor can be
bounded as follows:
\begin{equation}\label{4.46}
\begin{split}
\left(\int_{S^{2n+1}}|\psi_k|^{2+\frac{2}{n}}dV_{\widetilde{h}(t_k)}\right)^{\frac{n}{n+1}}&\leq
C\left(\int_{S^{2n+1}}|\psi_k|^{2+\frac{2}{n}}dV_{\theta_0}\right)^{\frac{n}{n+1}}\\
&\leq C\|\psi_k\|_{S_1^2(S^{2n+1},\theta_0)}\leq
C\|\psi_k\|_{S_1^2(S^{2n+1},\widetilde{h}(t_k))}\leq C
\end{split}
\end{equation}
by (\ref{4.38}), (\ref{4.40}), (\ref{4.41}) and Lemma \ref{lem2.3}.
This proves (\ref{4.43}). To prove (\ref{4.44}), we apply
H\"{o}lder's inequality to get
\begin{equation}\label{4.47}
\begin{split}
&\|\psi_k\nabla_{\widetilde{h}(t_k)}(\zeta_{\rho}\circ\pi)\|_{L^2(S^{2n+1},\widetilde{h}(t_k))}^2\\
&=\int_{S^{2n+1}}|\nabla_{\widetilde{h}(t_k)}(\zeta_{\rho}\circ\pi)|^2_{\widetilde{h}(t_k)}
\psi_k^2\,dV_{\widetilde{h}(t_k)}\\
&\leq\left(\int_{S^{2n+1}}|\nabla_{\widetilde{h}(t_k)}(\zeta_{\rho}\circ\pi)|^{2n+2}_{\widetilde{h}(t_k)}
dV_{\widetilde{h}(t_k)}\right)^{\frac{1}{n+1}}
\left(\int_{S^{2n+1}}\psi_k^{2+\frac{2}{n}}dV_{\widetilde{h}(t_k)}\right)^{\frac{n}{n+1}},
\end{split}
\end{equation}
where the first factor can be estimated as follows:
\begin{equation}\label{4.48}
\begin{split}
&\int_{S^{2n+1}}|\nabla_{\widetilde{h}(t_k)}(\zeta_{\rho}\circ\pi)|^{2n+2}_{\widetilde{h}(t_k)}
dV_{\widetilde{h}(t_k)}
=\int_{S^{2n+1}}|\nabla_{\theta_0}(\zeta_{\rho}\circ\pi)|^{2n+2}_{\theta_0}
dV_{\theta_0}\xrightarrow{\rho\rightarrow 0} 0
\end{split}
\end{equation}
by (\ref{4.39}), and the second factor is bounded by (\ref{4.46}).
Hence,  as $\rho\rightarrow 0$, we have
\begin{equation*}
\begin{split}
&\|\nabla_{\widetilde{h}(t_k)}(\psi_k\zeta_{\rho}\circ\pi)\|_{L^2(S^{2n+1},\widetilde{h}(t_k))}\\
&\leq\|(\zeta_{\rho}\circ\pi)\nabla_{\widetilde{h}(t_k)}\psi_k\|_{L^2(S^{2n+1},\widetilde{h}(t_k))}+
\|\psi_k\nabla_{\widetilde{h}(t_k)}(\zeta_{\rho}\circ\pi)\|_{L^2(S^{2n+1},\widetilde{h}(t_k))}\\
&\leq
\|\nabla_{\widetilde{h}(t_k)}\psi_k\|_{L^2(S^{2n+1},\widetilde{h}(t_k))}+o(1),
\end{split}
\end{equation*}
by (\ref{4.42}), (\ref{4.48}) and the fact that
$\zeta_{\rho}\circ\pi\leq 1$. This proves (\ref{4.44}).
\end{proof}

Next we have the following estimate, which is used to control the
upper bound of the normalized conformal factor $v$.

\begin{lem}\label{lem4.12}
On $(S^{2n+1},\theta_0)$, if $h=v^{\frac{2}{n}}\theta_0$ with $v>0$
is a contact form satisfying\\
\emph{(i)}
$\displaystyle\int_{S^{2n+1}}v^{2+\frac{2}{n}}dV_{\theta_0}=\int_{S^{2n+1}}u_0^{2+\frac{2}{n}}dV_{\theta_0}
=\mbox{\emph{Vol}}(S^{2n+1},\theta_0)$,\\
\emph{(ii)}
$\displaystyle\int_{S^{2n+1}}|R_h|^pv^{2+\frac{2}{n}}dV_{\theta_0}\leq
a_p\mbox{\emph{Vol}}(S^{2n+1},\theta_0)$ for some $p>n+1$,\\
\emph{(iii)} $0<\Lambda\leq\lambda_1(h)$, where $\lambda_1(h)$ is
the first nonzero eigenvalue of the sub-Laplacian $-\Delta_h$, where
$a_p,
\Lambda$ are positive constants and\\
\emph{(iv)} assuming in addition the condition: There exists some
positive constants $\sigma_0, l_0$ such that
\begin{equation}\label{4.52}
\int_{\{x\in S^{2n+1}|v(x)\geq\sigma_0\}}dV_{\theta_0}\geq l_0.
\end{equation}
Then there exists $\epsilon_0>0$ and a constant $C_0$ depending only
on $n, p, a_p, \Lambda, \sigma_0, l_0$ with
$$\int_{S^{2n+1}}v^{2+\frac{2}{n}+\epsilon_0}dV_{\theta_0}\leq
C_0.$$
\end{lem}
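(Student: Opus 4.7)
The proof is a Moser-type iteration on the CR Yamabe equation (\ref{4.22}) for $v$, carefully tuned so that all four hypotheses combine to break the conformal invariance of the underlying critical equation.

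\emph{Step 1 (Caccioppoli identity and condition (ii)).} For a small parameter $\delta>0$ to be chosen later, multiply (\ref{4.22}) by $v^{1+2\delta}$ and integrate over $S^{2n+1}$ to obtain, with $q_0=2+\tfrac{2}{n}$,
\begin{equation*}
(2+\tfrac{2}{n})(1+2\delta)\int_{S^{2n+1}} v^{2\delta}|\nabla_{\theta_0}v|^2\,dV_{\theta_0}+R_{\theta_0}\int_{S^{2n+1}} v^{2+2\delta}\,dV_{\theta_0}=\int_{S^{2n+1}}R_h v^{q_0+2\delta}\,dV_{\theta_0}.
\end{equation*}
By H\"older's inequality together with hypothesis (ii),
\begin{equation*}
\Big|\int R_h v^{q_0+2\delta}\,dV_{\theta_0}\Big|\leq (a_p\,\mathrm{Vol})^{1/p}\Big(\int v^{q_0+\frac{2\delta p}{p-1}}\,dV_{\theta_0}\Big)^{(p-1)/p}.
\end{equation*}
The hypothesis $p>n+1$ is precisely what makes the exponent $q_0+\tfrac{2\delta p}{p-1}$ strictly less than $q_0(1+\delta)$, providing the key Moser gain.

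\emph{Step 2 (First eigenvalue input from (iii)).} Applying Rayleigh's inequality for $-\Delta_h$ with first non-zero eigenvalue $\lambda_1(h)\geq \Lambda$ to the test function $\phi=v^{\delta}$, and using $dV_h=v^{q_0}dV_{\theta_0}$, $\mathrm{Vol}_h=\mathrm{Vol}$, and the identity $\int|\nabla_h v^{\delta}|^2\,dV_h=\delta^{2}\int v^{2\delta}|\nabla v|^2\,dV_{\theta_0}$, yields
\begin{equation*}
\int v^{q_0+2\delta}\,dV_{\theta_0}-\frac{1}{\mathrm{Vol}}\Big(\int v^{q_0+\delta}\,dV_{\theta_0}\Big)^{2}\leq \frac{\delta^{2}}{\Lambda}\int v^{2\delta}|\nabla v|^2\,dV_{\theta_0}.
\end{equation*}
Combining this with Step 1 and discarding the nonnegative curvature term $R_{\theta_0}\int v^{2+2\delta}$ gives an estimate controlling $\int v^{q_0+2\delta}$ modulo the ``mean term'' $\mathrm{Vol}^{-1}\big(\int v^{q_0+\delta}\big)^2$.

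\emph{Step 3 (Cauchy--Schwarz deficit via (iv)).} A direct Cauchy--Schwarz gives $\big(\int v^{q_0+\delta}\big)^{2}\leq \mathrm{Vol}\int v^{q_0+2\delta}$, with equality iff $v$ is constant. Writing the deficit as
\begin{equation*}
\mathrm{Vol}\int v^{q_0+2\delta}-\Big(\int v^{q_0+\delta}\Big)^{2}=\tfrac{1}{2}\iint v(x)^{q_0}v(y)^{q_0}\bigl(v(x)^{\delta}-v(y)^{\delta}\bigr)^{2}\,dx\,dy,
\end{equation*}
we use (iv) together with the uniform lower bound $v\geq C_{3}$ from Lemma \ref{lem4.10}: splitting $S^{2n+1}$ into $\{v\geq\sigma_{0}\}$ (measure $\geq l_{0}$) and its complement (where $v<\sigma_{0}$) allows us to bound the deficit from below by $c(\sigma_{0},l_{0},C_{3})\int v^{q_0+2\delta}$ \emph{provided} $v$ is not nearly constant; if $v$ is very close to a constant the conclusion $\int v^{q_0+\epsilon_{0}}\leq C$ follows directly from the normalization (i). The outcome is
\begin{equation*}
\frac{1}{\mathrm{Vol}}\Big(\int v^{q_0+\delta}\Big)^{2}\leq(1-\eta)\int v^{q_0+2\delta}+C(\sigma_{0},l_{0},C_{3}),
\end{equation*}
for some $\eta=\eta(\sigma_{0},l_{0},C_{3})\in(0,1)$.

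\emph{Step 4 (Closing the iteration).} Combining Steps 1--3 yields
\begin{equation*}
\eta\int v^{q_0+2\delta}\leq C(\sigma_{0},l_{0},C_{3})+C\,\Big(\int v^{q_0+\frac{2\delta p}{p-1}}\Big)^{(p-1)/p}.
\end{equation*}
Since $q_{0}<q_{0}+\tfrac{2\delta p}{p-1}<q_{0}(1+\delta)$, a H\"older interpolation between $\int v^{q_0}=\mathrm{Vol}$ (from (i)) and $\int v^{q_{0}(1+\delta)}$ (which, via Folland--Stein applied to $w=v^{1+\delta}$, is itself controlled by the gradient term from Step 1) bounds the right-hand side by $C+C\big(\int v^{q_0+2\delta}\big)^{\theta}$ with some $\theta<1$ whenever $\delta$ is chosen small enough. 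This absorbs the right-hand side into the left and gives $\int v^{q_0+2\delta}\leq C_{0}$, proving the lemma with $\epsilon_{0}=2\delta$.

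\emph{Main obstacle.} The delicate point is Step 3: extracting a Cauchy--Schwarz deficit with $\eta$ depending only on $\sigma_{0}, l_{0}, C_{3}$ (and $n,\mathrm{Vol}$), independent of $v$ itself. This is essentially a non-concentration/compactness argument, since without (iv) one could imagine a sequence $v_{k}$ collapsing to a Dirac-like profile that saturates Cauchy--Schwarz. Conditions (iii) and (iv) jointly exclude this degeneration, but the quantitative dependence of $\eta$ on the parameters must be handled carefully by the case split described above.
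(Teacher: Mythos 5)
Your Steps 1--3 follow the same architecture as the paper's argument (test function $v^{1+2\delta}$, Rayleigh's inequality with hypothesis (iii), and hypothesis (iv) to defeat the mean term), but Step 4 contains a genuine gap: the iteration does not close. After your H\"older application of (ii), interpolating $\int v^{q_0+\frac{2\delta p}{p-1}}dV_{\theta_0}$ between $\int v^{q_0}dV_{\theta_0}=\mbox{Vol}(S^{2n+1},\theta_0)$ and $B:=\int v^{q_0(1+\delta)}dV_{\theta_0}$ uses the exponent $\theta=\frac{2p}{q_0(p-1)}$, and
\begin{equation*}
\Big(\int_{S^{2n+1}} v^{q_0+\frac{2\delta p}{p-1}}dV_{\theta_0}\Big)^{\frac{p-1}{p}}\leq \mbox{Vol}(S^{2n+1},\theta_0)^{(1-\theta)\frac{p-1}{p}}\,B^{\theta\cdot\frac{p-1}{p}},\qquad \theta\cdot\frac{p-1}{p}=\frac{2}{q_0}=\frac{n}{n+1},
\end{equation*}
so you land \emph{exactly} on the critical Folland--Stein power of $B$, independently of $p$ and $\delta$ (a manifestation of the conformal invariance of the critical equation: a purely multiplicative Moser step gains nothing here). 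To control $B^{n/(n+1)}$ you must invoke Lemma \ref{lem2.3}, which reintroduces $\int v^{2\delta}|\nabla_{\theta_0}v|^2_{\theta_0}dV_{\theta_0}$, and by your Step 1 that quantity is bounded only by $(a_p\mbox{Vol})^{1/p}\big(\int v^{q_0+\frac{2\delta p}{p-1}}\big)^{(p-1)/p}$ again. The resulting self-bound has the form $B^{n/(n+1)}\leq c\,a_p^{1/p}B^{n/(n+1)}+C$ and closes only if $c\,a_p^{1/p}$ beats the Folland--Stein constant --- but $a_p$ is an arbitrary datum. The $\delta^2$ coming from the eigenvalue step never enters this sub-loop, and your claimed bound by $C+C\big(\int v^{q_0+2\delta}\big)^{\theta}$ with $\theta<1$ is unavailable since $q_0+\frac{2\delta p}{p-1}>q_0+2\delta$, so one cannot interpolate downward to $\int v^{q_0+2\delta}$.

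The missing device is the truncation of $R_h$ at a large level $b$, as in (\ref{4.55})--(\ref{4.56}): split $\int R_hv^{\frac{2}{n}}w^2\,dV_{\theta_0}$ over $\{|R_h|>b\}$ and $\{|R_h|\leq b\}$. On the first set, Chebyshev plus (ii) give $\int_{\{|R_h|>b\}}v^{q_0}dV_{\theta_0}\leq a_p\mbox{Vol}(S^{2n+1},\theta_0)/b^p$, and a three-exponent H\"older (with weights $\frac{1}{p}$, $\frac{1}{n+1}-\frac{1}{p}$, $\frac{n}{n+1}$) yields the coefficient $a_p^{\frac1p}(a_p/b^p)^{\frac{1}{n+1}-\frac{1}{p}}$ in front of $B^{n/(n+1)}$; this is where $p>n+1$ is genuinely used, since it makes that exponent positive and hence the coefficient arbitrarily small as $b\to\infty$, so it can be absorbed by the left side of the Folland--Stein inequality (\ref{4.54}). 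On $\{|R_h|\leq b\}$ the contribution is at most $b\int v^{q_0+2\delta}dV_{\theta_0}$, which is subcritical and is then handled by your Steps 2--3, choosing $\delta$ small \emph{after} $b$ is fixed so that the factor $b\delta^2$ is absorbed. Two further, fixable, issues: the lemma as stated should not invoke the lower bound $v\geq C_3$ of Lemma \ref{lem4.10}, which is not among hypotheses (i)--(iv) and would make $C_0$ depend on data other than $n,p,a_p,\Lambda,\sigma_0,l_0$ (the needed deficit in your Step 3 follows from (i) and (iv) alone, by Cauchy--Schwarz on $E_{\sigma_0}$ against the weight $v^{q_0}-\sigma_0^{q_0}$ as in (\ref{4.59})--(\ref{4.60})); and the dichotomy ``$v$ nearly constant / not nearly constant'' must be made quantitative, which that same computation accomplishes without any case split.
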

\begin{proof}
Since
$$-(2+\frac{2}{n})\Delta_{\theta_0}v+R_{\theta_0}v=R_hv^{1+\frac{2}{n}}\mbox{ on }S^{2n+1},$$
we multiply it by $v^\beta$ for some $\beta>0$ and integrate it to
get
$$(2+\frac{2}{n})\beta\int_{S^{2n+1}}v^{\beta-1}|\nabla_{\theta_0}v|_{\theta_0}^2dV_{\theta_0}
+R_{\theta_0}\int_{S^{2n+1}}v^{\beta+1}dV_{\theta_0}=\int_{S^{2n+1}}R_hv^{1+\frac{2}{n}+\beta}dV_{\theta_0}.$$
Set $w=v^{\frac{1+\beta}{2}}$, then
\begin{equation}\label{4.53}
(2+\frac{2}{n})\int_{S^{2n+1}}|\nabla_{\theta_0}w|_{\theta_0}^2dV_{\theta_0}=-\frac{(\beta+1)^2}{4\beta}R_{\theta_0}\int_{S^{2n+1}}w^2dV_{\theta_0}
+\frac{(\beta+1)^2}{4\beta}\int_{S^{2n+1}}R_hv^{\frac{2}{n}}w^2dV_{\theta_0}.
\end{equation}
By Lemma \ref{lem2.3}, we have
\begin{equation}\label{4.54}
\begin{split}
&\frac{n(n+1)}{2}\mbox{Vol}(S^{2n+1},\theta_0)^{\frac{1}{n+1}}\left(\int_{S^{2n+1}}w^{2+\frac{2}{n}}dV_{\theta_0}\right)^{\frac{n}{n+1}}\\
&\leq\int_{S^{2n+1}}\left((2+\frac{2}{n})|\nabla_{\theta_0}w|^2_{\theta_0}+R_{\theta_0}w^2\right)dV_{\theta_0}\\
&=\Big(1-\frac{(\beta+1)^2}{4\beta}\Big)R_{\theta_0}\int_{S^{2n+1}}w^2dV_{\theta_0}
+\frac{(\beta+1)^2}{4\beta}\int_{S^{2n+1}}R_hv^{\frac{2}{n}}w^2dV_{\theta_0}
\end{split}
\end{equation}
where we have used (\ref{4.53}). For a sufficiently large number
$b>0$, we get
\begin{equation}\label{4.55}
b^p\int_{\{|R_h|>b\}}v^{2+\frac{2}{n}}dV_{\theta_0}\leq
\int_{S^{2n+1}}|R_h|^pv^{2+\frac{2}{n}}dV_{\theta_0} \leq
a_p\mbox{Vol}(S^{2n+1},\theta_0)
\end{equation}
by assumption (ii). By H\"{o}lder's inequality, we have
\begin{equation}\label{4.56}
\begin{split}
&\int_{\{|R_h|>b\}}|R_h|^pv^{\frac{2}{n}}w^2dV_{\theta_0}\\
&\leq\left(\int_{S^{2n+1}}|R_h|^pv^{2+\frac{2}{n}}dV_{\theta_0}\right)^{\frac{1}{p}}
\left(\int_{\{|R_h|>b\}}v^{2+\frac{2}{n}}dV_{\theta_0}\right)^{\frac{1}{n+1}-\frac{1}{p}}
\left(\int_{S^{2n+1}}w^{2+\frac{2}{n}}dV_{\theta_0}\right)^{\frac{n}{n+1}}\\
&\leq
a_p^{\frac{1}{p}}\left(\frac{a_p}{b^p}\right)^{\frac{1}{n+1}-\frac{1}{p}}
\mbox{Vol}(S^{2n+1},\theta_0)^{\frac{1}{n+1}}
\left(\int_{S^{2n+1}}w^{2+\frac{2}{n}}dV_{\theta_0}\right)^{\frac{n}{n+1}}
\end{split}
\end{equation}
where we have used (\ref{4.55}) and the assumption (ii).

On the other hand, for any $\psi\in S^2_1(S^{2n+1},h)$, Raleigh's
inequality gives
$$\int_{S^{2n+1}}\psi^2\,dV_{h}\leq\frac{1}{\mbox{Vol}(S^{2n+1},\theta_0)}
\left(\int_{S^{2n+1}}\psi\,dV_{h}\right)^2
+\lambda_1(h)^{-1}\int_{S^{2n+1}}|\nabla_{h}\psi|_{h}^2dV_{h}.$$ Now
let $\psi=v^\epsilon$ with $0<\epsilon<1$, then
\begin{equation}\label{4.57}
\int_{S^{2n+1}}v^{2+\frac{2}{n}+2\epsilon}dV_{\theta_0}\leq
\frac{1}{\mbox{Vol}(S^{2n+1},\theta_0)}\left(\int_{S^{2n+1}}v^{2+\frac{2}{n}+\epsilon}dV_{\theta_0}\right)^2
+\Lambda^{-1}\int_{S^{2n+1}}v^2|\nabla_{\theta_0}v^\epsilon|_{\theta_0}^2dV_{\theta_0}
\end{equation}
by assumption (iii). Notice that condition (\ref{4.52}) implies that
there exist $\sigma_0>0$ and $l_0>0$ such that
\begin{equation}\label{4.58}
\int_{E_{\sigma_0}}dV_{\theta_0}\geq l_0\mbox{ i.e.
}\mbox{Vol}(E_{\sigma_0},\theta_0)\geq l_0,
\end{equation}
where $E_{\sigma_0}=\{x\in S^{2n+1}|v(x)\geq\sigma_0\}$. Observe
that
\begin{equation*}
\begin{split}
&\int_{S^{2n+1}}v^{2+\frac{2}{n}+\epsilon}dV_{\theta_0}=\int_{E_{\sigma_0}}v^{2+\frac{2}{n}+\epsilon}dV_{\theta_0}
+\int_{E_{\sigma_0}^c}v^{2+\frac{2}{n}+\epsilon}dV_{\theta_0}\\
&=\int_{E_{\sigma_0}}(v^{2+\frac{2}{n}}-\sigma_0^{2+\frac{2}{n}})v^\epsilon
dV_{\theta_0}+\sigma_0^{2+\frac{2}{n}}\int_{E_{\sigma_0}}v^\epsilon
dV_{\theta_0}
+\int_{E_{\sigma_0}^c}v^{2+\frac{2}{n}+\epsilon}dV_{\theta_0}\\
&\leq\left(\int_{E_{\sigma_0}}(v^{2+\frac{2}{n}}-\sigma_0^{2+\frac{2}{n}})v^{2\epsilon}
dV_{\theta_0}\right)^{\frac{1}{2}}\left(\int_{E_{\sigma_0}}(v^{2+\frac{2}{n}}-\sigma_0^{2+\frac{2}{n}})
dV_{\theta_0}\right)^{\frac{1}{2}}\\
&\hspace{4mm}+\sigma_0^{2+\frac{2}{n}}\left(\int_{E_{\sigma_0}}v^{2+\frac{2}{n}}
dV_{\theta_0}\right)^{\frac{n\epsilon}{2n+2}}\mbox{Vol}(S^{2n+1},\theta_0)^{1-\frac{n\epsilon}{2n+2}}
+\sigma_0^{2+\frac{2}{n}+\epsilon}\int_{E_{\sigma_0}^c}dV_{\theta_0}\\
&\leq\left(\int_{E_{\sigma_0}}v^{2+\frac{2}{n}+2\epsilon}
dV_{\theta_0}\right)^{\frac{1}{2}}\left(\int_{E_{\sigma_0}}(v^{2+\frac{2}{n}}-\sigma_0^{2+\frac{2}{n}})
dV_{\theta_0}\right)^{\frac{1}{2}}+C(\sigma_0)
\end{split}
\end{equation*}
where we have used assumption (i). This together with Young's
inequality implies that for any $0<\eta<1$
\begin{equation}\label{4.59}
\begin{split}
& \left(\int_{S^{2n+1}}v^{2+\frac{2}{n}+\epsilon}
dV_{\theta_0}\right)^{2}\\
&\leq (1+\eta)\left(\int_{E_{\sigma_0}}v^{2+\frac{2}{n}+2\epsilon}
dV_{\theta_0}\right)\left(\int_{E_{\sigma_0}}(v^{2+\frac{2}{n}}-\sigma_0^{2+\frac{2}{n}})
dV_{\theta_0}\right)+(1+\eta^{-1})C(\sigma_0)^2\\
&\leq (1+\eta)\left(\int_{E_{\sigma_0}}v^{2+\frac{2}{n}+2\epsilon}
dV_{\theta_0}\right)\left(\mbox{Vol}(S^{2n+1},\theta_0)
-\sigma_0^{2+\frac{2}{n}}\mbox{Vol}(E_{\sigma_0},\theta_0)\right)+(1+\eta^{-1})C(\sigma_0)^2
\end{split}
\end{equation}
by assumption (i). By choosing $\sigma_0$ small enough, we can
assume
$\sigma_0^{2+\frac{2}{n}}\mbox{Vol}(E_{\sigma_0},\theta_0)<\mbox{Vol}(S^{2n+1},\theta_0)$.
Since
$\sigma_0^{2+\frac{2}{n}}\mbox{Vol}(E_{\sigma_0},\theta_0)\geq\sigma_0^{2+\frac{2}{n}}
l_0>0$, then we may choose sufficiently small $\eta>0$ such that
\begin{equation}\label{4.60}
(1+\eta)\left(\mbox{Vol}(S^{2n+1},\theta_0)-\sigma_0^{2+\frac{2}{n}}l_0\right)\leq
(1-\delta)\mbox{Vol}(S^{2n+1},\theta_0)
\end{equation}
for some positive constant $\delta=\delta(\sigma_0,l_0)$. Then
\begin{equation*}
\begin{split}
\int_{S^{2n+1}}v^{2+\frac{2}{n}+2\epsilon}dV_{\theta_0}
&\leq
\frac{1}{\mbox{Vol}(S^{2n+1},\theta_0)}\left(\int_{S^{2n+1}}v^{2+\frac{2}{n}+\epsilon}dV_{\theta_0}\right)^2
+\Lambda^{-1}\int_{S^{2n+1}}v^2|\nabla_{\theta_0}v^\epsilon|_{\theta_0}^2dV_{\theta_0}\\
&\leq\frac{(1+\eta)\Big(\mbox{Vol}(S^{2n+1},\theta_0)
-\sigma_0^{2+\frac{2}{n}}\mbox{Vol}(E_{\sigma_0},\theta_0)\Big)}{\mbox{Vol}(S^{2n+1},\theta_0)}
\int_{S^{2n+1}}v^{2+\frac{2}{n}+2\epsilon}dV_{\theta_0}
\\
&\hspace{4mm}+\frac{(1+\eta^{-1})C(\sigma_0)^2}{\mbox{Vol}(S^{2n+1},\theta_0)}
+\Lambda^{-1}\int_{S^{2n+1}}v^2|\nabla_{\theta_0}v^\epsilon|_{\theta_0}^2dV_{\theta_0}\\
&\leq
(1-\delta)\int_{S^{2n+1}}v^{2+\frac{2}{n}+2\epsilon}dV_{\theta_0}
+\frac{(1+\eta^{-1})C(\sigma_0)^2}{\mbox{Vol}(S^{2n+1},\theta_0)}
+\Lambda^{-1}\int_{S^{2n+1}}v^2|\nabla_{\theta_0}v^\epsilon|_{\theta_0}^2dV_{\theta_0}
\end{split}
\end{equation*}
where we have used (\ref{4.57}) in the first inequality,
(\ref{4.59}) in the second inequality, and (\ref{4.60}) in the last
inequality. This implies that
\begin{equation}\label{4.61}
\delta\int_{S^{2n+1}}v^{2+\frac{2}{n}+2\epsilon}dV_{\theta_0}\leq
C(\sigma_0,l_0)+\Lambda^{-1}\int_{S^{2n+1}}v^2|\nabla_{\theta_0}v^\epsilon|_{\theta_0}^2dV_{\theta_0}.
\end{equation}

Choose $\beta=1+2\epsilon$, then
$w=v^{\frac{1+\beta}{2}}=v^{1+\epsilon}$. Then by (\ref{4.61})
\begin{equation}\label{4.62}
\begin{split}
\int_{S^{2n+1}}v^{2+\frac{2}{n}+2\epsilon}dV_{\theta_0}&\leq
C\delta^{-1}+(\delta\Lambda)^{-1}\int_{S^{2n+1}}v^2|\nabla_{\theta_0}v^\epsilon|_{\theta_0}^2dV_{\theta_0}\\
&=C\delta^{-1}+(\delta\Lambda)^{-1}\frac{\epsilon^2}{(1+\epsilon)^2}
\int_{S^{2n+1}}|\nabla_{\theta_0}w|_{\theta_0}^2dV_{\theta_0}\\
&\leq
C\delta^{-1}+(\delta\Lambda)^{-1}(2+\frac{2}{n})^{-1}\frac{\epsilon^2}{1+2\epsilon}
\int_{S^{2n+1}}R_hv^{\frac{2}{n}}w^2dV_{\theta_0}
\end{split}
\end{equation}
where the last inequality follows from (\ref{4.53}) with
$\beta=1+2\epsilon$. If we set
$I=\displaystyle\int_{S^{2n+1}}R_hv^{\frac{2}{n}}w^2dV_{\theta_0}$,
then
\begin{equation*}
\begin{split}
I&=\int_{\{|R_h|>b\}}|R_h|v^{\frac{2}{n}}w^2dV_{\theta_0}+\int_{\{|R_h|\leq
b\}}|R_h|v^{\frac{2}{n}}w^2dV_{\theta_0}\\
&\leq
a_p^{\frac{1}{p}}\left(\frac{a_p}{b^p}\right)^{\frac{1}{n+1}-\frac{1}{p}}
\mbox{Vol}(S^{2n+1},\theta_0)^{\frac{1}{n+1}}
\left(\int_{S^{2n+1}}w^{2+\frac{2}{n}}dV_{\theta_0}\right)^{\frac{n}{n+1}}
+b\int_{S^{2n+1}}v^{\frac{2}{n}}w^2dV_{\theta_0}\\
&\leq
a_p^{\frac{1}{p}}\left(\frac{a_p}{b^p}\right)^{\frac{1}{n+1}-\frac{1}{p}}
\mbox{Vol}(S^{2n+1},\theta_0)^{\frac{1}{n+1}}
\left(\int_{S^{2n+1}}w^{2+\frac{2}{n}}dV_{\theta_0}\right)^{\frac{n}{n+1}}\\
&\hspace{4mm}+bC\delta^{-1}+b(\delta\Lambda)^{-1}(2+\frac{2}{n})^{-1}\frac{\epsilon^2}{1+2\epsilon}
I
\end{split}
\end{equation*}
where the first inequality follows from (\ref{4.56}), and the last
inequality follows from (\ref{4.62}). This implies that
\begin{equation*}
\begin{split}
&\left[1-b(\delta\Lambda)^{-1}(2+\frac{2}{n})^{-1}\frac{\epsilon^2}{1+2\epsilon}\right]I\\
&\leq
a_p^{\frac{1}{p}}\left(\frac{a_p}{b^p}\right)^{\frac{1}{n+1}-\frac{1}{p}}
\mbox{Vol}(S^{2n+1},\theta_0)^{\frac{1}{n+1}}
\left(\int_{S^{2n+1}}w^{2+\frac{2}{n}}dV_{\theta_0}\right)^{\frac{n}{n+1}}+bC\delta^{-1}.
\end{split}
\end{equation*}
First, choose $b$ large enough such that
$$a_p^{\frac{1}{p}}\left(\frac{a_p}{b^p}\right)^{\frac{1}{n+1}-\frac{1}{p}}
<\frac{n(n+1)}{16}.$$
Next, choose $\epsilon>0$ small enough such that
$$1-b(\delta\Lambda)^{-1}(2+\frac{2}{n})^{-1}\frac{\epsilon^2}{1+2\epsilon}\geq\frac{1}{2}.$$
Thus
\begin{equation}\label{4.63}
I\leq \frac{n(n+1)}{8}\mbox{Vol}(S^{2n+1},\theta_0)^{\frac{1}{n+1}}
\left(\int_{S^{2n+1}}w^{2+\frac{2}{n}}dV_{\theta_0}\right)^{\frac{n}{n+1}}+C.
\end{equation}
Now by our choice of $\beta=1+2\epsilon$ with $\epsilon<1$, we
conclude that $\displaystyle\frac{(1+\beta)^2}{4\beta}\leq 2$. This
together with (\ref{4.54}) and (\ref{4.63}) implies that
\begin{equation*}
\begin{split}
&\frac{n(n+1)}{2}\mbox{Vol}(S^{2n+1},\theta_0)^{\frac{1}{n+1}}\left(\int_{S^{2n+1}}w^{2+\frac{2}{n}}dV_{\theta_0}\right)^{\frac{n}{n+1}}
\leq
C\int_{S^{2n+1}}w^2dV_{\theta_0} +2I\\
&\leq C\int_{S^{2n+1}}w^2dV_{\theta_0} +\frac{n(n+1)}{4}\mbox{Vol}(S^{2n+1},\theta_0)^{\frac{1}{n+1}}
\left(\int_{S^{2n+1}}w^{2+\frac{2}{n}}dV_{\theta_0}\right)^{\frac{n}{n+1}}+C,
\end{split}
\end{equation*}
which gives
\begin{equation}\label{4.64}
\frac{n(n+1)}{4}\mbox{Vol}(S^{2n+1},\theta_0)^{\frac{1}{n+1}}
\left(\int_{S^{2n+1}}w^{2+\frac{2}{n}}dV_{\theta_0}\right)^{\frac{n}{n+1}}\leq
C\int_{S^{2n+1}}w^2dV_{\theta_0}+C.
\end{equation}
On the other hand, if $\epsilon\leq\frac{1}{n}$, then
\begin{equation}\label{4.64a}
\begin{split}
\int_{S^{2n+1}}w^2dV_{\theta_0}&=\int_{S^{2n+1}}v^{2+2\epsilon}dV_{\theta_0}\\
&\leq\left(\int_{S^{2n+1}}v^{2+\frac{2}{n}}dV_{\theta_0}\right)^{\frac{n(1+\epsilon)}{1+n}}
\mbox{Vol}(S^{2n+1},\theta_0)^{1-\frac{n(1+\epsilon)}{1+n}}\leq C
\end{split}
\end{equation}
by H\"{o}lder's inequality and assumption (i). It follows from (\ref{4.64}) and (\ref{4.64a}) that
\begin{equation*}
\left(\int_{S^{2n+1}}v^{2+\frac{2}{n}+\epsilon_0}dV_{\theta_0}\right)^{\frac{n}{n+1}}
=\left(\int_{S^{2n+1}}w^{2+\frac{2}{n}}dV_{\theta_0}\right)^{\frac{n}{n+1}}\leq
C
\end{equation*}
where $\epsilon_0=\displaystyle(2+\frac{2}{n})\epsilon>0$. This
proves the assertion.
\end{proof}

\begin{lem}\label{lem4.13}
Let $\{\theta_k=\theta(t_k)\}$ be any sequence for the flow with
initial data $u_0\in C^\infty_f$. If $f$ cannot be realized as a
Wester scalar curvature in its conformal class, then there exists a
subsequence $\{t_j\}$ such that
$$v(t_j)\rightarrow 1\hspace{2mm}\mbox{ in
}C^{1,\lambda}_P(S^{2n+1})$$ for some $\lambda\in (0,1)$. Here
$C^{1,\lambda}_P(S^{2n+1})$ is the parabolic H\"{o}rmander
H\"{o}lder spaces defined as in $(\ref{2.24})$.
\end{lem}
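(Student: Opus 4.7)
The plan is to reduce the statement to Theorem \ref{thm4.7} by constructing a suitable limiting curvature $R_\infty$ and ruling out the alternative (i) of that theorem via Lemma \ref{lem4.8}.

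First, since $\{\alpha(t_k)\}$ is confined to $[\alpha_1,\alpha_2]$ by Lemma \ref{lem2.4}, I would pass to a subsequence (still denoted $\{t_k\}$) for which $\alpha(t_k)\to\alpha_\infty\in[\alpha_1,\alpha_2]$, and set $R_\infty:=\alpha_\infty f$. Then $R_\infty$ is smooth and strictly positive, and
\[
\frac{\max_{S^{2n+1}}R_\infty}{\min_{S^{2n+1}}R_\infty}=\frac{\max_{S^{2n+1}}f}{\min_{S^{2n+1}}f}<2^{\frac{1}{n}}
\]
by the simple bubble condition (\ref{sbc}), so the hypothesis on $R_\infty$ in Theorem \ref{thm4.7} is satisfied.

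Next, I would verify the convergence assumption of Theorem \ref{thm4.7}. Choose any $p_1$ with $n+1<p_1<\infty$. By the triangle inequality,
\[
\|R_{\theta_k}-R_\infty\|_{L^{p_1}(S^{2n+1},\theta_k)}\le F_{p_1}(t_k)^{\frac{1}{p_1}}+|\alpha(t_k)-\alpha_\infty|\,\|f\|_{L^{p_1}(S^{2n+1},\theta_k)}.
\]
The first term tends to $0$ by Lemma \ref{lem3.2}, and the second tends to $0$ by Proposition \ref{prop2.1} together with $\alpha(t_k)\to\alpha_\infty$. Hence the required $L^{p_1}$ convergence holds.

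Finally, I would invoke Theorem \ref{thm4.7}. Since $u_0\in C^\infty_f$ means $E_f(u_0)\le\beta$, the theorem applies and gives, after passing to a further subsequence, one of the two alternatives (i) or (ii). If alternative (i) held, Lemma \ref{lem4.8} would produce a conformal contact form whose Webster scalar curvature equals $R_\infty=\alpha_\infty f$; rescaling by the constant $\alpha_\infty^{n/2}$ then realizes $f$ itself as a Webster scalar curvature in the conformal class of $\theta_0$, contradicting the standing assumption of the lemma. Therefore alternative (ii) must occur, which is exactly the assertion $v(t_j)\to 1$ in $C^{1,\lambda}_P(S^{2n+1})$ for any $\lambda\in(0,1)$.

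The main obstacle here is essentially administrative rather than analytic: all the heavy lifting (the $L^p$ smallness of $R_\theta-\alpha f$, boundedness of $\alpha$, the dichotomy itself, and the rigidity of case (i)) is packaged in Lemmas \ref{lem2.4}, \ref{lem3.2}, \ref{lem4.8} and Theorem \ref{thm4.7}. The only point needing care is making sure that the $R_\infty$ produced from the subsequence has a single limit $\alpha_\infty$ (which is why we pass to a subsequence at the outset) and that its positivity and simple bubble condition are inherited directly from $f$.
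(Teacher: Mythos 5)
There is a genuine gap: your argument is circular. Theorem \ref{thm4.7} is not available as a black box here --- the paper states explicitly that, due to its length, the proof of Theorem \ref{thm4.7} is divided into several lemmas, and Lemma \ref{lem4.13} is precisely the lemma that supplies the content of case (ii) (the convergence $v(t_j)\rightarrow 1$ in $C^{1,\lambda}_P$). Invoking Theorem \ref{thm4.7} to deduce Lemma \ref{lem4.13} therefore assumes the conclusion you are trying to prove. Your peripheral steps are fine --- passing to a subsequence with $\alpha(t_k)\rightarrow\alpha_\infty$, setting $R_\infty=\alpha_\infty f$, checking that $R_\infty$ inherits (sbc) from $f$, verifying the $L^{p_1}$ convergence via Lemma \ref{lem3.2}, and using Lemma \ref{lem4.8} to rule out case (i) under the non-realizability hypothesis --- but none of this produces the dichotomy itself or the $C^{1,\lambda}_P$ convergence in the second branch.

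What is actually needed, and what the paper does, is the direct analysis of the normalized factor $v_k$: (a) a uniform $L^{2+\frac{2}{n}+\epsilon_0}$ bound on $v_k$ from Lemma \ref{lem4.12}, whose hypotheses must be verified one by one --- the volume normalization from $(\ref{4.30})$ and Proposition \ref{prop2.1}, the $L^p$ curvature bound $(\ref{4.65})$ from Lemmas \ref{lem2.4} and \ref{lem3.2}, the first-eigenvalue lower bound from Lemma \ref{lem4.11}, and the non-collapsing condition $(\ref{4.52})$ from the pointwise lower bound $v\geq C_3$ of Lemma \ref{lem4.10}; (b) an elliptic bootstrap through the CR Yamabe equation $(\ref{4.22})$, iterating the exponents $q_l, p_l, r_l$ until $p_{l_0}>n+1$ and then applying the Folland--Stein regularity theory to get uniform $C^\sigma_P$ bounds and convergence $v_j\rightarrow v_\infty$ in $C^{1,\lambda}_P$; and (c) the identification $v_\infty\equiv 1$, which uses the dichotomy on the dilation parameters $r(t_j)$ (finite limit contradicts non-realizability of $f$; infinite limit forces $R_{h_k}\rightarrow R_\infty(Q)$ and then the center-of-mass normalization $(\ref{4.19})$ plus the Jerison--Lee/Frank--Lieb classification forces $v_\infty$ to be the constant $1$). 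All of this heavy lifting is absent from your proposal, and it cannot be recovered by citing Theorem \ref{thm4.7}.
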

\begin{proof}
For simplicity, set $\phi_k=\phi(t_k)$ to be the CR diffeomorphism
such that the normalized contact form $h_k$ is given by
$h_k=h(t_k)=\phi_k^*\big(\theta(t_k)\big)=v(t_k)^{\frac{2}{n}}\theta_0$.
By Lemma \ref{lem2.4} and Lemma \ref{lem3.2}, we have
\begin{equation}\label{4.65}
\begin{split}
\|R_{h_k}\|_{L^p(S^{2n+1},h_k)}&\leq
\|\alpha(t_k)f\circ\phi_k-R_{h_k}\|_{L^p(S^{2n+1},h_k)}+\|\alpha(t_k)f\circ\phi_k\|_{L^p(S^{2n+1},h_k)}\\
&=\|\alpha(t_k)f-R_{\theta_k}\|_{L^p(S^{2n+1},\theta_k)}+\|\alpha(t_k)f\|_{L^p(S^{2n+1},\theta_k)}\leq
C
\end{split}
\end{equation}
for any $p\geq 1$. Note that condition (i) in Lemma \ref{lem4.12} is
satisfied because of (\ref{4.30}) and Proposition \ref{prop2.1}, condition (ii) is
fulfilled by (\ref{4.65}), and condition (iii) is satisfied thanks
to Lemma \ref{lem4.11}. Finally by Lemma \ref{lem4.10}, if we choose
$\sigma_0=C_3$ and $l_0=\mbox{Vol}(S^{2n+1},\theta_0)$, condition
(iv) in Lemma \ref{lem4.12} is fulfilled. Hence, we can apply Lemma
\ref{lem4.12} to $v(t_k)$ to show that there exists $\epsilon_0>0$
and $C_0>0$ such that
\begin{equation}\label{4.66}
\int_{S^{2n+1}}v(t_k)^{2+\frac{2}{n}+\epsilon_0}dV_{\theta_0}\leq
C_0.
\end{equation}

For simplicity, set $v(t_k)=v_k$. Starting with
$q_0=\displaystyle2+\frac{2}{n}+\epsilon_0$,
$p_0=\displaystyle\frac{q_0}{\frac{1}{2n+2}q_0+\frac{n+1}{n}}>\frac{2n+2}{n+2}$,
$r_0=\displaystyle\frac{q_0-(2+\frac{2}{n})}{(1+\frac{2}{n})p_0-(2+\frac{2}{n})}>1$,
we have
\begin{equation*}
\begin{split}
&\int_{S^{2n+1}}|R_{h_k}v_k^{1+\frac{2}{n}}|^{p_0}dV_{\theta_0}
=\int_{S^{2n+1}}|R_{h_k}|^{p_0}v_k^{(1+\frac{2}{n})p_0}dV_{\theta_0}\\
&\leq
\left(\int_{S^{2n+1}}|R_{h_k}|^{\frac{p_0r_0}{r_0-1}}v_k^{2+\frac{2}{n}}dV_{\theta_0}\right)^{\frac{r_0-1}{r_0}}
\left(\int_{S^{2n+1}}v_k^{q_0}dV_{\theta_0}\right)^{\frac{1}{r_0}}\leq
C
\end{split}
\end{equation*}
by H\"{o}lder's inequality, (\ref{4.65}) and (\ref{4.66}). Hence,
\begin{equation}\label{4.67}
(2+\frac{2}{n})\Delta_{\theta_0}v=R_{\theta_0}v-R_hv^{1+\frac{2}{n}}\in
L^{p_0}(S^{2n+1},\theta_0).
\end{equation}
Hence, $v_k\in S^{p_0}_2(S^{2n+1},\theta_0)$ by Theorem 3.16 in
\cite{Dragomir} (see also \cite{Folland&Stein}). By Folland-Stein
embedding theorem (see \cite{Folland&Stein} or Theorem 3.13 in
\cite{Dragomir}), we obtain
$$v_k\in S^{p_0}_2(S^{2n+1},\theta_0)\hookrightarrow
L^{\frac{p_0(n+1)}{n+1-p_0}}(S^{2n+1},\theta_0)=L^{q_1}(S^{2n+1},\theta_0)$$
with $q_1=\displaystyle\frac{p_0(n+1)}{n+1-p_0}$. Then we set
$p_1=\displaystyle\frac{q_1}{\frac{1}{2n+2}q_1+\frac{n+1}{n}}$,
$r_1=\displaystyle\frac{q_1-(2+\frac{2}{n})}{(1+\frac{2}{n})p_1-(2+\frac{2}{n})}$.
In general, if we know $q_{l-1}$, $p_{l-1}$, $r_{l-1}$, we can
define inductively
\begin{equation}\label{4.68}
\begin{split}
&q_l=\frac{p_{l-1}(n+1)}{n+1-p_{l-1}}=\frac{2n(n+1)q_{l-1}}{2(n+1)^2-nq_{l-1}}
,\hspace{2mm}
p_l=\frac{q_l}{\frac{1}{2n+2}q_l+\frac{n+1}{n}},\\
&
r_l=\frac{q_l-(2+\frac{2}{n})}{(1+\frac{2}{n})p_l-(2+\frac{2}{n})}>1,
\end{split}
\end{equation}
where $l\in\mathbb{Z}^+$.  Note  that if $\displaystyle
2+\frac{2}{n}<q_l<\frac{2(n+1)^2}{n}$, then by (\ref{4.68})
\begin{equation*}
q_{l+1}-q_{l}=\left(\frac{q_l-(2+\frac{2}{n})}{\frac{2(n+1)^2}{n}-q_l}\right)q_l
\geq\left(\frac{\epsilon_0}{n(2+\frac{2}{n})-\epsilon_0}\right)q_l
\end{equation*}
since $q_l\geq q_0\geq\displaystyle 2+\frac{2}{n}+\epsilon_0$. Thus
there exists $l_0\in\mathbb{N}$ with
$q_{l_0}>\displaystyle\frac{2(n+1)^2}{n}$ and
$q_0<q_1<\cdots<q_{l_0-1}<\displaystyle\frac{2(n+1)^2}{n}<q_{l_0}$
such that
$$p_{l_0}=\frac{q_{l_0}}{\frac{1}{2n+2}q_{l_0}+\frac{n+1}{n}}>n+1.$$
Therefore  by using the similar argument to get (\ref{4.67}), we can
prove that
$$
(2+\frac{2}{n})\Delta_{\theta_0}v=R_{\theta_0}v-R_hv^{1+\frac{2}{n}}\in
L^{p_{l_0}}(S^{2n+1},\theta_0).$$ Hence, by using Theorem 3.17 in
\cite{Dragomir} (see also \cite{Folland&Stein}), we can conclude
that
$$\|v_k\|_{C^{\sigma}_P(S^{2n+1})}\leq C\hspace{2mm}\mbox{ with }\sigma=2-\frac{2n+2}{p_{l_0}},$$
Consequently, from (\ref{4.67}) of $v_j$ and Lemma \ref{lem3.2}, we
conclude that
$$v_j\rightarrow v_\infty\hspace{2mm}\mbox{ in }C^{1,\lambda}_P(S^{2n+1}),
\mbox{ for any }\lambda\in (0,1),\mbox{ as }j\rightarrow\infty.$$

Recall now that $\widehat{P(t_k)}$ is the center of mass of the contact form
$\theta_k=\theta(t_k)$, and by the remark right after Lemma
\ref{lem4.3}, (\ref{4.23}) holds and hence $\widehat{P(t_k)}\rightarrow Q$ as
$k\rightarrow\infty$ if the sequence is not bounded in
$S_2^p(S^{2n+1},\theta_0)$ for any $p>n+1$. For the normalized
conformal CR diffeomorphisms
$\phi_k=\phi(t_k)=\phi_{p(t_k),r(t_k)}$ where $p(t_k)\in\mathbb{H}^n$ and $r(t_k)>0$, if there exists a
subsequence $\{t_j\}$ such that $r_j=r(t_j)\rightarrow r_0<\infty$,
then $\phi_j \rightarrow\phi_{Q,r_0}$ as $j\rightarrow\infty$. Since
$v_j$ is bounded from above and below by positive constants, and so
is $\det(d\phi(t_j))\rightarrow\det(d\phi_{Q,r_0})$, we conclude
that $\{u_j=u(t_j)\}$ is uniformly bounded  from above and below by
positive constants. Hence, there exists a convergent subsequence
with $u_\infty$ as limit. Then by assumption
$R_{\theta(t_k)}\rightarrow R_\infty$ in $L^p(S^{2n+1},\theta_0)$
with $p>n+1$ and on the other hand
$R_{\theta(t_k)}-\alpha(t_k)f\rightarrow 0$ in
$L^p(S^{2n+1},\theta_0)$ with $p>n+1$, hence up to a subsequence
$R_\infty=c f$. Thus the Webster scalar curvature of the contact
form $u_\infty^{\frac{2}{n}}\theta_0$ is equal to $f$ up to a
constant, which contradicts our assumption that $f$ cannot be
realized as the Webster scalar curvature in the conformal class.
Therefore $r(t_k)\rightarrow\infty$ as $k\rightarrow\infty$.
Furthermore, there holds
\begin{equation}\label{4.69}
\begin{split}
&\phi_{p(t_k),r(t_k)}\rightarrow\phi_{Q,\infty}\equiv Q\hspace{2mm}\mbox{ uniformly for }
x\in S^{2n+1}\setminus B_{\delta}(Q)\\
&\mbox{ with any sufficiently small }\delta>0.
\end{split}
\end{equation}
Thus, from  (\ref{4.23}), (\ref{4.69}) and our assumption on $R_\infty$,
letting $k\rightarrow\infty$, we see that
\begin{equation*}
\begin{split}
&\|R_{h_k}-R_\infty(Q)\|_{L^p(S^{2n+1},h_k)}\\
&\leq \|R_{\theta_k}\circ\phi_k-R_\infty\circ\phi_k\|_{L^p(S^{2n+1},h_k)}
+\|R_\infty\circ\phi_k-R_\infty(Q)\|_{L^p(S^{2n+1},h_k)}\\
&=\|R_{\theta_k}-R_\infty\|_{L^p(S^{2n+1},\theta_k)}
+\|R_\infty-R_\infty(Q)\|_{L^p(S^{2n+1},\theta_k)}\rightarrow 0,
\end{split}
\end{equation*}
for any $p>n+1$. Hence, we have shown that $v_\infty$ weakly solves
$$-(2+\frac{2}{n})\Delta_{\theta_0}v_\infty+R_{\theta_0}v_\infty=R_\infty(Q)v_\infty^{1+\frac{2}{n}}
\hspace{2mm}\mbox{ on }S^{2n+1}.$$ Since $v_k$ satisfies
normalization (\ref{4.19}) and $\mbox{Vol}(S^{2n+1},h_k)=
\displaystyle\int_{S^{2n+1}}u_0^{2+\frac{2}{n}}dV_{\theta_0}=\mbox{Vol}(S^{2n+1},\theta_0)$,
$v_\infty$ must be constant  (see the proof of Theorem 3.1 in \cite{Frank&Lieb} or see \cite{Jerison&Lee1}).. Therefore,
we asset that $v_\infty\equiv 1$, which indicates (\ref{4.24}). This
proves Lemma \ref{lem4.13}
\end{proof}

\begin{lem}\label{lem4.14}
Let $f:S^{2n+1}\rightarrow\mathbb{R}$ be a smooth positive
non-degenerate Morse function satisfying the simple bubble condition
(sbc):
$$\frac{\max_{S^{2n+1}}f}{\min_{S^{2n+1}}f}<2^{\frac{1}{n}}.$$
Suppose that $f$ cannot be realized as the the Webster scalar
curvature of any contact form conformal to $\theta_0$. Let $u(t)$ be
a smooth solution of $(\ref{2.3})$ with initial data $u_0\in
C_f^\infty$. Then there exists a family of CR diffeomorphism
$\phi(t)$ on $S^{2n+1}$ with the normalized contact form
$h(t)=v(t)^{\frac{2}{n}}\theta_0=\phi(t)^*\big(\theta(t)\big)$ such
that as $t\rightarrow\infty$
$$v(t)\rightarrow 1,\hspace{2mm}h(t)\rightarrow\theta_0\hspace{2mm}\mbox{ in }C^{1,\gamma}_P(S^{2n+1})$$
for any $\gamma\in(0,1)$, and $\phi(t)-\widehat{P(t)}\rightarrow 0$ in
$L^2(S^{2n+1},\theta_0)$. Moreover, as $t\rightarrow\infty$, we have
$$\|f\circ\phi(t)-f(\widehat{P(t)})\|_{L^2(S^{2n+1},\theta_0)}\rightarrow 0\hspace{2mm}\mbox{ and
}\hspace{2mm}\alpha(t)f(\widehat{P(t)})\rightarrow R_{\theta_0}.$$
Here $\widehat{P(t)}$ is defined as in $(\ref{cm})$.
\end{lem}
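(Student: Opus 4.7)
The plan is to upgrade the subsequential convergence obtained in Lemma \ref{lem4.13} to convergence as $t\to\infty$, and then read off the remaining limits from the structure of $\phi(t)=\phi_{p(t),r(t)}$ together with the weak convergence of measures $dV_{\theta(t)}\rightharpoonup\mbox{Vol}(S^{2n+1},\theta_0)\delta_Q$ from Theorem \ref{thm4.7}(ii).

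First, because $f$ cannot be realized as a Webster scalar curvature in its conformal class, Lemma \ref{lem4.8} rules out case (i) of Theorem \ref{thm4.7} for every time sequence, and Lemma \ref{lem4.5} guarantees the blow-up point $Q$ is the same for every sequence. Applied to an arbitrary $t_k\to\infty$, Lemma \ref{lem4.13} then produces a subsequence along which $v(t_j)\to 1$ in $C^{1,\gamma}_P(S^{2n+1})$ for any $\gamma\in(0,1)$. A standard contradiction argument (if $v(t)\not\to 1$, choose a bad sequence and apply Lemma \ref{lem4.13} to reach a contradiction using uniqueness of the limit $1$) upgrades this to full convergence $v(t)\to 1$ as $t\to\infty$, and hence $h(t)=v(t)^{2/n}\theta_0\to\theta_0$ in $C^{1,\gamma}_P$.

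Second, testing the weak convergence $dV_{\theta(t)}\rightharpoonup\mbox{Vol}(S^{2n+1},\theta_0)\delta_Q$ against the continuous map $x\mapsto x$ gives $P(t)\to Q\cdot\mbox{Vol}(S^{2n+1},\theta_0)$, and therefore $\widehat{P(t)}\to Q$. From the proof of Lemma \ref{lem4.13} we also know that $r(t)\to\infty$ and $\phi(t)(x)\to Q$ uniformly on $S^{2n+1}\setminus B_\delta(Q)$ for each $\delta>0$, so $\phi(t)(x)-\widehat{P(t)}\to 0$ pointwise a.e. on $S^{2n+1}$. Both quantities lie in $S^{2n+1}\subset\mathbb{R}^{2n+2}$ and are uniformly bounded, so dominated convergence yields $\|\phi(t)-\widehat{P(t)}\|_{L^2(S^{2n+1},\theta_0)}\to 0$. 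Applying the same reasoning with the bounded continuous function $f$ in place of the coordinate map, and using $f(\widehat{P(t)})\to f(Q)$, we obtain $\|f\circ\phi(t)-f(\widehat{P(t)})\|_{L^2(S^{2n+1},\theta_0)}\to 0$.

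Finally, for the limit of $\alpha(t)f(\widehat{P(t)})$, the $C^{1,\gamma}_P$ convergence $v(t)\to 1$ gives $|\nabla_{\theta_0}v(t)|_{\theta_0}\to 0$ and $v(t)^2\to 1$ uniformly, so
$$E(u(t))=E(v(t))=\int_{S^{2n+1}}\left((2+\tfrac{2}{n})|\nabla_{\theta_0}v(t)|^2_{\theta_0}+R_{\theta_0}v(t)^2\right)dV_{\theta_0}\longrightarrow R_{\theta_0}\mbox{Vol}(S^{2n+1},\theta_0).$$
Meanwhile, $\int_{S^{2n+1}}f\,dV_{\theta(t)}\to f(Q)\mbox{Vol}(S^{2n+1},\theta_0)$ by the same weak convergence of measures, so (\ref{2.6}) gives $\alpha(t)\to R_{\theta_0}/f(Q)$, and multiplying by $f(\widehat{P(t)})\to f(Q)$ produces $\alpha(t)f(\widehat{P(t)})\to R_{\theta_0}$. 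The principal technical point is the upgrade from subsequential to full convergence of $v(t)$, which depends crucially on Lemma \ref{lem4.5}; once that is in hand, the remaining conclusions follow from the concentration picture of Theorem \ref{thm4.7}(ii) together with dominated convergence.
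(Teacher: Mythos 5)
Your proposal is correct in substance and shares the paper's overall skeleton (Lemma \ref{lem4.13} plus a contradiction with the non-realizability of $f$ to upgrade subsequential to full convergence), but it handles the middle claim differently. For $\|\phi(t)-\widehat{P(t)}\|_{L^2}\to 0$ you argue directly: $r(t)\to\infty$ forces $\phi(t)(x)\to Q$ for a.e.\ $x$, the Dirac-mass convergence $dV_{\theta(t)}\rightharpoonup\mbox{Vol}(S^{2n+1},\theta_0)\delta_Q$ forces $\widehat{P(t)}\to Q$, and dominated convergence finishes. The paper instead runs a second contradiction argument built on the exact identity $|\phi(t)-\widehat{P(t)}|^2=2(1-\langle\phi(t),\widehat{P(t)}\rangle)$ together with $\int_{S^{2n+1}}\phi(t)\,dV_{h}=P(t)$: if the $L^2$ norm did not vanish, then $\|P(t_l)\|$ would stay bounded away from its extremal value, so $\phi(t_l)$ would converge to a \emph{finite} conformal transformation $\phi_{P_0/\|P_0\|,r_0}$, making $u(t_l)$ uniformly bounded above and below and hence realizing $f$ — contradiction. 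The paper's route has the advantage of not needing the full-flow version of $\phi(t)\to Q$ as an input (it derives the dichotomy "either $\phi(t_l)$ collapses or $f$ is realized" from the center-of-mass formula alone), whereas your route is shorter but quietly requires upgrading $r(t)\to\infty$ and (\ref{4.69}) from the subsequential statements in Lemma \ref{lem4.13} to the full flow; you flag this, and the upgrade is indeed routine (the same non-realizability contradiction applied to any sequence with $r(t_j)\to r_0<\infty$, plus Lemma \ref{lem4.5} for uniqueness of $Q$). Likewise, for $\alpha(t)f(\widehat{P(t)})\to R_{\theta_0}$ you compute $\alpha(t)\to R_{\theta_0}/f(Q)$ from $E(v(t))\to R_{\theta_0}\mbox{Vol}(S^{2n+1},\theta_0)$ and the weak measure convergence, while the paper uses the identity $\int_{S^{2n+1}}R_h\,dV_h=\alpha(t)\int_{S^{2n+1}}f\circ\phi(t)\,dV_h$ together with the just-established $L^2$ convergence of $f\circ\phi(t)-f(\widehat{P(t)})$; both are valid since $f(Q)>0$.
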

\begin{proof}
We prove it by contradiction. Suppose, for a fixed $\gamma\in
(0,1)$, there exists a sequence $t_l\rightarrow\infty$ such that
\begin{equation}\label{4.70}
\lim_{l\rightarrow\infty}\Big(\|v(t_l)-1\|_{C^{1,\gamma}(S^{2n+1})}
+\|\phi(t_l)-\widehat{P(t_l)}\|_{L^2(S^{2n+1},\theta_0)}\Big)>0.
\end{equation}
Since Lemma \ref{lem4.13} implies that
\begin{equation}\label{4.71}
\lim_{l\rightarrow\infty}\|v(t_l)-1\|_{C^{1,\gamma}(S^{2n+1})}=0,
\end{equation}
by (\ref{4.70})
\begin{equation*}
\lim_{l\rightarrow\infty}
\|\phi(t_l)-\widehat{P(t_l)}\|^2_{L^2(S^{2n+1},\theta_0)}\equiv
C_0\mbox{Vol}(S^{2n+1},\theta_0)
\end{equation*}
for some constant $C_0>0$. Observe that
$|\phi(t)-\widehat{P(t)}|^2=2(1-\langle\phi(t),\widehat{P(t)}\rangle)$. Thus we have
\begin{equation}\label{4.72}
\begin{split}
C_0\mbox{Vol}(S^{2n+1},\theta_0)&=\lim_{l\rightarrow\infty}
\|\phi(t_l)-\widehat{P(t_l)}\|_{L^2(S^{2n+1},\theta_0)}^2\\
&=\lim_{l\rightarrow\infty}\int_{S^{2n+1}}2|1-\langle\phi(t_l),\widehat{P(t)}\rangle|dV_{\theta_0}\\
&=\lim_{l\rightarrow\infty}\int_{S^{2n+1}}2|1-\langle\phi(t_l),\widehat{P(t)}\rangle|dV_{h_l}\\
&\hspace{4mm}+\lim_{l\rightarrow\infty}\int_{S^{2n+1}}2|1-\langle\phi(t_l),\widehat{P(t)}\rangle|
(dV_{\theta_0}-dV_{h_l}).
\end{split}
\end{equation}
Clearly, the second limit is zero as it can be seen from
(\ref{4.71}). On the other hand, we have
\begin{equation}\label{4.73}
\begin{split}
\int_{S^{2n+1}}2|1-\langle\phi(t_l),\widehat{P(t)}\rangle|dV_{h_l}
&=2\Big(\mbox{Vol}(S^{2n+1},h_l)-\Big\langle\int_{S^{2n+1}}\phi(t_l)dV_{h_l},\widehat{P(t)}\Big\rangle\Big)\\
&=2\Big(\mbox{Vol}(S^{2n+1},\theta_0)-\Big\langle\int_{S^{2n+1}}x
u(t_l)^{2+\frac{2}{n}}dV_{\theta_0},\widehat{P(t)}\Big\rangle\Big)\\
&=2\mbox{Vol}(S^{2n+1},\theta_0)(1-\|P(t_l)\|^2).
\end{split}
\end{equation}
Combining (\ref{4.72}) and (\ref{4.73}), we obtain that
$$1-\|P(t_l)\|^2\geq\frac{C_0}{2}>0$$
for $l$ sufficiently large.

Since $\|P(t_l)\|^2\leq 1-\displaystyle \frac{C_0}{2}<1$, there
exists a subsequence, still denote as $\{t_l\}$, such that
$P(t_l)\rightarrow P_0$ as $l\rightarrow\infty$ with $\|P_0\|<1$.
Then $\phi(t_l)\rightarrow \phi_{\frac{P_0}{\|P_0\|},r_0}$ as
$l\rightarrow\infty$ with $r_0$ being finite. Hence, we can conclude
as in the proof of Lemma \ref{lem4.13} that $u(t_l)$ is uniformly
bounded from above and below by positive constants. Then it implies
that $\{u_l\}$ has a convergent subsequence, and its limit
$u_\infty$ is strictly positive and smooth such that
$u_\infty^{\frac{2}{n}}\theta_0$ has Webster scalar curvature equal
to $f$, which contradicts the assumption  that $f$ cannot be
realized as the Webster scalar curvature in the conformal class.

Moreover, since $v\rightarrow 1$ in $C^{1,\gamma}(S^{2n+1})$, we
have
$$\int_{S^{2n+1}}R_hdV_h
=\int_{S^{2n+1}}\left((2+\frac{2}{n})|\nabla_{\theta_0}v|^2_{\theta_0}+R_{\theta_0}v^2\right)dV_{\theta_0}
\rightarrow R_{\theta_0}\mbox{Vol}(S^{2n+1},\theta_0)$$ as
$t\rightarrow\infty$. Thus, we have
\begin{equation*}
\begin{split}
\lim_{t\rightarrow\infty}\Big(R_{\theta_0}-\alpha(t)f(\widehat{P(t)})\Big)\cdot\mbox{Vol}(S^{2n+1},\theta_0)
&=\lim_{t\rightarrow\infty}\left(\int_{S^{2n+1}}R_hdV_h
-\alpha(t)f(\widehat{P(t)})
\cdot\mbox{Vol}(S^{2n+1},\theta_0)\right)\\
&=\lim_{t\rightarrow\infty}\alpha(t)\int_{S^{2n+1}}\Big(f\circ\phi(t)
-f(\widehat{P(t)})\Big)dV_h=0.
\end{split}
\end{equation*}
Thus the proof of Lemma \ref{lem4.14} is complete.
\end{proof}

\section{Spectral decomposition}\label{section6}

As before, we denote
$$F_p(t)=\int_{S^{2n+1}}|R_\theta-\alpha f|^pdV_\theta\hspace{2mm}\mbox{ and }\hspace{2mm}G_p(t)
=\int_{S^{2n+1}}|\nabla_\theta(R_\theta-\alpha f)|_\theta^pdV_\theta$$
for $p\geq 1$.

\subsection{Upper bound of change rate of $F_2(t)$}

\begin{lem}\label{lem6.1}
With error $o(1)\rightarrow 0$ as $t\rightarrow\infty$, there holds
$$\frac{d}{dt}F_2(t)\leq (n+1+o(1))(nF_2(t)-2G_2(t))+o(1)F_2(t).$$
\end{lem}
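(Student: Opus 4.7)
The starting identity is (\ref{3.5}), which upon substituting $R_\theta=\alpha f-(\alpha f-R_\theta)$ in the quadratic term of (\ref{3.1}) and multiplying by $2$ reads
\begin{equation*}
\frac{d}{dt}F_2(t)=-2(n+1)G_2(t)+2\alpha\!\int_{S^{2n+1}}\!\!f(\alpha f-R_\theta)^2 dV_\theta+(n-1)\!\int_{S^{2n+1}}\!\!(\alpha f-R_\theta)^3 dV_\theta+2\alpha'\!\int_{S^{2n+1}}\!\!f(\alpha f-R_\theta) dV_\theta.
\end{equation*}
The plan is to extract the leading $n(n+1)F_2(t)$ from the first integral by exploiting $\alpha f\approx R_{\theta_0}=n(n+1)/2$ on the normalized flow, while absorbing the cubic and $\alpha'$ integrals into $o(1)(F_2(t)+G_2(t))$.

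All three error estimates are carried out on the normalized contact form $h(t)=\phi(t)^*\theta(t)=v(t)^{2/n}\theta_0$ of Section~\ref{section4}. Writing $f_\phi=f\circ\phi$ and $\widehat{P}=\widehat{P(t)}$, change of variables gives $F_2=\int(\alpha f_\phi-R_h)^2 dV_h$, and analogously for $G_2$. Since $v\to 1$ uniformly (Lemma~\ref{lem4.13}), $h$- and $\theta_0$-integrals of nonnegative integrands differ by a factor $1+o(1)$; Lemma~\ref{lem2.3} then yields
\begin{equation*}
\|\alpha f_\phi-R_h\|_{L^{2+2/n}(\theta_0)}^2\leq C(F_2(t)+G_2(t)).
\end{equation*}
Lemma~\ref{lem4.14} provides $\alpha(t)f(\widehat{P})\to R_{\theta_0}$ and $\|f_\phi-f(\widehat{P})\|_{L^2(\theta_0)}\to 0$; since $f$ is bounded the latter upgrades to $L^{n+1}$ convergence, whence $\|\alpha f_\phi-R_{\theta_0}\|_{L^{n+1}(\theta_0)}=o(1)$. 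Splitting $\alpha f_\phi=R_{\theta_0}+(\alpha f_\phi-R_{\theta_0})$ inside the $h$-integral,
\begin{equation*}
2\alpha\!\int f(\alpha f-R_\theta)^2 dV_\theta=2R_{\theta_0}F_2(t)+2\!\int(\alpha f_\phi-R_{\theta_0})(\alpha f_\phi-R_h)^2 dV_h=n(n+1)F_2(t)+o(1)(F_2+G_2),
\end{equation*}
the remainder being estimated by H\"older with exponents $(n+1,(n+1)/n)$ and the two displays above. A parallel H\"older argument, combined with $\|\alpha f_\phi-R_h\|_{L^{n+1}(\theta_0)}=o(1)$ (from Lemma~\ref{lem3.2} and $v\to 1$), gives the same $o(1)(F_2+G_2)$ bound for $(n-1)\!\int(\alpha f-R_\theta)^3 dV_\theta$.

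The delicate step is the $\alpha'$-term, since the crude bound $|\alpha'|\leq CF_2^{1/2}$ from (\ref{3.3}) only produces $O(F_2)$. The extra decay is gained by observing that (\ref{2.2}) forces $\int(\alpha f-R_\theta) dV_\theta=0$, whence
\begin{equation*}
\int\alpha f(\alpha f-R_\theta) dV_\theta=\int(\alpha f-R_{\theta_0})(\alpha f-R_\theta) dV_\theta=\int(\alpha f_\phi-R_{\theta_0})(\alpha f_\phi-R_h) dV_h,
\end{equation*}
which by Cauchy-Schwarz and $\|\alpha f_\phi-R_{\theta_0}\|_{L^2(\theta_0)}=o(1)$ is bounded by $o(1)F_2(t)^{1/2}$. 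Plugging into (\ref{2.22}) with $nF_2=o(F_2^{1/2})$ yields $|\alpha'|=o(1)F_2(t)^{1/2}$, hence $|2\alpha'\int f(\alpha f-R_\theta) dV_\theta|=o(1)F_2(t)$. Summing the three bounds gives
\begin{equation*}
\frac{d}{dt}F_2(t)\leq n(n+1)F_2-2(n+1)G_2+o(1)(F_2+G_2),
\end{equation*}
and writing $o(1)(F_2+G_2)=-\tfrac{\epsilon(t)}{2}(nF_2-2G_2)+(1+\tfrac{n}{2})\epsilon(t)F_2$ where $\epsilon(t)\to 0$ absorbs the error coefficient rearranges this into $(n+1+o(1))(nF_2-2G_2)+o(1)F_2$. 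The principal obstacle is exactly this $\alpha'$-sharpening, which requires both the vanishing-average identity and the normalized-flow convergence of Section~\ref{section4} to upgrade $O(F_2)$ to $o(F_2)$.
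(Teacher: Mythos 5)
Your proposal is correct, and for the two dominant contributions it coincides with the paper's argument: the quadratic term $2\alpha\int f(\alpha f-R_\theta)^2\,dV_\theta$ is split exactly as in (\ref{6.5})--(\ref{6.7}) using Lemma \ref{lem4.14} on the normalized flow, and the cubic term is absorbed into $o(1)(F_2+G_2)$ by the same H\"older/Jerison--Lee argument as (\ref{6.3})--(\ref{6.4}). Where you genuinely diverge is the $\alpha'$-contribution. The paper first substitutes (\ref{2.21}) to pass to (\ref{3.2}), then completes the square in $\alpha'/\alpha$ against $\frac{1}{n+1}(\alpha'/\alpha)^2E(u)$ and exploits the favorable sign of $-\frac{n}{(n+1)E(u)}\bigl(\int\alpha f(\alpha f-R_\theta)\,dV_\theta\bigr)^2$, reducing the leftover to $F_2(t)^2\leq CF_3(t)=o(1)(F_2+G_2)$; this never needs a refined bound on $\alpha'$ itself. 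You instead keep the raw term $2\alpha'\int f(\alpha f-R_\theta)\,dV_\theta$ from (\ref{3.1}) and upgrade $|\alpha'|$ from $O(F_2^{1/2})$ to $o(1)F_2^{1/2}$ by combining the zero-average identity $\int(\alpha f-R_\theta)\,dV_\theta=0$ with $\|\alpha f_\phi-R_{\theta_0}\|_{L^2}=o(1)$ from Lemma \ref{lem4.14}, so that $\int\alpha f(\alpha f-R_\theta)\,dV_\theta=o(1)F_2^{1/2}$ in (\ref{2.22}). Your route is somewhat more direct and makes the mechanism transparent (the refined decay of $\alpha'$ is what kills the borderline $O(F_2)$ term), at the cost of invoking the normalized-flow convergence in one more place; the paper's algebraic completion of the square is slightly more economical in its use of Section \ref{section4}. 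The final rearrangement into $(n+1+o(1))(nF_2-2G_2)+o(1)F_2$ is a correct bookkeeping identity. One small point worth making explicit if you write this up: the comparison of $\theta_0$- and $h$-norms (and the identity $G_2(t)=\int|\nabla_h(\alpha f_\phi-R_h)|_h^2\,dV_h$) uses both the CR-invariance of the Dirichlet integral and the uniform two-sided bounds on $v$ from Lemmas \ref{lem4.10} and \ref{lem4.13}.
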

\begin{proof}
By (\ref{3.2}), we have
\begin{equation}\label{6.1}
\begin{split}
&\frac{d}{dt}\left(\frac{1}{2}\int_{S^{2n+1}}(\alpha
f-R_\theta)^{2}dV_{\theta}\right)+\frac{1}{n+1}\left(\frac{\alpha'}{\alpha}\right)^2E(u)\\
&=\frac{n}{n+1}\frac{\alpha'}{\alpha}\int_{S^{2n+1}}(\alpha
f-R_\theta)R_\theta dV_{\theta}+\int_{S^{2n+1}}(\alpha
f-R_\theta)^2R_\theta
dV_{\theta}\\
&\hspace{4mm}-(n+1)\int_{S^{2n+1}}|\nabla_\theta(\alpha
f-R_\theta)|^2_\theta
dV_{\theta}+\frac{n+1}{2}\int_{S^{2n+1}}(\alpha
f-R_\theta)^3dV_{\theta}\\
&=\frac{n}{n+1}\frac{\alpha'}{\alpha}\int_{S^{2n+1}}\alpha f(\alpha
f-R_\theta)dV_{\theta}
-\frac{n}{n+1}\frac{\alpha'}{\alpha}\int_{S^{2n+1}}(\alpha
f-R_\theta)^2dV_{\theta}\\
&\hspace{4mm}+\int_{S^{2n+1}}\alpha f(\alpha f-R_\theta)^2
dV_{\theta}+\frac{n-1}{2}\int_{S^{2n+1}}(\alpha
f-R_\theta)^3dV_{\theta}-(n+1)\int_{S^{2n+1}}|\nabla_\theta(\alpha
f-R_\theta)|^2_\theta dV_{\theta}\\
&=\frac{n}{(n+1)E(u)}\left[-n\int_{S^{2n+1}}(\alpha
f-R_\theta)^2dV_{\theta}-\int_{S^{2n+1}}\alpha f (\alpha
f-R_{\theta})dV_{\theta}\right]\int_{S^{2n+1}}\alpha
f(\alpha f-R_\theta)dV_{\theta} \\
&\hspace{4mm}-\frac{n}{n+1}\frac{\alpha'}{\alpha}\int_{S^{2n+1}}(\alpha
f-R_\theta)^2dV_{\theta}+\int_{S^{2n+1}}\alpha f(\alpha
f-R_\theta)^2 dV_{\theta}\\
&\hspace{4mm}+\frac{n-1}{2}\int_{S^{2n+1}}(\alpha
f-R_\theta)^3dV_{\theta}-(n+1)\int_{S^{2n+1}}|\nabla_\theta(\alpha
f-R_\theta)|^2_\theta dV_{\theta}
\end{split}
\end{equation}
where the last equality follows from (\ref{2.22}). Note that the
first term on the right hand side of (\ref{6.1}) is less than or
equals to
\begin{equation*}
\begin{split}
&-\frac{n^2}{(n+1)E(u)}\left(\int_{S^{2n+1}}(\alpha
f-R_\theta)^2dV_{\theta}\right)\left(\int_{S^{2n+1}}\alpha f (\alpha
f-R_{\theta})dV_{\theta}\right)\\
&\leq\frac{n^2}{(n+1)R_{\theta_0}\mbox{Vol}(S^{2n+1},\theta_0)}F_2(t)\left(\alpha_2\big(\max_{S^{2n+1}}
f\big)F_2(t)^{\frac{1}{2}}\mbox{Vol}(S^{2n+1},\theta_0)^{\frac{1}{2}}\right)=o(1)F_2(t)
\end{split}
\end{equation*}
by (\ref{2.12a}), H\"{o}lder's inequality, Proposition \ref{prop2.1}, Lemma \ref{lem2.4}, and Lemma \ref{lem3.2}. For the second term on
the right hand side of (\ref{6.1}), we can combine it with the
second term on the left hand side to get
\begin{equation*}
\begin{split}
&\frac{1}{n+1}\left(\frac{\alpha'}{\alpha}\right)^2E(u)+\frac{n}{n+1}\frac{\alpha'}{\alpha}\int_{S^{2n+1}}(\alpha
f-R_\theta)^2dV_{\theta}\\
&=\frac{1}{n+1}\left[\frac{\alpha'}{\alpha}\sqrt{E(u)}+\frac{n\int_{S^{2n+1}}(\alpha
f-R_\theta)^2dV_{\theta}}{2\sqrt{E(u)}}\right]^2
-\frac{n^2}{4(n+1)E(u)}\left(\int_{S^{2n+1}}(\alpha
f-R_\theta)^2dV_{\theta}\right)^2\\
&\geq
-\frac{n^2}{4(n+1)R_{\theta_0}\mbox{Vol}(S^{2n+1},\theta_0)}\left(\int_{S^{2n+1}}(\alpha
f-R_\theta)^2dV_{\theta}\right)^2
\end{split}
\end{equation*}
by (\ref{2.12a}) and Proposition \ref{prop2.1}. Note also that
\begin{equation*}
\begin{split}
\left(\int_{S^{2n+1}}(\alpha f-R_\theta)^2dV_{\theta}\right)^2&\leq
\mbox{Vol}(S^{2n+1},\theta_0)^{\frac{2}{3}}\left(\int_{S^{2n+1}}|\alpha
f-R_\theta|^3dV_{\theta}\right)^{\frac{4}{3}}\\
&=C\int_{S^{2n+1}}|\alpha f-R_\theta|^3dV_{\theta}
\end{split}
\end{equation*}
by H\"{o}lder's inequality, Proposition \ref{prop2.1} and Lemma
\ref{lem3.2}. Combining all these, we can deduce from (\ref{6.1})
that
\begin{equation}\label{6.2}
\frac{1}{2}\frac{d}{dt}F_2(t)\leq \int_{S^{2n+1}}\alpha f(\alpha
f-R_\theta)^2
dV_{\theta}-(n+1)G_2(t)+C\int_{S^{2n+1}}|\alpha
f-R_\theta|^3dV_{\theta}+o(1)F_2(t).
\end{equation}
Observe that
\begin{equation}\label{6.3}
\begin{split}
&\int_{S^{2n+1}}|\alpha f-R_\theta|^3dV_{\theta}\\
&\leq \left(\int_{S^{2n+1}}|\alpha
f-R_\theta|^{n+1}dV_{\theta}\right)^{\frac{1}{n+1}}\left(\int_{S^{2n+1}}|\alpha
f-R_\theta|^{2+\frac{2}{n}}dV_{\theta}\right)^{\frac{n}{n+1}}\\
&\leq
F_{n+1}(t)^{\frac{1}{n+1}}\cdot\frac{1}{Y(S^{2n+1},\theta_0)}\left(\int_{S^{2n+1}}(2+\frac{2}{n})|\nabla_\theta(\alpha
f-R_\theta)|_\theta^2+R_\theta|\alpha
f-R_\theta|^2dV_{\theta}\right)\\
&=o(1)\left(\int_{S^{2n+1}}|\nabla_\theta(\alpha
f-R_\theta)|_\theta^2+|\alpha f-R_\theta|^2+|\alpha
f-R_\theta|^3dV_{\theta}\right)
\end{split}
\end{equation}
by H\"{o}lder's inequality, Lemma \ref{lem2.3}, \ref{lem2.4}, and
\ref{lem3.2}, which implies that
\begin{equation}\label{6.4}
\int_{S^{2n+1}}|\alpha f-R_\theta|^3dV_{\theta} \leq
o(1)(G_2(t)+F_2(t)).
\end{equation}
Finally, we rewrite the first term in (\ref{6.2}) as
\begin{equation}\label{6.5}
\begin{split}
\int_{S^{2n+1}}\alpha f(\alpha f-R_\theta)^2
dV_{\theta}&=\int_{S^{2n+1}}\alpha (f_\phi-f(\widehat{P(t)})(\alpha
f_\phi-R_h)^2 dV_{h}\\
&\hspace{4mm}+\Big(\alpha
f(\widehat{P(t)})-\frac{n(n+1)}{2}\Big)\int_{S^{2n+1}}(\alpha
f_\phi-R_h)^2 dV_{h}\\
&\hspace{4mm}+\frac{n(n+1)}{2}\int_{S^{2n+1}}(\alpha f_\phi-R_h)^2
dV_{h}.
\end{split}
\end{equation}
By Lemma \ref{lem4.14}, $\alpha f(\widehat{P(t)})\rightarrow
R_{\theta_0}=\displaystyle\frac{n(n+1)}{2}$ as $t\rightarrow\infty$,
we obtain that
\begin{equation}\label{6.6}
\Big(\alpha
f(\widehat{P(t)})-\frac{n(n+1)}{2}\Big)\int_{S^{2n+1}}(\alpha
f_\phi-R_h)^2 dV_{h}=o(1)F_2(t).
\end{equation} To get control of the first term on
the right hand side of (\ref{6.5}), first we notice that
\begin{equation*}
\int_{S^{2n+1}}|f_\phi-f(\widehat{P(t)})|^{n+1} dV_{h}
\leq C\int_{S^{2n+1}}|f_\phi-f(\widehat{P(t)})|^{n+1} dV_{\theta_0}=o(1)
\end{equation*}
by Lemma \ref{lem4.14}.  Hence, by (\ref{6.3}),
H\"{o}lder's inequality, Lemma \ref{lem2.3} and \ref{lem2.4}, we have
\begin{equation}\label{6.7}
\begin{split}
&\left|\int_{S^{2n+1}}\alpha (f_\phi-f(\widehat{P(t)})(\alpha f_\phi-R_h)^2 dV_{h}\right|\\
&\leq\alpha_2\left(\int_{S^{2n+1}} |f_\phi-f(\widehat{P(t)})|^{n+1}
dV_{h}\right)^{\frac{1}{n+1}}\left(\int_{S^{2n+1}} |\alpha
f_\phi-R_h|^{2+\frac{2}{n}} dV_{h}\right)^{\frac{n}{n+1}}\\
&\leq o(1)\left(\int_{S^{2n+1}}(2+\frac{2}{n})|\nabla_\theta(\alpha
f-R_\theta)|_\theta^2+R_\theta|\alpha
f-R_\theta|^2dV_{\theta}\right)\\
&\leq o(1)(G_2(t)+F_2(t)).
\end{split}
\end{equation}
Substituting  (\ref{6.3})-(\ref{6.7}) into (\ref{6.2}), we deduce
\begin{equation*}
\begin{split}
\frac{1}{2}\frac{d}{dt}F_2(t)\leq
\frac{n(n+1)}{2}F_2(t)-(n+1)G_2(t)+o(1)(G_2(t)+F_2(t)),
\end{split}
\end{equation*}
as required.
\end{proof}

\subsection{The spectral decomposition}

Let $\{\varphi_i\}$ be an $L^2(S^{2n+1},\theta_0)$-orthonormal basis
of eigenfunctions of $-\Delta_{\theta_0}$, satisfying
$-\Delta_{\theta_0}\varphi_i=\lambda_i\varphi_i$ with eigenvalues
$0=\lambda_0<\lambda_1=\cdots=\lambda_{2n+2}=\displaystyle\frac{n}{2}<\lambda_{2n+3}\leq\cdots$.
Now in terms of the orthonormal basis $\{\varphi_i^\theta\}$,
$\{\varphi_i^h\}$ of the eigenfunctions of $-\Delta_\theta$,
$-\Delta_h$ with the corresponding eigenvalues $\lambda_i^\theta$,
$\lambda_i^h$ respectively, we expand
\begin{equation}\label{6.20}
\alpha
f-R_\theta=\sum_{i=0}^\infty\beta_\theta^i\varphi_i^\theta\hspace{2mm}\mbox{
and }\hspace{2mm}\alpha
f_\phi-R_h=\sum_{i=0}^\infty\beta_h^i\varphi_i^h,
\end{equation}
with coefficients
\begin{equation}\label{6.21}
\beta^i_h=\int_{S^{2n+1}}(\alpha f_\phi-R_h)\varphi_i^h
\,dV_h=\int_{S^{2n+1}}(\alpha f-R_\theta)\varphi_i^\theta
\,dV_\theta=\beta_\theta^i
\end{equation}
for all $i\in\mathbb{N}$. First notice that we always have
\begin{equation}\label{6.19}
\beta^0_\theta=0
\end{equation} in view of (\ref{2.2}). It is well known that
$\varphi_i^h=\varphi_i^\theta\circ\phi$, which implies (\ref{6.21})
and $\lambda_i^\theta=\lambda_i^h$ for all $i\in\mathbb{N}$.

\begin{lem}\label{lem6.3}
 As $t\rightarrow\infty$, we have
$\lambda_i^\theta=\lambda_i^h\rightarrow\lambda_i$ and we can choose
$\varphi_i$ such that $\varphi_i^h\rightarrow\varphi_i$ in
$L^2(S^{2n+1},\theta_0)$ for all $i\in\mathbb{N}$.
\end{lem}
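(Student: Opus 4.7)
The plan is to reduce the lemma to a standard perturbation statement for a family of self-adjoint sub-elliptic operators that converge smoothly to $-\Delta_{\theta_0}$, the input being the convergence $v(t)\to 1$ in $C^{1,\gamma}_P(S^{2n+1})$ provided by Lemma \ref{lem4.14}. Since $\phi(t)$ is a CR diffeomorphism, the pullback identity $\varphi_i^h=\varphi_i^\theta\circ\phi$ and equality $\lambda_i^\theta=\lambda_i^h$ are immediate, so it suffices to treat $-\Delta_h$ and show $\lambda_i^h\to\lambda_i$ together with convergence in $L^2(S^{2n+1},\theta_0)$ of appropriately chosen representatives $\varphi_i^h$.

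First I would handle the eigenvalues via the min-max characterization. Using $dV_h=v^{2+2/n}dV_{\theta_0}$ together with the identity $\int\langle\nabla_h\psi_1,\nabla_h\psi_2\rangle_h\,dV_h=\int v^2\langle\nabla_{\theta_0}\psi_1,\nabla_{\theta_0}\psi_2\rangle_{\theta_0}\,dV_{\theta_0}$ (cf.\ the proof of Lemma \ref{lem3.3}), the $h$-Rayleigh quotient equals
\begin{equation*}
\frac{\int v^2|\nabla_{\theta_0}\psi|^2_{\theta_0}\,dV_{\theta_0}}{\int v^{2+2/n}\psi^2\,dV_{\theta_0}}=\bigl(1+o(1)\bigr)\frac{\int|\nabla_{\theta_0}\psi|^2_{\theta_0}\,dV_{\theta_0}}{\int\psi^2\,dV_{\theta_0}},
\end{equation*}
where the $o(1)$ is controlled by $\|v-1\|_{L^\infty}\to 0$ and is uniform over all nonzero $\psi\in S^2_1(S^{2n+1},\theta_0)$. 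Substituting this into the min-max formula
\begin{equation*}
\lambda_i^h=\inf_{\dim W=i+1}\sup_{0\neq\psi\in W}\frac{\int|\nabla_h\psi|^2_h\,dV_h}{\int\psi^2\,dV_h},
\end{equation*}
and the analogous one for $\lambda_i$ yields $\lambda_i^h\to\lambda_i$ for every $i$.

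Next, to obtain convergence of eigenfunctions I would normalize $\|\varphi_i^h\|_{L^2(S^{2n+1},h)}=1$, which by the comparison just noted gives $\|\varphi_i^h\|_{L^2(S^{2n+1},\theta_0)}=1+o(1)$. Rewriting the equation $-\Delta_h\varphi_i^h=\lambda_i^h\varphi_i^h$ via formula (\ref{2.0}) as
\begin{equation*}
-v\Delta_{\theta_0}\varphi_i^h-2\langle\nabla_{\theta_0}v,\nabla_{\theta_0}\varphi_i^h\rangle_{\theta_0}=\lambda_i^h v^{1+2/n}\varphi_i^h,
\end{equation*}
and using that $v$ is uniformly bounded above and away from zero (the lower bound coming from Lemma \ref{lem4.10}), the coefficients remain in a fixed bounded set of $C^{0,\gamma}$. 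The subelliptic estimates (Theorems 3.16 and 3.17 of \cite{Dragomir}) then yield uniform bounds on $\varphi_i^h$ in $S^2_2(S^{2n+1},\theta_0)$. Folland--Stein compactness furnishes subsequential limits $\widetilde\varphi_i$ in $L^2(S^{2n+1},\theta_0)$; passing to the limit in the displayed equation shows $-\Delta_{\theta_0}\widetilde\varphi_i=\lambda_i\widetilde\varphi_i$ with $\|\widetilde\varphi_i\|_{L^2(S^{2n+1},\theta_0)}=1$.

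The main obstacle is the eigenvalue multiplicity, especially the $(2n+2)$-fold degeneracy at $\lambda_1=n/2$, where individual $\varphi_i^h$ need not stabilize as $t\to\infty$. To deal with this I would argue inductively on $i$: within each eigenspace I apply a Gram--Schmidt procedure with respect to the time-dependent $L^2(S^{2n+1},h)$ inner product, and use that this inner product converges to the $L^2(S^{2n+1},\theta_0)$ inner product uniformly on bounded subsets of $L^2(S^{2n+1},\theta_0)$, so that $h$-orthonormality of $\{\varphi_i^h\}$ passes to $\theta_0$-orthonormality of the limits. A diagonal extraction together with the freedom to rotate within each eigenspace of $-\Delta_{\theta_0}$ then produces an orthonormal eigenbasis $\{\varphi_i\}$ of $-\Delta_{\theta_0}$ with $\varphi_i^h\to\varphi_i$ in $L^2(S^{2n+1},\theta_0)$ for all $i\in\mathbb{N}$, as required.
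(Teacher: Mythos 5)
Your proposal is essentially correct in outline but departs from the paper's argument in both halves, and one of the departures leaves a real gap. On the eigenvalues your route is cleaner than the paper's: since Lemma \ref{lem4.14} gives $\|v-1\|_{L^\infty}\to 0$, the two Rayleigh quotients differ by a multiplicative factor $1+o(1)$ uniformly in $\psi$, and Courant--Fischer then yields $\lambda_i^h\to\lambda_i$ for every $i$ simultaneously. The paper instead argues by induction on $i$: the lower bound $\liminf\lambda_{k+1}^h\geq\lambda_{k+1}$ is obtained by rerunning the cutoff/concentration argument of Lemma \ref{lem4.11}, and the upper bound by testing the variational characterization with $\Psi^h=\varphi_{k+1}-\sum_{j\le k}(\int\varphi_{k+1}\varphi_j^h\,dV_h)\varphi_j^h$, which requires already knowing that $\varphi_j^h\to\varphi_j$ for $j\le k$. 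Your version buys a shorter, non-inductive proof of the eigenvalue statement; the paper's version has the advantage that the test function $\Psi^h$ it builds is exactly the object it then uses to define $\varphi_{k+1}^h$, so the eigenvalue and eigenfunction parts come out of one construction.

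The gap is in the eigenfunction part. You take the \emph{actual} $L^2(h)$-normalized eigenfunctions $\varphi_i^h$, obtain uniform $S^2_2$ bounds, and extract subsequential limits, finishing with ``a diagonal extraction together with the freedom to rotate within each eigenspace.'' This only produces convergence along a subsequence of times. Because $\lambda_1=\cdots=\lambda_{2n+2}=n/2$ is degenerate, an arbitrary choice of orthonormal eigenbasis of $-\Delta_{h(t)}$ in the corresponding spectral cluster can rotate indefinitely as $t\to\infty$ (and the ordering of the nearly equal $\lambda_i^h$ can keep switching), so different subsequences may yield different limiting bases; no single fixed $\{\varphi_i\}$ then works for the full limit $t\to\infty$. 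The lemma, and its use in Section \ref{section6} where the estimate $\beta^i_h=b_i/\sqrt{n+1}+o(1)$ must hold for \emph{all} large $t$ to get the exponential decay of $F_2$, requires convergence of the full family. The fix is the one implicit in the paper's construction: rather than trying to show that an arbitrary eigenbasis converges, \emph{define} the time-dependent (approximate) eigenfunctions from the fixed target basis --- e.g.\ project each fixed $\varphi_i$ onto the corresponding spectral subspace of $-\Delta_{h(t)}$ (equivalently, use $\Psi^h$ as in $(\ref{6.23})$) and orthonormalize with respect to $dV_h$; since the spectral projections converge, this choice converges to $\varphi_i$ along the whole flow. With that modification your argument is complete.
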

\begin{proof}
First, for $i=0$, for any time $t$, the eigenvalue
$\lambda_0^h=\lambda_0=0$ and $\varphi_0^h=\varphi_0=1$. Thus the
statement is true for $i=0$. Now assume that the statement is true
for all $i\leq k$ for a fixed integer $k\geq 0$. We should show that
it is true for $i=k+1$ for a suitable choice of $\varphi_{k+1}$. As
starting point, we show that $\lambda_{k+1}^h\rightarrow\lambda_{k+1}$. The
argument in the proof of Lemma \ref{lem4.11} shows that
$\displaystyle\liminf_{t\rightarrow\infty}\lambda_{k+1}^h\geq\lambda_{k+1}$.
Hence we only need to show that
$\displaystyle\limsup_{t\rightarrow\infty}\lambda_{k+1}^h\leq\lambda_{k+1}$.
To do this, we pick $\varphi_{k+1}$ the eigenfunction for
$-\Delta_{\theta_0}$ which is perpendicular to all $\varphi_j$ for
$0\leq j\leq k$ with the smaller eigenvalue. Since by assumption
$\varphi_j^h\rightarrow\varphi_j$ as $t\rightarrow\infty$ and
$\displaystyle\int_{S^{2n+1}}\varphi_{k+1}\varphi_j\,dV_{\theta_0}=0$
for all $0\leq j\leq k$, we conclude that
\begin{equation}\label{6.22}
\int_{S^{2n+1}}\varphi_{k+1}\varphi_j^h\,dV_{\theta_0}=o(1)\hspace{2mm}\mbox{
as }t\rightarrow\infty.
\end{equation}
Now consider the test function
\begin{equation}\label{6.23}
\Psi^h=\varphi_{k+1}-\sum_{j=0}^k\left(\int_{S^{2n+1}}\varphi_{k+1}\varphi_j^h\,dV_h\right)\varphi_j^h.
\end{equation}
Since $\{\varphi_i^h\}_{i\in\mathbb{N}}$ are orthonormal, one has
\begin{equation}\label{6.24}
\begin{split}
\int_{S^{2n+1}}\Psi^h\varphi_l^h\,dV_h
&= \int_{S^{2n+1}}\varphi_{k+1}\varphi_l^h\,dV_h
-\sum_{j=0}^k\left(\int_{S^{2n+1}}\varphi_{k+1}\varphi_j^h\,dV_h\right)\int_{S^{2n+1}}\varphi_j^h\varphi_l^h\,dV_h\\
&=\int_{S^{2n+1}}\varphi_{k+1}\varphi_l^h\,dV_h
-\sum_{j=0}^k\left(\int_{S^{2n+1}}\varphi_{k+1}\varphi_j^h\,dV_h\right)\delta_{jl}=0
\hspace{2mm}\mbox{ for all }0\leq l\leq k.
\end{split}
\end{equation}
Thus by characterization of the $(k+1)$-th eigenvalue, one obtains
\begin{equation}\label{6.25}
\lambda_{k+1}^h\leq\frac{\int_{S^{2n+1}}|\nabla_h\Psi^h|_h^2dV_h}{\int_{S^{2n+1}}(\Psi^h)^2dV_h}.
\end{equation}
Note that
\begin{equation*}
\begin{split}
\int_{S^{2n+1}}(\Psi^h)^2dV_h &=\int_{S^{2n+1}}\Psi^h\varphi_{k+1}\,dV_h
-\sum_{j=0}^k\left(\int_{S^{2n+1}}\varphi_{k+1}\varphi_j^h\,dV_h\right)\int_{S^{2n+1}}\Psi^h\varphi_j^h\,dV_h\\
&=\int_{S^{2n+1}}\Psi^h\varphi_{k+1}\,dV_h\\
&=\int_{S^{2n+1}}\varphi_{k+1}^2\,dV_h
-\sum_{j=0}^k\left(\int_{S^{2n+1}}\varphi_{k+1}\varphi_j^h\,dV_h\right)^2=1+o(1)\mbox{
as }t\rightarrow\infty,
\end{split}
\end{equation*}
where the second equality follows from (\ref{6.24}), and the last
equality follows from (\ref{4.24}) and (\ref{6.22}). Note also that
\begin{equation*}
\begin{split}
\int_{S^{2n+1}}|\nabla_h\Psi^h|_h^2dV_h
&=\int_{S^{2n+1}}|\nabla_h\varphi_{k+1}|_h^2dV_h
-2\sum_{j=0}^k\left(\int_{S^{2n+1}}\varphi_{k+1}\varphi_j^h\,dV_h\right)
\int_{S^{2n+1}}\langle\nabla_h\varphi_{k+1},\nabla_h\varphi_j^h\rangle_hdV_h\\
&\hspace{2mm}+
\sum_{j,l=0}^k\left(\int_{S^{2n+1}}\varphi_{k+1}\varphi_j^h\,dV_h\right)
\left(\int_{S^{2n+1}}\varphi_{k+1}\varphi_l^h\,dV_h\right)
\int_{S^{2n+1}}\langle\nabla_h\varphi_j^h,\nabla_h\varphi_l^h\rangle_hdV_h\\
&=\lambda_{k+1}+o(1)\mbox{ as }t\rightarrow\infty,
\end{split}
\end{equation*}
where we have used (\ref{4.24}) and (\ref{6.22}). Now if we set
$\varphi_{k+1}^h=\displaystyle\Psi^h\Big/\left(\int_{S^{2n+1}}(\Psi^h)^2dV_h\right)^{\frac{1}{2}}$,
then the required estimate follows easily from (\ref{6.25}) and
these two estimates.

Since $\{\varphi_{k+1}^h\}$ is a bounded sequence in
$S_2^2(S^{2n+1},\theta_0)$, there exists a subsequence of time $t_l$
such that $\{\varphi_{k+1}^h(t_l)\}$ weakly converges to
$\varphi_{k+1}^0$. Hence it will converge to $\varphi_{k+1}^0$
strongly in $S_1^2(S^{2n+1},\theta_0)$. In particular, we have
$$\int_{S^{2n+1}}(\varphi_{k+1}^0)^2dV_{\theta_0}=1\hspace{2mm}\mbox{ and }\hspace{2mm}
\int_{S^{2n+1}}|\nabla_{\theta_0}\varphi_{k+1}^0|_{\theta_0}^2dV_{\theta_0}=\lambda_{k+1}.$$
Then $\displaystyle\lim_{k\rightarrow\infty}
\int_{S^{2n+1}}|\nabla_{\theta_0}\varphi_{k+1}^h|_{\theta_0}^2dV_{\theta_0}=\lambda_{k+1}$,
and $\varphi_{k+1}^0$ is in the eigenfunction of the eigenvalue
$\lambda_{k+1}$ and is orthogonal to all eigenfunctions $\varphi_j$
with $0\leq j\leq k$. Redefine $\varphi_{k+1}$ to be
$\varphi_{k+1}^0$ to finish the induction argument.
\end{proof}

\subsection{Convergence of CR Yamabe flow}

Now we consider the case when $f$ is a positive constant function, i.e. $f\equiv c$ for some positive constant $c>0$. Then
by (\ref{2.2}), we have
\begin{equation*}
\alpha f=\frac{\alpha\int_{S^{2n+1}}f dV_{\theta}}{\int_{S^{2n+1}} dV_{\theta}}=\frac{\int_{S^{2n+1}}R_\theta dV_{\theta}}{\int_{S^{2n+1}}dV_\theta}
=\overline{R}_\theta
\end{equation*}
where  $\overline{R}_{\theta}$ is the average of the Webster scalar curvature $R_{\theta}$ of $\theta$.
Therefore, the Webster scalar curvature flow in (\ref{2.1}) reduced to the CR Yamabe flow
\begin{equation*}
\frac{\partial }{\partial t}\theta=-(R_{\theta}- \overline{R}_{\theta})\theta,
\hspace{4mm} \theta\big|_{t=0}=u_0^{\frac{2}{n}}\theta_0.
\end{equation*}
Our aim is to prove the following result, which recovers the result of the author in \cite{Ho2}:
\begin{theorem}\label{thm6.3}
Suppose that $u_0$ satisfies $(\ref{2.01})$ and $f$ is a positive constant function. Then the flow $(\ref{2.1})$
converges to a contact form $\theta_\infty=u_\infty^{\frac{2}{n}}\theta_0$ of constant
Webster scalar curvature.
\end{theorem}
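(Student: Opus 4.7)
The plan is to specialize everything developed in Sections 2--5 to the case $f\equiv c>0$ and then promote the subsequential information into genuine convergence using the spectral setup of Section 5. First I would note the substantial simplifications: the simple bubble condition (sbc) is trivially satisfied since $\max f/\min f = 1<2^{1/n}$; one has $f\circ\phi\equiv c$ for any CR diffeomorphism $\phi$; and $\alpha(t) f=\overline{R}_{\theta(t)}$ by (\ref{2.2}). Consequently every lemma proved so far applies with no restriction on the initial energy, and in particular Lemma \ref{lem3.2} gives $F_p(t)\to 0$ for all $p<\infty$ and Lemma \ref{lem3.3} gives $G_2(t)\to 0$.

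Next I would pick an arbitrary sequence $t_k\to\infty$ and apply Lemma \ref{lem4.3} to $u_k=u(t_k)$. In the non-concentrating alternative, $\{u_k\}$ is uniformly bounded in $S^p_2(S^{2n+1},\theta_0)$ for some $p>n+1$. Extracting a weakly convergent subsequence $u_k\to u_\infty$ and using the CR Yamabe equation (\ref{2.4}), one shows $u_\infty$ satisfies
\[
-(2+\tfrac{2}{n})\Delta_{\theta_0}u_\infty+R_{\theta_0}u_\infty=R_\infty u_\infty^{1+\frac{2}{n}},
\]
with $R_\infty$ the strong $L^{p_1}$ limit of $R_{\theta_k}$. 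Since $R_{\theta_k}-\overline{R}_{\theta_k}\to 0$ in every $L^p$ by Lemma \ref{lem3.2} and $\overline{R}_{\theta_k}$ is bounded, $R_\infty$ is forced to be a (positive) constant, and hence $\theta_\infty=u_\infty^{2/n}\theta_0$ has constant Webster scalar curvature. Convergence of the whole flow (not just a subsequence) follows from the $L^p$-closeness $R_\theta-\overline{R}_\theta\to 0$ combined with the positivity bounds of Lemma \ref{lem2.8} and the long-time estimates of Section \ref{section2.3}.

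In the concentration alternative, one passes to the normalized flow $h(t)=\phi(t)^*\theta(t)=v(t)^{2/n}\theta_0$ of Section \ref{section4}. Since $f_\phi\equiv c$, the error term $\|f_\phi-f(\widehat{P(t)})\|$ appearing in Lemma \ref{lem4.14} vanishes identically, and the whole argument of Lemmas \ref{lem4.10}--\ref{lem4.14} goes through for every initial datum. One obtains $v(t)\to 1$ and $h(t)\to\theta_0$ in $C^{1,\gamma}_P$, together with $\alpha(t)c\to R_{\theta_0}$. To upgrade this to convergence of $\theta(t)$ itself, I would use the spectral decomposition (\ref{6.20})--(\ref{6.21}): expand $\alpha f-R_\theta=\sum_{i\geq 1}\beta^i\varphi_i^\theta$ (so $\beta^0=0$ by (\ref{6.19})), write
\[
F_2(t)=\sum_{i\geq 1}(\beta^i)^2,\qquad G_2(t)=\sum_{i\geq 1}\lambda_i^\theta(\beta^i)^2,
\]
and split off the first eigenspace (dimension $2n+2$, eigenvalue $\lambda_1=n/2$). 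Higher modes then satisfy $G_2^{(2)}\geq\lambda_{2n+3}F_2^{(2)}$ with $\lambda_{2n+3}>n/2$, so Lemma \ref{lem6.1} gives
\[
\tfrac{d}{dt}F_2(t)\leq -(2(n+1))(\lambda_{2n+3}-n/2+o(1))F_2^{(2)}(t)+o(1)F_2(t),
\]
forcing exponential decay of the higher-mode component. The first-mode component is controlled by the balancing condition (\ref{4.19}): after pulling back to $h(t)$, the projection of $\alpha f-R_h$ onto the coordinate functions $x_j$ (which are the $\lambda=n/2$ eigenfunctions by Lemma \ref{lem6.3}) is tied to $\int x\,dV_h=0$, forcing that projection to $o(1)$ as $v\to 1$. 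Together with the integrability $\int_0^\infty F_2(t)\,dt<\infty$ inherited from (\ref{2.10}) and Lemma \ref{lem3.2}, this yields actual convergence of $h(t)$ and then, after unwinding the normalization, of $\theta(t)$ to a limiting contact form of constant Webster scalar curvature.

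The main obstacle is the concentration case: controlling the first eigenspace where $\lambda_i=n/2$ gives no decay from the Rayleigh quotient alone, so one must genuinely exploit the balancing condition (\ref{4.19}) together with the sharp asymptotic relation between $F_2$ and $G_2$ from Lemma \ref{lem6.1}, and, crucially, promote the subsequential statements of Lemma \ref{lem4.14} into an eventual-monotonicity statement for $\|v(t)-1\|$ so that one can pass from $\lim_{t_j\to\infty}$ to $\lim_{t\to\infty}$. Everything else (long-time existence, $L^p$ decay, spectral convergence of $\lambda_i^h$, $C^{1,\gamma}$ compactness of $v$) is already in hand from the preceding sections.
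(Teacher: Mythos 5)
Your overall skeleton matches the paper's: reduce to $\alpha f=\overline{R}_\theta$, use the spectral decomposition of $\overline{R}_\theta-R_\theta$ and the spectral gap $\lambda_{2n+3}>n/2$ together with Lemma \ref{lem6.1} to get exponential decay of $F_2$, and then conclude. But there is a genuine gap at the crucial step, namely the control of the first eigenspace. You propose to kill the modes with $\lambda_i=n/2$ using the balancing condition $\int_{S^{2n+1}}x\,dV_h=0$. That condition constrains $\int x_i v^{2+2/n}dV_{\theta_0}$, not the projections $\beta^i=\int(\overline{R}_h-R_h)\varphi_i^h\,dV_h$; tracing it through the CR Yamabe equation one gets at best $\beta^i=o(1)$ as $t\to\infty$. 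That is not enough: to obtain $G_2(t)\geq(\lambda_{2n+3}+o(1))F_2(t)$, and hence $\frac{d}{dt}F_2\leq-\delta F_2$ from Lemma \ref{lem6.1}, one needs the projections to be small \emph{relative to} $F_2(t)^{1/2}$, i.e.\ $|\beta^i|\leq o(1)F_2(t)^{1/2}$; with only $\beta^i=o(1)$ you get $\frac{d}{dt}F_2\leq-\delta F_2+o(1)$, which gives no exponential decay. The paper obtains the needed relative smallness from Cheng's Kazdan--Warner-type identity $\int_{S^{2n+1}}\langle\nabla_{\theta_0}x_i,\nabla_{\theta_0}R_h\rangle_{\theta_0}dV_h=0$ (equation (\ref{5.1})): integrating by parts converts $b^i=\int(\overline{R}_h-R_h)x_i\,dV_h$ into $\frac{4n+4}{n^2}\int(\overline{R}_h-R_h)\langle\nabla_{\theta_0}x_i,\nabla_{\theta_0}v\rangle_{\theta_0}v^{1+\frac{2}{n}}dV_{\theta_0}$, and Cauchy--Schwarz plus $\|v-1\|_{S_1^2}\to0$ (Lemma \ref{lem4.14}) gives $|b^i|\leq CF_2(t)^{1/2}\|v-1\|_{S_1^2}=o(1)F_2(t)^{1/2}$. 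This identity is the essential ingredient your argument is missing.

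A secondary but also real issue is the final passage from the normalized flow back to $\theta(t)$. In the concentration alternative the diffeomorphisms $\phi(t)$ degenerate and $u(t)$ blows up, so ``unwinding the normalization'' from $v(t)\to1$ cannot by itself yield convergence of $u(t)$. The paper's route is different and cleaner: once $F_2(t)\leq Ce^{-\delta t}$ is established, the identity $\left|\frac{d}{dt}\mathrm{Vol}(B_{r_0}(Q),\theta)\right|\leq CF_2(t)^{1/2}$ shows that volume cannot concentrate at any point, so case (ii) of Theorem \ref{thm4.7} is excluded, case (i) holds, and Lemma \ref{lem4.8} gives convergence of $u(t)$ itself in $S_2^p$ to a conformal factor of constant Webster scalar curvature. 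You should replace the ``unwind the normalization'' step with this volume-concentration argument.
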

\begin{proof}
First we show the exponential decay of $F_2$. We define
$$b=(b^1,\cdots,b^{2n+2})=\int_{S^{2n+1}}(x,\overline{x})(\overline{R}_h-R_h)dV_h$$
where $x=(x_1,...,x_{n+1})\in S^{2n+1}\subset\mathbb{C}^{n+1}$ and
$\overline{x}=(\overline{x}_1,...,\overline{x}_{n+1})$.
That is,
\begin{equation}\label{5.0}
b^i=\int_{S^{2n+1}}x_i(\overline{R}_h-R_h)dV_h\mbox{ and }b^{n+1+i}=\int_{S^{2n+1}}\overline{x}_i(\overline{R}_h-R_h)dV_h\mbox{ for }1\leq i\leq n+1.
\end{equation}
J. H. Cheng proved the following
Kazdan-Warner type condition in \cite{Cheng}:
\begin{equation}\label{5.1}
\int_{S^{2n+1}}\langle\nabla_{\theta_0}x_i,
\nabla_{\theta_0}R_h\rangle_{\theta_0}dV_h=0\mbox{ and
}\int_{S^{2n+1}}\langle\nabla_{\theta_0}\overline{x}_i,
\nabla_{\theta_0}R_h\rangle_{\theta_0}dV_h=0
\end{equation}
for $1\leq i\leq n+1$.
Since $$-\Delta_{\theta_0}x_i=\frac{n}{2}x_i\hspace{2mm}\mbox{ and }\hspace{2mm}-\Delta_{\theta_0}\overline{x}_i=\frac{n}{2}\overline{x}_i\mbox{ for }1\leq i\leq n+1,$$ by (\ref{5.1}) and integration by parts we find that
\begin{equation}\label{5.2}
\begin{split}
b^i&=\int_{S^{2n+1}}(\overline{R}_h-R_h)x_i\,dV_h=-\frac{2}{n}\int_{S^{2n+1}}(\overline{R}_h-R_h)\Delta_{\theta_0}x_i\,dV_h\\
&=\frac{4n+4}{n^2}\int_{S^{2n+1}}(\overline{R}_h-R_h)\langle\nabla_{\theta_0}x_i,\nabla_{\theta_0}v\rangle_{\theta_0}v^{1+\frac{2}{n}}dV_{\theta_0}\mbox{ for }1\leq i\leq n+1.
\end{split}
\end{equation}
Similarly, we have
\begin{equation}\label{5.3}
b^{n+1+i}
=\frac{4n+4}{n^2}\int_{S^{2n+1}}(\overline{R}_h-R_h)\langle\nabla_{\theta_0}\overline{x}_i,\nabla_{\theta_0}v\rangle_{\theta_0}v^{1+\frac{2}{n}}dV_{\theta_0}\mbox{ for }1\leq i\leq n+1.
\end{equation}
It follows from (\ref{5.2}), (\ref{5.3}) and Lemma \ref{lem4.14} that
with error $o(1)\rightarrow 0$ as $t\rightarrow\infty$
\begin{equation}\label{5.4}
|b^i|\leq CF_2(t)^{\frac{1}{2}}\|v-1\|_{S_1^2(S^{2n+1},\theta_0)}=o(1)F_2(t)^{\frac{1}{2}}\mbox{ for }1\leq i\leq 2n+2.
\end{equation}
We can take $\varphi_i=x_i/\sqrt{n+1}$ and $\varphi_{n+i}=\overline{x}_i/\sqrt{n+1}$ for $1\leq i\leq n+1$. Hence, by (\ref{6.21}), (\ref{5.0}), and Lemma \ref{lem6.3}, we obtain
\begin{equation*}
\beta^i_h=\int_{S^{2n+1}}\varphi^h_i(\overline{R}_h-R_h)dV_h=\int_{S^{2n+1}}\varphi_i(\overline{R}_h-R_h)dV_h+o(1)
=\frac{b_i}{\sqrt{n+1}}+o(1)\mbox{ for }1\leq i\leq 2n+2,
\end{equation*}
which implies by (\ref{6.21}) and (\ref{5.4}) that
\begin{equation}\label{5.5}
|\beta^i_\theta|=|\beta^i_h|\leq o(1)F_2(t)^{\frac{1}{2}}+o(1)\mbox{ for }1\leq i\leq 2n+2.
\end{equation}

On the other hand, by (\ref{6.20}), (\ref{6.21}) and (\ref{6.19}), we have
\begin{equation}\label{5.6}
F_2(t)=\int_{S^{2n+1}}(\overline{R}_\theta-R_\theta)^2dV_\theta
=\sum_{i,j=1}^\infty\beta_\theta^i\beta_\theta^j\int_{S^{2n+1}}\varphi_i^\theta\varphi_j^\theta\,dV_\theta\\
=\sum_{i=1}^\infty|\beta_\theta^i|^2
\end{equation}
and
\begin{equation}\label{5.7}
\begin{split}
G_2(t)=\int_{S^{2n+1}}|\nabla_\theta(\overline{R}_\theta-R_\theta)|_{\theta}^2dV_\theta&=-\int_{S^{2n+1}}(\overline{R}_\theta-R_\theta)\Delta_\theta (\overline{R}_\theta-R_\theta)dV_\theta\\
&=\sum_{i,j=1}^\infty\beta_\theta^i\beta_\theta^j\int_{S^{2n+1}}\varphi_i^\theta
\big(-\Delta_\theta\varphi_j^\theta\big)\,dV_\theta\\
&=\sum_{i,j=1}^\infty\beta_\theta^i\beta_\theta^j\lambda_j^\theta
\int_{S^{2n+1}}\varphi_i^\theta\varphi_j^\theta\,dV_\theta=\sum_{i=1}^\infty\lambda_i^\theta|\beta_\theta^i|^2.
\end{split}
\end{equation}
Combining (\ref{5.5}), (\ref{5.6}), (\ref{5.7}) and Lemma \ref{lem6.3}, we obtain
\begin{equation}\label{5.8}
G_2(t)=\sum_{i=1}^\infty\lambda_i^\theta|\beta_\theta^i|^2\geq (\lambda_{2n+3}^\theta+o(1))\sum_{i=1}^\infty|\beta_\theta^i|^2
= (\lambda_{2n+3}+o(1))F_2(t).
\end{equation}
Combining (\ref{5.8}) and Lemma \ref{lem6.1} and taking advantage of the spectral gap $\lambda_{2n+3}>n/2$,
for sufficiently large $t$ we infer the
estimate
$$\frac{d}{dt}F_2(t)\leq-\delta F_2(t)$$
for some uniform constant $\delta>0$, which implies that
\begin{equation}\label{5.9}
F_2(t)\leq Ce^{-\delta t}
\end{equation}
for all $t\geq 0$ with some uniform constant $C$.

Note that (\ref{5.9}) rules out the concentration of volume. Indeed, let $Q$ be the
unique concentration point described in Theorem \ref{thm4.7}, and
$B_{r_0}(Q)=B_{r_0}(Q,\theta_0)$. For any $r_0>0$, we have
\begin{equation}\label{5.10}
\begin{split}
\left|\frac{d}{dt}\mbox{Vol}(B_{r_0}(Q),\theta)\right|
&=\left|\frac{d}{dt}\Big(\int_{B_{r_0}(Q)}dV_\theta\Big)\right|=(n+1)\left|\int_{B_{r_0}(Q)}(\alpha
f-R_\theta)dV_\theta\right|\\
&\leq(n+1)\mbox{Vol}(S^{2n+1},\theta)^{\frac{1}{2}}\left(\int_{B_{r_0}(Q)}(\alpha
f-R_\theta)^2dV_\theta\right)^{\frac{1}{2}}\\
&\leq
(n+1)\mbox{Vol}(S^{2n+1},\theta_0)^{\frac{1}{2}}F_2(t)^{\frac{1}{2}}\leq C
e^{-\frac{\delta}{2}t}\hspace{2mm}\mbox{
for all }t\geq 0.
\end{split}
\end{equation}
Here we have used (\ref{2.3}), (\ref{5.9}) and H\"{o}lder's inequality. Thus, by integrating (\ref{5.10})
 from $T$ to $t$, we obtain
\begin{equation}\label{6.35}
\mbox{Vol}(B_{r_0}(Q),\theta(t))\leq\mbox{Vol}(B_{r_0}(Q),\theta(T))
+\frac{C}{\delta}e^{-\frac{\delta}{2}T}
\end{equation}
for all $t\geq T$. Now, first by choosing $T$ large enough and then by choosing $r_0>0$ small enough, we obtain
\begin{equation*}
\mbox{Vol}(B_{r_0}(Q),\theta(t))\leq
\frac{1}{2}\mbox{Vol}(S^{2n+1},\theta_0)\mbox{ for all }t\geq T,
\end{equation*}
thanks to (\ref{6.35}). In particular, the concentration in the sense of (\ref{4.23}) cannot occur.
By Theorem \ref{thm4.7}, case (i) occurs. Therefore, Lemma \ref{lem4.8} implies that
$u(t)$ converges to $u_\infty$ in $S_2^p(S^{2n+1},\theta_0)$ for all $p>n+1$, and hence in $C^\infty(S^{2n+1})$, as $t\rightarrow\infty$ such that the Webster scalar curvature of
$\theta_\infty=u_\infty^{\frac{2}{n}}\theta_0$ is constant.
\end{proof}

\section{Appendix}

We have the following lemma which is an improved version of Lemma \ref{lem2.3}:

\begin{lem}\label{lemA}
For any $\epsilon>0$, there exists a constant $C_\epsilon$ such that
for any $0\leq u\in S^2_1(S^{2n+1},\theta_0)$ satisfying
\begin{equation}\label{8.1}
\int_{S^{2n+1}}x_iu^{2+\frac{2}{n}}dV_{\theta_0}=\int_{S^{2n+1}}\overline{x}_iu^{2+\frac{2}{n}}dV_{\theta_0}=0\hspace{2mm}\mbox{
for all }i=1,2...,n+1,
\end{equation}
where $x=(x_1,...,x_{n+1})\in S^{2n+1}\subset\mathbb{C}^{n+1}$,
 we have
\begin{equation}\label{8.1a}
Y(S^{2n+1},\theta_0)\left(\int_{S^{2n+1}}u^{2+\frac{2}{n}}dV_{\theta_0}\right)^{\frac{n}{n+1}}
\leq \left(2^{-\frac{1}{n+1}}\cdot\frac{2n+2}{n}+\epsilon\right)
\int_{S^{2n+1}}|\nabla_{\theta_0} u|^2_{\theta_0}
dV_{\theta_0}+C_\epsilon\int_{S^{2n+1}}u^2dV_{\theta_0}.
\end{equation}
\end{lem}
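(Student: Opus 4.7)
The plan is to argue by contradiction along the lines of Aubin's original improvement of the Sobolev inequality on the sphere, using the balancing condition (\ref{8.1}) in place of Aubin's conformal recentering. Suppose (\ref{8.1a}) fails for some $\epsilon_0>0$; then one produces a sequence $\{u_k\}\subset S_1^2(S^{2n+1},\theta_0)$ of nonnegative functions satisfying (\ref{8.1}), normalized so that $\int_{S^{2n+1}} u_k^{2+2/n}dV_{\theta_0}=1$, and violating
\begin{equation*}
Y(S^{2n+1},\theta_0)\leq \Bigl(2^{-\tfrac{1}{n+1}}\tfrac{2n+2}{n}+\epsilon_0\Bigr)\int_{S^{2n+1}}|\nabla_{\theta_0}u_k|^2_{\theta_0}\,dV_{\theta_0}+k\int_{S^{2n+1}}u_k^2\,dV_{\theta_0}.
\end{equation*}
This forces $\|u_k\|_{L^2}\to 0$ and keeps $\|\nabla_{\theta_0}u_k\|_{L^2}$ uniformly bounded; after passing to a subsequence, $u_k\rightharpoonup 0$ weakly in $S_1^2(S^{2n+1},\theta_0)$.

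The next step is to invoke the CR analogue of P.-L. Lions' second concentration-compactness principle for the critical embedding $S_1^2\hookrightarrow L^{2+2/n}$, which extracts an at most countable atomic decomposition
\begin{equation*}
u_k^{2+\tfrac{2}{n}}\,dV_{\theta_0}\rightharpoonup\sum_{j} c_j\,\delta_{x_j},\qquad |\nabla_{\theta_0}u_k|_{\theta_0}^2\,dV_{\theta_0}\rightharpoonup\nu\geq\sum_{j}\mu_j\,\delta_{x_j},
\end{equation*}
with $c_j,\mu_j>0$, $\sum_j c_j=1$, together with the sharp local inequality at each concentration point,
\begin{equation*}
Y(S^{2n+1},\theta_0)\,c_j^{\tfrac{n}{n+1}}\leq \tfrac{2n+2}{n}\,\mu_j.
\end{equation*}
One obtains this local bound by localizing $u_k$ around $x_j$ with a smooth Carnot--Carath\'{e}odory cutoff and applying the sharp Folland--Stein inequality on $\mathbb{H}^n$ of Jerison--Lee; the lower-order term $R_{\theta_0}\|u_k\|_{L^2}^2$ is absorbed by $\|u_k\|_{L^2}\to 0$.

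Finally, I would exploit the balancing condition together with convexity. Since the coordinate functions $x_i,\overline{x}_i$ are bounded and continuous, (\ref{8.1}) passes to the weak-$*$ limit to give $\sum_j c_j x_j=0$ in $\mathbb{C}^{n+1}$. Each $x_j\in S^{2n+1}$ has $|x_j|=1$, so the one-atom case is excluded, forcing at least two atoms. Concavity of $t\mapsto t^{n/(n+1)}$ on $[0,1]$ then gives $\sum_j c_j^{n/(n+1)}\geq 2\cdot(1/2)^{n/(n+1)}=2^{1/(n+1)}$ whenever $\sum c_j=1$ with at least two nonzero $c_j$. Summing the local sharp bound yields $2^{1/(n+1)}Y(S^{2n+1},\theta_0)\leq\tfrac{2n+2}{n}(\|\nabla_{\theta_0}u_k\|_{L^2}^2+o(1))$. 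Plugging this into the failed inequality gives
\begin{equation*}
Y(S^{2n+1},\theta_0)>Y(S^{2n+1},\theta_0)+\epsilon_0\cdot\tfrac{2^{1/(n+1)}n}{2n+2}\,Y(S^{2n+1},\theta_0)+o(1),
\end{equation*}
a contradiction for large $k$.

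The main obstacle is the first step of the second paragraph: the CR second concentration-compactness lemma with the \emph{sharp} constant $Y(S^{2n+1},\theta_0)$ in the local bound. This requires a delicate cutoff/localization argument in the sub-Riemannian setting together with the sharp Folland--Stein inequality. Once that ingredient is granted, the balancing/convexity step that forces $\geq 2$ atoms and recovers precisely the factor $2^{1/(n+1)}$ is the algebraic core of the improvement and matches the constant appearing in the statement of the lemma.
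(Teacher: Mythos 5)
Your route is genuinely different from the paper's: the paper gives a direct Aubin-style argument, building an approximate partition of unity $\{h_i\}$ out of the coordinate functions, using the balancing identity $\int\xi_i^+u^{2+\frac{2}{n}}dV_{\theta_0}=\int\xi_i^-u^{2+\frac{2}{n}}dV_{\theta_0}$ to trade the $h_i^+$ piece for the $h_i^-$ piece at the cost of a factor $2^{\frac{n}{n+1}}$, and then applying only the global Jerison--Lee inequality (Lemma \ref{lem2.3}). Your contradiction/concentration-compactness scheme can in principle reach the same conclusion, but as written it has a genuine error at its algebraic core. The claim that concavity of $t\mapsto t^{\frac{n}{n+1}}$ gives $\sum_j c_j^{\frac{n}{n+1}}\geq 2^{\frac{1}{n+1}}$ whenever $\sum_j c_j=1$ with at least two atoms is false: concavity yields the \emph{upper} bound $\sum_{j=1}^N c_j^{\frac{n}{n+1}}\leq N^{\frac{1}{n+1}}$, and the infimum of $\sum_j c_j^{\frac{n}{n+1}}$ over configurations with at least two nonzero masses is $1$ (approached as one atom absorbs almost all the mass, e.g. $c_1=1-\delta$, $c_2=\delta$), which gives no improvement at all. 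Merely excluding the one-atom case is not enough. The correct use of the balancing condition is quantitative: from $\sum_j c_jx_j=0$ with $|x_j|=1$ one gets, for each $j_0$, $c_{j_0}=\bigl\|\sum_{j\neq j_0}c_jx_j\bigr\|\leq\sum_{j\neq j_0}c_j=1-c_{j_0}$, hence $c_j\leq\frac{1}{2}$ for \emph{every} $j$; then $c_j^{\frac{n}{n+1}}=c_j\,c_j^{-\frac{1}{n+1}}\geq 2^{\frac{1}{n+1}}c_j$ and summation gives $\sum_jc_j^{\frac{n}{n+1}}\geq 2^{\frac{1}{n+1}}$. With that repair the final contradiction goes through as you computed.

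A second, smaller issue is that your key analytic input --- a CR analogue of Lions' second concentration-compactness lemma with the sharp constant $Y(S^{2n+1},\theta_0)$ in the local bound $Y(S^{2n+1},\theta_0)\,c_j^{\frac{n}{n+1}}\leq\frac{2n+2}{n}\mu_j$ --- is nowhere established in the paper and is not a one-line consequence of what is; you acknowledge this, but it means your proof is conditional on a nontrivial auxiliary lemma, whereas the paper's direct argument avoids any compactness machinery and produces the explicit constant $C_\epsilon$ required by the statement. If you pursue your route, you should prove the CR Lions lemma (the cutoff plus sharp Folland--Stein argument does adapt to the sub-Riemannian setting) and replace the concavity step by the $c_j\leq\frac{1}{2}$ argument above.
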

\begin{proof}
The proof is based on the argument of Aubin in \cite{Aubin1}
(see also \cite{Aubin2}).

Dividing both sides by $R_{\theta_0}=n(n+1)/2$, (\ref{8.1a}) is equivalent to
\begin{equation}\label{8.1b}
\mbox{Vol}(S^{2n+1},\theta_0)^{\frac{1}{n+1}}\left(\int_{S^{2n+1}}u^{2+\frac{2}{n}}dV_{\theta_0}\right)^{\frac{n}{n+1}}
\leq \left(2^{-\frac{1}{n+1}}\cdot\frac{4}{n^2}+\epsilon\right)
\int_{S^{2n+1}}|\nabla_{\theta_0} u|^2_{\theta_0}
dV_{\theta_0}+C_\epsilon\int_{S^{2n+1}}u^2dV_{\theta_0}.
\end{equation}
Let $\Lambda$ be the vector space of functions on $S^{2n+1}$ spanned by
$x_i$ and $\overline{x}_i$ where $i=1,2...,n+1$. Let $0<\eta<1/2$ be a real number, which we are going to choose very small. There exists a family of functions $\xi_i\in\Lambda$ ($i=1,2,..., k$) such that
\begin{equation}\label{8.2}
1+\eta<\sum_{i=1}^k|\xi_i|^{\frac{n}{n+1}}<1+2\eta\hspace{2mm}\mbox{ with }\hspace{2mm}|\xi_i|<2^{-\frac{2n+2}{n}}.
\end{equation}
Consider $h_i$, $C^1$ functions, such that everywhere $h_i\,\xi_i\geq 0$ and such that
\begin{equation}\label{8.3}
\big||h_i|^2-|\xi_i|^{\frac{n}{n+1}}\big|<(\eta/k)^{\frac{2n+2}{n}}.
\end{equation}
 Then we have
\begin{equation}\label{8.4}
\begin{split}
&\sum_{i=1}^k|h_i|^2<\sum_{i=1}^k|\xi_i|^{\frac{n}{n+1}}+k(\eta/k)^{\frac{n}{2n+2}}<1+2\eta+\eta=1+3\eta\hspace{2mm}\mbox{ and }\\
&\sum_{i=1}^k|h_i|^2>\sum_{i=1}^k|\xi_i|^{\frac{n}{n+1}}-k(\eta/k)^{\frac{n}{2n+2}}>1+\eta-\eta=1.
\end{split}
\end{equation}
By mean-value theorem, we have $|F(a)-F(b)|\leq
\max_{x\in[a,b]}|F'(x)|(a-b)$ for $a>b$. Apply this with
$F(x)=x^{\frac{n+1}{n}}$ and note that for $x$ lies between
$|h_i|^{2}$ and $|\xi_i|^{\frac{n}{n+1}}$ satisfying
$|x|=|h_i|^{2}+\big||h_i|^2-|\xi_i|^{\frac{n}{n+1}}\big|<|h_i|^{2}+(\eta/k)^{\frac{2n+2}{n}}$
by (\ref{8.3}), we have
\begin{equation}\label{8.5}
\begin{split}
\big||h_i|^{\frac{2n+2}{n}}-|\xi_i|\big|&\leq\frac{n+1}{n}
\left[|\xi_i|^{\frac{n}{n+1}}+(\eta/k)^{\frac{2n+2}{n}}\right]^{\frac{n+1}{n}-1}\big||h_i|^2-|\xi_i|^{\frac{n}{n+1}}\big|
\\
&\leq\frac{n+1}{n}\big||h_i|^2-|\xi_i|^{\frac{n}{n+1}}\big|\\
&\leq\frac{n+1}{n}(\eta/k)^{\frac{2n+2}{n}}:=\epsilon_0^{\frac{2n+2}{n}}
\end{split}
\end{equation}
where the second inequality follows from
\begin{equation*}
|\xi_i|^{\frac{n}{n+1}}+(\eta/k)^{\frac{2n+2}{n}}<2^{-2}+(1/2k)^{\frac{2n+2}{n}}<1
\end{equation*}
by (\ref{8.2}) and the assumption that $\eta<1/2$. Since
 $u\geq 0$, we have
\begin{equation}\label{8.6}
\begin{split}
\|u\|_{L^{\frac{2n+2}{n}}(S^{2n+1},\theta_0)}^2&=\|u^2\|_{L^{\frac{n+1}{n}}(S^{2n+1},\theta_0)}\\
&\leq\Big\|\sum_{i=1}^ku^2|h_i|^2\Big\|_{L^{\frac{n+1}{n}}(S^{2n+1},\theta_0)}
\leq\sum_{i=1}^k\|uh_i\|_{L^{\frac{n+1}{n}}(S^{2n+1},\theta_0)}^2.
\end{split}
\end{equation}

For a function $f$, define $f^+(x)=\sup\{f(x),0\}$ and
$f^-(x)=\sup\{-f(x),0\}$. By assumption (\ref{8.1}), we have
\begin{equation}\label{8.7}
\int_{S^{2n+1}}\xi_i^+u^{\frac{2+2n}{n}}dV_{\theta_0}=\int_{S^{2n+1}}\xi_i^-u^{\frac{2+2n}{n}}dV_{\theta_0}.
\end{equation}
If
\begin{equation}\label{8.8}
\big\|h_i^+|\nabla_{\theta_0}
u|_{\theta_0}\big\|_{L^2(S^{2n+1},\theta_0)}\geq
\big\|h_i^-|\nabla_{\theta_0}
u|_{\theta_0}\big\|_{L^2(S^{2n+1},\theta_0)},
\end{equation}
we obtain
\begin{equation}\label{8.9}
\begin{split}
&\|u h_i\|_{L^{\frac{2n+2}{n}}(S^{2n+1},\theta_0)}^2\\
&=\left[\int_{S^{2n+1}}(h_i^+u)^{\frac{2+2n}{n}}dV_{\theta_0}
+\int_{S^{2n+1}}(h_i^-u)^{\frac{2+2n}{n}}dV_{\theta_0}\right]^{\frac{n}{n+1}}\\
&\leq
\left[\int_{S^{2n+1}}\big(\xi_i^++\epsilon_0^{\frac{2+2n}{n}}\big)u^{\frac{2+2n}{n}}dV_{\theta_0}
+\int_{S^{2n+1}}(h_i^-u)^{\frac{2+2n}{n}}dV_{\theta_0}\right]^{\frac{n}{n+1}}\\
&=
\left[\int_{S^{2n+1}}\big(\xi_i^-+\epsilon_0^{\frac{2+2n}{n}}\big)u^{\frac{2+2n}{n}}dV_{\theta_0}
+\int_{S^{2n+1}}(h_i^-u)^{\frac{2+2n}{n}}dV_{\theta_0}\right]^{\frac{n}{n+1}}\\
&\leq
2^{\frac{n}{n+1}}\left[\int_{S^{2n+1}}\big((h_i^-)^{\frac{2+2n}{n}}+\epsilon_0^{\frac{2+2n}{n}}\big)u^{\frac{2+2n}{n}}dV_{\theta_0}
\right]^{\frac{n}{n+1}}\\
&\leq2^{\frac{n}{n+1}}\|u(h_i^-+\epsilon_0)\|_{L^{\frac{2n+2}{n}}(S^{2n+1},\theta_0)}^2\\
&\leq\frac{2^{\frac{n}{n+1}}}{\mbox{Vol}(S^{2n+1},\theta_0)^{\frac{1}{n+1}}}\left(\frac{4}{n^2}\|\nabla_{\theta_0}\big(u(h_i^-+\epsilon_0)\big)\|_{L^{2}(S^{2n+1},\theta_0)}^2
+\|u(h_i^-+\epsilon_0)\|_{L^{2}(S^{2n+1},\theta_0)}^2\right)
\end{split}
\end{equation}
where the second and third inequality follows from (\ref{8.5}), the
second equality follows from (\ref{8.7}), and the last inequality
follows from Lemma \ref{lem2.3}. Let $H=\displaystyle\sup_{1\leq
i\leq k}\sup_{S^{2n+1}}|\nabla_{\theta_0}h_i|_{\theta_0}$. Then
there exist constants $\mu$ and $\nu$ such that
\begin{equation}\label{8.10}
\begin{split}
\|\nabla_{\theta_0}\big(u(h_i^-+\epsilon_0)\big)\|_{L^{2}(S^{2n+1},\theta_0)}^2
&\leq\int_{S^{2n+1}}(h_i^-+\epsilon_0)^2|\nabla_{\theta_0}u|_{\theta_0}^2dV_{\theta_0}
+\nu
H^2\|u\|_{L^{2}(S^{2n+1},\theta_0)}^2\\
&\hspace{2mm} +\mu H
\|u\|_{L^{2}(S^{2n+1},\theta_0)}\|\nabla_{\theta_0}u\|_{L^{2}(S^{2n+1},\theta_0)}.
\end{split}
\end{equation}
Since $(h_i^-+\epsilon_0)^2\leq (h_i^-)^2+2(h_i^-+\epsilon_0)\epsilon_0<(h_i^-)^2+2\epsilon_0$, then by (\ref{8.6}), (\ref{8.8}),(\ref{8.9}) and (\ref{8.10})
we have
\begin{equation}\label{8.11}
\begin{split}
&\mbox{Vol}(S^{2n+1},\theta_0)^{\frac{1}{n+1}}\|u\|_{L^{\frac{2n+2}{n}}(S^{2n+1},\theta_0)}^2\\
&\leq\frac{4\cdot2^{\frac{n}{n+1}}}{n^2}\left(
\sum_{i=1}^k\int_{S^{2n+1}}(h_i^-+\epsilon_0)^2|\nabla_{\theta_0}u|_{\theta_0}^2dV_{\theta_0}
+\nu k
H^2\|u\|_{L^{2}(S^{2n+1},\theta_0)}^2\right.\\
&\hspace{2mm} \left.\vphantom{\int_{S^{2n+1}}}+\mu kH
\|u\|_{L^{2}(S^{2n+1},\theta_0)}\|\nabla_{\theta_0}u\|_{L^{2}(S^{2n+1},\theta_0)}\right)+2^{\frac{n}{n+1}}
\sum_{i=1}^k\|u(h_i^-+\epsilon_0)\|_{L^{2}(S^{2n+1},\theta_0)}^2\\
&\leq\frac{4\cdot2^{\frac{n}{n+1}}}{n^2}\left(
\frac{1}{2}\sum_{i=1}^k\int_{S^{2n+1}}h_i^2|\nabla_{\theta_0}u|_{\theta_0}^2dV_{\theta_0}+2k\epsilon_0\|\nabla_{\theta_0}u\|_{L^{2}(S^{2n+1},\theta_0)}^2
\right.\\
&\hspace{2mm} \left.\vphantom{\int_{S^{2n+1}}}+\nu k
H^2\|u\|_{L^{2}(S^{2n+1},\theta_0)}^2+\mu kH
\|u\|_{L^{2}(S^{2n+1},\theta_0)}\|\nabla_{\theta_0}u\|_{L^{2}(S^{2n+1},\theta_0)}\right)
+2^{\frac{n}{n+1}}
k\|u\|_{L^{2}(S^{2n+1},\theta_0)}^2.
\end{split}
\end{equation}
Note that if $\big\|h_i^+|\nabla_{\theta_0}
u|_{\theta_0}\big\|_{L^2(S^{2n+1},\theta_0)}\leq
\big\|h_i^-|\nabla_{\theta_0}
u|_{\theta_0}\big\|_{L^2(S^{2n+1},\theta_0)}$, by the same argument
above we also have (\ref{8.11}).

Young's inequality asserts that for any $\epsilon_1>0$, there exists
a constant $C(\epsilon_1)$ such that
\begin{equation}\label{8.12}
\|\nabla_{\theta_0}u\|_{L^{2}(S^{2n+1},\theta_0)}\|u\|_{L^{2}(S^{2n+1},\theta_0)}\leq
\epsilon_1\|\nabla_{\theta_0}u\|_{L^{2}(S^{2n+1},\theta_0)}^2+C(\epsilon_1)\|u\|_{L^{2}(S^{2n+1},\theta_0)}^2.
\end{equation}
Using (\ref{8.4}) and  (\ref{8.12}), (\ref{8.11}) can be written as
\begin{equation*}
\begin{split}
\mbox{Vol}(S^{2n+1},\theta_0)^{\frac{1}{n+1}}\|u\|_{L^{\frac{2n+2}{n}}(S^{2n+1},\theta_0)}^2
&\leq\frac{4}{n^2}\left[2^{-\frac{1}{n+1}}(1+3\eta)+2^{\frac{2n+1}{n+1}}k\epsilon_0
+\mu
kH\epsilon_1\right]\|\nabla_{\theta_0}u\|_{L^{2}(S^{2n+1},\theta_0)}^2\\
&\hspace{2mm}+2^{\frac{n}{n+1}}k\left[\frac{4}{n^2}\Big(\nu H^2+\mu
H C(\epsilon_1)\Big) +1\right]\|u\|_{L^{2}(S^{2n+1},\theta_0)}^2.
\end{split}
\end{equation*}
Recall by (\ref{8.5}),
$\epsilon_0^{\frac{2n+2}{n}}=\frac{n+1}{n}(\eta/k)^{\frac{2n+2}{n}}$.
Taking $\eta$ and $\epsilon_1$ small enough, we prove (\ref{8.1b}) and hence (\ref{8.1a}).
\end{proof}

\bibliographystyle{amsplain}

\end{document}